\tikzstyle{circ}=[draw,circle, text centered]
\newcommand{\one}{\mathbbm{1}}
\setlist[1]{itemsep=0.5em, topsep=0.5em}
\definecolor{red}{rgb}{1,0,0}
\definecolor{orange}{rgb}{0.7,0.3,0}
\definecolor{blue}{rgb}{0,.3,.7}
\definecolor{green}{rgb}{0,.6,.4}
\definecolor{dblue}{rgb}{0,.3,.7}
\renewcommand{\le}{\leqslant}
\renewcommand{\leq}{\leqslant}
\renewcommand{\ge}{\geqslant}
\renewcommand{\geq}{\geqslant}{}
\newcommand{\ep}{\varepsilon}
\newcommand{\e}{\operatorname{e}}
\numberwithin{equation}{section}
\renewcommand{\tilde}{\widetilde}
\theoremstyle{plain}
\newtheorem{maintheorem}{Theorem}
\newtheorem{lemma}{Lemma}[section]
\newtheorem{theorem}[lemma]{Theorem}
\newtheorem{corollary}[lemma]{Corollary}
\newtheorem{proposition}[lemma]{Proposition}
\theoremstyle{remark}
\newtheorem{rem}{Remark}[section]
\newtheorem*{rem*}{Remark}
\newtheorem*{rems*}{Remarks}
\theoremstyle{definition}
\newtheorem{dfn}[lemma]{Definition}
\newcommand{\N}{\mathbb{N}}
\newcommand{\Z}{\mathbb{Z}}
\newcommand{\Q}{\mathbb{Q}}
\newcommand{\R}{\mathbb{R}}
\renewcommand{\P}{\mathbb{P}}
\newcommand{\CA}{\mathcal{A}}
\newcommand{\CB}{\mathcal{B}}
\newcommand{\CC}{\mathcal{C}}
\newcommand{\CD}{\mathcal{D}}
\newcommand{\CE}{\mathcal{E}}
\newcommand{\CK}{\mathcal{K}}
\newcommand{\CL}{\mathcal{L}}
\newcommand{\CM}{\mathcal{M}}
\newcommand{\CN}{\mathcal{N}}
\newcommand{\CP}{\mathcal{P}}
\newcommand{\CQ}{\mathcal{Q}}
\newcommand{\CR}{\mathcal{R}}
\newcommand{\CS}{\mathcal{S}}
\newcommand{\CT}{\mathcal{T}}
\newcommand{\CV}{\mathcal{V}}
\newcommand{\CW}{\mathcal{W}}
\newcommand{\nn}{\nonumber \\}
\newcommand{\ds}{\displaystyle}
\newcommand{\dee}{\,\mathrm{d}}
\newcommand{\fl}[1]{\left\lfloor#1\right\rfloor}
\newcommand{\eps}{\varepsilon}
\renewcommand{\phi}{\varphi}
\renewcommand{\rho}{\varrho}
\newcommand{\bs}\boldsymbol{}
\renewcommand{\bar}[1]{\overline{#1}}
\renewcommand{\mod}[1]{\,(\mathrm{mod}\,#1)}
\newcommand{\bg}{\big}
\DeclareMathOperator{\meas}{meas}
\definecolor{red}{rgb}{1,0,0}
\definecolor{orange}{rgb}{0.7,0.3,0}
\definecolor{blue}{rgb}{.2,.6,.75}
\definecolor{green}{rgb}{.4,.7,.4}
\newcommand{\lbr}{[}
\newcommand{\rbr}{]}
\begin{document}

\title{Erd\H os's integer dilation approximation problem and GCD graphs}

\author[D. Koukoulopoulos]{Dimitris Koukoulopoulos}
\address{D\'epartement de math\'ematiques et de statistique\\
Universit\'e de Montr\'eal\\
CP 6128 succ. Centre-Ville\\
Montr\'eal, QC H3C 3J7\\
Canada}
\email{dimitris.koukoulopoulos@umontreal.ca}

\author[Y. Lamzouri]{Youness Lamzouri}
\address{
Universit\'e de Lorraine, CNRS, IECL,  and  Institut Universitaire de France,
F-54000 Nancy, France}
\email{youness.lamzouri@univ-lorraine.fr}

\author[J. D. Lichtman]{Jared Duker Lichtman}
\address{Department of Mathematics, Stanford University, Stanford, CA, USA}
\email{jared.d.lichtman@gmail.com}

\subjclass[2020]{Primary: 11J25, 11B83. Secondary: 11A05, 05C40}
\keywords{Erd\H os, integer dilation approximation problem, primitive sets of integers, GCD graphs, Diophantine approximation, structure vs randomness}

\date{\today}

\begin{abstract}
Let $\mathcal{A}\subset\mathbb{R}_{\geqslant1}$ be a countable set such that $\limsup_{x\to\infty}\frac{1}{\log x}\sum_{\alpha\in\mathcal{A}\cap[1,x]}\frac{1}{\alpha}>0$. We prove that, for every $\varepsilon>0$, there exist infinitely many pairs $(\alpha, \beta)\in \CA^2$ such that $\alpha\neq \beta$ and $|n\alpha-\beta| <\varepsilon$ for some positive integer $n$. This resolves a problem of Erd\H os from 1948. A critical role in the proof is played by the machinery of GCD graphs, which were introduced by the first author and by James Maynard in their work on the Duffin--Schaeffer conjecture in Diophantine approximation.
\end{abstract}

\maketitle

\section{Introduction} 
Given a set $\mathcal{A}\subset\mathbb{R}_{>0}$ and a positive number $\eps$, let us consider the following approximation problem of a Diophantine nature: are there distinct $\alpha,\beta\in\CA$ and an integer $n$ such that 
\begin{equation}
	\label{eq:solutions}
|n\alpha-\beta|<\varepsilon\,?
\end{equation}
Equivalently, we seek to find a fraction $\beta/\alpha\neq 1$ whose numerator and denominator both lie in $\CA$, and which has the property that  $\|\beta/\alpha\|<\eps/\alpha$, where $\|\cdot\|$ denotes the distance to the nearest integer. 

Evidently, if $\CA$ has an accumulation point in $\R$, then \eqref{eq:solutions} has infinitely many solutions with distinct $\alpha,\beta\in\CA$ and with $n=1$. So, we will be assuming throughout that $\CA$ has no accumulation points, that is to say it is a {\it discrete set}. In particular, $\CA$ is a countable set.

In 1948, Erd\H os \cite[Page 692]{Erd48} asked whether it is possible to find solutions to \eqref{eq:solutions} if $\CA$ is ``large enough''. Motivated by his work \cite{Erd35} and that of Behrend \cite{Be} on integer primitive sequences (see Definition \ref{dfn:primitive} below and the discussion following it), he proposed that this might indeed be possible if $\CA$ satisfies one of the following conditions:
\begin{equation}\label{Eq:Condition1Erdos}
\sum_{\alpha\in\CA,\,\alpha\ge2}\frac{1}{\alpha\log\alpha}=\infty
\end{equation}
or
\begin{equation}\label{Eq:Condition2Erdos}
\limsup_{x\to\infty}\frac{1}{\log x} \sum_{\alpha\in \CA\cap[1,x]}\frac{1}{\alpha} > 0.
\end{equation}
We shall refer to this question as the {\it integer dilation approximation problem}.  This problem was mentioned\footnote{In \cite{Erd97}, which was published in 1997, shortly after Paul Erd\H os passed away, he wrote ``I offer \$500 for settling this annoying diophantine problem.''} again in \cite{Erd61, Erd63, Erd73, Erd75, Erd77, Erd80, Erd92, Erd97, ES78, ESS}. In particular, Erd\H os, S\'ark\"ozy and Szemer\'edi \cite{ESS} asked in 1968 whether one can prove  the existence of infinitely many solutions to \eqref{eq:solutions} under the stronger assumption that 
\[
\liminf_{x\to\infty}\frac{1}{x} \sum_{\alpha\in \CA\cap[1,x]}1 > 0.
\]

In this paper, we resolve Erd\H os's integer dilation approximation problem when $\CA$ satisfies the second condition  \eqref{Eq:Condition2Erdos}.

\begin{maintheorem}\label{thm:solutions}
Let $\mathcal{A}\subset\mathbb{R}_{>0}$ be a discrete set such that
\[
\limsup_{x\to\infty} \frac{1}{\log x} \sum_{\alpha\in\CA\cap[1,x]} \frac{1}{\alpha}>0 .
\]
Then, for every $\eps>0$, there exists a pair $(\alpha, \beta)\in \CA^2$ such that $\alpha\neq \beta$ and $|n\alpha-\beta| <\varepsilon$ for some positive integer $n$. 
\end{maintheorem}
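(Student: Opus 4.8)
The natural strategy is to argue by contraposition. Suppose that for some fixed $\eps_0>0$ there are \emph{no} solutions, i.e.\ $|n\alpha-\beta|\ge\eps_0$ for all distinct $\alpha,\beta\in\CA$ and all integers $n\ge1$; we must show $\frac{1}{\log x}\sum_{\alpha\in\CA\cap[1,x]}\frac1\alpha\to0$. Two easy normalizations come first. Taking $n=1$ shows $\CA$ is $\eps_0$-separated (any two distinct elements differ by at least $\eps_0$), so $\#(\CA\cap[1,x])\ll_{\eps_0}x$ and each dyadic block $[2^j,2^{j+1})$ contributes $O_{\eps_0}(1)$ to $\sum1/\alpha$; and since deleting the finite set $\CA\cap[1,T]$ changes $\sum_{\alpha\le x}1/\alpha$ by only $O_T(1)$, we may assume $\min\CA$ is as large as we wish. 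Under these normalizations the no-solution hypothesis is equivalent to saying that $\CA$ is a \emph{fuzzy primitive set}: $\|\beta/\alpha\|\ge\eps_0/\alpha$ for all distinct $\alpha<\beta$ in $\CA$. (For $\CA\subset\Z$ with $\eps_0<1$ this is exactly primitivity, and then the theorem already follows from Behrend's bound for primitive sequences; the substance is the genuinely Diophantine ``fuzzy'' regime.) Finally, fixing $\delta>0$ and a sequence $N\to\infty$ with $\sum_{\alpha\in\CA\cap[1,N]}1/\alpha\ge\delta\log N$, a dyadic pigeonhole produces, for each such $N$, a dense scale $J=J(N)$ with $\sum_{\alpha\in\CA\cap[2^J,2^{J+1})}1/\alpha\gg_{\eps_0,\delta}1$ below which a positive proportion (depending on $\eps_0,\delta$) of the scales $2^i$, $i<J$, are themselves dense; consequently $\sum_{\alpha\in\CA\cap[1,2^J)}1/\alpha\gg_{\eps_0,\delta}\log N\to\infty$.

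The heart of the argument is a second-moment estimate on the ``dilation neighborhoods'' $E_\alpha:=\bigcup_{n\ge1}(n\alpha-\eps_0,n\alpha+\eps_0)$. Fix a dense scale $J$ as above and let $E_\alpha^{(1/2)}$ be the variant using $\eps_0/2$ in place of $\eps_0$. Consider the multiplicity function $F(x):=\#\{\alpha\in\CA\cap[1,2^J):x\in E_\alpha^{(1/2)}\}$ on the block $[2^J,2^{J+1})$. Its integral over the block is $M_1:=\int F=\eps_0 2^J\sum_{\alpha\in\CA\cap[1,2^J)}1/\alpha+O(2^J)\gg_{\eps_0,\delta}2^J\log N$, so the \emph{average} multiplicity tends to infinity. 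If one can also bound the second moment $M_2:=\int F^2$ by $(1+o(1))M_1^2/2^J$ as $N\to\infty$ — i.e.\ establish \emph{quasi-independence} of the events $\{x\in E_\alpha^{(1/2)}\}$ — then Cauchy--Schwarz gives $\big|\bigcup_\alpha E_\alpha^{(1/2)}\cap[2^J,2^{J+1})\big|\ge M_1^2/M_2=(1-o(1))2^J$, so this union fills the block up to a complement of relative size $o(1)$. But, around each $\beta\in\CA\cap[2^J,2^{J+1})$, the no-solution hypothesis forces the interval $(\beta-\eps_0/4,\beta+\eps_0/4)$ to be disjoint from every $E_\alpha^{(1/2)}$ with $\alpha<\beta$; by $\eps_0$-separation these intervals are pairwise disjoint and have total measure $\gg_{\eps_0,\delta}2^J$, a \emph{fixed} positive proportion of the block — which cannot fit inside a set of relative measure $o(1)$. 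This contradiction proves the theorem, \emph{provided} the quasi-independence bound holds, and that is the crux.

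Establishing that bound is where the GCD graph machinery of the first author and Maynard enters. The off-diagonal part of $M_2$ is $\sum_{\alpha\ne\alpha'}|E_\alpha^{(1/2)}\cap E_{\alpha'}^{(1/2)}\cap[2^J,2^{J+1})|$, and the pairwise overlap is governed by how often a multiple of $\alpha$ nearly coincides with a multiple of $\alpha'$, i.e.\ by an \emph{approximate gcd} of $\alpha$ and $\alpha'$: for generic pairs the overlap matches the independent prediction $|E_\alpha|\,|E_{\alpha'}|/2^J$, but ``arithmetically related'' pairs (close to a small integer ratio, or sharing a large approximate common factor) contribute far more. Thus, after a suitable discretization, $M_2$ is a \emph{GCD sum} of exactly the shape analyzed in the proof of the Duffin--Schaeffer conjecture. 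One forms a GCD graph whose vertices are (rescaled, rounded) elements of $\CA$ across the relevant scales, with local $p$-adic data and edge weights recording these overlaps, and runs the structure-versus-randomness dichotomy: either the GCD sum is small enough (the ``random'' case), which is precisely the quasi-independence needed above; or the graph carries nontrivial arithmetic structure — typically a prime $p$ dividing most vertices, or constraining their $p$-adic valuations — in which case one passes to a dense sub-collection of $\CA$ from which that structure has been removed (e.g.\ dividing through by $p$), observes that the sub-collection is still fuzzy primitive, and iterates; a quality parameter improves at each step, so the process terminates in the random case.

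I expect the main obstacle to be the passage from the real, Diophantine formulation to a workable integer GCD-graph problem, together with the attendant precision bookkeeping — and this is essentially the whole paper. The difficulty is that for $\alpha$ at scale $2^i$ and $\beta$ at scale $2^J$ the multiplier $n$ ranges over $\asymp 2^{J-i}$ values, so rounding $\alpha$ to a nearby integer introduces an error $n\cdot O(1)$ that swamps $\eps_0$; one must instead approximate each $\alpha$ by a rational with a large, scale-dependent denominator (equivalently, rescale by a factor $\asymp 2^J/\eps_0$ and round), turning the problem into one about \emph{fuzzy divisibility among large integers} — and the entire GCD graph formalism then has to be re-run in this setting, checking that every step of the iteration survives. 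Compounding this is the \emph{self-referential} nature of the problem: $\CA$ is simultaneously the index set of the dilation neighborhoods and the target set they must avoid, so — $\CA$ having Lebesgue measure zero — no purely measure-theoretic input can suffice, and one genuinely needs the anatomy of the integers to rule out that $\CA$ dodges its own dilates while keeping $\sum1/\alpha$ large. This is, in effect, a quantitative strengthening of Behrend's theorem, and I expect almost all of the work to lie in making the GCD graph iteration deliver it.
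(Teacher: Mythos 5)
Your skeleton — contraposition, $\eps_0$-separation, a second-moment/Cauchy--Schwarz argument on the dilation neighbourhoods, the disjointness of those neighbourhoods from a fattened copy of $\CA$, and a structure-versus-randomness dichotomy run through GCD graphs — is the paper's skeleton. But there is a genuine gap at the step you yourself flag as the crux, and it is not merely hard: the quasi-independence bound $M_2\le(1+o(1))M_1^2/2^J$ is \emph{false} for the unrestricted neighbourhoods $E_\alpha=\bigcup_{n\ge1}(n\alpha-\eps_0,n\alpha+\eps_0)$. Already for $\alpha,\beta\in\N$ the diagonal solutions $m\alpha=n\beta$ give $\P_T(E_\alpha\cap E_\beta)\gtrsim\gcd(\alpha,\beta)\,\P_T(E_\alpha)\P_T(E_\beta)$, and there exist fuzzy-primitive sets of integers (e.g.\ subsets of $\{a\in[\sqrt{x_j},x_j]:\omega(a)=\lfloor\log\log x_j\rfloor\}$) for which the weighted average of $\gcd(\alpha,\beta)$ is a power of $\log x_j$; so the events are positively correlated \emph{on average} and no analysis of the overlaps, GCD-graph or otherwise, can certify the bound you need. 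Two devices are required and are absent from your proposal: (i) Erd\H os's rough-multiplier restriction, replacing $E_\alpha$ by $\CN_\alpha=\bigcup_{P^-(n)>\alpha}(n\alpha-\tfrac12,n\alpha+\tfrac12)$, which makes $\CN_\alpha$ and $\CN_\beta$ literally disjoint whenever $[\alpha,\beta]\le1$ and thus kills the diagonal; and (ii) a pre-selection of the index set $\CA'$ (via the Ahlswede--Khachatrian--S\'ark\"ozy refinement of Behrend's bound) that discards the elements sitting over small denominators, so that the remaining positively correlated pairs live in a single scale and are provably sparse. A related quantitative issue: you run the second moment on the block $[2^J,2^{J+1})$, i.e.\ with $T\asymp\max\alpha$, where Haight-type asymptotics for $\P_T(E_\alpha\cap E_\beta)$ are not available; the paper instead fixes a \emph{finite} $\CA'$ and lets $T\to\infty$ afterwards, which is why its argument can remain soft.

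The structured branch of your dichotomy is also not right as stated. "Divide through by $p$, note the subcollection is still fuzzy primitive, and iterate" does not close the argument: the GCD-graph iteration does not hand you back the original problem on a denser set; it terminates with fixed divisors $A,B$ of the numerators and $Q,R$ of the denominators, and one must then directly bound a sum over the cofactors $a',b'$. That sum is only acceptable because, for each fixed denominator, the set of admissible numerators is a genuinely \emph{primitive} set of integers, to which a generalization of Behrend's $\log x/\sqrt{\log\log x}$ bound applies — and this $1/\sqrt{\log\log x}$ saving is exactly what absorbs the Euler-factor losses and the extra summation over $A,B$ incurred by the iteration. Your proposal never invokes primitivity or Behrend at all, yet some such arithmetic input is unavoidable: for $\CA\subset\N$ and $\eps_0<1$ the hypothesis of no solutions \emph{is} primitivity, and the conclusion \emph{is} the Behrend--Erd\H os theorem, which cannot follow from overlap estimates alone.
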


\begin{rem*}
	In fact, the above theorem implies trivially that, for every $\eps>0$, there must exist infinitely many pairs $(\alpha,\beta)$ as above. Indeed, once we locate the first one, say $(\alpha_1,\beta_1)$, we apply the theorem to $\CA'\coloneqq\CA\setminus\{\alpha_1,\beta_1\}$ to locate a second suitable pair $(\alpha_2,\beta_2)$. Continuing this way, we may find an infinite number of such pairs.
\end{rem*}

It is worth noting that Erd\H os originally stated his problem in its contrapositive form; the above formulation of the problem appeared first in a paper of Erd\H os and S\'ark\"ozy \cite{ES78}, and subsequently in Haight's 1988 work \cite{Ha88}, who proved Theorem \ref{thm:solutions} in the special case when the ratios $\alpha/\beta$ with distinct $\alpha,\beta\in\CA$ are all irrational. As a matter of fact, under this assumption, Haight proved the stronger estimate
\begin{equation}\label{eq:Haight}
	\lim_{x\to\infty} \frac{1}{x} \sum_{a\in \CA\cap[1,x]}1=0.
\end{equation}
Erd\H os's motivation for stating the problem in the contrapositive form comes from its connection to {\it primitive sets} of integers.  Indeed, if $\CA\subset\N$ and $\eps\in(0,1]$, then the only way to have a solution to \eqref{eq:solutions} is if $\beta=n\alpha$, that is to say, if $\alpha$ divides $\beta$. Let us now recall the definition of a primitive set:

\begin{dfn}[primitive set]\label{dfn:primitive}
We say that a set $\CA\subset\N$ is {\it primitive} if $a\nmid b$ for all distinct $a,b\in\CA$. 
\end{dfn}

Early on, many experts, including Chowla, Davenport, and Erd\H{o}s, believed that primitive sets all have natural density equal to zero. But to their great surprise, in 1934 Besicovitch \cite{Bes34} constructed primitive sets with upper natural density $\frac{1}{2}-\eps$, for any $\eps>0$. (This is sharply different from the situation in Haight's estimate \eqref{eq:Haight}.) On the other hand, in 1935 Behrend \cite{Be} and Erd\H{o}s \cite{Erd35} proved that primitive sets of integers all have logarithmic density zero, and hence lower natural density zero. In fact, Erd\H{o}s \cite{Erd35} showed the stronger result that 
	\begin{equation}
		\label{eq:Erdos}
		\sum_{a\in \CA} \frac{1}{a\log a} <\infty,
	\end{equation}
for any primitive set of integers $\CA$, while Behrend's result \cite{Be} states that 
	\begin{equation}
		\label{eq:Behrend}
	\sum_{a\in \CA\cap[1,x]}\frac{1}{a}\ll \frac{\log x}{\sqrt{\log\log x}}, 
	\end{equation}
for such sets. More recently, Ahlswede, Khachatrian and S\'ark\"ozy \cite{AKS04} strengthened Behrend's estimate by showing that
	\begin{equation}
		\label{eq:AKS bound}
	\sum_{a\in \CA\cap[y,yx]}\frac{1}{a}\ll \frac{\log x}{\sqrt{\log\log x}}
	\end{equation}
uniformly for all primitive sets $\CA$ and all $x\ge3$ and $y\ge1$.

\subsection{Notation}\label{sec:notation}
Given $\CS\subseteq\R$ and $x\in\R$, we let $\mathcal S_{\le x} \coloneqq \mathcal S\cap(-\infty,x]$ and $\mathcal S_{> x} \coloneqq \mathcal S\cap(x,\infty)$. 

For a positive integer $n$, we denote by $P^{+}(n)$ and $P^{-}(n)$ its largest and the smallest prime factors respectively, with the standard conventions that $P^{+}(1)=1$ and $P^{-}(1)=\infty$. In addition, we write $\omega(n)$ for the number of distinct prime factors of $n$.

Given $k\in\N$, we let $\tau_k$ denote the $k$-th divisor function. 

We write $f(x) \lesssim g(x)$ (respectively $f(x)\gtrsim g(x)$) if $f(x)\leq (1+o(1))g(x)$ (respectively $f(x)\geq (1+o(1))g(x)$) as $x\to \infty$.

Lastly, we write $\meas$ to denote the Lebesgue measure on $\R$. In addition, given $T>0$ and a Lebesgue-measurable set $\CS$, we let 
\begin{equation}\label{Eq:DefinitionNormalizedMeasure}
	\P_T(\CS)\coloneqq \frac{\meas(\CS\cap[0,T])}{T}.
\end{equation}

\subsection{Ideas from the proof} 
The connection of Erd\H os's problem to primitive sets of integers plays a crucial role in our proof, as does Haight's work. In fact, one may view our proof as an instance of the {\it structure versus randomness} philosophy. 

Let us consider a discrete set $\CA\subset\R_{\ge1}$ and a number $\eps\in(0,1]$ such that there are no solutions to the inequality $|n\alpha-\beta|<\eps$ with distinct $\alpha,\beta\in\CA$ and with $n\in\N$. In order to establish Theorem \ref{thm:solutions}, we must prove that 
\begin{equation}
	\label{eq:log density zero}
	\sum_{\alpha\in\CA\cap[1,x]} \frac{1}{\alpha}=o(\log x) \quad(x\to\infty).
\end{equation}
We have two extreme cases:
\begin{itemize}
	\item {\it Structured sets:} these are primitive sets or small ``perturbations'' of them. By this, we mean that $\CA \subseteq \gamma\Q_{\ge1}\coloneqq \{\gamma\rho:\rho\in\Q_{\ge1}\}$ for some $\gamma\in\R_{\ge1}$, and that the set of denominators 
	\[
	\CQ = \big\{q\in\N: \exists a\in\N\ \mbox{such that $\gcd(a,q)=1$ and $\gamma a/q\in\CA$} \big\}
	\]
	is sparse. For each given $q\in\CQ$, the set $\{a\in\N: \gcd(a,q)=1,\ a/q\in\CA\}$ is primitive, so we may apply  Behrend's estimate \eqref{eq:Behrend} to it. If $\CQ$ is sparse enough, then we deduce that \eqref{eq:log density zero} holds.
	
	\item {\it Random sets:} these are sets $\CA$ for which all ratios $\alpha/\beta$ are irrational, or perhaps rational numbers of large height\footnote{The correct notion of ``large height'' is with respect to the size of $\alpha$ and $\beta$. The crucial quantity to consider is $[\alpha,\beta] \coloneqq H(\alpha/\beta)/\max\{\alpha,\beta\}$, where $H(\cdot)$ is the height function; see  Definition \ref{dfn:bracket} below.}. We can expect to be able to handle such sets by a suitable variant of Haight's proof and to prove that \eqref{eq:log density zero} holds for them too.
\end{itemize}
Roughly speaking, the strategy of the proof is to show that either $\CA$ consists almost $100\%$ of a random set, or that a positive proportion of $\CA$ is structured.\footnote{An astute reader might have noticed that the notions of structured and random sets are not fully orthogonal to each other, because in the latter case we allow the ratios $\alpha/\beta$ to be rationals of large height. Consider for example the case of a set $\CA=\bigcup_{j\ge1}\{a/q_j:a\in\CS_j,\ \gcd(a,q_j)=1\}$, where $(q_j)_{j=1}^\infty$ is a sparse sequence of prime numbers and $\CS_j\subset\Z\cap(q_jx_{j-1},q_jx_j]$ is primitive for each $j$, with the sequence $(x_j)_{j=1}^\infty$ growing at some appropriate rate. This is a structured set, but it could also be a random set, unless there are elements in some $\CS_j$ with a large GCD. This example is very useful to keep in mind for the discussion later on.}

Let us now provide some more concrete context. As in Haight's work, the starting observation is that, for any choice $\CA'$ of a subset of $\CA$, the union
\begin{equation}\label{eq:starting point}
\bigcup_{\alpha\in\CA'} \bigcup_{n\in\N} (n\alpha-\eps/2,n\alpha+\eps/2) 
\end{equation}
avoids all intervals $(\alpha-\eps/2,\alpha+\eps/2)$ with $\alpha\in\CA\setminus\CA'$. Indeed, this is a simple consequence of our assumption that there are no solutions to the inequality $|n\alpha-\beta|<\eps$ with distinct $\alpha,\beta\in\CA$ and with $n\in\N$. Thus, if we could show that the union in \eqref{eq:starting point} covers $\sim 100\%$ of the positive real numbers, we would deduce that $\#(\CA\setminus\CA')\cap[0,T]=o(T)$ as $T\to\infty$. 

To prove that the union in \eqref{eq:starting point} has large measure, we use the {\it second moment method} (cf.~Section \ref{sec:second moment}). Suppose we are given certain events $E_1,E_2,\dots$ in a probability space and we need to show that, with high probability, at least one of them occurs. The second moment method says that this  is indeed true if the sum of the probabilities of the events $E_i$ is large and the events $E_i$ are \emph{negatively correlated on average} (see Definition \ref{dfn:uncorrelated} and equation \eqref{Eq:UncorrelatedAverage} below).  

Our probability space is the interval space $[0,T]$ with $T$ a parameter tending fast to infinity, and the probability measure is given by $\P_T$, which we defined in \eqref{Eq:DefinitionNormalizedMeasure}. To construct the events $E_i$, we fix a judicious choice of distinct elements $\alpha_1,\dots,\alpha_J$ of $\CA$; these will form the special set $\CA'$. Haight considered the events
\[
H_i = \bigcup_{n\in\N} (n\alpha_i-\eps/2,n\alpha_i+\eps/2) 
\]
and proved that if  $\alpha_i/\alpha_j$ is irrational, then $\P_T(H_i\cap H_j)\sim \P_T(H_i)\P_T(H_j)$ as $T\to\infty$. However, the events $H_i$ could be highly correlated when $\alpha_i/\alpha_j$ is a rational number of small height (see Section \ref{sec:problem_sets} below). For this reason, we borrow a key idea from the proof of Erd\H os's estimate \eqref{eq:Erdos} and we take 
\[
E_i = \bigcup_{n\in\N,\ P^-(n)>\alpha_i}  (n\alpha_i-\eps/2,n\alpha_i+\eps/2).
\]
For these events too, Haight's argument and inclusion-exclusion allows us to show that if $\alpha_i/\alpha_j$ is irrational, then $\P_T(E_i\cap E_j)\sim \P_T(E_i)\P_T(E_j)$ as $T\to\infty$.  In addition, we may use an elementary argument to calculate $\P_T(E_i\cap E_j)$ when $\alpha_i/\alpha_j$ is a rational number. It turns out that $\P_T(E_i\cap E_j)\lesssim \P_T(E_i)\P_T(E_j)$, unless $\alpha_i/\alpha_j$ has small height. 

By the above discussion, we see that the only potentially problematic case is when there is a positive proportion of ratios $\alpha_i/\alpha_j$ of small height. Our goal is then to prove that $\CA'$ contains a large structured subset to which we can apply Behrend's estimate \eqref{eq:Behrend}. In order to explain how to locate this subset, let us assume for simplicity that $\CA\subset\Q_{\ge1}$ (without necessarily knowing that the set of denominators is very sparse, meaning that $\CA$ is structured). If we write $\alpha_i=a/q$ and $\alpha_j=b/r$, it turns out that the corresponding events $E_i$ and $E_j$ are negatively correlated, unless the product $\gcd(a,b)\gcd(q,r)$ is large. This is a generalization of the set-up that occurred in the work of the first author and of James Maynard on the Duffin--Schaeffer conjecture \cite{DS}. In that paper, a potential counterexample to the Duffin--Schaeffer conjecture was a set of integers with many pairwise GCDs being large. Here, we work with a set of rational numbers and we must account for the product of the GCDs of the numerators and the denominators, but the methods of \cite{DS} can be adapted to this more general setting, without any serious difficulty. Hence, using the machinery of GCD graphs of \cite{DS}, we show that the only way we can have lots of pairs $(a/q,b/r)$ with large product $\gcd(a,b)\gcd(q,r)$ is if there exist fixed integers $A$, $B$, $Q$ and $R$ such that, for lots of pairs $(a/q,b/r)$, we have $a=Aa'$, $b=Bb'$, $q=Qq'$ and $r=Rr'$, as well as $\gcd(a,b)=\gcd(A,B)$ and $\gcd(q,r)=\gcd(Q,R)$. In particular, $\gcd(A,B)\gcd(Q,R)$ must be large.

However, when we use GCD graphs to produce the fixed integers $A,B,Q,R$, we incur the loss of the Euler factors $$
\prod_{p|\gcd(A,B)}\Big(1-\frac{1}{p}\Big)^{-2}\prod_{p|\gcd(Q,R)}\Big(1-\frac{1}{p}\Big)^{-2}.
$$
 In the context of the Duffin--Schaeffer conjecture, each integer $n$ is weighted by $\phi(n)/n$. These weights were used in \cite{DS} to  counterbalance the lost Euler factors from the method of GCD graphs. Here, however, we do not possess such convenient weights, so we have to gain the missing Euler factors from certain coprimality considerations that allow us to sieve for them. Indeed, note that $\gcd(a,q)=\gcd(b,r)=1$, and thus 
 \[
 \gcd(a,Q)=\gcd(b,R)=\gcd(q,A)=\gcd(r,B)=1. 
 \]
 Hence, in principle, we should be able to win the needed Euler factors by sieving $a',b',q'$ and $r'$ by $Q,R,A$ and $B$, respectively. However, this will only be true if the range of summation of $a',b',q',r'$ is ``long enough'' with respect to the integers we are sieving with. It turns out that we can arrange for the range of $a'$ and $b'$ to be long enough, and thus to gain the factors $\frac{\phi(Q)}{Q}\cdot \frac{\phi(R)}{R}$. However, the range of $q'$ and $r'$ might not be long enough. In fact, it is possible that $q'=r'=1$,  a case that will occur if all elements of $\CA$ have the same denominator. It thus seems we have reached an impasse. 

To get around this issue, we observe that the range of summation of $a'$ and $b'$ is sufficiently long so that if we somehow knew that $\gcd(a',A)=\gcd(b',B)=1$, then we would be able to gain the factors $\frac{\phi(A)}{A}$ and $\frac{\phi(B)}{B}$ from the $a'$ and $b'$ summations, respectively. This would be the case, for example, if we knew that all numerators $a$ were square-free (if $a$ is square-free and $a=Aa'$, we automatically have $\gcd(a',A)=1$). But this assumption is too restrictive to prove Theorem \ref{thm:solutions} in full generality. Alternatively, we would be able to gain the needed Euler factors if we knew that, for every prime $p|A$, the $p$-adic valuation of $A$ matches that of $a$ (and similarly for the primes $p|B$). However, the method of GCD graphs does not guarantee this exact divisibility. Indeed, we have the required exact divisibility in the ``first iterative step'' of the method of GCD graphs \cite[Proposition 8.1]{DS}, but not necessarily in the ``second iterative step''  \cite[Proposition 8.2]{DS}.

To get around this second obstacle, we use a variant of the method of GCD graphs that borrows a key idea from the recent paper of Hauke, Vazquez and Walker \cite{HSW}, in which they build on the work of Green--Walker \cite{GW} to give an alternative proof of the Duffin--Schaeffer conjecture. In this variant, we only perform the second iterative step to fully determine the divisors $Q$ and $R$ of the denominators, but we do not fully determine fixed divisors $A$ and $B$ for the numerators. Instead, after we perform fully the first iterative step for the numerators, and both the first and second iterative steps for the denominators, we arrive at a situation where we have fully determined $Q$ and $R$, and we have few possibilities for $A$ and $B$ with the benefit that, for each given possibility of $A$ and $B$, we have the required exact divisibility. This gives us the advantage that we can gain the factors $\frac{\phi(A)}{A}$ and $\frac{\phi(B)}{B}$ from the $a'$ and $b'$ summations. But it comes at the expense of having to execute a new summation over all possible values of $A$ and $B$. It turns out that this summation is too large (unlike in \cite{HSW}, where the savings $\frac{\phi(A)}{A}\cdot \frac{\phi(B)}{B}$ is enough to counterbalance the loss from the summation over $A$ and $B$). Hence, we have reached a third important obstacle. 

In order to get around this last obstacle, we use the extra savings provided to us from Behrend's estimate \eqref{eq:Behrend} (in fact, we need a generalization of it - see Theorem \ref{thm:behrend} below). In this last step, a small miracle occurs: the savings from Behrend's estimate is precisely what we need to balance the loss from the extra summation over all possible values of $A$ and $B$. This feature of our proof is new and did not appear in the work on the Duffin--Schaeffer conjecture \cite{HSW,DS,DS-quantitative}. It is also one of the main reasons why our theorem is ``soft'' and we do not prove quantitative estimates like \eqref{eq:Behrend}.

\begin{rem*} A different reason why we cannot prove \eqref{eq:Behrend} is that we work with the events $E_i$, whose measure is too small. There is some hope though that our method can show \eqref{eq:Erdos}, at least when $\CA\subset\{a/q \in\Q_{\ge1} :a\ \text{square-free}\}$. The reason is that under this assumption we can avoid using the Hauke--Vazquez--Walker variation described above. Note however that there is a second important feature of our proof, stemming from the construction of the sets $\CA'_j$ in Section \ref{sec:construction_setA'}. We use this construction to guarantee that the events $E_i$ and $E_j$ can only be positively correlated when $\log\alpha_{i}\asymp \log\alpha_{j}$. This is an important technical feature of the proof that greatly simplifies many details in Section \ref{sec:strategy}. But it also appears to be essential, especially in Sections \ref{sec:reduction-to-rationals} and \ref{sec:proof-of-moments-bound}.
\end{rem*}

%%%%%%%%%%%%%%%%%%%%%%%%%%%%%%%%%%%%%%%%%%%%%%%%%%%%%%%%%%%%%%%%%%%%%%%%%%%%%%%%%%%%%%%%%%%%%%%%%%%%%%%%%%%%%%%%%%%%%%%%%%%%%%%%%%%%%%%%%%%%%%%%%%%%%%%%%%%%

\section{Strategy and key ingredients of the proof of Theorem \ref{thm:solutions}}\label{sec:strategy}

\subsection{Initial maneuvers}\label{sec:initial maneuvers}
Let $\CA\subset\R_{\ge2}$ be a discrete set such that 
\begin{equation}
	\label{eq:positive upper log density}
	\limsup_{x\to\infty} \frac{1}{\log x}\sum_{\alpha\in\CA\cap[1,x]}\frac{1}{\alpha} >0,
\end{equation}
and let $\eps\in(0,1]$. We seek to find distinct $\alpha,\beta\in\CA$ and an integer $n$ such that $|n\alpha-\beta|<\eps$. By dividing this inequality by $\eps$, and by replacing $\CA$ by the set $\{\alpha/\eps: \alpha\in\CA\}$, which also satisfies \eqref{eq:positive upper log density}, we may assume that $\eps=1$. 

In conclusion, we have reduced Theorem \ref{thm:solutions} to the following case: we are given a discrete set $\CA\subset\R_{\ge2}$ satisfying \eqref{eq:positive upper log density}, and we wish to find distinct $\alpha,\beta\in\CA$ and a positive integer $n$ such that $|n\alpha-\beta|<1$. Let us assume for contradiction that 
\begin{equation}
	\label{eq:No solutions}
	|n\alpha-\beta|\ge1 \quad\mbox{for all $\alpha,\beta\in\CA$ with $\alpha\neq\beta$, and for all $n\in\N$}.
\end{equation}
In particular, $\CA$ is $1$-spaced and satisfies the ``primitivity condition''
\begin{equation}
	\label{eq:primitivity conditon}
	\alpha/\beta\notin\N\quad\mbox{for all distinct $\alpha,\beta\in\CA$}.
\end{equation}

We will then show that 
\begin{equation}
	\label{eq:zero natural density}
	\lim_{T\to\infty} \frac{1}{T}\sum_{\alpha\in\CA\cap[1,T]}1=0.
\end{equation}
Clearly, if we can indeed prove this, we will contradict \eqref{eq:positive upper log density}. We will have thus completed the proof of Theorem \ref{thm:solutions}.

Now for each $\alpha\in\CA$, let us define the sets
\[
\CM_\alpha \coloneqq \bigcup_{n\in\N} \Big(n\alpha-\tfrac{1}{2},\, n\alpha+\tfrac{1}{2}\Big) .
\]
As Haight also observed, for any $\CA'\subset\CA$, condition \eqref{eq:No solutions} implies that
\begin{equation}\label{Eq:IntersectionNeighborhood1}
\bigg(\bigcup_{\alpha \in \CA\setminus \CA'} \Big(\alpha-\tfrac{1}{2},\, \alpha+\tfrac{1}{2}\Big) \bigg)\cap \bigg(\bigcup_{\alpha\in\CA'}\CM_\alpha \bigg) =\emptyset. 
\end{equation}
Moreover, we have that
\begin{equation}\label{Eq:IntersectionNeighborhood2}
\meas\bigg( [0,T]\cap \bigcup_{\alpha \in \CA\setminus \CA'} \Big(\alpha-\tfrac{1}{2},\, \alpha+\tfrac{1}{2}\Big) \bigg) =\#\Big(\big(\CA\setminus \CA')\cap[0,T]\Big) +O(1).
\end{equation}
Assume now that, for every given $\ep>0$, we can show the existence of a finite set $\CA'=\CA'(\ep)\subset \CA$, and $T_0= T_0(\CA', \ep)$ such that
\begin{equation}
		\label{eq:large measure}
			\P_T\bigg(\bigcup_{\alpha\in\CA'}\CM_\alpha \bigg) \ge 1-\ep\quad\text{for all}\ T\ge T_0.
	\end{equation}
We then deduce that $\#(\mathcal{A}\cap[1,T])\leq \ep T +O_{\ep}(1)$ for all $T\geq T_0$  by \eqref{Eq:IntersectionNeighborhood1} and \eqref{Eq:IntersectionNeighborhood2}, which implies \eqref{eq:zero natural density} since $\ep$ can be taken arbitrarily small.

%%%%%%%%%%%%%%%%%%%%%%%%%%%%%%%%%%%%%%%%%%%%%%%%%

\subsection{The second moment method}\label{sec:second moment}
In order to prove \eqref{eq:large measure}, we use the second moment method as in Haight's paper \cite{Ha88}. To this end we record the following classical lemma which follows from an easy application of the Cauchy-Schwarz inequality (see for example \cite[Lemma 2.3]{Har}).

\begin{lemma}\label{Lem:SecondMomentMethod}
Let $T$ be a positive real number, let $k\ge1$ be an integer, and let $E_1, \dots, E_k$ be measurable subsets of $[0, T]$. Then, we have 
\begin{equation}\label{Eq:PaleyZygmund}
	\P_T\bigg( \bigcup_{j=1}^k E_j \bigg) \geq \frac{\big(\sum_{j=1}^k  \P_T\big(E_j\big)\big)^2}{\sum_{i=1}^k\sum_{j=1}^k \P_T\big(E_i\cap E_j\big)}.
\end{equation}
\end{lemma}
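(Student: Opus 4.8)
The plan is to apply the Cauchy--Schwarz inequality to the \emph{counting function}
\[
S \coloneqq \sum_{j=1}^k \one_{E_j},
\]
regarded as a random variable on the probability space $\big([0,T],\P_T\big)$; here we write $\E_T$ for the associated expectation, so that $\E_T[f]=\frac1T\int_0^T f(x)\dee x$ for any integrable $f\colon[0,T]\to\R$. First I would record the two moment identities
\[
\E_T[S] = \sum_{j=1}^k \P_T(E_j), \qquad \E_T[S^2] = \sum_{i=1}^k\sum_{j=1}^k \P_T(E_i\cap E_j),
\]
which follow immediately by linearity of $\E_T$ from $\E_T[\one_{E_j}]=\P_T(E_j)$ together with the pointwise identity $\one_{E_i}\one_{E_j}=\one_{E_i\cap E_j}$.

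The key observation is that $\bigcup_{j=1}^k E_j = \{x\in[0,T]:S(x)\ge 1\}$, and hence $S = S\cdot\one_{\bigcup_j E_j}$ as functions on $[0,T]$. Applying the Cauchy--Schwarz inequality to this product gives
\[
\E_T[S]^2 = \E_T\big[S\cdot\one_{\bigcup_j E_j}\big]^2 \le \E_T[S^2]\cdot \E_T\big[\one_{\bigcup_j E_j}\big].
\]
Since $\E_T\big[\one_{\bigcup_j E_j}\big]=\P_T\big(\bigcup_j E_j\big)$, substituting the two moment identities above and dividing through by $\sum_{i,j}\P_T(E_i\cap E_j)$ yields \eqref{Eq:PaleyZygmund}.

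This is a soft, essentially one-line deduction, so I do not anticipate any genuine obstacle. The only point requiring a word of care is the degenerate case in which the denominator $\sum_{i,j}\P_T(E_i\cap E_j)$ vanishes: then every $E_j$ is a null set, the numerator $\big(\sum_j\P_T(E_j)\big)^2$ vanishes as well, and the asserted inequality holds vacuously under the convention $0/0=0$ (one may instead simply assume the denominator is positive, which is the only case relevant in the applications).
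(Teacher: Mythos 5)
Your proof is correct and is exactly the standard Cauchy--Schwarz argument that the paper itself invokes (it gives no details, citing Harman's Lemma 2.3, which proceeds the same way). Nothing further is needed.
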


In view of the above lemma, if we are given measurable sets $E_1, \dots, E_k\subset[0, T]$ and we want to show that
\begin{equation}\label{Eq:FullMeasureUnion}
\P_T\bigg( \bigcup_{j=1}^k E_j \bigg)\geq (1+o(1)), \text{ as } T\to \infty,
\end{equation}
then it suffices to prove that $E_1, \dots, E_k$ verify the following two conditions:
 \begin{itemize} 
 \item[1.] $\sum_{j=1}^k \P_T\big(E_j) \to \infty$ as $T\to \infty$;
 \item[2.] $\P_T\big(E_i\cap E_j\big)\lesssim \P_T\big(E_i)\P_T\big(E_j)$ whenever $1\leq i<j\leq k$.
 \end{itemize}
Motivated by the second condition, and in order to keep the exposition concise, we make the following definition.
 
\begin{dfn}[Negatively correlated sets]\label{dfn:uncorrelated}
Let $E_1, E_2$ be measurable subsets of $\R_{\ge0}$. We say that $E_1$ and $E_2$ are \emph{negatively correlated}\footnote{If we view $[0, T]$ as a probability space equipped with the probability measure $\P_T$, then the condition $\P_T\big(E_1\cap E_2\big)<\P_T\big(E_1)\P_T\big(E_2)$ is equivalent to saying the random variables $\one_{E_1}$ and $\one_{E_2}$ have negative correlation. With this in mind, we ought to have defined $\P_T\big(E_1\cap E_2\big)\lesssim \P_T\big(E_1)\P_T\big(E_2)$ to mean that $E_1$ and $E_2$ are ``asymptotically non-positively correlated'', but this would be rather cumbersome. For this reason, we resort to a less precise but cleaner terminology.}  if $\P_T\big(E_1\cap E_2\big)\lesssim \P_T\big(E_1)\P_T\big(E_2)$ as $T\to\infty$.
\end{dfn}
 
 In order to obtain \eqref{Eq:FullMeasureUnion}, one can of course weaken the second condition above to only require that the sets $E_1, \dots, E_k$ are \emph{pairwise negatively correlated on average}, namely that 
 \begin{equation}\label{Eq:UncorrelatedAverage}
 \mathop{\sum\sum}_{1\leq i\neq j \leq k} \P_T\big(E_i\cap E_j\big)\lesssim \Big(\sum_{1\leq j\leq k} \P_T\big(E_j\big)\Big)^2.
 \end{equation}

Note that 
$ 
\P_T\big(\CM_\alpha\big) \sim 1/\alpha,
$ and we might assume that $\sum_{\alpha\in \CA}1/\alpha=\infty$ since otherwise \eqref{eq:zero natural density} holds trivially. Therefore, if we can show that the sets $\{\CM_\alpha\}_{\alpha \in \CA'}$ are pairwise negatively correlated on average, for some judicious choice of $\CA'$ such that $\sum_{\alpha\in \CA'}1/\alpha$ is large, then we would deduce \eqref{eq:large measure}. 

Haight \cite[Lemma 1]{Ha88} proved that
\begin{equation}\label{Eq:HaightIntersection}
\P_T\big(\CM_\alpha\cap\CM_\beta\big)\sim\P_T\big(\CM_\alpha)\P_T\big(\CM_\beta\big)
\end{equation}
  if $\alpha/\beta$ is irrational. This implies that $\CM_\alpha$ and $\CM_\beta$ are asymptotically uncorrelated, which is even stronger than saying they are negatively correlated. Using \eqref{Eq:HaightIntersection}, Haight deduced that \eqref{eq:zero natural density} holds if $\alpha/\beta\notin \mathbb{Q}$ for all distinct elements $\alpha, \beta\in \CA$.  

%%%%%%%%%%%%%

\subsection{The problem with the sets $\CM_\alpha$}\label{sec:problem_sets}
Given the discussion in the previous section, a natural approach to prove \eqref{eq:large measure} is to apply Lemma \ref{Lem:SecondMomentMethod} with the events $E_i$ being of the form $\CM_\alpha$ with $\alpha$ in our specially chosen set $\CA'$. However, this does not work in full generality. For instance, if $\CA$ is a set of integers, then the events $\CM_\alpha$ and $\CM_\beta$ are positively correlated if $\gcd(\alpha, \beta)>1$. Indeed if $\alpha, \beta\in\N$, then
\[
\P_T\big(\CM_\alpha\cap\CM_\beta\big)
	=\frac{1}{T}\#\big\{(m,n)\in\N^2: m\alpha=n\beta,\ m\alpha\leq T\big\}
	\sim \gcd(\alpha, \beta)\P_T\big(\CM_\alpha)\P_T\big(\CM_\beta\big).
\]
As a matter of fact, one can construct easy examples of sets of integers $\CA$ such that  $\{\CM_\alpha\}_{\alpha\in\CA}$ are positively correlated on average, so there is no hope for the second moment method to work in this case.\footnote{An easy example is to take $\CA\subset\bigcup_{j\ge1} \{a\in\N\cap[\sqrt{x_j},x_j] :  \omega(a)=\fl{\log\log x_j}\}$ for a sparse sequence $x_j\to\infty$. We may find such an example such that the factor $\gcd(a,b)$ has expected value equal to a power of $\log x_j$ when $a$ and $b$ range over $\CA\cap[\sqrt{x_j},x_j]$ and are weighted by $\P_T(\CM_a)\sim1/a$ and $\P_T(\CM_b)\sim1/b$, respectively.} In fact, if it were to work, we would have an analogous result to Haight in the integer case, but we know from Besicovitch's result \cite{Bes34} that there exist integer primitive sets with positive upper natural density. 

By \eqref{Eq:HaightIntersection}, we only need to focus on the case where $\alpha/\beta\in \mathbb{Q}$. In this case let us write $\alpha/\beta=s/t$ where $s, t$ are positive coprime integers. Then we observe that 
\begin{equation}\label{Eq:LowBoundIntersectionDiagonal}
	\begin{split}
\P_T\big(\CM_\alpha\cap\CM_\beta\big)
	&\geq \frac{1}{T}\#\big\{(m,n)\in\N^2:  m\alpha=n\beta,\ m\alpha\leq T\big\} \\
	&\sim \frac{1}{\alpha t}
	\sim \frac{\beta}{t} \cdot \P_T\big(\CM_\alpha)\P_T\big(\CM_\beta\big).
	\end{split}
\end{equation} 
Hence, if the quantity $t/\beta= s/\alpha$ is small, the sets $\CM_\alpha$, $\CM_\beta$ will be positively correlated. It turns out that this quantity plays a central role in our proof of Theorem \ref{thm:solutions}. We shall call it the \emph{bracket} of $\alpha$ and $\beta$. To define this notion properly for all positive real numbers, we first define the \emph{height} of a real number.

\begin{dfn}[Height of a real number]\label{dfn:height}
		Let $\alpha>0$ be a real number. If $\alpha\notin\Q$, we define 
	\[
	H(\alpha)\coloneqq \infty.
	\]
	Otherwise, if $\alpha\in\Q$, then we write $\alpha=a/q$ with $a,q\in\N$ and $\gcd(a,q)=1$, and we define 
	\[
	H(\alpha)\coloneqq \max\{a,q\}. 
	\]
	We call $H(\alpha)$ the \emph{height} of the number $\alpha$. 
\end{dfn}

\begin{dfn}[Bracket of two real numbers]\label{dfn:bracket}
	Given two real numbers $\alpha,\beta>0$, we define
		\[
		[\alpha,\beta]\coloneqq \frac{H(\alpha/\beta)}{\max\{\alpha,\beta\}}.
		\]
	\end{dfn}
We have the following basic properties of the bracket of two real numbers.

	\begin{lemma}\label{lem:R for rational ratio}
		Let $\alpha,\beta>0$. 
		\begin{enumerate}
			\item We have $[\alpha,\beta]=[\beta,\alpha]$.
			\item If $\alpha/\beta=s/t \in \Q$, where $s/t$ is a reduced fraction, then 
				\[
				[\alpha,\beta]= \frac{s}{\alpha}=\frac{t}{\beta} .
				\]
			\item If $\alpha=a/q$ and $\beta=b/r$, where $a/q$ and $b/r$ are both reduced fractions, then
			\[
			[\alpha,\beta] = \frac{qr}{\gcd(q,r)\gcd(a,b)}.
			\]
		\end{enumerate}
	\end{lemma}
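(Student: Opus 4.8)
The plan is to prove the three assertions in order, deducing the general formula in part~(3) from the special case in part~(2).

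For part~(1), I would first observe that for any positive rational $\gamma$, writing $\gamma = a/q$ in lowest terms, its reciprocal $1/\gamma = q/a$ is also in lowest terms, so $H(1/\gamma) = \max\{q,a\} = H(\gamma)$; and if $\gamma$ is irrational then so is $1/\gamma$, so $H(1/\gamma) = \infty = H(\gamma)$. Applying this with $\gamma = \alpha/\beta$ and using the symmetry of $\max\{\alpha,\beta\}$ then gives $[\alpha,\beta] = [\beta,\alpha]$ immediately.

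For part~(2), I would write $\alpha/\beta = s/t$ with $\gcd(s,t)=1$, so that $H(\alpha/\beta) = \max\{s,t\}$. The relation $s\beta = t\alpha$ gives $s/\alpha = t/\beta$ with no further hypotheses, so it only remains to identify $\max\{s,t\}/\max\{\alpha,\beta\}$ with this common value. I would split into cases according to whether $s \ge t$ or $s < t$: since $\beta, t > 0$ and $\alpha/\beta = s/t$, the inequality $s \ge t$ is equivalent to $\alpha \ge \beta$, so in the first case $\max\{s,t\}/\max\{\alpha,\beta\} = s/\alpha$, and in the second it equals $t/\beta$; either way this is the desired common value.

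For part~(3), I would set $d_1 = \gcd(a,b)$ and $d_2 = \gcd(q,r)$ and write $a = d_1 a'$, $b = d_1 b'$, $q = d_2 q'$, $r = d_2 r'$, so that $\gcd(a',b') = \gcd(q',r') = 1$. Cancelling,
\[
\frac{\alpha}{\beta} = \frac{ar}{bq} = \frac{a'r'}{b'q'}.
\]
The crucial step is to check that $(a'r')/(b'q')$ is already in lowest terms: any prime dividing both $a'r'$ and $b'q'$ would divide one of $a',r'$ and one of $b',q'$, and each of the four resulting cases contradicts one of $\gcd(a',b')=1$, $\gcd(q',r')=1$, $\gcd(a',q')=1$ (which follows from $\gcd(a,q)=1$ since $a'\mid a$, $q'\mid q$), or $\gcd(b',r')=1$ (which follows from $\gcd(b,r)=1$). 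Hence $\alpha/\beta = s/t$ with $s = a'r'$ and $t=b'q'$, and part~(2) gives
\[
[\alpha,\beta] = \frac{s}{\alpha} = \frac{a'r'q}{a} = \frac{a'r'q}{d_1 a'} = \frac{qr'}{d_1} = \frac{qr}{d_1 d_2} = \frac{qr}{\gcd(a,b)\gcd(q,r)} .
\]
I do not expect any real obstacle here: the lemma is a direct unwinding of the definitions. The only step that needs genuine care is the verification that $(a'r')/(b'q')$ is a reduced fraction, since this is the point where all the coprimality information — the two relations $\gcd(a,q)=\gcd(b,r)=1$ built into the hypothesis, together with the fact that $\gcd(a,b)$ and $\gcd(q,r)$ have been divided out — must be used simultaneously.
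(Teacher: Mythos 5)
Your proof is correct and follows essentially the same route as the paper: part (b) via reduction to the case $\alpha\ge\beta$ (your case split $s\ge t$ versus $s<t$ is the same reduction), and part (c) by cancelling $\gcd(a,b)$ and $\gcd(q,r)$ to exhibit $\alpha/\beta=a'r'/(b'q')$ in lowest terms and then invoking part (b). The only difference is that you spell out the four-case verification that $a'r'/(b'q')$ is reduced, which the paper asserts without detail; this is a welcome bit of extra care rather than a divergence.
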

	
	\begin{proof}(a) This follows readily by the definition of $[\alpha,\beta]$.
		
		\medskip 
		
		(b) Using part (a), we may assume that $\alpha\ge\beta$. Then, we have that $\alpha/\beta=s/t\ge1$, and thus $H(\alpha/\beta)=s$. We thus find that $[\alpha,\beta]=s/\alpha$. Lastly, we have that $s/\alpha=t/\beta$ by assumption.
		
		\medskip
		
		(c) If we let $a_1=a/\gcd(a,b)$, $b_1=b/\gcd(a,b)$, $q_1=q/\gcd(q,r)$ and $r_1=r/\gcd(q,r)$, then we have $\alpha/\beta=\frac{a_1r_1}{b_1q_1}$, and $\gcd(a_1r_1,b_1q_1)=1$. Hence, $[\alpha,\beta]=\frac{a_1r_1}{\alpha}$ by part (b). Inserting the definition of $a_1$ and $r_1$ in this expression, and using that $\alpha=a/q$, completes the proof of this last part of the lemma.
	\end{proof}
	
It follows from \eqref{Eq:LowBoundIntersectionDiagonal} that $\CM_{\alpha}$ and $\CM_{\beta}$ are positively correlated if $[\alpha,\beta]<1$. To overcome this problem so that the second moment method works, we replace $\CM_{\alpha}$ by \emph{smaller} sets, of the form $\bigcup_{n\in \mathcal{L}_{\alpha}} (n\alpha-1/2,n\alpha+1/2)$ for some $\mathcal{L}_{\alpha}\subset \mathbb{N}$. Using an idea of Erd\H{o}s \cite{Erd35} from his proof of \eqref{eq:Erdos}, it turns out that a judicious choice for $\mathcal{L}_{\alpha}$ is to take $\{n\in \mathbb{N} : P^{-}(n)>\alpha\}$, meaning the set of $\alpha$-rough numbers. Hence, for each $\alpha\ge1$, we define the sets
\[
\CN_\alpha\coloneqq \bigcup_{\substack{n\in\N \\ P^-(n)>\alpha }} 
\Big(n\alpha-\tfrac{1}{2},\, n\alpha+\tfrac{1}{2}\Big),
\]
for which we have
\begin{equation}
\label{eq:measure of N_alpha}
\P_T(\CN_\alpha) \sim \frac{1}{\alpha} \prod_{p\le\alpha}\bigg(1-\frac{1}{p}\bigg)
\end{equation}
as $T\to\infty$. Since $\CN_\alpha\subset\CM_\alpha$ for all $\alpha\ge1$, relation \eqref{eq:large measure} will follow as long as we can show that
\begin{equation}
	\label{eq:large measure for N_alpha}
	\P_T\bigg(\bigcup_{\alpha\in\CA'}\CN_\alpha \bigg) \ge 1-\eps
\end{equation}
for an appropriate choice of $\CA'$. We will use Lemma \ref{Lem:SecondMomentMethod} to prove \eqref{eq:large measure for N_alpha}. To this end, we must understand the correlations of the sets $\CN_\alpha$.

%%%%%%%%%%%%%%%%%%%%%%%%%%

\subsection{Correlation estimates}
We first show that if $[\alpha, \beta]\leq 1$, then the sets $\CN_\alpha$, $\CN_\beta$ are not only negatively correlated, they are in fact disjoint!
\begin{lemma}[No overlap when $\lbr\alpha,\beta\rbr\le1$]
	\label{lem:R<1}
	Let $\alpha>\beta\geq 1$ be two real numbers such that $\alpha/\beta\notin\N$ and $[\alpha,\beta]\le 1$. Then 
	\[
	\CN_\alpha\cap\CN_\beta= \emptyset. 
	\]
\end{lemma}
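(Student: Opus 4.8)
The plan is to show that if some interval $(n\alpha-1/2,n\alpha+1/2)$ with $P^-(n)>\alpha$ overlaps some interval $(m\beta-1/2,m\beta+1/2)$ with $P^-(m)>\beta$, then in fact the intervals must coincide (i.e.\ $n\alpha=m\beta$), and then derive a contradiction with the hypotheses $[\alpha,\beta]\le1$ and $\alpha/\beta\notin\N$. The first step is to reduce to the rational case: if $\alpha/\beta\notin\Q$ then $H(\alpha/\beta)=\infty$, so $[\alpha,\beta]=\infty>1$, contrary to hypothesis; hence we may assume $\alpha/\beta=s/t$ with $s>t\ge1$ coprime (note $s>t$ since $\alpha>\beta$, and $s/t\notin\N$ means $t\ge2$). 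By Lemma \ref{lem:R for rational ratio}(b) the hypothesis $[\alpha,\beta]\le1$ becomes $s\le\alpha$, equivalently $t\le\beta$.

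\emph{Step 1: overlapping forces coincidence.} Suppose $(n\alpha-1/2,n\alpha+1/2)\cap(m\beta-1/2,m\beta+1/2)\ne\emptyset$ with $P^-(n)>\alpha$ and $P^-(m)>\beta$, so $|n\alpha-m\beta|<1$. Writing $\alpha=s\delta$, $\beta=t\delta$ where $\delta=\beta/t=\alpha/s>0$, this reads $|ns-mt|\,\delta<1$. Now $ns-mt$ is an integer, and I claim it must be $0$. If not, then $|ns-mt|\ge1$, so $\delta<1$, i.e.\ $\beta<t$; but we showed $t\le\beta$, a contradiction. Hence $ns=mt$, so the two intervals have the same center and therefore coincide.

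\emph{Step 2: the rough-number conditions are incompatible with $ns=mt$.} From $ns=mt$ and $\gcd(s,t)=1$ we get $t\mid n$, say $n=t n'$. Since $t\ge2$, it has a prime factor $p\mid t$, and then $p\mid n$. But $P^-(n)>\alpha=s\delta\ge\beta=t\delta\ge p\delta\ge p$ (using $s\ge t\ge p$ and $\delta\ge1$, the latter because $\delta=\beta/t\ge1$ as $t\le\beta$), so every prime factor of $n$ exceeds $p$, contradicting $p\mid n$. Therefore no such overlap can occur, and $\CN_\alpha\cap\CN_\beta=\emptyset$.

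\emph{Main obstacle.} The argument is essentially bookkeeping once the right reductions are in place; the only subtlety is making sure the inequalities line up — in particular that $[\alpha,\beta]\le1$ really does give both $s\le\alpha$ \emph{and} (via $s\ge t$) $t\le\beta$ and $\delta\ge1$, since it is the bound $\delta\ge1$ that both kills the case $ns\ne mt$ in Step 1 and guarantees in Step 2 that a prime factor $p$ of $t$ cannot exceed $\alpha$. One should also double-check the degenerate possibility $t=1$: this is exactly excluded by $\alpha/\beta\notin\N$, which is why that hypothesis is present. I would also remark that this lemma is a (much simpler) analogue of the disjointness phenomena underlying Erd\H os's estimate \eqref{eq:Erdos}, where $\alpha$-rough indices were introduced precisely to destroy such overlaps.
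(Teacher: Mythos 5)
Your proof is correct and follows essentially the same route as the paper: reduce to $\alpha/\beta=s/t\in\Q$, observe that $[\alpha,\beta]\le1$ means $\delta=\beta/t=1/[\alpha,\beta]\ge1$ so that $|n\alpha-m\beta|=|ns-mt|\delta<1$ forces $ns=mt$, and then rule out the diagonal case via $\gcd(s,t)=1$, $t\mid n$, and the roughness condition $P^-(n)>\alpha\ge t\ge2$. The only cosmetic difference is that the paper isolates the diagonal case as a separate lemma (under the weaker hypothesis $[\alpha,\beta]\le\alpha/\beta$, which it reuses later), whereas you inline it.
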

To prove this result we first need to take care of  the diagonal solutions $m\alpha=n\beta$, when $m$ is $\alpha$-rough and $n$ is $\beta$-rough.

\begin{lemma}[No diagonal solutions]
	\label{lem:erdos for rationals}
	Let $\alpha>\beta\geq 1$ be two real numbers such that $\alpha/\beta\notin\N$ and $[\alpha,\beta]\le \alpha/\beta$. Then there are no solutions to the equation $m\alpha=n\beta$ with $m$ and $n$ natural numbers such that $P^-(m)>\alpha$.
\end{lemma}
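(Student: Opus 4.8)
Write $\alpha/\beta = s/t$ with $s,t$ coprime positive integers; since $\alpha > \beta$ we have $s > t$, and since $\alpha/\beta \notin \N$ we have $t \ge 2$. A solution $m\alpha = n\beta$ is then equivalent to $ms = nt$, and coprimality of $s$ and $t$ forces $t \mid m$; in particular, every prime factor of $t$ divides $m$. By Lemma \ref{lem:R for rational ratio}(b), the hypothesis $[\alpha,\beta] \le \alpha/\beta$ reads $s/\alpha \le s/t$, i.e. $t \le \alpha$. The plan is to derive a contradiction from the assumption that a solution exists together with the roughness condition $P^-(m) > \alpha$: since $t \ge 2$ has a prime factor $p$, and $p \mid t \mid m$, we get $p \ge P^-(m) > \alpha \ge t \ge p$, which is absurd. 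So no solution can exist.

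The only point requiring a little care is the edge case $t = 1$, which the hypothesis $\alpha/\beta \notin \N$ rules out, so $t \ge 2$ is legitimate and $t$ genuinely has a prime factor. I would also double-check the direction of the inequality coming from the bracket: with $\alpha \ge \beta$ we have $H(\alpha/\beta) = H(s/t) = s$ (as $s > t \ge 1$), hence $[\alpha,\beta] = s/\alpha$, and $[\alpha,\beta] \le \alpha/\beta = s/t$ is exactly $t \le \alpha$ after cancelling the positive factor $s$. Combining $p \mid t$, $t \le \alpha$, and $p \mid m$ with $P^-(m) > \alpha$ yields $\alpha < p \le t \le \alpha$, the desired contradiction.

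There is essentially no hard step here: the whole argument is the observation that a diagonal solution forces the "small" denominator $t$ to divide the rough number $m$, while the bracket hypothesis keeps $t$ below the roughness threshold $\alpha$. The main thing to get right is bookkeeping — correctly translating $[\alpha,\beta] \le \alpha/\beta$ via Lemma \ref{lem:R for rational ratio}(b) into $t \le \alpha$, and noting that the primitivity hypothesis $\alpha/\beta \notin \N$ is used precisely to guarantee $t \ge 2$ so that $t$ has a prime factor at all. (This lemma will in turn feed into Lemma \ref{lem:R<1}: once diagonal solutions $m\alpha = n\beta$ are excluded, an overlap of $\CN_\alpha$ and $\CN_\beta$ would have to come from $m\alpha$ and $n\beta$ lying in a common interval of length $1$ without being equal, and the condition $[\alpha,\beta] \le 1$ should rule this out by forcing any two such points to be too far apart, but that is the content of the next lemma, not this one.)
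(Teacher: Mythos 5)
Your proof is correct and is essentially the paper's own argument: write $\alpha/\beta=s/t$ in lowest terms, deduce $t\mid m$ from $ms=nt$, translate $[\alpha,\beta]\le\alpha/\beta$ into $t\le\alpha$, and derive a contradiction with $P^-(m)>\alpha$. The only cosmetic difference is that the paper concludes $t=1$ and then contradicts $\alpha/\beta\notin\N$, whereas you invoke $t\ge2$ up front to exhibit a prime factor of $t$ directly; these are the same argument.
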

\begin{proof} 
We have $\alpha/\beta\in\Q$; otherwise, $[\alpha,\beta]=\infty$. Write $\alpha/\beta=s/t$ in reduced form. Suppose for contradiction that $m\alpha=n\beta$ for some $m,n\in\N$ with $P^-(m)>\alpha$. Equivalently, $ms=nt$. Since $\gcd(s,t)=1$, we must have that $t|m$,  in particular $P^-(t)>\alpha$. On the other hand, note that $t/\beta=[\alpha,\beta]\le \alpha/\beta$, and thus $t\le\alpha$. But $t\le\alpha$ and $P^-(t)>\alpha$ forces $t=1$, which gives  $\alpha/\beta=s\in\N$, a contradiction.
\end{proof}

\begin{proof}[Proof of Lemma \ref{lem:R<1}] As in Lemma \ref{lem:erdos for rationals}, we may assume that $\alpha/\beta\in\Q$. Let us write $\alpha/\beta=s/t$ in reduced form. If $\CN_\alpha\cap\CN_\beta$ were non-empty, then there would exist $m,n\in\N$ such that $P^-(m)>\alpha$, $P^-(n)>\beta$ and $ |m\alpha-n\beta|<1$. Since $[\alpha,\beta]\le1<\alpha/\beta$, Lemma \ref{lem:erdos for rationals} forces $m\alpha\neq n\beta$. Thus $ms-nt$ is a non-zero integer, and so 
	\[
	1>|m\alpha-n\beta|=\beta|ms/t-n|=\frac{\beta}{t} \cdot |ms-nt| = \frac{|ms-nt|}{[\alpha,\beta]} \ge |ms-nt| \ge 1
	\] 
	using $[\alpha,\beta]\le1$. This gives a contradiction, and completes the proof.
\end{proof}
	
In view of Lemma \ref{lem:R<1}, we only need to investigate the correlations of $\CN_{\alpha}$ and $\CN_{\beta}$ when $[\alpha, \beta]>1$. Using inclusion-exclusion and Haight's result \eqref{Eq:HaightIntersection} we first show that $\CN_{\alpha}$ and $\CN_{\beta}$ are negatively correlated when $[\alpha, \beta]=\infty$, that is to say, when $\alpha/\beta$ is irrational.

\begin{lemma} 	\label{lem:overlap estimate for irrational ratios}
	Let $\alpha,\beta\in\R_{\ge1}$ be such that $\alpha/\beta$ is irrational. Then, as $T\to\infty$, we have
\[
\P_T\big(\CN_\alpha\cap\CN_\beta\big) \sim   \P_T\big(\CN_\alpha\big)\P_T\big(\CN_\beta\big).
\]
\end{lemma}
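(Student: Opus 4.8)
The plan is to strip the roughness conditions defining $\CN_\alpha$ and $\CN_\beta$ by a Legendre--Möbius sieve, thereby reducing the computation of $\P_T(\CN_\alpha\cap\CN_\beta)$ to a \emph{finite} $\Z$-linear combination of quantities $\P_T(\CM_{d\alpha}\cap\CM_{e\beta})$, each of which is handled directly by Haight's estimate \eqref{Eq:HaightIntersection}.

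First I would record the geometry. Since $\alpha,\beta\ge1$, the intervals $I_m^\alpha\coloneqq(m\alpha-\tfrac12,\,m\alpha+\tfrac12)$ with $m\in\N$ are pairwise disjoint, and likewise the intervals $I_n^\beta$; hence each of $\CN_\alpha$, $\CN_\beta$ and $\CM_{d\alpha}=\bigcup_{k\in\N}I_k^{d\alpha}$ (any $d\in\N$) is a disjoint union of its defining intervals, and so is $\CN_\alpha\cap\CN_\beta=\bigsqcup_{m,n}(I_m^\alpha\cap I_n^\beta)$, the union being over $\alpha$-rough $m$ and $\beta$-rough $n$. In particular,
\[
\meas\big(\CN_\alpha\cap\CN_\beta\cap[0,T]\big)=\sum_{\substack{m\in\N,\ P^-(m)>\alpha\\ n\in\N,\ P^-(n)>\beta}}\meas\big(I_m^\alpha\cap I_n^\beta\cap[0,T]\big),
\]
which is a finite sum, since any nonzero term forces $|m\alpha-n\beta|<1$ and $m\alpha,n\beta\le T+1$.

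Next I would sieve. Writing $P_\alpha\coloneqq\prod_{p\le\alpha}p$ and using $\one_{P^-(m)>\alpha}=\sum_{d\mid\gcd(m,P_\alpha)}\mu(d)$ together with its analogue in $n$, then substituting $m=dk$, $n=el$ and interchanging the (finitely many) summations, I arrive at the \emph{exact} identity
\[
\meas\big(\CN_\alpha\cap\CN_\beta\cap[0,T]\big)=\sum_{d\mid P_\alpha}\sum_{e\mid P_\beta}\mu(d)\mu(e)\,\meas\big(\CM_{d\alpha}\cap\CM_{e\beta}\cap[0,T]\big),
\]
where I have used $I_{dk}^\alpha=I_k^{d\alpha}$ and the fact that $\CM_{d\alpha}\cap\CM_{e\beta}$ is the disjoint union of the sets $I_k^{d\alpha}\cap I_l^{e\beta}$. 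Dividing by $T$ and letting $T\to\infty$ term by term — legitimate precisely because $d$ ranges over the finitely many divisors of $P_\alpha$ and $e$ over those of $P_\beta$ — I apply \eqref{Eq:HaightIntersection}: the ratio $(d\alpha)/(e\beta)=(d/e)(\alpha/\beta)$ is irrational, so $\P_T(\CM_{d\alpha}\cap\CM_{e\beta})\sim\P_T(\CM_{d\alpha})\P_T(\CM_{e\beta})$, while $\P_T(\CM_{d\alpha})=\tfrac1T\big(\lfloor T/(d\alpha)\rfloor+O(1)\big)\to\tfrac1{d\alpha}$ by the elementary count of multiples. This gives
\[
\lim_{T\to\infty}\P_T\big(\CN_\alpha\cap\CN_\beta\big)=\frac1{\alpha\beta}\Big(\sum_{d\mid P_\alpha}\frac{\mu(d)}{d}\Big)\Big(\sum_{e\mid P_\beta}\frac{\mu(e)}{e}\Big)=\frac1\alpha\prod_{p\le\alpha}\Big(1-\frac1p\Big)\cdot\frac1\beta\prod_{p\le\beta}\Big(1-\frac1p\Big),
\]
and comparing with \eqref{eq:measure of N_alpha} yields $\P_T(\CN_\alpha\cap\CN_\beta)\sim\P_T(\CN_\alpha)\P_T(\CN_\beta)$, which is the claim.

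I do not expect a serious obstacle here. The only points requiring care are the disjointness of the defining intervals — which is what allows measures to add without overcounting, and is the one place where $\alpha,\beta\ge1$ is used — and the interchange of $\lim_{T\to\infty}$ with the double sum over $d$ and $e$; but the latter is immediate, since the support of $\mu$ confines $d$ to divisors of $P_\alpha$ and $e$ to divisors of $P_\beta$, so only finitely many terms are present. In short, all of the analytic input is outsourced to Haight's Lemma, and the genuinely new ingredient is nothing more than a tidy piece of Legendre-sieve bookkeeping.
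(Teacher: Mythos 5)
Your proposal is correct and follows essentially the same route as the paper: the paper's proof also applies Möbius inversion over the divisors of $\prod_{p\le\alpha}p$ and $\prod_{p\le\beta}p$ to reduce $\P_T(\CN_\alpha\cap\CN_\beta)$ to a finite signed sum of $\P_T(\CM_{d_1\alpha}\cap\CM_{d_2\beta})$, then invokes Haight's estimate \eqref{Eq:HaightIntersection} term by term. The only cosmetic difference is that the paper encodes the interval overlaps via the kernel $(1-|m\alpha-n\beta|)^{+}$ rather than via your disjointness observation.
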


\begin{proof}
First, we note that
\begin{align}\label{Eq:ExpressionOverlapTwoSets}
\P_T\big(\CN_\alpha\cap\CN_\beta\big) 
& = \frac{1}{T}\int_0^T \one_{t\in \CN_\alpha}\cdot\one_{t\in\CN_\beta}\dee t
= \frac{1}{T}\sum_{\substack{n\leq T/\beta \\ P^{-}(n)>\beta}} \ \sum_{\substack{m\leq T/\alpha \\ P^{-}(m)>\alpha}}\int_0^T \one_{|t-m\alpha|<1} \cdot \one_{|t-n\beta|<1}\dee t \nonumber\\
&=\frac{1}{T}\sum_{\substack{n\leq T/\beta \\ P^{-}(n)>\beta}} \ \sum_{\substack{m\leq T/\alpha \\ P^{-}(m)>\alpha}} \left(1-|n\beta-m\alpha|\right)^{+},
\end{align}
where $x^{+} \coloneqq  \max(0, x)$. Given $z\in\R_{\ge1}$, let $\CP_{z}= \prod_{p\leq z} p$. Then the condition $P^-(m)>\alpha$ is equivalent to $\gcd(m,\CP_\alpha)=1$, and so by M\"obius inversion we have
\begin{align}
\P_T\big(\CN_\alpha\cap\CN_\beta\big) 
 &=  \sum_{d_1\mid \mathcal{P}_{\alpha}} (-1)^{\omega(d_1)}\sum_{d_2\mid \mathcal{P}_{\beta}} (-1)^{\omega(d_2)} \ \frac{1}{T}\sum_{n\leq T/(d_2\beta)} \ \sum_{m\leq T/(d_1\alpha)} \left(1-|nd_2\beta-md_1\alpha|\right)^{+} \nn
 &=  \sum_{d_1\mid \mathcal{P}_{\alpha}} (-1)^{\omega(d_1)} \sum_{d_2\mid \mathcal{P}_{\beta}} (-1)^{\omega(d_2)} \P_T\big(\CM_{d_1\alpha}\cap\CM_{d_2\beta}\big).
 \label{Eq:InclusionExclusionIrrational}
\end{align}
Furthermore, note that $\alpha/\beta\notin\mathbb{Q}$ if and only if $d_1\alpha/(d_2\beta)\notin \mathbb{Q}$ for any pair of positive integers $d_1, d_2$. Therefore, by \eqref{Eq:HaightIntersection}, \eqref{eq:measure of N_alpha}  and \eqref{Eq:InclusionExclusionIrrational} we get 
\begin{align*}
\P_T\big(\CN_\alpha\cap\CN_\beta\big) &\sim \sum_{d_1\mid \mathcal{P}_{\alpha}} (-1)^{\omega(d_1)} \sum_{d_2\mid \mathcal{P}_{\beta}} (-1)^{\omega(d_2)} 
\cdot \frac{1}{d_1\alpha d_2\beta}\\
&= \frac{1}{\alpha\beta}\prod_{p\leq \alpha} \left(1-\frac1p\right)\prod_{p\leq \beta} \left(1-\frac1p\right)\sim  \P_T\big(\CN_\alpha\big)\P_T\big(\CN_\beta\big)
\end{align*}
when $T\to\infty$, as desired.
\end{proof}

It now remains to consider the case $1<[\alpha, \beta]<\infty$, which we will split further in two cases. If $[\alpha, \beta]$ is large, we prove the following lemma, which we shall use to show that for such pairs $(\alpha, \beta)$, the sets $\CN_\alpha$ and $\CN_\beta$ are negatively correlated on average. Indeed, as Lemma \ref{lem:contribution of diagonal error terms} below shows, the error term $\log(2\beta)/(\alpha/\beta)+1/\beta$ is negligible on average.

\begin{lemma}
	\label{lem:overlap estimate for ratios with large R}
		Let $\alpha>\beta\ge2$ be two real numbers with $\alpha/\beta\in\Q \setminus \N$ and $[\alpha,\beta]\ge\min\{\alpha^2,10^\beta\}$. Then as $T\to\infty$,
	\[
	\frac{\P_T\big(\CN_\alpha\cap \CN_\beta\big)}{\P_T\big(\CN_\alpha\big) \P_T\big(\CN_\beta\big)}
	 \ \lesssim \ 1 +O\bigg(\frac{\log(2\beta)}{\alpha/\beta}+\frac{1}{\beta}\bigg) .
	\]
\end{lemma}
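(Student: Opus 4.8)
The plan is to compute $\P_T(\CN_\alpha\cap\CN_\beta)$ essentially exactly, in the spirit of the inclusion–exclusion computation in Lemma \ref{lem:overlap estimate for irrational ratios}, but now with $\alpha/\beta = s/t \in \Q\setminus\N$ a rational of large height. Starting from \eqref{Eq:ExpressionOverlapTwoSets}, we have
\[
\P_T\big(\CN_\alpha\cap\CN_\beta\big) = \frac1T \sum_{\substack{n\le T/\beta\\ P^-(n)>\beta}}\ \sum_{\substack{m\le T/\alpha\\ P^-(m)>\alpha}} \big(1-|n\beta-m\alpha|\big)^+.
\]
The pairs $(m,n)$ contributing here split into the \emph{diagonal} pairs with $m\alpha=n\beta$ and the \emph{off-diagonal} pairs with $0<|m\alpha - n\beta|<1$. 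By Lemma \ref{lem:erdos for rationals}, since $[\alpha,\beta]\le\min\{\alpha^2,10^\beta\}$ does \emph{not} obviously give $[\alpha,\beta]\le\alpha/\beta$, I cannot discard the diagonal term outright — instead I will keep it and show it contributes exactly the "main term" $\P_T(\CN_\alpha)\P_T(\CN_\beta)$-type quantity plus an acceptable error. Concretely, the diagonal solutions are $m\alpha=n\beta \iff ms=nt \iff t\mid m,\ s\mid n$, say $m=tk$, $n=sk$; the rough-number constraints $P^-(m)>\alpha$, $P^-(n)>\beta$ then force conditions on $k$ and on the prime factors of $s,t$, and after Möbius-expanding the roughness conditions and summing the geometric-type sum in $k$, this diagonal contribution is
\[
\sim \frac{1}{\alpha}\cdot\frac{1}{t}\prod_{\substack{p\le\alpha\\ p\nmid t}}\Big(1-\tfrac1p\Big)\prod_{\substack{p\le\beta\\ p\nmid s}}\Big(1-\tfrac1p\Big) = \frac{[\alpha,\beta]^{-1}}{\prod_{p\mid t, p\le\alpha}(1-1/p)\prod_{p\mid s, p\le\beta}(1-1/p)}\cdot \P_T(\CN_\alpha)\P_T(\CN_\beta),
\]
using $t/\beta = [\alpha,\beta]$ and \eqref{eq:measure of N_alpha}. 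The hypothesis $[\alpha,\beta]$ large makes the prefactor $\le 1$, and in fact the lost Euler factors over $p\mid st$ are harmless here precisely because $[\alpha,\beta]$ is so large — one checks $\prod_{p\mid st}(1-1/p)^{-1}\ll \log\log(3st) \ll \log(2\beta)$ (since $t\le\beta[\alpha,\beta]$ but also, crucially, $s,t$ themselves have all their prime factors controlled) so this diagonal term is $\lesssim 1$ times the product, up to the stated error.

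For the off-diagonal pairs, write $m\alpha - n\beta = \frac{\beta}{t}(ms - nt) = [\alpha,\beta]^{-1}(ms-nt)$, so $0<|m\alpha-n\beta|<1$ forces $ms-nt = \ell$ with $1\le |\ell| < [\alpha,\beta]$, and then $1-|m\alpha-n\beta| = 1 - |\ell|/[\alpha,\beta]$. For each fixed nonzero $\ell$ with $|\ell|<[\alpha,\beta]$, the congruence $ms\equiv \ell\pmod t$ pins down $m$ in a residue class mod $t$ (recall $\gcd(s,t)=1$), hence $n$ is determined by $m$; summing over the $\asymp T/(\alpha t)$ admissible $m\le T/\alpha$ in that class that are also $\alpha$-rough (handled by Möbius as before, giving the factor $\frac1\alpha\prod_{p\le\alpha}(1-1/p)$ once we sum $m$ over a long progression mod $t\cdot\CP_\alpha$ — here is where we need $[\alpha,\beta]$ not too large relative to $T$, but $T\to\infty$ takes care of that), and then summing the weights $\sum_{0<|\ell|<[\alpha,\beta]}(1-|\ell|/[\alpha,\beta]) = [\alpha,\beta]-1+O(1) \sim [\alpha,\beta]$, we obtain an off-diagonal contribution
\[
\ \sim\ [\alpha,\beta]\cdot \frac{1}{\alpha t}\prod_{p\le\alpha}\Big(1-\tfrac1p\Big) \cdot \frac{1}{\beta}\prod_{p\le\beta}\Big(1-\tfrac1p\Big)\cdot(\text{correction})\ =\ \big(1+o(1)\big)\,\P_T(\CN_\alpha)\P_T(\CN_\beta)
\]
once we use $[\alpha,\beta]/t = 1/\beta$. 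The only subtlety is that the $\beta$-roughness of $n$ is not independent of the residue class of $m$; but since $\gcd(s,t)=1$ and the relevant primes $p\le\beta$ either divide $t$ (a negligible set, absorbed into the $\log(2\beta)/(\alpha/\beta)$ and $1/\beta$ errors) or are coprime to $t$ (in which case the class of $n$ mod $p$ ranges freely as $\ell$ varies), the sieve goes through with only the claimed losses.

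The main obstacle is bookkeeping the \textbf{interaction between the roughness conditions and the modulus $t$}: the primes dividing $st$ must be excluded from one or both of the Euler products $\prod_{p\le\alpha}(1-1/p)$, $\prod_{p\le\beta}(1-1/p)$, and the resulting discrepancy from the clean product $\P_T(\CN_\alpha)\P_T(\CN_\beta)$ must be shown to be $\le 1 + O(\log(2\beta)/(\alpha/\beta) + 1/\beta)$. This is exactly where the two-sided hypothesis $[\alpha,\beta]\ge\min\{\alpha^2,10^\beta\}$ is used: when $[\alpha,\beta]\ge 10^\beta$ the exceptional primes $p\mid st$ with $p\le\beta$ number at most $\pi(\beta)$ and cost at most $\prod_{p\le\beta}(1-1/p)^{-1}\ll\log\beta$, which is absorbable; and when $[\alpha,\beta]\ge\alpha^2$ one has $t\ge\alpha^2/\ (\alpha/\beta)\cdot(1/\beta)\cdot\dots$ large enough that $\gcd(t,\CP_\alpha)$ contributes a genuine power saving. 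I expect the off-diagonal main term to fall out cleanly and the entire difficulty to be confined to estimating these "bad prime" corrections and verifying they match the advertised error term; everything else is the same Möbius-plus-geometric-sum computation already executed in Lemma \ref{lem:overlap estimate for irrational ratios}.
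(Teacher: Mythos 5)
Your decomposition is the same as the paper's: the paper proves the more general Lemma \ref{lem:overlap estimate for rational ratios} by exactly this route (separate the diagonal $m\alpha=n\beta$ from the off-diagonal pairs, parametrize the off-diagonal pairs by the nonzero integer $ms-nt$ with $|ms-nt|<[\alpha,\beta]$, sieve $m$ and $n$ for roughness in residue classes modulo $st$, and sum the Fej\'er weights $1-|ms-nt|/[\alpha,\beta]$), and then deduces the present lemma by taking $y=\infty$ there. However, your write-up has a genuine gap precisely at the step you defer: showing that the off-diagonal contribution is $\lesssim 1\cdot \P_T(\CN_\alpha)\P_T(\CN_\beta)$ with constant exactly $1$, rather than merely $\ll \P_T(\CN_\alpha)\P_T(\CN_\beta)$. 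The two roughness conditions interact: for each prime $p\le\beta$ with $p\nmid st$, the conditions $p\nmid m$ and $p\nmid n$ delete $2/p$ of the residues mod $p$ unless $p\mid(ms-nt)$, in which case they delete only $1/p$. So each off-diagonal term carries a singular series $\asymp\prod_{3\le p\le\beta,\ p\nmid st}\frac{1-2/p}{(1-1/p)^2}\prod_{p\mid (ms-nt)}\frac{p-1}{p-2}$ (plus a parity factor at $p=2$), and one must average the weight $\prod_{p\mid j}\frac{p-1}{p-2}$ over $j=ms-nt$ with $|j|<[\alpha,\beta]$, coprime to the bad primes, to see that the product of the two factors is $1+o(1)$ and not some absolute constant $>1$. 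This is the entire content of the computation of $C$ and $S$ in the paper's proof (together with Lemma \ref{lem:fejer}); asserting that ``the sieve goes through with only the claimed losses'' skips it.

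Relatedly, your account of where the hypothesis $[\alpha,\beta]\ge\min\{\alpha^2,10^\beta\}$ enters is not correct. A multiplicative loss of $\prod_{p\le\beta}(1-1/p)^{-1}\ll\log\beta$ in the main term is \emph{not} absorbable into $1+O\big(\frac{\log(2\beta)}{\alpha/\beta}+\frac1\beta\big)$; if the exceptional primes really cost a factor $\log\beta$ the lemma would fail. In fact those primes cost nothing: the restriction $\gcd(j,st)=1$ on $j=ms-nt$ saves exactly the Euler factors that the sieve loses at $p\mid st$, and these cancel identically when $y=\infty$. The hypothesis is used elsewhere, namely to control the \emph{error term} in the equidistribution of $j$ in progressions: summing the Fej\'er weights over $j$ coprime to the bad modulus costs $O(2^{\omega(st)})$ per divisor, accumulating to $O(3^{\omega(st)}\log(2[\alpha,\beta]))$ against a main term $\asymp[\alpha,\beta]$, and one needs either $\omega(st)\ll\log[\alpha,\beta]/\log\log[\alpha,\beta]$ (which follows from $[\alpha,\beta]\ge\alpha^2$, since then $s,t\le[\alpha,\beta]^{3/2}$) or the crude bound $2^{\beta}[\alpha,\beta]^{1/2}$ (acceptable when $[\alpha,\beta]\ge10^\beta$) to make this error $\ll[\alpha,\beta]/\beta$. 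Finally, two smaller slips in the diagonal term: your formula carries a spurious second Euler product $\prod_{p\le\beta,\,p\nmid s}(1-1/p)$ (the single rough variable $k$ produces only one sieve factor), and the bound $\prod_{p\mid st}(1-1/p)^{-1}\ll\log\log(3st)\ll\log(2\beta)$ is false when $[\alpha,\beta]$ is enormous — though the diagonal is still negligible thanks to the factor $[\alpha,\beta]^{-1}$.
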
 

We shall construct our set $\CA'$ so that we can apply Lemma \ref{lem:overlap estimate for ratios with large R} to the vast majority of pairs $(\alpha, \beta)\in \CA'\times \CA'$. The remaining pairs are very few, but their treatment constitutes the hardest part of the proof. In particular, it is in this part that we need to use the machinery of GCD graphs from \cite{DS}. On the other hand, because the pairs $(\alpha,\beta)$ with 
$1<[\alpha,\beta]\leq \min\{\alpha^2,10^\beta\}$ will be arranged to be sparse, it suffices to obtain less precise upper bounds for $\P_T\big(\CN_\alpha\cap \CN_\beta\big)$. These are given in terms of the prime factors dividing the product of the numerator and denominator of the fraction $\alpha/\beta$. To this end, we introduce the following useful concepts.

\begin{dfn}[The prime numbers of a rational number]\label{dfn:primes of a rational}
		Given a prime $p$ and a rational number $\rho>0$, we write $p\in\rho$ if we may write $\rho=a/q$ with $\gcd(a,q)=1$ and $p|aq$. 
	\end{dfn}
	
	\begin{dfn}[Two arithmetic functions on the rationals]\label{dfn:omega and L}
		Given $\rho\in\Q_{>0}$ and $z\ge1$, we define
		\[
		\omega(\rho;z)\coloneqq \#\{p\in  \rho \text{ such that } p\leq z\}
		\quad\text{and}\quad
		L(\rho;z)\coloneqq \sum_{\substack{p\in \rho  \\ p>z}} \frac{1}{p} .
		\]
	\end{dfn}

Then we prove the following result:

\begin{lemma}
	\label{lem:overlap estimate for rational ratios of typical rationals}
	Let $\alpha>\beta\ge2$ be real numbers with $\alpha/\beta\in\Q\setminus\N$ and $R\coloneqq[\alpha,\beta]>1$. 
	\begin{enumerate}
		\item If $L(\alpha/\beta;R) \le 1$, then as $T\to\infty$,
		\[
		\frac{\P_T\big(\CN_\alpha\cap \CN_\beta\big)}{\P_T\big(\CN_\alpha\big) \P_T\big(\CN_\beta\big)}
		\ \ll \ 1 +  \frac{\log \beta}{\alpha/\beta}. 
		\]

		\item If $L(\alpha/\beta;R)>1$ and we let
		\[
		z=\max\big\{2^i : i\in\Z_{\ge0},\ L(\alpha/\beta;2^i)>1 \big\},
		\] 
		then $R< 2z$ and, as $T\to\infty$, we have
		\[
		\frac{\P_T\big(\CN_\alpha\cap \CN_\beta\big)}{\P_T\big(\CN_\alpha\big) \P_T\big(\CN_\beta\big)}
		\ll \frac{\log(2z)}{\log(2R)} + \frac{\log \beta}{\alpha/\beta}. 
		\]
			\end{enumerate}
\end{lemma}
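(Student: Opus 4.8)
The plan is to follow the same template as in the proof of Lemma~\ref{lem:overlap estimate for irrational ratios}, but to track the rational cross-correlations explicitly. Write $\alpha/\beta = s/t$ in reduced form, so that by Lemma~\ref{lem:R for rational ratio}(b) we have $R = [\alpha,\beta] = t/\beta = s/\alpha$. Starting from the identity
\[
\P_T\big(\CN_\alpha\cap\CN_\beta\big)
= \sum_{d_1\mid\CP_\alpha}(-1)^{\omega(d_1)}\sum_{d_2\mid\CP_\beta}(-1)^{\omega(d_2)}\,\P_T\big(\CM_{d_1\alpha}\cap\CM_{d_2\beta}\big),
\]
I would split each term according to whether the diagonal equation $m\,d_1\alpha = n\,d_2\beta$ has solutions. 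The off-diagonal contribution is handled by the elementary count used for $\P_T(\CM_u\cap\CM_v)$: it equals $\frac{1}{uv}\#\{(m,n): m u=n v\}$ asymptotically plus a bounded-overlap term, and summing the main part over $d_1,d_2$ reproduces $\P_T(\CN_\alpha)\P_T(\CN_\beta)$ exactly as in Lemma~\ref{lem:overlap estimate for irrational ratios}; this gives the ``$1$'' on the right-hand side. The diagonal contribution is the one that must be bounded, and it is where $L(\alpha/\beta;\cdot)$ enters.

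For the diagonal: $m\,d_1\alpha = n\,d_2\beta$ forces $d_1 s \mid d_2 t$ (after clearing $\gcd$), and since $\gcd(s,t)=1$ the constraint is that the part of $s$ not cancelled by $d_2$ must divide $m$; because $P^-(m)>\alpha$, every prime of $s$ exceeding $\alpha$ must be ``absorbed'' either into $d_2$ or paid for by a factor of size $>\alpha$ in $m$. Crucially, the primes $p\in s/t=\alpha/\beta$ with $p\le R$ can be absorbed into $d_1$ or $d_2$ (these divide $\CP_\alpha$ or $\CP_\beta$ only if $p\le\alpha$ or $p\le\beta$, but one checks $p\le R$ together with $R\le\alpha$ in case (a), or the dyadic choice of $z$ in case (b), makes them available), whereas the primes $p\in\alpha/\beta$ with $p>R$ must each be paid for in the sum by a factor $\ge p$. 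Carrying this out, the total diagonal contribution to the ratio is bounded by
\[
\frac{\log\beta}{\alpha/\beta} \;+\; \Big(\text{a product/sum over primes } p\in\alpha/\beta,\ p>R\Big),
\]
where the first term comes from the length of the $\mu$-sum over $d_1\mid\CP_\alpha$ (the $\log(2\beta)/(\alpha/\beta)$-type error already seen in Lemma~\ref{lem:overlap estimate for ratios with large R}), and the second term, after estimating $\sum 1/p$ via Mertens, is $O(1)$ when $L(\alpha/\beta;R)\le1$, giving part~(a). For part~(b), where $L(\alpha/\beta;R)>1$, one instead raises the threshold to the dyadic value $z$ defined in the statement: by maximality $L(\alpha/\beta;2z)\le1$, so the primes above $2z$ again contribute $O(1)$, while the primes in the range $(R,2z]$ — of which there are few, since $R<2z$ — are handled by a counting bound that produces the factor $\log(2z)/\log(2R)$. (The inequality $R<2z$ itself follows because $L(\alpha/\beta;R)>1$ means the defining set for $z$ contains some $2^i\ge R/2$.)

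The main obstacle I anticipate is the bookkeeping in the diagonal term: precisely controlling which primes of $\alpha/\beta$ are ``free'' (absorbable into $d_1$ or $d_2$ at no cost) versus ``expensive'' (forcing a large factor into $m$ or $n$, hence a $1/p$ saving), and verifying that the expensive primes are exactly those exceeding $R$ in case~(a), or exceeding $z$ in case~(b). Getting the dyadic extremal choice of $z$ to interface cleanly with the Möbius ranges $d_1\mid\CP_\alpha$, $d_2\mid\CP_\beta$ — and checking the edge constraints $R\le\alpha$, $z\ge\beta$ or the relevant substitutes — is the delicate point; once the prime classification is pinned down, the remaining estimates are routine applications of Mertens' theorem and the divisor-counting bounds already used above.
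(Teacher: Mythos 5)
Your reduction of part (b) to the bound $\ll e^{L(\alpha/\beta;R)}+\log\beta/(\alpha/\beta)$ (via $R<2z$ and Mertens, giving $e^{L(\alpha/\beta;R)}\ll\log(2z)/\log(2R)$) is exactly right and matches the paper, which simply quotes this bound as estimate \eqref{eq:intersection inequality crude} of Lemma \ref{lem:overlap estimate for rational ratios}. The gap is in your sketch of how to \emph{establish} that bound. You propose to run the inclusion--exclusion identity over $d_1\mid\CP_\alpha$, $d_2\mid\CP_\beta$ and to claim that the off-diagonal parts of $\P_T(\CM_{d_1\alpha}\cap\CM_{d_2\beta})$ ``reproduce $\P_T(\CN_\alpha)\P_T(\CN_\beta)$ exactly as in Lemma \ref{lem:overlap estimate for irrational ratios}.'' For rational ratios this step fails. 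Writing $d_1\alpha/(d_2\beta)$ in lowest terms, the off-diagonal part of each term equals $\frac{1}{d_1\alpha d_2\beta}\cdot\frac{1}{R'}\sum_{1\le|j|\le R'}(1-|j|/R')$ with $R'=[d_1\alpha,d_2\beta]=R/\gcd(d_1 s,d_2 t)$, i.e.\ it deviates from $\frac{1}{d_1\alpha d_2\beta}$ by a relative error of order $\gcd(d_1s,d_2t)/R$ that does \emph{not} vanish as $T\to\infty$ (and the near-diagonal $j$'s also impose congruence conditions mod $p\mid j$ that correlate the two roughness conditions). Since the target main term $\P_T(\CN_\alpha)\P_T(\CN_\beta)$ is smaller than the leading term $1/(\alpha\beta)$ by a factor $\asymp\log\alpha\log\beta$, these per-term errors, summed with absolute values over the inclusion--exclusion, exceed the main term unless $R\gg(\log\alpha\log\beta)^2$ --- but the lemma must cover all $R>1$. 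This is the standard failure of naive inclusion--exclusion, and it is precisely why the paper does not use it here: instead it parametrizes the near-solutions by $j=ms-nt$ with $|j|\le R$, and for each fixed $j$ sieves the complementary variable $k$ (writing $n=ks+uj$) via the fundamental lemma, with local densities $\nu_1(p),\nu_2(p)$ recording the correlation.

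A second, related issue is your attribution of the $e^{L(\alpha/\beta;R)}$ factor to the diagonal $m\,d_1\alpha=n\,d_2\beta$. In the paper the diagonal ($j=0$) contributes only the $O(\log(2\beta)/(\alpha/\beta))$ term (via Lemma \ref{lem:erdos for rationals}). The exponential in $L(\alpha/\beta;R)$ comes from the off-diagonal $j\neq0$: the correlated sieve densities produce a factor $\prod_{p\mid Q_1Q_2}(1+O(1/p))$ over \emph{all} primes of $\alpha/\beta$ up to $\alpha$, which is compensated by the coprimality saving $\prod_{p\mid Q_1Q_2,\,p\le R}(1-1/p)$ in the $j$-sum --- but only for primes up to $R$, because $R$ is the \emph{length} of the $j$-range. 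The uncompensated primes $p\in\alpha/\beta$ with $p>R$ give $e^{L(\alpha/\beta;R)}$. Your heuristic of primes $\le R$ being ``absorbed into $d_1$ or $d_2$'' does not reflect this mechanism (the $d_i$ range over divisors of $\CP_\alpha$ and $\CP_\beta$, whose thresholds are $\alpha$ and $\beta$, not $R$), so even the classification of which primes are ``expensive'' would not come out at the threshold $R$ under your bookkeeping. To repair the argument you would essentially have to redo the proof of Lemma \ref{lem:overlap estimate for rational ratios}.
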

Both Lemmas \ref{lem:overlap estimate for ratios with large R} and \ref{lem:overlap estimate for rational ratios of typical rationals} will be consequences of Lemma \ref{lem:overlap estimate for rational ratios} which we shall establish in Section \ref{sec:rational} using an elementary sieving argument.
 
 %%%%%%%%%%%%%%%%%%%%
\subsection{Constructing the set $\CA'$}\label{sec:construction_setA'} Next, we explain how to construct a finite set $\CA'$ such that \eqref{eq:large measure for N_alpha} holds. 
To simplify our notation, for each $\alpha>0$ we let
\begin{equation}\label{eq:Kappa_Weight}
\kappa(\alpha)\coloneqq \frac{1}{\alpha} \prod_{p\le \alpha}\bigg(1-\frac{1}{p}\bigg) \asymp \frac{1}{\alpha\log(\alpha+2)},
\end{equation}
and
\[
\lambda(\alpha)\coloneqq \frac{1}{\alpha}.
\]
In addition, for any finite sets $\CB\subset\R_{>0}$ and $\CE\subset\R_{>0}^2$, and for $\mu:\R_{>0}\to \R_{\ge0}$, we define
\begin{equation}\label{eq:Sets_Weights}
	\mu(\CB) \coloneqq \sum_{\beta\in\CB} \mu(\beta), \qquad \text{and}\qquad
	\mu(\CE) \coloneqq \sum_{(\alpha,\beta)\in\CE} \mu(\alpha) \mu(\beta).
\end{equation}
By \eqref{eq:positive upper log density}, there exists some constant $c\in(0,1/10)$ such that 
\[	
\lambda\big(\CA\cap[1,x]\big)=\sum_{\alpha\in\CA\cap[1,x]}\frac{1}{\alpha}\ge 4c\log x
\]
infinitely often as $x\to\infty$. Recall that $\CA$ is $1$-spaced by \eqref{eq:No solutions}. This implies that $\sum_{\alpha\in\CA\cap[1,x^c]}1/\alpha\le 2c\log x$
for all $x$ sufficiently large. Hence, we find that
\begin{equation}
	\label{eq:lb on log sums}
\lambda\big(\CA\cap[x^c,x]\big)\ge 2c\log x
\end{equation}
infinitely often as $x\to\infty$.

Next, we construct a convenient sequence $x_1<x_2<\dots$ such that $x_j>x_{j-1}^{1/c}$ for all $j\ge2$, and certain sets
\[
\CA_j'\subseteq \CA_j\coloneqq \CA\cap[x_j^c,x_j]\quad j=1,2,\dots
\]
such that
\begin{equation}
	\label{eq:lb on log sums 2}
		\lambda(\CA_j')\ge c\log x_j. 
\end{equation}
The reason we introduce the sets $\CA_j'$ is to guarantee Lemma \ref{lem:independence A_i and A_j}(b) below. In turn, this will allow us to apply Lemma \ref{lem:overlap estimate for ratios with large R} for almost all pairs $(\alpha,\beta)\in\CA'\times\CA'$. 

In order to achieve the construction of $\CA'$, let us consider the following equivalence relation on $\CA$: we write $\alpha\equiv\beta$ if, and only, if $\alpha/\beta\in\Q$. We may then partition $\CA$ into equivalence classes. For each $\alpha\in \CA$, let us denote by $\gamma_{\alpha}$ the smallest element of $\CA$ that lies in the same equivalence class as $\alpha$. Moreover, let 
\[
\Gamma \coloneqq \{\gamma_\alpha : \alpha\in\CA \} ,
\]
which is the set of minimal representatives of the equivalent classes of $\CA$.

Let us now construct the $x_j$'s. Firstly, we take $x_1$ to be the smallest integer $\ge e^{1/c}$ for which \eqref{eq:lb on log sums} holds, and we let $\CA_1'=\CA_1=\CA\cap[x_1^c,x_1]$. In particular, \eqref{eq:lb on log sums 2} holds.

Next, assume we have constructed $x_1,\dots,x_j$ and $\CA_1',\dots,\CA_j'$ such that $x_i^c>x_{i-1}$ and \eqref{eq:lb on log sums 2} holds for $i=2,\dots,j$. Consider the quantities 
\[
Q_j=\max\Big\{10^\alpha \alpha H(\alpha/\gamma_{\alpha}) : \alpha\in \CA_1\cup\cdots \cup \CA_j \Big\} \quad \text{and}\quad 
S_j =  Q_j^2\sum_{\gamma\in\Gamma\cap[1,x_j]}\frac{1}{\gamma}.
\]
In addition, let $\CB_j$ be the set of $\alpha\in\CA$ for which there exists $\gamma\in\Gamma\cap[1,x_j]$ 
 and a reduced fraction $a/q$  such that $q\le Q_j$ and $\alpha=\gamma a/q$. For each given $\gamma$ and $q$, the set $\{a\in\N: \gamma a/q\in \CA\}$ is primitive. We may thus apply \eqref{eq:AKS bound} to show that
\begin{align*}
\sum_{\substack{q\le a\le qx \\ \gamma a/q\in \CA}} \frac{1}{a} \ll \frac{\log x}{\sqrt{\log\log x}}.
\end{align*}
Consequently,
\[
\begin{split}
\lambda\big(\CB_j\cap[1,x]\big)
	 \le \sum_{\gamma\in\Gamma\cap[1,x_j]}\frac{1}{\gamma}
 \sum_{q\le Q_j} q \sum_{\substack{q\le a\le qx \\ \gamma a/q\in \CA}} \frac{1}{a}
	& \ll \sum_{\gamma\in\Gamma\cap[1,x_j]}\frac{1}{\gamma} 
	 		\sum_{q\le Q_j} q  \cdot \frac{\log x}{\sqrt{\log\log x}}  \\
 	&\le  S_j \cdot \frac{\log x}{\sqrt{\log\log x}}.
\end{split}
\]
Therefore, there exists a constant $y_j$ such
\[
\lambda\big(\CB_j\cap[1,x]\big) \le c\log x\quad\text{for all}\ x\ge y_j.
\]
We then take $x_{j+1}$ to be the smallest integer $>\max\{y_j,x_j^{1/c}\}$ such that \eqref{eq:lb on log sums} holds. Hence, if we set 
\[
\CA_{j+1}'\coloneqq \CA_{j+1}\setminus \CB_j, 
\]
then \eqref{eq:lb on log sums 2} holds. Fix $J$ large and denote 
\[
\CA'  \coloneqq \bigcup_{j=1}^J \CA_j'. 
\]

This completes the construction of the set $\CA'$. We show that it satisfies the following properties. 

\begin{lemma}\label{lem:independence A_i and A_j}
Let $\CA'$ be as above. 
\begin{enumerate}
\item We have
\[
	cJ \ll \sum_{\alpha\in \CA'} \P_T(\CN_\alpha) \ll J \log(1/c).
\]
\item If $\alpha, \beta \in\CA'$ are such that $\alpha>\beta$ and  $[\alpha,\beta] \leq 10^{\beta}$, then there exists $j\in\{1,2,\dots,J\}$ such that 
$\alpha, \beta  \in\CA'_j$.
\end{enumerate}
\end{lemma}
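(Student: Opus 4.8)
The plan is to prove part (a) first, as it is a relatively direct consequence of the asymptotic \eqref{eq:measure of N_alpha} together with the lower bounds \eqref{eq:lb on log sums 2} on the sets $\CA_j'$, and then to prove part (b) by exploiting the sieving construction that produced the sets $\CB_j$. For part (a), recall from \eqref{eq:measure of N_alpha} that $\P_T(\CN_\alpha)\sim\kappa(\alpha)$ as $T\to\infty$, where $\kappa(\alpha)=\frac1\alpha\prod_{p\le\alpha}(1-1/p)\asymp\frac1{\alpha\log(\alpha+2)}$ by \eqref{eq:Kappa_Weight}. Since $\CA'=\bigcup_{j=1}^J\CA_j'$ with the $\CA_j'$ pairwise disjoint (because $\CA_j'\subseteq[x_j^c,x_j]$ and $x_{j+1}^c>x_j$), it suffices to estimate $\sum_{\alpha\in\CA_j'}\kappa(\alpha)$ for each $j$. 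For the lower bound, on $\CA_j'\subseteq[x_j^c,x_j]$ we have $\log(\alpha+2)\le(1+o(1))\log x_j$, so $\kappa(\alpha)\gg\frac1{\alpha\log x_j}$, whence $\sum_{\alpha\in\CA_j'}\kappa(\alpha)\gg\frac1{\log x_j}\lambda(\CA_j')\ge\frac{c\log x_j}{\log x_j}\gg c$ by \eqref{eq:lb on log sums 2}; summing over $j$ gives $\gg cJ$. For the upper bound, on $\CA_j'$ we have $\alpha\ge x_j^c$, so $\log(\alpha+2)\gg c\log x_j$, and hence $\kappa(\alpha)\ll\frac1{c\alpha\log x_j}$; but $\lambda(\CA_j')=\sum_{\alpha\in\CA_j'}\frac1\alpha\le\sum_{\alpha\in\CA\cap[x_j^c,x_j]}\frac1\alpha\ll\log(x_j^{1-c})\le\log x_j$ since $\CA$ is $1$-spaced, so $\sum_{\alpha\in\CA_j'}\kappa(\alpha)\ll\frac1{c\log x_j}\cdot\log x_j\cdot\frac1{?}$ — more carefully, $\sum_{\alpha\in\CA\cap[x_j^c,x_j]}\frac{1}{\alpha\log(\alpha+2)}\ll\int_{x_j^c}^{x_j}\frac{dt}{t\log t}=\log\frac{\log x_j}{\log x_j^c}=\log(1/c)$, using $1$-spacedness to convert the sum to an integral. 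Summing over $j$ yields $\ll J\log(1/c)$.

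For part (b), suppose $\alpha,\beta\in\CA'$ with $\alpha>\beta$ and $[\alpha,\beta]\le10^\beta$. Write $\alpha\in\CA_i'$ and $\beta\in\CA_j'$; after relabeling assume $i\ge j$, and we must show $i=j$. Suppose not, so $i\ge j+1$. Since $\alpha\equiv\beta$ (as $[\alpha,\beta]\le10^\beta<\infty$ forces $\alpha/\beta\in\Q$), they lie in the same equivalence class, so $\gamma_\alpha=\gamma_\beta=:\gamma$, and $\gamma\le\beta\le x_j$, hence $\gamma\in\Gamma\cap[1,x_j]$. Now write $\alpha=\gamma a/q$ and $\beta=\gamma b/r$ in reduced form; I claim $q\le Q_j$, which by definition of $\CB_j$ forces $\alpha\in\CB_j$, contradicting $\alpha\in\CA_i'=\CA_i\setminus\CB_{i-1}\subseteq\CA\setminus\CB_j$ (using $i-1\ge j$ and the monotonicity $\CB_j\subseteq\CB_{i-1}$ in the defining sets $\Gamma\cap[1,x_j]$ and $Q_j$). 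To prove $q\le Q_j$: the quantity $Q_j=\max\{10^\alpha\alpha H(\alpha/\gamma_\alpha):\alpha\in\CA_1\cup\cdots\cup\CA_j\}$ — note $\beta\in\CA_j$ is among the elements in this max, so $Q_j\ge10^\beta\,\beta\,H(\beta/\gamma)$. On the other hand, by Lemma \ref{lem:R for rational ratio}(c) applied to $\alpha=\gamma a/q$ and $\beta=\gamma b/r$, one expresses $[\alpha,\beta]$ in terms of $q,r$ and the relevant gcds; combined with the bracket identity $[\alpha,\beta]=H(\alpha/\beta)/\alpha$ and the hypothesis $[\alpha,\beta]\le10^\beta$, this bounds $q$ in terms of $\alpha$, $\beta$, $H(\beta/\gamma)$ and $10^\beta$, all of which are controlled by $Q_j$.

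The main obstacle will be the bookkeeping in part (b): one must carefully track how $q$ (the reduced denominator of $\alpha/\gamma$) is constrained by the hypothesis $[\alpha,\beta]\le 10^\beta$. The cleanest route is to use $\alpha=\gamma a/q$, $\beta=\gamma b/r$ and the factorization $[\alpha,\beta]=qr/(\gcd(q,r)\gcd(a,b))$ from Lemma \ref{lem:R for rational ratio}(c), together with $H(\beta/\gamma)=\max\{b,r\}\ge r$, to see that $q$ divides $[\alpha,\beta]\cdot\gcd(a,b)\cdot\gcd(q,r)/r$ and hence $q \le [\alpha,\beta]\cdot a \le 10^\beta\cdot a$; but $a = q\alpha/\gamma \le q\alpha$, which is circular, so instead one bounds $a/q = \alpha/\gamma \le \alpha$, giving $q \le [\alpha,\beta] \le 10^\beta \le 10^\beta\,\beta\,H(\beta/\gamma) \le Q_j$. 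This last chain is the delicate point and must be arranged so that every factor appearing is manifestly $\le Q_j$; I expect the definition of $Q_j$ (with its seemingly redundant factors $10^\alpha$, $\alpha$, $H(\alpha/\gamma_\alpha)$) to be tailored precisely so that this works.
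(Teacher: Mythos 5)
Part (a) of your proposal is correct and is essentially the paper's argument: compare $\P_T(\CN_\alpha)$ to $\kappa(\alpha)\asymp 1/(\alpha\log\alpha)$, use $\lambda(\CA_j')\ge c\log x_j$ for the lower bound, and use $1$-spacedness plus the integral $\int_{x_j^c}^{x_j}\frac{dt}{t\log t}=\log(1/c)$ for the upper bound.

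Part (b) has the right framework (same contradiction set-up as the paper: $\alpha\in\CA_i'$, $\beta\in\CA_j'$ with $i>j$, common representative $\gamma\in\Gamma\cap[1,x_j]$, and the goal of showing $q\le Q_j$ so that $\alpha\in\CB_j\subseteq\CB_{i-1}$), but the decisive inequality is wrong as written. Your final chain begins with ``$q\le[\alpha,\beta]$'', which is false in general: writing $\alpha=\gamma a/q$, $\beta=\gamma b/r$ in reduced form, one has $[\alpha,\beta]=\frac{qr}{\gcd(q,r)\gcd(a,b)\,\gamma}$ (note the extra $\gamma$ compared with Lemma \ref{lem:R for rational ratio}(c), which assumes $\gamma=1$), and this can be far smaller than $q$ when $\gcd(a,b)$ or $\gamma$ is large. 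The deduction you attempt from ``$a/q=\alpha/\gamma\le\alpha$'' does not produce $q\le[\alpha,\beta]$; it only yields a vacuous statement. The correct route, which is the paper's, is to use $[\alpha,\beta]=b_1q_1/\beta$ with $b_1=b/\gcd(a,b)$, $q_1=q/\gcd(q,r)$ (this is Lemma \ref{lem:R for rational ratio}(b) applied to the reduced form of $\alpha/\beta$), and then bound $b_1\ge1$, $\gcd(q,r)\le r\le H(\beta/\gamma)$ to get $[\alpha,\beta]\ge q/(\beta\,H(\beta/\gamma))$. Combined with the hypothesis $[\alpha,\beta]\le10^\beta$ this gives exactly $q\le 10^\beta\,\beta\,H(\beta/\gamma)\le Q_j$ (using $\beta\in\CA_j$), which is where the ``redundant'' factors $10^\alpha\,\alpha\,H(\alpha/\gamma_\alpha)$ in the definition of $Q_j$ come from. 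So the factor $\beta H(\beta/\gamma)$ must appear multiplying $[\alpha,\beta]$ in your bound on $q$; your chain inserts it only after the (false) step $q\le[\alpha,\beta]$, and as written the argument does not close.
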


\begin{proof} (a) By \eqref{eq:measure of N_alpha} and Mertens' theorem we have 
\begin{equation}\label{eq:Asymp_Measure_SetsA'}
\sum_{\alpha\in \CA'} \P_T(\CN_\alpha)\asymp \sum_{j=1}^J \sum_{\alpha\in \CA'_j} \frac{1}{\alpha\log \alpha}. 
\end{equation}
Furthermore, by \eqref{eq:lb on log sums 2} and the fact that $\CA$ is $1$-spaced, we get
\begin{equation}\label{eq:Asymp_Measure_SetsA'j}
c\ll \sum_{\alpha\in \CA'_j} \frac{1}{\alpha\log \alpha}\le \sum_{\alpha\in \CA_j} \frac{1}{\alpha\log \alpha} \ll \log(1/c).
\end{equation}
Inserting this estimate in \eqref{eq:Asymp_Measure_SetsA'} completes the proof of part (a). 

\medskip

(b) 
Since $\alpha>\beta$ there exists $1\leq j\leq i\leq J$ such that $\alpha \in \CA'_i$ and $\beta\in \CA'_j$.  Assume for contradiction that $i>j$. Since $[\alpha,\beta] \leq 10^{\beta}$, we must have $\alpha/\beta\in\Q$ and hence there exists $\gamma\in\Gamma$ such that $\alpha=\gamma a/q$ and $\beta= \gamma b/r$ with $a/q$ and $b/r$ reduced fractions $\ge1$. In particular, $\gamma\le \beta\le x_j\le x_{i-1}$. 
Since $\alpha=\gamma a/q\in\CA\setminus \CB_{i-1}$, we must have that $q>Q_{i-1}\ge 10^\beta \beta H(\beta/\gamma)$. 
	
Now, note that $\frac{\alpha}{\beta}=\frac{a_1r_1}{b_1q_1}$, where $a_1=a/\gcd(a,b)$, $r_1=r/\gcd(q,r)$, $b_1=b/\gcd(a,b)$ and $q_1=q/\gcd(q,r)$. We have $\gcd(a_1r_1,b_1q_1)=1$. Hence, 
\[
[\alpha,\beta]=\frac{b_1q_1}{\beta}	
	\ge \frac{q}{\beta r} 
	\ge \frac{q}{\beta H(\beta/\gamma)} 
	> 10^\beta,
\]
which gives a contradiction. This completes the proof of part (b). 
\end{proof}

\subsection{The last key result towards Theorem \ref{thm:solutions}} 
Let us now explain how to put together all of the above ingredients. This will reduce Theorem \ref{thm:solutions} to the key Proposition \ref{prop:key proposition} below.

Recall that Theorem \ref{thm:solutions} follows from \eqref{eq:large measure}, which in turn can be deduced from \eqref{eq:large measure for N_alpha}. To show this last relation, we apply Lemma \ref{Lem:SecondMomentMethod} with the events $E_i$ being the sets $\CN_\alpha$ with $\alpha\in\CA'$. We have
\[
\sum_{\alpha\in\CA'} \P_T(\CN_\alpha)\sim \kappa(\CA')\asymp_c J
\]
by Lemma \ref{lem:independence A_i and A_j}(a), where $\kappa(\CA')$ is defined in \eqref{eq:Kappa_Weight} and \eqref{eq:Sets_Weights}. Thus, it suffices to show, as $T\to\infty$,
\begin{align}\label{eq:CS}
\mathop{\sum\sum}_{\substack{\alpha,\beta\in\CA' \\ \alpha\neq\beta}} \P_T\big(\CN_\alpha\cap\CN_\beta\big) \ \lesssim \ \kappa(\CA')^2 + O_c\big(J\big),
\end{align}
in which case we may conclude
\begin{align*}
\P_T\bigg(\bigcup_{\alpha\in\CA'} \CN_\alpha\bigg) \gtrsim \frac{\kappa(\CA')^2}{\kappa(\CA')^2 + O_c\big(\kappa(\CA')\big)}= 1-O_c\bigg(\frac{1}{J}\bigg),
\end{align*}
by Lemma \ref{Lem:SecondMomentMethod}. 
This will complete the proof of \eqref{eq:large measure for N_alpha} (and thus of Theorem \ref{thm:solutions}) if we take $J$ to be suitably large.

Combining the estimates in Lemmas \ref{lem:R<1}, \ref{lem:overlap estimate for irrational ratios}, \ref{lem:overlap estimate for ratios with large R} and \ref{lem:overlap estimate for rational ratios of typical rationals}, we obtain that
\begin{align}\label{eq:bilinear}
\mathop{\sum\sum}_{\alpha,\beta\in\CA',\ \alpha>\beta} \P_T\big(\CN_\alpha\cap\CN_\beta\big)
\lesssim \mathop{\sum\sum}_{\alpha,\beta\in\CA',\ \alpha>\beta} &\P_T\big(\CN_\alpha)\P_T\big(\CN_\beta\big) 
 + O\Big(E_1+E_2+E_3\Big)
\end{align}
as $T\to\infty$, where
\begin{align*}
E_1&\coloneqq  \mathop{\sum\sum}_{\alpha,\beta\in\CA',\ \alpha>\beta} \kappa(\alpha)\kappa(\beta) \cdot \bigg(\frac{1}{\beta}+\frac{\log \beta}{\alpha/\beta}\bigg) ,\\
E_2 &\coloneqq   \mathop{\sum\sum}_{\substack{\alpha,\beta\in\CA',\ \alpha>\beta \\ 1< [\alpha,\beta]<\min\{\alpha^2,10^\beta\}}} \kappa(\alpha)\kappa(\beta) ,\\
E_3 &\coloneqq \sum_{i\ge0}  \mathop{\sum\sum}_{\substack{\alpha,\beta\in\CA',\ \alpha>\beta \\ 1<[\alpha,\beta]<\min\{\alpha^2,10^\beta\} \\ 2^{i+1}>[\alpha,\beta],\, L(\alpha/\beta;2^i)>1}} \kappa(\alpha)\kappa(\beta)\cdot \frac{i+1}{\log(2[\alpha,\beta])}.
\end{align*}
We first show that the contribution of $E_1$ is negligible. This follows from the following basic lemma.
\begin{lemma}
	\label{lem:contribution of diagonal error terms}
	If $\CC\subset\R_{\ge2}$ is $1$-spaced, then
	\[
	\mathop{\sum\sum}_{\alpha,\beta\in\CC,\ \alpha>\beta} \frac{1}{\alpha\log \alpha} \cdot \frac{1}{\beta\log \beta}  \cdot \bigg(\frac{1}{\beta}+\frac{\log \beta}{\alpha/\beta}\bigg) \ll \kappa(\CC).	
	\]
\end{lemma}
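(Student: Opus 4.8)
The plan is to split the double sum according to the two summands $1/\beta$ and $(\log\beta)/(\alpha/\beta)$ inside the parentheses, and to bound each of the two resulting double sums separately by $O(\kappa(\CC))$. Throughout I will use the elementary facts that, for $\gamma\ge2$, one has $\kappa(\gamma)\asymp 1/(\gamma\log(\gamma+2))\asymp 1/(\gamma\log\gamma)$, so that $\kappa(\CC)\asymp\sum_{\gamma\in\CC}1/(\gamma\log\gamma)$; and that since $\CC$ is $1$-spaced and contained in $[2,\infty)$, its elements $\beta_1<\beta_2<\cdots$ satisfy $\beta_k\ge k+1$. In particular $\#(\CC\cap[2,x))\le x$ for every $x$, and $\sum_{\beta\in\CC}1/\beta^2\ll1$.

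For the contribution of the term $1/\beta$, I would first carry out the sum over $\alpha$: for each fixed $\beta\in\CC$,
\[
\sum_{\substack{\alpha\in\CC\\ \alpha>\beta}}\frac{1}{\alpha\log\alpha}\ \le\ \sum_{\alpha\in\CC}\frac{1}{\alpha\log\alpha}\ \ll\ \kappa(\CC).
\]
Hence the corresponding double sum is at most $\kappa(\CC)\sum_{\beta\in\CC}\frac{1}{\beta^2\log\beta}\ll\kappa(\CC)$, where the final $\beta$-sum is $O(1)$ because $\log\beta\ge\log2$ and $\sum_{\beta\in\CC}1/\beta^2$ converges.

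For the contribution of the term $(\log\beta)/(\alpha/\beta)$, I would exploit the cancellation
\[
\frac{1}{\alpha\log\alpha}\cdot\frac{1}{\beta\log\beta}\cdot\frac{\log\beta}{\alpha/\beta}=\frac{1}{\alpha^{2}\log\alpha},
\]
so that this double sum equals $\sum_{\alpha\in\CC}\frac{1}{\alpha^{2}\log\alpha}\,\#\{\beta\in\CC:\beta<\alpha\}$. Bounding $\#\{\beta\in\CC:\beta<\alpha\}\le\alpha$ by the $1$-spacing of $\CC$, this is at most $\sum_{\alpha\in\CC}\frac{1}{\alpha\log\alpha}\ll\kappa(\CC)$. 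Adding the two bounds gives the lemma.

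There is no serious obstacle here; this really is a routine estimate. The only points requiring a little care are the two consequences of $1$-spacing — the counting bound $\#(\CC\cap[2,x))\le x$ and the convergence $\sum_{\beta\in\CC}1/\beta^{2}\ll1$ — together with the equivalence $\kappa(\gamma)\asymp 1/(\gamma\log\gamma)$ on $[2,\infty)$, which is what licenses passing freely between $\sum_{\gamma\in\CC}1/(\gamma\log\gamma)$ and $\kappa(\CC)$.
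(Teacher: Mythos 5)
Your proof is correct and follows essentially the same route as the paper: split the parenthesis into its two terms, observe that the first contributes $\frac{1}{\beta^2\log\beta}$-type sums and the second telescopes to $\frac{1}{\alpha^2\log\alpha}$, and finish using the $1$-spacing of $\CC$. The only cosmetic difference is in which variable you sum first in each piece (e.g.\ the paper bounds the second term by summing over $\alpha>\beta$ for fixed $\beta$, whereas you count $\#\{\beta<\alpha\}\le\alpha$ for fixed $\alpha$), but both are the same routine estimate.
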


\begin{proof}
	Indeed, the sum in question simplifies as
	\[
	\sum_{\alpha\in \CC} \frac{1}{\alpha\log\alpha} 
		\mathop{\sum}_{\substack{\beta\in\CC \\ \beta<\alpha}}\frac{1}{\beta^2\log \beta}+
	\sum_{\beta\in \CC}\mathop{\sum}_{\substack{\alpha\in\CC \\ \alpha>\beta}} \frac{1}{\alpha^2\log \alpha} \ll 
	\sum_{\alpha\in \CC} \frac{1}{\alpha\log\alpha} 
		+	\sum_{\beta\in\CC} \frac{1}{\beta\log \beta} \asymp \kappa(\CC) ,
	\]
	since $\CC$ is $1$-spaced. This completes the proof. 
\end{proof}

This result, together with Lemma \ref{lem:independence A_i and A_j}(a), implies that
\begin{align}\label{eq:E1}
E_1\ll \kappa(\CA') \asymp_c J.
\end{align}
To bound $E_2$, note that Lemma \ref{lem:independence A_i and A_j}(b) and the condition $[\alpha,\beta]\le 10^\beta$ in its range of summation imply that there exists $j\in\{1,2,\dots,J\}$ such that $\alpha,\beta\in\CA_j'$. Hence,
\[
E_2\le \sum_{j=1}^J \kappa(\CA_j')^2 \ll_c J 
\]
by \eqref{eq:Asymp_Measure_SetsA'j}.

It remains to bound $E_3$. As in the case of $E_2$, there  must exist some $j\in\{1,2,\dots,J\}$ such that $\alpha,\beta\in\CA_j'\subseteq\CA_j$. In particular, the condition $[\alpha,\beta]<\alpha^2$ and the definition of $[\alpha,\beta]$ imply that $H(\alpha/\beta)<x_j^3$. Since $\log\alpha\asymp_c \log x_j$ for each $\alpha\in\CA_j$, we also have that $\kappa(\alpha)\ll_c\lambda(\alpha)/\log x_j$ (and similarly for $\beta$).  
Thus we conclude that
\begin{equation}
	\label{eq:E3 reduction to key prop}
E_3 \ll_c \sum_{j=1}^J \frac{1}{(\log x_j)^2} \sum_{i=0}^\infty \sum_{r=0}^i \frac{i+1}{r+1} \mathop{\sum\sum}_{ \substack{(\alpha,\beta)\in \CA_j^2  \\ H(\alpha/\beta)< x_j^3,\\  2^r<[\alpha,\beta]\le 2^{r+1},\\ L(\alpha/\beta;2^i)>1}} \lambda(\alpha)\lambda(\beta).
\end{equation}
We claim that we have the following key bound:

\begin{proposition}
	\label{prop:key proposition}
	Let $x\ge3$, let $y,z\ge1$, and let $\CB\subseteq[1,x]$ be a $1$-spaced set such that $\alpha/\beta\notin\N$ for all distinct $\alpha,\beta\in\CB$. 
	We have the uniform estimate
	\[
	\lambda\Big(\big\{(\alpha,\beta)\in\CB\times\CB :  H(\alpha/\beta)\le x^3,\ y<[\alpha,\beta]\le 2y,\ L(\alpha/\beta;z)>1\big\}\Big)  \ll y e^{-z} (\log x)^2 .
	\]
\end{proposition}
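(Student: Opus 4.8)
The plan is to bound the weighted count $\Sigma:=\lambda(\{(\alpha,\beta)\in\CB\times\CB: H(\alpha/\beta)\le x^3,\ y<[\alpha,\beta]\le 2y,\ L(\alpha/\beta;z)>1\})$ directly. First I would dispose of the trivial regime: since $\CB$ is $1$-spaced we have $\sum_{\alpha\in\CB}1/\alpha\ll\log x$, so $\Sigma\le\bigl(\sum_{\alpha\in\CB}1/\alpha\bigr)^2\ll(\log x)^2$; hence we may assume $ye^{-z}<1$, i.e.\ $e^z>y$. In particular the set is non-empty, and counting the primes $>z$ that can divide the height-$\le x^3$ fraction $\alpha/\beta$ shows $L(\alpha/\beta;z)<6\log x/(z\log z)$, so non-emptiness forces $z\ll\log x$.

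Next I would clean up the weight. Write $\alpha=a/q$, $\beta=b/r$ in lowest terms and let $s/t$ be $\alpha/\beta$ in lowest terms. By Lemma~\ref{lem:R for rational ratio}(b), $[\alpha,\beta]=s/\alpha=t/\beta$, so on our set $s\in(y\alpha,2y\alpha]$, $t\in(y\beta,2y\beta]$ and
\[
\frac1{\alpha\beta}=\frac{[\alpha,\beta]^2}{st}\asymp\frac{y^2}{st}.
\]
The primes of $\alpha/\beta$ are exactly the primes dividing $st$, so $L(\alpha/\beta;z)=\sum_{p\mid st,\,p>z}1/p$, and Rankin's trick gives
\[
\one[L(\alpha/\beta;z)>1]\le e^{-z}\prod_{\substack{p\mid st\\ p>z}}e^{z/p}=e^{-z}f(s)f(t),\qquad f(n):=\prod_{\substack{p\mid n\\ p>z}}e^{z/p},
\]
using $\gcd(s,t)=1$. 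The crucial property is $\sum_{n\le N}f(n)\ll N$, since $\sum_{p>z}\frac{e^{z/p}-1}{p}\ll z\sum_{p>z}p^{-2}\ll 1$. After this step it remains to show
\[
\mathop{\sum\sum}_{\substack{(\alpha,\beta)\in\CB\times\CB,\ H(\alpha/\beta)\le x^3\\ y<[\alpha,\beta]\le 2y}}\frac{f(s)f(t)}{\alpha\beta}\ \ll\ y(\log x)^2.
\]

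For the core estimate I would exploit the thinness of $\CB$ in the form: for every integer $q$ the elements of $\CB$ with denominator $q$ have numerators forming a $q$-spaced subset of $[q,qx]$, whence $\sum_{\alpha\in\CB,\ \mathrm{denom}(\alpha)=q}1/\alpha\ll\log x$. Expanding $f(st)=\sum_{d}g(d)$ over squarefree $d$ with $P^-(d)>z$ all of whose primes divide $st$ (with $g(d)=\prod_{p\mid d}(e^{z/p}-1)$), then splitting each such $d$ according to which of $s$ and $t$ each of its primes divides (a disjoint split, since $\gcd(s,t)=1$) and recalling $s=(a/g)(r/h)$, $t=(b/g)(q/h)$ with $g=\gcd(a,b)$, $h=\gcd(q,r)$, the problem reduces to a bounded number of sums in which $q$, $r$, $g$, $h$ are fixed and one sums over the remaining numerator variable using the $1$-spacing estimate above. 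Here the bracket condition is essential: it pins $\gcd(a,b)$ down to size $\asymp\lcm(q,r)/y$, and it is this — rather than the crude bound $\sum_{R\in(y,2y]}R^2\ll y\min(s,t)$, which on its own would threaten to produce $y^2$ — that extracts exactly one factor $y$ in the end. The saving $e^{-z}$ from Rankin, together with the convergence $\sum_{d}g(d)2^{\omega(d)}/d=\prod_{p>z}\bigl(1+2(e^{z/p}-1)/p\bigr)\ll1$, then closes the argument.

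The hard part is that $f(s)$ can be as large as $\exp(z\log\log x)$ — larger than any fixed power of $\log x$ — precisely for the pairs where $L(\alpha/\beta;z)>1$ is ``easy'' to meet, so the $f$-weighted numerator sums cannot be bounded by pulling out a pointwise maximum. One must instead genuinely use that $\CB$ is $1$-spaced so such pairs are rare on average (a Rankin--Mertens averaging of $f$ over the sparse set of admissible numerators), and do so while respecting the bracket constraint so that the cumulative loss over all auxiliary summations — over $d$, over the splittings of $d$, and over $g,h,q,r$ — is only $y(\log x)^2$. Matching the prime factorisation of $st=(a/g)(b/g)(q/h)(r/h)$, whose primes come partly from the numerators and partly from the denominators of $\alpha$ and $\beta$, against the dyadic structure of $\CB$ is the technical heart of the proof.
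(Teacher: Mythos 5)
Your opening reductions (restricting to $e^z>y$, writing $1/(\alpha\beta)\asymp y^2/(st)$, and applying Rankin's trick to replace $\one[L(\alpha/\beta;z)>1]$ by $e^{-z}f(s)f(t)$) are sound, and the Rankin device is genuinely close to what the paper does (it uses $h(n)=\prod_{p\mid n,\,p>z}e^{4z/p}$ for the same purpose). The gap is in the core step, where you assert that the problem ``reduces to a bounded number of sums in which $q,r,g,h$ are fixed'' and is then closed by the $1$-spacing estimate. Fix $q,r$ and a candidate value $g\asymp\lcm(q,r)/y$ of $\gcd(a,b)$, and write $a=ga'$, $b=gb'$. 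The $1$-spacing estimate gives only $\sum_{a:\,a/q\in\CB,\ g\mid a}1/a\ll(\log x)/q$ (the divisibility constraint need not help at all, e.g.\ if every admissible numerator with denominator $q$ happens to be a multiple of $g$), so the contribution of this single triple is $\frac{qr}{g^2}\cdot\frac{g\log x}{q}\cdot\frac{g\log x}{r}=(\log x)^2$, with no decay in $g$ whatsoever. Since the number of triples $(q,r,g)$ that can occur is not bounded --- there may be many denominators and, for each pair of them, many divisors $g$ in the dyadic window $\asymp\lcm(q,r)/y$ --- summing these individual bounds cannot yield $y(\log x)^2$. This ``many pairs with large GCD'' obstruction is exactly the Duffin--Schaeffer-type difficulty, and it is what forces the paper to run the GCD-graph iteration of Sections \ref{sec:bipartite-graphs}--\ref{sec:IterationStep2-denominators}: either the contribution is genuinely small, or almost all pairs share \emph{fixed} divisors $d^\pm,e^\pm,D^-,E^-,N$, and only after extracting that structure can the remaining free variables be summed.

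A second, related gap: even for a single tuple you must average $f(s)f(t)$ over the admissible pairs, and $s,t$ are built from numerators of elements of an \emph{arbitrary} set $\CB$, not from all integers in an interval, so $\sum_{n\le N}f(n)\ll N$ does not apply. The saving the paper extracts here uses that, for fixed denominator $q$, the numerators form a \emph{primitive} set (this is where the hypothesis $\alpha/\beta\notin\N$ enters), harvested through the generalization of Behrend's theorem (Theorem \ref{thm:behrend} and Corollary \ref{cor:behrend}); the resulting $1/\sqrt{\log\log x}$ saving is precisely what pays for the extra summation over the divisors $A^\pm,B^\pm$ left undetermined by the graph argument. Your final paragraph correctly names both difficulties, but the proposal supplies no mechanism that overcomes either of them, so as written the proof does not go through.
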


In view of \eqref{eq:primitivity conditon} and \eqref{eq:E3 reduction to key prop}, we may apply the above proposition with $\CB=\CA_j$, $x=x_j$, $y=2^r$ and $z=2^i$ to find that
\[
E_3 \ll_c \sum_{j=1}^J \sum_{i=0}^\infty \sum_{r=0}^i \frac{i+1}{r+1} \cdot 2^r e^{-2^i} \asymp J \sum_{i=0}^\infty  2^i e^{-2^i} \asymp J .
\]
Therefore, to complete the proof of \eqref{eq:large measure for N_alpha} (and hence of Theorem \ref{thm:solutions}), it suffices to prove Lemmas  \ref{lem:overlap estimate for ratios with large R} and \ref{lem:overlap estimate for rational ratios of typical rationals}, as well as Proposition \ref{prop:key proposition}. 

\medskip 

The remainder of the paper is organized as follows: 
\begin{itemize}
	\item We prove Lemmas  \ref{lem:overlap estimate for ratios with large R} and \ref{lem:overlap estimate for rational ratios of typical rationals} in Section \ref{sec:rational}.
	\item We prove a generalization of Behrend's estimate \eqref{eq:Behrend} in Section \ref{sec:behrend}.
	\item In Sections \ref{sec:bipartite-graphs} and \ref{sec:reduction-to-rationals}, we reduce Proposition \ref{prop:key proposition} to the case when $\CA$ is a set of rational numbers. This is the context of Proposition \ref{prop:key proposition for rationals}.
	\item We introduce the language of GCD graphs and various results about them in Section \ref{sec:GCDgraphs}. Then, in Section \ref{sec:proof-of-moments-bound} we show how to deduce from them Proposition \ref{prop:key proposition for rationals}. The final sections of the paper are devoted to proving the needed results about GCD graphs.
\end{itemize}

%%%%%%%%%%%%%%%%%%%%%%%%%%%%%%%%%%%%%%%%%%%%%%%%%%%%%%%%%%%%%%%%%%%%

\section{The overlap estimate for rational ratios}\label{sec:rational}

The purpose of this section is to establish the following result and deduce Lemmas \ref{lem:overlap estimate for ratios with large R} and \ref{lem:overlap estimate for rational ratios of typical rationals} from it. 

\begin{lemma}\label{lem:overlap estimate for rational ratios}
	Let $\alpha>\beta\ge2$ be two real numbers with $\alpha/\beta\in\Q\setminus \N$, let $y\ge100$ be a parameter, and assume that $R\coloneqq [\alpha,\beta]>1$.  Then, as $T\to\infty$, we have
		\begin{align}
			\label{eq:intersection inequality asymptotic}
			\frac{\P_T\big(\CN_\alpha\cap \CN_\beta\big)}{\P_T\big(\CN_\alpha\big) \P_T\big(\CN_\beta\big)}
			\ 
			& \lesssim 3^{L(\alpha/\beta;y)}   \big( 1+O(\eta)\big)
			+ O\bigg(\frac{\log(2\beta)}{\alpha/\beta}\bigg) 
		\end{align}
		with $\eta=\min\big\{2^\beta R^{-1/2} , 3^{\omega(\alpha/\beta;y)} (1+\log R)/R\big\}$, as well as
		\begin{equation}
			\label{eq:intersection inequality crude}
			\frac{\P_T\big(\CN_\alpha\cap \CN_\beta\big)}{\P_T\big(\CN_\alpha\big) \P_T\big(\CN_\beta\big)}
			\ \ll \ e^{L(\alpha/\beta;R)}+ \frac{\log(2\beta)}{\alpha/\beta} .
		\end{equation}

\end{lemma}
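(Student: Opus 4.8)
The plan is to evaluate $\P_T\big(\CN_\alpha\cap\CN_\beta\big)$ exactly as $T\to\infty$ via an inclusion--exclusion over the finitely many primes $\le\alpha$, and then to estimate the resulting arithmetic sum. Write $\alpha/\beta=s/t$ in lowest terms; by Lemma~\ref{lem:R for rational ratio}(b), $R=s/\alpha=t/\beta$, so $s=R\alpha$, $t=R\beta$ and $s>t$. Starting from the expansion in \eqref{Eq:ExpressionOverlapTwoSets} and using $m\alpha-n\beta=\tfrac{\beta}{t}(ms-nt)$, the condition $|m\alpha-n\beta|<1$ becomes $|\ell|<R$ for $\ell:=ms-nt$, and $(1-|m\alpha-n\beta|)^{+}=(1-|\ell|/R)$. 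Hence
\[
\P_T\big(\CN_\alpha\cap\CN_\beta\big)=\frac1T\sum_{|\ell|<R}\Big(1-\tfrac{|\ell|}{R}\Big)N_T(\ell)+O\!\Big(\tfrac1T\Big),
\]
where $N_T(\ell):=\#\{(m,n)\in\N^2:\ ms-nt=\ell,\ m\le T/\alpha,\ P^{-}(m)>\alpha,\ P^{-}(n)>\beta\}$, the error coming from the bounded gap between the ranges $m\alpha\le T$ and $n\beta\le T$. Since $\gcd(s,t)=1$, the solutions of $ms-nt=\ell$ form one progression $(m,n)=(m_0,n_0)+j(t,s)$, so $N_T(\ell)$ counts $j$ in a window of length $\asymp T/(\alpha t)$ subject to the two roughness conditions; as $\pi(\alpha)$ is fixed while $T\to\infty$, an exact inclusion--exclusion over the squarefree divisors of $\prod_{p\le\alpha}p$ gives $N_T(\ell)\sim\frac{T}{\alpha t}\rho(\ell)$ for a local density $\rho(\ell)=\prod_{p\le\alpha}\rho_p(\ell)$, whence $\P_T\big(\CN_\alpha\cap\CN_\beta\big)\sim\frac{1}{\alpha t}\sum_{|\ell|<R}(1-|\ell|/R)\rho(\ell)$.

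The first task is the elementary computation of $\rho_p(\ell)$, by cases according to whether $p$ divides $s$, divides $t$, or divides neither, whether $p\le\beta$ or $\beta<p\le\alpha$, and whether $p\mid\ell$ (with a separate check at $p=2$). One finds that $\rho(\ell)=0$ unless $\ell$ is coprime to $D_0:=\prod\{p\le\alpha:p\mid t\}\cdot\prod\{p\le\beta:p\mid s\}$ and $\ell$ has the forced parity ($2\mid\ell$ exactly when $2\nmid st$); that each $\rho_p(\ell)\in\{0,\,1-1/p,\,1-2/p\}$; and --- the key identity --- that $\E_{\ell}[\rho_p]=(1-1/p)^2$ for $p\le\beta$ while $\E_{\ell}[\rho_p]=1-1/p$ for $\beta<p\le\alpha$. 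Thus $\E_\ell[\rho]=\prod_{p\le\beta}(1-1/p)\prod_{p\le\alpha}(1-1/p)=:C=\alpha\beta\,\kappa(\alpha)\kappa(\beta)$, and since $\sum_{|\ell|<R}(1-|\ell|/R)=R$ and $\tfrac{1}{\alpha t}RC=\kappa(\alpha)\kappa(\beta)$, this already predicts the ratio $\approx 1$; the work is to turn this into an inequality with the stated errors. First, the diagonal $\ell=0$: since $0$ is divisible by every prime, $\rho(0)\ne 0$ forces $P^{-}(t)>\alpha$ and $P^{-}(s)>\beta$, in which case $N_T(0)\sim\frac{T}{\alpha t}\prod_{p\le\alpha}(1-1/p)$, contributing $\frac{1}{\alpha t}\prod_{p\le\alpha}(1-1/p)\asymp\kappa(\alpha)\kappa(\beta)\cdot\frac{\log(2\beta)}{R}$; and $P^{-}(t)>\alpha$ together with $\alpha/\beta\notin\N$ forces $t>\alpha$, i.e. $R>\alpha/\beta$, so this is $\ll\kappa(\alpha)\kappa(\beta)\cdot\frac{\log(2\beta)}{\alpha/\beta}$, which yields the second summand in both \eqref{eq:intersection inequality asymptotic} and \eqref{eq:intersection inequality crude}.

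It remains to bound the off-diagonal sum $S:=\sum_{0<|\ell|<R}(1-|\ell|/R)\rho(\ell)$. Factor $\rho(\ell)=c_0\cdot\one_{\gcd(\ell,D_0)=1}\cdot\one_{[\text{parity of }\ell]}\cdot\prod_{p\mid\gcd(\ell,D_1)}\tfrac{p-1}{p-2}$, where $D_1$ is the product of the odd primes $p\le\beta$ with $p\nmid st$ and $c_0$ collects the ``generic'' factors; a short computation gives $c_0/C\asymp\prod_{p\mid D_0}(1-1/p)^{-1}\cdot\prod_{p\mid D_1}\big(1-\tfrac{1}{(p-1)^2}\big)$, the last product being bounded. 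For the primes $p\mid st$ with $p>y$ (automatically $\le\alpha$, so those relevant to $D_0$ divide $t$) I simply discard the constraint $p\nmid\ell$, retaining the factor $\prod_{p\mid st,\,p>y}(1-1/p)^{-1}\le 3^{L(\alpha/\beta;y)}$ (valid since $y\ge100$ makes $\log\tfrac{p}{p-1}\le\tfrac{\log3}{p}$). For the primes $p\le y$ I expand the remaining indicator and divisor factors of $\rho$ by M\"obius inversion into residue-class sums $\sum_{\ell\equiv a\,(q)}(1-|\ell|/R)=\tfrac Rq+O(1)$, whose main parts reconstruct the local densities on the nose and cancel the rest of $c_0/C$; this gives $S\lesssim RC\cdot 3^{L(\alpha/\beta;y)}(1+O(\eta))$, where $\eta$ measures the accumulated $O(1)$-per-term error against $RC$. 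Bounding the number of terms trivially (there are $\le 2^{\pi(\beta)}\le 2^\beta$ relevant prime conditions) and truncating the expansion by Rankin's trick gives $\eta\ll 2^\beta R^{-1/2}$, while estimating the relevant divisor sum directly (it is $\ll 1+\log R$) and keeping only the $y$-smooth expansion gives $\eta\ll 3^{\omega(\alpha/\beta;y)}(1+\log R)/R$; the smaller of the two is the $\eta$ of the lemma. Dividing $S$ plus the diagonal contribution by $\P_T(\CN_\alpha)\P_T(\CN_\beta)\sim\kappa(\alpha)\kappa(\beta)$ (see \eqref{eq:measure of N_alpha}) gives \eqref{eq:intersection inequality asymptotic}. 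For \eqref{eq:intersection inequality crude} I run the same argument with $y=R$ --- so there are no ``large'' primes to discard and $L(\alpha/\beta;y)=L(\alpha/\beta;R)$ --- and estimate $S$ more wastefully, not seeking cancellation at primes $\le R$ beyond the convergent products $\prod_p\big(1+\tfrac{1}{p(p-2)}\big)$ arising from the $D_1$-factors; this replaces $3^{L(\alpha/\beta;y)}(1+O(\eta))$ by $O\big(e^{L(\alpha/\beta;R)}\big)$.

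The main obstacle is this last off-diagonal estimate: one must extract the savings encoded in $\gcd(\ell,D_0)=1$ and in the $D_1$-primes --- which is legitimate only when $R$ is large relative to the moduli in play --- while controlling the inclusion--exclusion remainder uniformly over the whole range of $(\alpha,\beta,R,y)$, and one must carry this out in the two complementary ways that combine into $\eta=\min\{2^\beta R^{-1/2},\,3^{\omega(\alpha/\beta;y)}(1+\log R)/R\}$. A subsidiary but crucial point is the exactness of the local computation: it matters that $\E_\ell[\rho_p]$ is \emph{precisely} $(1-1/p)^2$ (resp. $1-1/p$), since any bounded constant other than $1$ there would make the main term only $\asymp\kappa(\alpha)\kappa(\beta)$ rather than $\sim\kappa(\alpha)\kappa(\beta)$ --- which would be useless downstream in the second moment method.
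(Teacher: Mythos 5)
Your proposal is correct and follows essentially the same route as the paper: the same reduction to a Fej\'er-weighted sum over $\ell=ms-nt$ with $|\ell|<R$ (the paper parametrizes equivalently via $n=ks+uj$), the same local-density computation giving $\nu_p$ averaging to $(1-1/p)^2$ for $p\le\beta$ and $1-1/p$ for $\beta<p\le\alpha$, the same treatment of the diagonal via $R>\alpha/\beta$, and the same two-way bound on the error term producing $\eta$. The only cosmetic difference is that you invoke exact inclusion--exclusion over the fixed primes $\le\alpha$ where the paper cites the fundamental lemma of sieve methods; both are valid since $\alpha,\beta$ are fixed as $T\to\infty$.
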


\subsection{Auxiliary results} We shall make frequent use of the following basic estimate about averages of non-negative, divisor-bounded multiplicative functions.

\begin{lemma}[Bounds on multiplicative functions]\label{lem:mult-fnc-bound}
	Let $k\in\N$ and write $\tau_k$ for the $k$-th divisor function. If $f$ is a multiplicative function such that $0\le f\le \tau_k$, then
	\[
	\sum_{n\le x} f(n) \ll_k \frac{x}{\log x} \cdot \exp\bigg(\sum_{p\le x}\frac{f(p)}{p}\bigg)
	\asymp_k x \cdot \exp\bigg(\sum_{p\le x}\frac{f(p)-1}{p}\bigg) \qquad(x\ge2). 
	\]
	Moreover, 
	\[
	\sum_{n\le x} \frac{f(n)}{n} \asymp_k \exp\bigg(\sum_{p\le x}\frac{f(p)}{p}\bigg)\qquad(x\ge2). 
	\]
\end{lemma}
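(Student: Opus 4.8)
The plan is to prove the three estimates in turn; the two bounds on $\sum_{n\le x}f(n)/n$ are the heart of the matter. To start, the two right-hand sides of the first display are interchangeable: by Mertens' theorem $\sum_{p\le x}1/p=\log\log x+O(1)$, so $1/\log x\asymp\exp(-\sum_{p\le x}1/p)$ and hence $\tfrac{x}{\log x}\exp\!\big(\sum_{p\le x}\tfrac{f(p)}{p}\big)\asymp x\exp\!\big(\sum_{p\le x}\tfrac{f(p)-1}{p}\big)$. It therefore suffices to prove the upper bound $\sum_{n\le x}f(n)\ll_k\tfrac{x}{\log x}\exp(\sum_{p\le x}\tfrac{f(p)}{p})$ together with $\sum_{n\le x}\tfrac{f(n)}{n}\asymp_k\exp(\sum_{p\le x}\tfrac{f(p)}{p})$.

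The upper bound for $\sum_{n\le x}f(n)/n$ is the soft half. Since every $n\le x$ has $P^+(n)\le x$, this sum is at most $\prod_{p\le x}\big(1+f(p)/p+f(p^2)/p^2+\cdots\big)$. The bound $f(p^j)\le\tau_k(p^j)=\binom{j+k-1}{k-1}$ gives $\sum_{j\ge2}f(p^j)/p^j\le C_k/p^2$ for every prime $p$, so the local factor at $p$ is at most $\exp\!\big(f(p)/p+O_k(1/p^2)\big)$; multiplying over $p\le x$ and using $\sum_p p^{-2}<\infty$ yields $\sum_{n\le x}f(n)/n\ll_k\exp(\sum_{p\le x}f(p)/p)$.

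The lower bound for $\sum_{n\le x}f(n)/n$ is the delicate point and the place where I would concentrate my effort. First replace $f$ by the multiplicative function $g$ with $g(p)=f(p)$ and $g(p^j)=0$ for $j\ge2$: then $g\le f$, $g\le\tau_k$, and $\sum_{p\le x}g(p)/p=\sum_{p\le x}f(p)/p$, so it is enough to bound $\sum_{n\le x}g(n)/n$ from below; and, $g$ being supported on squarefrees, $\sum_{n\le x}g(n)/n\ge\sum_{n\le x,\,P^+(n)\le z}g(n)/n$ for any $2\le z\le x$. I would read this truncated sum probabilistically: letting $S$ be the random subset of $\{p\le z\}$ in which each prime $p$ is included independently with probability $q_p=(f(p)/p)/(1+f(p)/p)$, one has the exact identity
\[
\sum_{\substack{n\le x\\ P^+(n)\le z}}\frac{g(n)}{n}=\Big(\prod_{p\le z}\big(1+\tfrac{f(p)}{p}\big)\Big)\cdot\mathbb{P}\Big(\prod_{p\in S}p\le x\Big).
\]
Since $\mathbb{E}\big[\sum_{p\in S}\log p\big]=\sum_{p\le z}q_p\log p\le k\sum_{p\le z}(\log p)/p\le k(\log z+O(1))$ by Mertens, choosing $z=x^{1/(3k)}$ and applying Markov's inequality gives $\mathbb{P}(\prod_{p\in S}p\le x)\ge\tfrac12$ for $x$ large in terms of $k$. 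Finally $\prod_{p\le z}(1+f(p)/p)\gg_k\exp(\sum_{p\le z}f(p)/p)$ (from $1+t\ge e^{\,t-t^2}$ and $\sum_p p^{-2}<\infty$), while $\sum_{p\le z}f(p)/p\ge\sum_{p\le x}f(p)/p-k\sum_{z<p\le x}1/p\ge\sum_{p\le x}f(p)/p-O_k(1)$ because $\log x/\log z=3k$. Combining these gives $\sum_{n\le x}f(n)/n\gg_k\exp(\sum_{p\le x}f(p)/p)$ for $x$ large, and for the remaining bounded range of $x$ the inequality is trivial since the left side is $\ge1$ and the right side is $O_k(1)$.

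It remains to establish $\sum_{n\le x}f(n)\ll_k\tfrac{x}{\log x}\exp(\sum_{p\le x}f(p)/p)$, which is a form of the Shiu / Halberstam--Richert inequality. The natural first move is $\log n=\sum_{d\mid n}\Lambda(d)$, which yields $\sum_{n\le x}f(n)\log n=\sum_{p^a}f(p^a)\log(p^a)\sum_{m\le x/p^a,\ p\nmid m}f(m)$; bounding the inner sum inductively and peeling off the range $n\le\sqrt x$ with the estimate for $\sum_{n\le y}f(n)/n$ almost does it, but recovering the sharp $1/\log x$ (rather than $\log\log x/\log x$) requires Shiu's more careful organization of the induction, so I would invoke that bound as a known result. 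The main obstacles in the whole argument are thus the lower bound for $\sum_{n\le x}f(n)/n$ above and, were one to insist on a self-contained treatment, the sharp form of the $\sum_{n\le x}f(n)$ bound.
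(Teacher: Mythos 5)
Your proposal is correct, and it is worth noting that it does substantially more than the paper, whose ``proof'' of this lemma consists entirely of citing \cite[Theorem 14.2 and Exercise 14.5]{dk-book}. Your reduction of the first display to the Halberstam--Richert/Shiu-type bound via Mertens' theorem is right, and your treatment of $\sum_{n\le x}f(n)/n$ is complete and correct: the upper bound by the Euler product over $p\le x$ works since $\sum_{j\ge2}\tau_k(p^j)p^{-j}\ll_k p^{-2}$, and the lower bound via the Bernoulli-product identity is a clean, exact reformulation of the truncated sum over $z$-smooth squarefree integers, with Markov's inequality at $z=x^{1/(3k)}$ and Mertens' theorem closing the argument (the bounded range $x\ll_k 1$ being trivial, as you say). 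This is a genuinely self-contained route to the second estimate, in contrast to the paper's bare citation. For the sharp bound $\sum_{n\le x}f(n)\ll_k \frac{x}{\log x}\exp(\sum_{p\le x}f(p)/p)$ you correctly identify that the naive $\log n=\sum_{p^a\| n}\log(p^a)$ decomposition combined with the crude bound $S(y)\le y\sum_{m\le y}f(m)/m$ loses a logarithmic factor, and you defer to the known theorem; since that known theorem is precisely the result the paper itself cites, this leaves no gap relative to the paper's own standard of proof.
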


\begin{proof}
	The first part of the lemma follows readily from \cite[Theorem 14.2, p. 145]{dk-book}.	For the second part, see \cite[Exercise 14.5, p. 154]{dk-book}. 
\end{proof}

We also need the following elementary estimate.

\begin{lemma}\label{lem:fejer}
	Let $t\ge1$ be a real number and $q\in\N$.
	\begin{enumerate}
		\item We have
		\[
		\sum_{1\le |j|\le t} (1-|j|/t)  =t-1+O(1/t).
		\]
		\item We have
		\[
		\sum_{\substack{1\le |j|\le t \\ (q,j)=1}}  (1-|j|/t)  =t\prod_{p|q}\bigg(1-\frac{1}{p}\bigg) + O\big(2^{\omega(q)}\big) .
		\]
		\item We have 
				\[
				\sum_{\substack{1\le |j|\le t \\ (j,q)=1}}	\prod_{\substack{p|j \\ p>2}} \frac{p-1}{p-2} \ll t\prod_{\substack{p|q \\ p\le t}}\bigg(1-\frac{1}{p}\bigg) .
				\]
	\end{enumerate}
\end{lemma}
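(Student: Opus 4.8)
The three parts are of increasing difficulty, and I would prove them in order, with (b) reducing to (a) and (c) being a clean application of Lemma~\ref{lem:mult-fnc-bound}.

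\emph{Part (a).} This is a direct computation. Put $n=\fls{t}$, so that $\sum_{1\le|j|\le t}(1-|j|/t)=2\sum_{j=1}^{n}(1-j/t)=2n-\frac{n(n+1)}{t}$. Substituting $n=t-\{t\}$ and expanding, the terms proportional to $t$ and the constant terms combine to give exactly $t-1$, while the remainder equals $\frac{\{t\}-\{t\}^2}{t}$, which is $O(1/t)$ because $0\le\{t\}-\{t\}^2\le 1/4$. (When $t<2$ the sum is just the $j=\pm1$ terms and the formula is checked by hand.)

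\emph{Part (b).} Detect the coprimality condition by M\"obius inversion, $\one_{(j,q)=1}=\sum_{d\mid\gcd(j,q)}\mu(d)$, and interchange summations. Writing $j=dk$ turns the inner sum into $\sum_{1\le|k|\le t/d}\bigl(1-|k|/(t/d)\bigr)$, which part~(a) evaluates as $\frac{t}{d}-1+O(d/t)$ when $d\le t$; when $d>t$ the inner sum is empty and equals $0=\frac{t}{d}-1+O(1)$. Hence $\sum_{d\mid q}\mu(d)\bigl(\frac{t}{d}-1+O(1)\bigr)=t\prod_{p\mid q}(1-1/p)-\one_{q=1}+O(2^{\omega(q)})$, and absorbing $\one_{q=1}$ into the error term finishes the proof.

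\emph{Part (c).} The function $g(j):=\prod_{p\mid j,\,p>2}\frac{p-1}{p-2}$ is multiplicative with $g(p^a)=\frac{p-1}{p-2}$ for odd primes $p$ and $g(2^a)=1$, so $f(j):=g(j)\one_{(j,q)=1}$ is multiplicative with $0\le f(j)\le 2^{\omega(j)}\le\tau_2(j)$. Disposing of the range $t<2$ trivially, apply the first estimate of Lemma~\ref{lem:mult-fnc-bound} to get $\sum_{j\le t}f(j)\ll t\exp\bigl(\sum_{p\le t}\frac{f(p)-1}{p}\bigr)$. Here $f(p)-1=-1$ for $p\mid q$, $f(p)-1=\frac{1}{p-2}$ for odd $p\nmid q$, and $f(2)-1=0$, so $\sum_{p\le t}\frac{f(p)-1}{p}=-\sum_{p\mid q,\,p\le t}\frac1p+\sum_{2<p\le t,\,p\nmid q}\frac{1}{p(p-2)}=-\sum_{p\mid q,\,p\le t}\frac1p+O(1)$, the tail sum converging absolutely. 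Comparing with $\prod_{p\mid q,\,p\le t}(1-1/p)=\exp\bigl(-\sum_{p\mid q,\,p\le t}\frac1p+O(1)\bigr)$ (again using $\sum_p p^{-2}<\infty$) yields $\sum_{j\le t}f(j)\ll t\prod_{p\mid q,\,p\le t}(1-1/p)$; doubling to account for negative $j$ proves (c).

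The only point needing a little care --- the ``hard'' part, such as it is --- is ensuring the error terms in~(b) and the Euler-product comparison in~(c) are genuinely uniform in $q$, so that no stray factor of $2^{\omega(q)}$ or $\log$ enters the multiplicative constant in~(c). This is guaranteed by the absolute convergence of $\sum_p p^{-2}$ and $\sum_p\frac{1}{p(p-2)}$, so there is no real obstacle.
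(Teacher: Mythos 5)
Your proof is correct and follows essentially the same route as the paper's: part (a) by an exact elementary evaluation of the arithmetic sum (the paper uses Euler--Maclaurin, which amounts to the same computation), part (b) by M\"obius inversion reducing to part (a), and part (c) by applying Lemma \ref{lem:mult-fnc-bound} to the multiplicative function $f(j)=\one_{(j,q)=1}\prod_{p\mid j,\,p>2}\frac{p-1}{p-2}$. Nothing further is needed.
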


\begin{proof}(a) Using the Euler--Maclaurin summation formula \cite[Theorem 1.10]{dk-book}, we find that
	\begin{align*}
		\sum_{1\le |j|\le t} (1-|j|/t) 
		 = 2\sum_{0<j\le t}(1-j/t) = 2\int_0^t (1-y/t)\dee y - \frac{2}{t} \int_0^t \{y\} \dee y 
	\end{align*}
	We have $\int_0^t (1-y/t)\dee y=t/2$ and $\int_0^t \{y\}\dee y=t/2+O(1)$, with the last estimate following by periodicity. This completes the proof.
	
	\medskip
	
	(b) By M\"obius inversion and part (a), we have
	\[
	\begin{split}
		\sum_{\substack{1\le |j|\le t \\ (q,j)=1}} (1-|j|/t)
		&= \sum_{1\le |j|\le t} (1-|j|/t) \sum_{d\mid (q,j)}\mu(d) 
		\overset{j=di}{=}\sum_{d\mid q} \mu(d) \sum_{1\le |i|\le t/d} \bigg(1 - \frac{|i|}{t/d}\bigg) \\
        &= \sum_{d\mid q}\mu(d)\big(t/d+O(1)\big)= t\prod_{p\mid q}(1-1/p)+ O(2^{\omega(q)}). 
	\end{split}
	\]
	\medskip
	
	(c) This follows from Lemma \ref{lem:mult-fnc-bound} above applied to $f(n)=\one_{(n,q)=1}\prod_{p|n,\, p>2}\frac{p-1}{p-2}$, which is indeed a non-negative and divisor-bounded multiplicative function.
\end{proof}

\subsection{Proof of Lemma \ref{lem:overlap estimate for rational ratios}} 
    Recall by \eqref{Eq:ExpressionOverlapTwoSets} that
    \begin{align}\label{eq:TPTNaNb}
    \P\big(\CN_\alpha\cap \CN_\beta\big) 
	= \frac{1}{T}\sum_{\substack{P^-(n)>\beta \\ n\beta \le T}}\sum_{P^-(m)>\alpha} \big(1- |m\alpha-n\beta|\big)^+ +O\bigg(\frac{1}{T}\bigg).
    \end{align}
	Here we dropped the condition that $m\alpha\le T$, since for all but $O(1)$ values of $n$, it is a consequence of the condition $ |m\alpha-n\beta|<1$ that is implicit in the above double sum. In addition, note that $|m-nt/s|=|m-n\beta/\alpha|< 1/\alpha\le 1/2$ for $m$ and $n$ as above. Hence, given a $\beta$-rough natural number $n$, there is at most one choice for $m$, which is given by the nearest integer to $nt/s$. This choice is admissible if, and only if, $\|nt/s\|<1/\alpha$ and the resulting $m$ is $\alpha$-rough, in which case we have 
	\[
	|m\alpha-n\beta| = \alpha|m-nt/s| = \alpha \|nt/s\|.
	\]
	We must find a convenient way to express all this data.
	
	Since $\gcd(s,t)=1$, there exist integers $u,v$ such that 
	\begin{equation}
		\label{eq:ell_0-k_0}
		ut=1+vs.
	\end{equation}
    We must then have $\gcd(u,s)=1$, and so we may define the residue class $n\bar u \pmod{s}$. Choosing the unique representative $j$ of this class lying in $(-s/2,s/2]$, there exists a unique way to write
	\[
	n=ks+uj\qquad\text{with}\quad -s/2<j\le s/2,\ j,k\in\Z. 
	\]
	We then find that
	\[
	0<n\beta\le T\qquad\iff\qquad -\frac{uj}{s}<k\le \frac{T}{\beta s}-\frac{uj}{s}.
	\]
	In addition, 
	\begin{equation}
		\label{eq:nt/s}
	\frac{nt}{s} = kt+ \frac{ujt}{s} = kt+vj+\frac{j}{s} .
		\end{equation}
		
		Now, using \eqref{eq:nt/s}, we find readily that $\|nt/s\|=|j|/s$ and so
	\begin{equation}
		\label{eq:nt/s 2}
	|m\alpha-n\beta|=\alpha\|nt/s\| = \alpha|j|/s =|j|/R ,
	\end{equation}
	where we used that $\alpha>\beta$ to find that $R=[\alpha,\beta]=H(\alpha/\beta)/\max\{\alpha,\beta\}=s/\alpha$. Since $R=s/\alpha$, we also find that
	\[
	\frac{T}{\beta s} = \frac{T}{\alpha\beta R} .
	\]
	Next, observe that \eqref{eq:nt/s 2} renders the condition $|m\alpha-n\beta|<1$ equivalent to having $|j|<R$. Since $R = s/\alpha$ and $\alpha>2$, the condition $|j|<R$ is stronger than our prior condition $-s/2<j\le s/2$. In particular, we must have $|j|<s/2$, and thus relation \eqref{eq:nt/s} implies that the nearest integer to $nt/s$ equals $kt+vj=m$. 
	Lastly, note that if $j=0$, then  $m=kt$ and $n=ks$. Consequently, $k$ and $t$ must be $\alpha$-rough, $s$ must be $\beta$-rough, and we must also have $m\alpha=n\beta$. Using Lemma \ref{lem:erdos for rationals}, we deduce further that $R=[\alpha,\beta]>\alpha/\beta$ in the case when $j=0$.
	
	In summary, we see that \eqref{eq:TPTNaNb} becomes
	\begin{equation}
		\label{eq:overlap reduction to Sj}
	\P\big(\CN_\alpha\cap \CN_\beta \big)
	= \frac{1}{T}\sum_{0\le|j|\le R} \bigg(1-\frac{|j|}{R}\bigg)  \cdot S_j  \ + O\Big(\frac{1}{T}\Big),
		\end{equation}
	where
	\[
	S_j \coloneqq  \sum_{\substack{-uj/s<k\le \frac{T}{\alpha\beta R}-uj/s \\ P^-(kt+vj)>\alpha,\ P^-(ks+uj)>\beta}}1
	\]
	for $j\neq0$ and
	\[
	S_0 \coloneqq \one_{P^-(s)>\beta}\one_{P^-(t)>\alpha}\one_{R>\alpha/\beta}
	\sum_{\substack{k\le \frac{T}{\alpha\beta R} \\ P^-(k)>\alpha}} 1.
	\]
	
	For $S_0$, we use directly the fundamental lemma of sieve methods to find that
	\[
	\sum_{\substack{k\le \frac{T}{\alpha\beta R} \\ P^-(k)>\alpha}} 1
	\sim \frac{T}{\alpha\beta R} \prod_{p\le \alpha}\bigg(1-\frac{1}{p}\bigg) 
	\sim \frac{T}{R}\,\P_T\big(\CN_\alpha\big)\P_T\big(\CN_\beta\big)\prod_{p\le \beta}\bigg(1-\frac{1}{p}\bigg)^{-1} .
	\]
	Using Mertens' theorem and that we must have $R>\alpha/\beta$ for $S_0$ to be non-zero, we conclude that 
	\begin{equation}
		\label{eq:overlap S0 bound}
	S_0 \ll T\,\P_T\big(\CN_\alpha\big)\P_T\big(\CN_\beta\big)\frac{\log(2\beta)}{\alpha/\beta}.
		\end{equation}
	
	Let us now estimate $S_j$ when $j\neq0$. Note that if there exists a prime $p\le \beta$ that divides $j$ and $st$, then $S_j=0$. Similarly, if there exists a prime $p\in(\beta,\alpha]$ that divides $j$ and $t$, then $S_j=0$. Lastly, note that if $2\nmid jst$, then $ks+uj\equiv k+u\mod2$ and $kt+vj\equiv k+v\mod 2$. In addition, we have that $u\equiv 1+v\mod 2$ by \eqref{eq:ell_0-k_0}. This implies that $2|(k+u)(k+v)$ for all $k$. In particular, there are no $k$ with $P^-(ks+uj)>\beta$ and $P^-(kt+vj)>\alpha$, so that $S_j=0$ again in this case. 
	
	In conclusion, if we let 
	\[
	Q_1\coloneqq \prod_{p\le \beta,\  p|st}p
	\quad\text{and}\quad 
	Q_2\coloneqq \prod_{\beta<p\le\alpha,\  p|t}p,
	\]
	then we have that 
	\begin{equation}
		\label{eq:overlap Sj conditions}
	\mbox{$S_j=0$ when $\gcd(j,Q_1Q_2)>1$ or when $2\nmid jQ_1$}.
\end{equation}
	
	Finally, we estimate $S_j$ when $\gcd(j,Q_1Q_2)=1$ and $2|jQ_1$. 
	For any prime $p\le \beta$, we have
	\[
	\sum_{\substack{-uj/s<k\le \frac{T}{\alpha\beta R}-uj/s \\ p|(ks+uj)(kt+vj) }}1
	=\frac{T}{\alpha\beta R}\cdot \frac{\nu_1(p)}{p}+O(\nu_1(p)),
	\]
	where
	\[
	\nu_1(p)\coloneqq  \#\big\{ k\in \Z/p\Z : (ks+uj)(kt+vj) \equiv0\mod p \big\}. 
	\]
	If $p|s$, then $p\nmid j$ by our assumption that $\gcd(j,Q_1)=1$. In addition, $p\nmid ut$ by \eqref{eq:ell_0-k_0}. So $\nu_1(p)=1$ for such primes. Similarly, if $p|t$, then $\nu_1(p)=1$. Assume now that $p\nmid st$. Then, $\nu_1(p)=2$, unless 
	\[
	ju \bar{s}\equiv j v\bar{t}\mod p\quad\iff\quad j ut\equiv j vs\mod p \quad\iff\quad j \equiv0\mod p,
	\]
	where we used \eqref{eq:ell_0-k_0}. 
	In conclusion, 
	\[
	\nu_1(p) = \begin{cases} 1&\text{if}\ p|jQ_1  ,\\
		2&\text{otherwise}.
	\end{cases}
	\]
	In particular, $\nu_1(2)=1$ because we have assumed that $2|jQ_1$.
	
	Similarly, for any prime $p\in(\beta,\alpha]$, we have
	\[
	\sum_{\substack{-uj/s<k\le\frac{T}{\alpha\beta R} -uj/s  \\ p|kt+vj}}1
	= \frac{T}{\alpha\beta R} \cdot \frac{\nu_2(p)}{p}+O(\nu_2(p)),
	\]
	where
	\[
	\nu_2(p)\coloneqq  \#\big\{ k\in \Z/p\Z : kt+vj \equiv0\mod p \big\}. 
	\]
	If $p|t$, then $p\nmid j$ by our assumption that $\gcd(j,Q_2)=1$. In addition, we have $p\nmid v$ by \eqref{eq:ell_0-k_0}. So $\nu_2(p)=0$ for such primes. On the other hand, if $p\nmid t$, then $\nu_2(p)=1$. In conclusion, 
	\[
	\nu_2(p) = \begin{cases} 0&\text{if}\ p|Q_2  ,\\
		1&\text{otherwise}.
	\end{cases}
	\]
Using the fundamental lemma of sieve methods \cite[Theorem 18.11]{dk-book} and the fact that $\nu_1(2)=1$, we find that 
	\begin{align}
		S_j &\sim \frac{T}{\alpha\beta R} \prod_{p\le \beta}\bigg(1-\frac{\nu_1(p)}{p}\bigg) \prod_{\beta<p\le \alpha}\bigg(1-\frac{\nu_2(p)}{p}\bigg) \nn
		&= \frac{T}{2\alpha\beta R} \prod_{\substack{3\le p\le \beta \\ p|jQ_1 }}\bigg(1-\frac{1}{p}\bigg)
		\prod_{\substack{3\le p\le \beta \\ p\nmid jQ_1}}		\bigg(1-\frac{2}{p}\bigg)
		\prod_{\substack{\beta<p\le \alpha \\ p\nmid Q_2}}\bigg(1-\frac{1}{p}\bigg)	\nn
		&=\frac{T}{2\alpha\beta R} \prod_{p|jQ_1,\, p>2}\frac{p-1}{p-2} \prod_{p|Q_2}\frac{p}{p-1}  \prod_{3\le p\le \beta}\bigg(1-\frac{2}{p}\bigg)	
		\prod_{\beta<p\le \alpha}\bigg(1-\frac{1}{p}\bigg)
			\label{eq:overlap Sj}
	\end{align}
	as $T\to\infty$. 
	
	Lastly, note that $\P_T\big(\CN_\alpha\big) \P_T\big(\CN_\beta\big)\sim \frac{1}{4\alpha\beta} \prod_{3\le p\le \beta}(1-1/p)^2\prod_{\beta<p\le \alpha}(1-1/p)$. Hence, combining \eqref{eq:overlap reduction to Sj}, \eqref{eq:overlap S0 bound}, \eqref{eq:overlap Sj conditions} and \eqref{eq:overlap Sj}, we deduce that
	\begin{equation}
		\label{eq:intersection asymptotic - almost there}
		\begin{split}
		\frac{\P_T\big(\CN_\alpha\cap \CN_\beta\big)}{\P_T\big(\CN_\alpha\big) \P_T\big(\CN_\beta\big)}
		&\sim \frac{2CS}{R} \prod_{p|Q_1,\, p>2}  \frac{p-1}{p-2}   \prod_{p|Q_2}\frac{p}{p-1}   +O\bigg(\frac{\log(2\beta)}{\alpha/\beta}\bigg), 
			\end{split}
		\end{equation}
	where
	\[
	C\coloneqq \prod_{3\le p\le \beta} \frac{1-2/p}{(1-1/p)^2}
	 \qquad\text{and}\qquad 
	S\coloneqq  \sum_{\substack{1\le |j|\le R \\ \gcd(j,Q_1Q_2)=1,\ 2|jQ_1}} \bigg(1-\frac{|j|}{R}\bigg)
	\prod_{\substack{3\le p\le \beta \\ p|j}} \frac{p-1}{p-2} .
	\]
	
		To prove \eqref{eq:intersection inequality crude}, note that \eqref{eq:intersection asymptotic - almost there} implies that 
	\[
	\lim_{T\to\infty}\frac{\P_T\big(\CN_\alpha\cap \CN_\beta\big)}{\P_T\big(\CN_\alpha\big) \P_T\big(\CN_\beta\big)}
	\ll \frac{S}{R}\cdot \prod_{p|Q_1Q_2}\bigg(1+\frac{1}{p}\bigg) + \frac{\log(2\beta)}{\alpha/\beta} .
	\]
	To bound $S$, we note that $0\le 1-|j|/R\le 1$ for $|j|\le R$ and then apply Lemma \ref{lem:fejer}(c). This completes the proof of \eqref{eq:intersection inequality crude}. 
	
	Let us now prove \eqref{eq:intersection inequality asymptotic}. We wish to estimate $S$. Given $y\ge100$ as in the statement of Lemma \ref{lem:overlap estimate for rational ratios}, let $Q=\prod_{p|Q_1Q_2,\, p\le y}p$. In addition, note that
	\[
	\prod_{\substack{3\le p\le \beta \\ p|j}} \frac{p-1}{p-2} = \sum_{d|j,\ d|P}f(d)
	\quad\text{with}\quad P\coloneqq\prod_{3\le p\le\beta}p\quad\text{and}\quad f(d)\coloneqq \prod_{\substack{p|d \\ p>2 }}\frac{1}{p-2}.
	\]
	Hence,
	\begin{align}
		S&\le  \sum_{\substack{1\le |j|\le R \\ \gcd(j,Q)=1,\ 2|jQ_1}} \bigg(1-\frac{|j|}{R}\bigg)
		\prod_{\substack{3\le p\le \beta \\ p|j}} \frac{p-1}{p-2} \nn
		&\overset{j=di}{=} \sum_{\substack{d|P,\, d\le R \\ \gcd(d,Q)=1}}f(d)
		\sum_{\substack{1\le |i|\le R/d \\ \gcd(i,Q)=1,\ 2|diQ_1}} \bigg(1-\frac{|i|}{R/d}\bigg). 
		\label{eq:overlap sum over j}
	\end{align}
	Fix $d|P$ such that $d\le R$. We claim that 
	\begin{equation}
		\label{eq:overlap sum over i}
		\sum_{\substack{1\le |i|\le R/d \\ \gcd(i,Q)=1,\ 2|diQ_1}} \bigg(1-\frac{|i|}{R/d}\bigg)
		= \frac{R}{2d} \prod_{p|Q,\,p>2}\bigg(1-\frac{1}{p}\bigg)
		+O(2^{\omega(Q)}) .
	\end{equation}
	Indeed, if $2|Q_1$ (and thus $2|Q$), then the condition $2|diQ_1$ holds for all $i$. Hence, using Lemma \ref{lem:fejer}(b) with $t=R/d$ establishes \eqref{eq:overlap sum over i}, since $2|Q$ here. On the other hand, if $2\nmid Q_1$, then the condition $2|diQ_1$ implies that $2|i$ (note here that $d$ is odd because it divides $P$). Writing $i=2i'$, and applying Lemma \ref{lem:fejer}(b) with $t=R/(2d)$ proves \eqref{eq:overlap sum over i} in this case too. 
	
	Combining \eqref{eq:overlap sum over j} and \eqref{eq:overlap sum over i}, we conclude that 
	\[
	S\le \sum_{\substack{d|P,\, d\le R \\ \gcd(d,Q)=1}} f(d)\cdot \bigg( 
	\frac{R}{2d} \prod_{p|Q,\, p>2} \bigg(1-\frac{1}{p}\bigg) 
	+ O(2^{\omega(Q)})\bigg). 
	\]
	In addition, note that
	\[
		\sum_{\substack{d|P,\, d\le R \\ \gcd(d,Q)=1}}\frac{f(d)}{d} \le 	\sum_{\substack{d|P \\ \gcd(d,Q)=1}}\frac{f(d)}{d} 
		= \prod_{\substack{3\le p\le \beta \\ p\nmid Q}} \bigg(1+\frac{f(p)}{p}\bigg) 	
		= \frac{1}{C} \prod_{\substack{p|Q \\ 3\le p\le \beta}}\frac{p(p-2)}{(p-1)^2},
	\]
	since $1+f(p)/p= (p-1)^2/(p(p-2))=(1-2/p)^{-1}(1-1/p)^2$ for $p>2$.
	Moreover, letting $g(n)=\one_{n|P} \cdot nf(n)$, we deduce by the second part of Lemma \ref{lem:mult-fnc-bound} that
	\[
	\sum_{d|P,\, d\le R}f(d)= \sum_{d\le R}\frac{g(d)}{d} \ll \log(2R).
	\]
	Consequently, 
	\begin{align*}
		S&\le \frac{R}{2C}
		\prod_{\substack{p|Q \\ 3\le p\le \beta}}\frac{p-2}{p-1}
		\prod_{\substack{p|Q \\ p>\beta}}\frac{p-1}{p} 
		+ O\big(2^{\omega(Q)}\log(2R)\big) \\
		&=  \bigg(\frac{R}{2C}+O\big(3^{\omega(Q)}\log(2R) \big) \bigg) 	\prod_{\substack{p|Q \\ 3\le p\le \beta}}\frac{p-2}{p-1}
			\prod_{\substack{p|Q \\ p>\beta}}\frac{p-1}{p} .
	\end{align*}
	Since $\{p|Q:3\le p\le \beta\}=\{p|Q_1:3\le p\le y\}$ and $\{p|Q: p> \beta\}=\{p|Q_2: p\le y\}$, we conclude that
	\[
	S\le \bigg(\frac{R}{2C}+O\big(3^{\omega(Q)}\log(2R) \big) \bigg) 
		\prod_{\substack{p|Q_1 \\ 3\le p\le y}}\frac{p-2}{p-1} \prod_{\substack{p|Q_2 \\ p\le y}}\frac{p-1}{p}  .
	\]
	Furthermore, note that $\frac{p}{p-1}\leq \frac{p-1}{p-2}\le 3^{1/p}$ for $p\ge y\ge 100$. Therefore,
	\begin{equation}
\label{eq:S almost there}
	\begin{split}
		S&\le \bigg(\frac{R}{2C}+O\big(3^{\omega(Q)}\log(2R) \big) \bigg)   3^{\sum_{p|st,\, p>y}\frac{1}{p}} 
		\prod_{\substack{p|Q_1 \\ p>2}} \frac{p-2}{p-1}
		\prod_{p|Q_2 }\frac{p-1}{p} .
	\end{split}
\end{equation}
	Finally, note that $\omega(Q)\le \omega(st;y)=\omega(\alpha/\beta;y)$, as well as $\omega(Q)\le \#\{p\le\beta\}+\omega(t)\le 0.5\beta+ 0.4\log t+O(1)$. Since $t=R\beta$, we conclude that
	\[
	3^{\omega(Q)}\log(2R) 
	\ll \min\big\{3^{\omega(\alpha/\beta;y)} \log(2R),  2^\beta R^{1/2}\big\} \le R\cdot \eta.
	\]
	Inserting the above estimate into \eqref{eq:S almost there}, and then combining the resulting inequality with \eqref{eq:intersection asymptotic - almost there} completes the proof of \eqref{eq:intersection inequality asymptotic}, and hence of Lemma \ref{lem:overlap estimate for rational ratios}.\qed

\subsection{Deduction of Lemmas \ref{lem:overlap estimate for ratios with large R} and \ref{lem:overlap estimate for rational ratios of typical rationals} from Lemma \ref{lem:overlap estimate for rational ratios}}

\begin{proof}[Proof of Lemma \ref{lem:overlap estimate for ratios with large R}]
	We use the estimate \eqref{eq:intersection inequality asymptotic} in Lemma \ref{lem:overlap estimate for rational ratios} with $y=\infty$. Let $R=[\alpha,\beta]$, and let us first consider the case when $R\ge \alpha^2$. Write $\alpha/\beta = s/t$ for coprime integers $s,t$. Since $\alpha>\beta$, we have $t<s=R\alpha \le R^{3/2}$, and thus
    $\omega(\alpha/\beta; \infty)=\omega(st) \ll \log R/\log\log R$. Hence, 
    \[
 	3^{\omega(\alpha/\beta;\infty)} \frac{1+\log R}{R}\ll R^{-1/2}\le \frac{1}{\alpha} \le \frac{1}{\beta}.
    \]
    Hence, we have established the claimed estimate in the case when $\alpha/\beta\in\Q$ and $R\ge \alpha^2$.
    
    Finally, let us consider the case when $\alpha/\beta\in\Q$ and $R\ge 10^\beta$. We then note that 
    \[
   	\frac{2^\beta}{R^{1/2}} \ll \frac{1}{\beta}.
    \]
	Hence, Lemma \ref{lem:overlap estimate for rational ratios}, applied with $y=\infty$, completes the proof once again. 
\end{proof}

\begin{proof}[Proof of Lemma \ref{lem:overlap estimate for rational ratios of typical rationals}] 
We apply estimate \eqref{eq:intersection inequality crude} in Lemma \ref{lem:overlap estimate for rational ratios} to find that
	\begin{equation}
		\label{eq:intersection inequality crude encore}
	\frac{\P_T\big(\CN_\alpha\cap \CN_\beta\big)}{\P_T\big(\CN_\alpha\big) \P_T\big(\CN_\beta\big)} 
		\ll e^{L(\alpha/\beta;R)}+ \frac{\log \beta}{\alpha/\beta}
	\end{equation}
	as $T\to\infty$. Clearly, this proves part (a) of the lemma.
	
Let us now prove part (b), in which case $L(\alpha/\beta;R)>1$. By the definition of $z$, we have $L(\alpha/\beta;z)>1\ge L(\alpha/\beta;2z)$. Since $L(\alpha/\beta;R)>1$ here, we must have $R<2z$ as claimed. In addition, we have
\[
L(\alpha/\beta;R)\le \sum_{R<p\le 2z}\frac{1}{p}+ L(\alpha/\beta;2z)
	\le \log\bigg(\frac{\log(2z)}{\log(2R)}\bigg)+ O(1) 
\]
by Mertens' theorem. Inserting this bound into \eqref{eq:intersection inequality crude encore} completes the proof.
\end{proof}

%%%%%%%%%%%%%%%%%%%%%%%%%%%%%%%%%%%%%%%%%%%%%%%%%%%%%%%%%%%%%%%%%%%%%%%%%%%%%%%%%%%%%%%%%%%%%%%%%%%%%%%%%%%%%%

\section{A refinement of Behrend's theorem}\label{sec:behrend}

In order to prove Proposition \ref{prop:key proposition}, we shall need the following generalization of a result due to Behrend:

\begin{theorem}\label{thm:behrend} Let $z\ge y\ge2$, let $\CA\subset\N$ be a primitive set, and let $f$ be a multiplicative function such that $0\le f\le\tau_k$ for some fixed integer $k$. If we let $L=\sum_{p\le y} \frac{f(p)}{p}$, then we have the uniform estimate
	\[
	\sum_{a\in\CA\cap[z/y,z] } \frac{f(a)}{a} \ll_k \frac{\log y}{\sqrt{L+1}} \cdot \exp\bigg(\sum_{p\le z}\frac{f(p)-1}{p}\bigg). 
	\]
\end{theorem}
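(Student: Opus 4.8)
After replacing $\CA$ by $\CA\cap[z/y,z]$ (still primitive) we must bound $S:=\sum_{a\in\CA}f(a)/a$. When $L=O_k(1)$ the trivial bound $S\le\sum_{z/y<a\le z}f(a)/a\ll_k(\log y)\exp\!\big(\sum_{p\le z}\tfrac{f(p)-1}{p}\big)$ (Lemma~\ref{lem:mult-fnc-bound} and partial summation) already yields the theorem, so we may assume $L$ is large. Write each $a\in\CA$ as $a=a_0a_1$ with $a_0$ the largest $y$-smooth divisor of $a$ and $a_1$ its $y$-rough complement, so that $f(a)=f(a_0)f(a_1)$ by coprimality; for each $y$-rough $a_1$ the set $\CA_{a_1}:=\{a_0:\ a_0a_1\in\CA\}$ is a primitive set of $y$-smooth integers, all lying in the interval $[z/(ya_1),\,z/a_1]$ of multiplicative length $y$. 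Thus
\[
S=\sum_{a_1\ y\text{-rough}}\frac{f(a_1)}{a_1}\,\sigma(a_1),\qquad
\sigma(a_1):=\sum_{a_0\in\CA_{a_1}}\frac{f(a_0)}{a_0},
\]
and the problem is reduced to a weighted version of Behrend's theorem \cite{Be,AKS04} for \emph{primitive sets of $y$-smooth numbers confined to a short interval}.

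\textbf{Weighted Behrend for smooth numbers.} The heart of the matter is the clean estimate: for every primitive set $\CB$ of $y$-smooth integers,
\[
\sum_{b\in\CB}\frac{f(b)}{b}\ \ll_k\ \frac{e^{L}}{\sqrt{L+1}}.
\]
The source of the $\sqrt{L+1}$ is best seen probabilistically. Endow the countable set of $y$-smooth integers with the probability measure $\mu(\{b\})=Z_y^{-1}f(b)/b$, where $Z_y=\prod_{p\le y}\sum_{j\ge0}f(p^j)p^{-j}\asymp_k e^{L}$; under $\mu$ the exponents $v_p(b)$ $(p\le y)$ are independent, and $\Omega(b)=\sum_{p\le y}v_p(b)$ has mean and variance both $L+O_k(1)$, so a local limit theorem gives $\max_r\mu(\{b:\Omega(b)=r\})\ll(L+1)^{-1/2}$. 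Being primitive, $\CB$ is an antichain in the divisor lattice of the $y$-smooth integers --- a product of chains --- and a weighted Sperner / de Bruijn--Tengbergen--Kruyswijk-type inequality then bounds $\mu(\CB)$ by $\max_r\mu(\{b:\Omega(b)=r\})$; multiplying by $Z_y$ gives the displayed bound. (In practice one might instead replay Behrend's original level-counting argument directly, carrying the weight $f$ throughout.) I would then upgrade this to a range-sensitive form: if $\CB$ lies in $[X/y,X]$, then $\sum_{b\in\CB}f(b)/b$ is additionally small when $X$ is small --- since $\sum_{b\le X}f(b)/b\ll_k(\log(X+2))^k$ by Lemma~\ref{lem:mult-fnc-bound} --- and when $X$ is large --- since then $\sum_{b\in\CB}f(b)/b\le\sum_{b\ y\text{-smooth},\,b>X/y}f(b)/b$ is the tail of a convergent series, bounded via Rankin's trick with a shift of size $\asymp1/(k\log y)$.

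\textbf{Reassembly and the main obstacle.} Inserting the bound for $\sigma(a_1)$ gives $S\ll_k\frac{e^{L}}{\sqrt{L+1}}\sum_{a_1\ y\text{-rough}}\frac{f(a_1)}{a_1}\,\delta(z/(ya_1))$ with a factor $0\le\delta(\cdot)\le1$ decaying at both ends, so it remains to prove that this last sum is $\ll_k\frac{\log y}{e^{L}}\exp\!\big(\sum_{p\le z}\tfrac{f(p)-1}{p}\big)$, equivalently --- after simplifying the exponentials via Mertens --- of order at most $\frac{\log y}{\log z}\exp\!\big(\sum_{y<p\le z}\tfrac{f(p)}{p}\big)$. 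One splits the $a_1$-sum by the size of $z/a_1$: when $a_1$ is close to $z$ the enclosing intervals are tiny, so very little $y$-rough mass is available (equivalently $\sum_{y<p\le z}f(p)/p$-type sums are small when $z$ is only moderately larger than $y$); when $a_1$ is small the tail estimate for $\delta$ wins decisively; and the remaining dyadic ranges are a routine multiplicative-function computation via Lemma~\ref{lem:mult-fnc-bound} --- with care, the net effect matches the target in every regime of $z/y$. The real difficulties are two. First, establishing the weighted Sperner inequality for a \emph{general} product measure on the $y$-smooth divisor lattice: the Boolean case cannot simply be quoted, and the crude symmetric-chain bound is too weak here, so one needs either a weighted LYM / normalized-matching argument or a direct Behrend-style count that rebuilds the antichain structure while carrying $f$. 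Second --- and this is the conceptual crux --- anchoring the local-limit saving at the scale $y$, i.e.\ to $L=\sum_{p\le y}f(p)/p$ rather than to $\sum_{p\le z}f(p)/p$: the split into $y$-smooth and $y$-rough parts is precisely what localises the gain at $y$, but it is also what forces the delicate bookkeeping in the $a_1$-summation, where one must be careful not to be wasteful.
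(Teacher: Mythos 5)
Your decomposition is genuinely different from the paper's. The paper runs an Erd\H os--Behrend counting argument: it forms the products $n=am$ with $a\in\CA\cap[z/y,z]$ and $m\le y$ squarefree, compares the resulting weight $R=\sum_{z/y\le n\le zy}r(n)/n$ from above and below, and obtains the $1/\sqrt{L+1}$ saving by applying the \emph{classical, unweighted} Sperner theorem to the set of cofactors $n/a$ for each fixed $n$ (these are squarefree, $\le y^2$, and form an antichain by primitivity). Your route instead factors $a=a_0a_1$ into $y$-smooth and $y$-rough parts and pushes all the difficulty into a ``weighted Behrend theorem for primitive sets of $y$-smooth numbers''. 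That restructuring is legitimate in outline, but as written the proposal has two genuine gaps, both of which you flag yourself without closing.

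The first and more serious gap is the claimed bound $\sum_{b\in\CB}f(b)/b\ll_k e^{L}/\sqrt{L+1}$ for an arbitrary primitive set $\CB$ of $y$-smooth integers. The local limit theorem giving $\max_r\mu(\Omega=r)\ll(L+1)^{-1/2}$ is fine, but the reduction from an arbitrary antichain to a single $\Omega$-level is not: the standard LYM/normalized-matching machinery for products of chains bounds $\sum_r|\CF_r|/N_r$, which converts into a measure bound only when the measure is constant on rank levels. Here $\mu(\{b\})\propto f(b)/b$ varies wildly within a level, so neither Sperner for the Boolean lattice nor de Bruijn--Tengbergen--Kruyswijk can simply be quoted, and a weighted Sperner inequality for a general product measure on $\prod_{p\le y}\{1,p,p^2,\dots\}$ (with $f(p^j)/p^j$ not even log-concave in $j$ in general) is not a known off-the-shelf result. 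Your parenthetical fallback --- ``replay Behrend's argument carrying the weight $f$'' --- is essentially the paper's proof, at which point the smooth/rough factorization is superfluous. The second gap is the reassembly: you need $\sum_{a_1}f(a_1)a_1^{-1}\delta(z/(ya_1))\ll(\log y/\log z)\exp\big(\sum_{y<p\le z}f(p)/p\big)$, i.e.\ the decay of $\delta$ must recover a factor $\log y/\log z$ against a $y$-rough sum whose mass can concentrate at a single scale; this is plausible (Rankin at one end, the shortness of the $y$-rough sum over windows of multiplicative length $y^{O(1)}$ at the other) but is asserted, not proved, and is not a routine citation of Lemma~\ref{lem:mult-fnc-bound}. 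Until both steps are supplied, the proposal does not constitute a proof.
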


\begin{proof} Set $\CA'=\CA\cap[z/y,z]$, and define 
	\[
	r(n) \coloneqq \sum_{\substack{a\in\CA',\, a|n \\ n/a\ \text{square-free}}} f(a)f(n/a)
	\qquad\text{and}\qquad
	R\coloneqq \sum_{z/y\le n\le zy } \frac{r(n)}{n}.
	\]
	Notice that
	\begin{equation}
		\label{eq:R lb}
	R \ge \sum_{a\in\CA'} \frac{f(a)}{a} \sum_{\substack{m\le y \\  m\ \text{square-free} }} \frac{f(m)}{m} \gg_k \sum_{a\in\CA'} \frac{f(a)}{a} \cdot e^L,
	\end{equation}
	by the second part of Lemma \ref{lem:mult-fnc-bound}. Next, we give an upper bound on $R$. 
	
	Let $g$ be the multiplicative function defined by $g(p^k)=\max\{f(p^k),f(p)f(p^{k-1})\}$, so that $g(p)=f(p)$. Note that $f(a)f(n/a)\le g(n)$ whenever $a|n$ and $n/a$ is square-free. Hence, 
	\begin{equation}
		\label{eq:from r to r tilde}
	r(n) \le g(n) \#\{a\in\CA' : a|n,\ n/a\ \text{square-free} \} \eqqcolon g(n)\tilde{r}(n).
	\end{equation}
	If $n\le zy$ and $a\in \CA'$, then $n/a\le y^2$ because $a\ge z/y$. In particular, $n/a$ divides the $y^2$-smooth part of the radical of $n$. In addition, if $a$ and $b$ are distinct divisors of $n$ from the set $\CA'$, then $n/a$ does not divide $n/b$ by our assumption that $\CA$ is primitive. Hence, the integers counted by $\tilde{r}(n)$ are in 1-1 correspondence with a set $\CD$ consisting of integers that divide $ \prod_{p\le y^2,\ p|n}p$ and that they do not divide each other. In particular, Sperner's theorem \cite{Sp28}  implies that 
	\[
	\tilde{r}(n) = |\CD| \le 2^{\omega(n;y^2)}/\sqrt{\max\{1,\omega(n;y^2)\}}.
	\]
	Hence, 
	\[
	\tilde{r}(n)\le 
	\begin{cases} 
		2^{\omega(n;y^2)}/\sqrt{L+1}&\text{if}\ \omega(n;y^2)\ge L+1,\\
		2^{L+1} &\text{otherwise}.
	\end{cases}
	\]
	Together with \eqref{eq:from r to r tilde}, this implies that
	\[
	R \le \frac{1}{\sqrt{L+1}} \sum_{z/y\le n\le zy } \frac{g(n)2^{\omega(n;y^2)}}{n} 
	+ 2^{L+1}  \sum_{z/y\le n\le zy}  \frac{g(n)}{n}.
	\]
	Using the first part of Lemma \ref{lem:mult-fnc-bound} and partial summation, we find that
	\[
	\begin{split}
		\sum_{z/y\le n\le zy } \frac{g(n)2^{\omega(n;y^2)}}{n} 
		&\ll_k \frac{\log y}{\log z}  \exp\bigg( \sum_{p\le y^2 } \frac{2f(p)}{p} + \sum_{y^2<p\le zy} \frac{f(p)}{p}\bigg) \\
		&\asymp_k e^{2L} \exp\bigg(  \sum_{y<p\le z} \frac{f(p)-1}{p}\bigg) ,
	\end{split}
	\]
	as well as
	\[
	\begin{split}
		2^{L+1}\sum_{z/y\le n\le zy } \frac{g(n)}{n} 
		&\ll_k  (2e)^L  \exp\bigg( \sum_{y<p\le z} \frac{f(p)-1}{p}\bigg) .
	\end{split}
	\]
	In conclusion, 
	\[
	R \ll_k  \frac{e^{2L}}{\sqrt{L+1}} \exp\bigg(  \sum_{y<p\le z} \frac{f(p)-1}{p}\bigg) .
	\]
	Comparing the above estimate with \eqref{eq:R lb}, and noticing that $L=\log\log y+\sum_{p\le y}\frac{f(p)-1}{p}$ by Mertens' theorem completes the proof.
\end{proof}

\begin{corollary}\label{cor:behrend} Fix $C\ge1$. Let $z\ge y\ge3$ and let $q\in\N\cap[1,y^C]$. 
	In addition, let $\CA\subset\N$ be a primitive set, and let $f$ be a multiplicative function such that $1\le f\le\tau_k$ for some fixed integer $k$. Then
	\[
	\sum_{\substack{a\in\CA\cap[z/y,z] \\ \gcd(a,q)=1}} \frac{f(a)}{a} \ll_{k,C} \frac{\phi(q)}{q}\cdot 
	\frac{\log y} {\sqrt{\log\log y}} \cdot \exp\bigg(\sum_{p\le z}\frac{f(p)-1}{p}\bigg). 
	\]
\end{corollary}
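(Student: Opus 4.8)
The plan is to deduce the corollary from a single application of Theorem~\ref{thm:behrend}, with the coprimality condition absorbed into a modified multiplicative function. First I would set
\[
g(a) \coloneqq f(a)\cdot\one_{\gcd(a,q)=1},
\]
which is multiplicative (since $a\mapsto\one_{\gcd(a,q)=1}$ is) and satisfies $0\le g\le f\le\tau_k$. Since the left-hand side of the corollary equals $\sum_{a\in\CA\cap[z/y,z]}g(a)/a$, Theorem~\ref{thm:behrend} applied to $g$ gives
\[
\sum_{\substack{a\in\CA\cap[z/y,z]\\ \gcd(a,q)=1}}\frac{f(a)}{a}\ \ll_k\ \frac{\log y}{\sqrt{L_g+1}}\cdot\exp\bigg(\sum_{p\le z}\frac{g(p)-1}{p}\bigg),\qquad L_g\coloneqq\sum_{p\le y}\frac{g(p)}{p}=\sum_{\substack{p\le y\\ p\nmid q}}\frac{f(p)}{p}.
\]
The corollary then reduces to two elementary estimates: the lower bound $L_g+1\gg_{C}\log\log y$, and the Euler-factor comparison $\exp\big(\sum_{p\le z}\frac{g(p)-1}{p}\big)\ll_C\frac{\phi(q)}{q}\exp\big(\sum_{p\le z}\frac{f(p)-1}{p}\big)$; inserting both into the display above yields the claim.

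For the lower bound on $L_g$, use $f\ge1$ to get $L_g\ge\sum_{\substack{p\le y\\ p\nmid q}}\frac1p=\sum_{p\le y}\frac1p-\sum_{\substack{p\le y\\ p\mid q}}\frac1p$. The role of the hypothesis $q\le y^C$ is to make $\sum_{p\mid q}1/p$ negligible: among moduli of a given size this sum is largest for a primorial, and a primorial that is $\le y^C$ has all prime factors at most $w$ with $w\ll_C\log y$ (by Chebyshev's estimate for $\theta$), so $\sum_{p\mid q}1/p\le\sum_{p\le w}\frac1p=o(\log\log y)$ as $y\to\infty$, uniformly in $q\le y^C$. With Mertens' theorem this gives $L_g\ge(1+o(1))\log\log y$ as $y\to\infty$ (the $o(1)$ depending only on $C$), hence $L_g+1\gg_C\log\log y$ for $y$ past a threshold depending only on $C$, while for the remaining range $3\le y\ll_C1$ the bound is trivial since then $\log\log y\ll_C 1\le L_g+1$. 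I expect this to be the only substantive point of the proof: it is where the restriction $q\le y^C$---rather than an arbitrary modulus---is used, and the reason the denominator comes out as $\sqrt{\log\log y}$ instead of $\sqrt{L+1}$ for a possibly tiny $L$.

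The Euler-factor comparison is routine bookkeeping. Expanding and using $f\ge1$,
\[
\sum_{p\le z}\frac{g(p)-1}{p}=\sum_{p\le z}\frac{f(p)-1}{p}-\sum_{\substack{p\le z\\ p\mid q}}\frac{f(p)}{p}\ \le\ \sum_{p\le z}\frac{f(p)-1}{p}-\sum_{\substack{p\le z\\ p\mid q}}\frac1p,
\]
so it remains to check that $\prod_{p\mid q,\, p\le z}e^{-1/p}\ll_C\phi(q)/q$. This is immediate from $\prod_{p\mid q,\, p\le z}\frac{e^{-1/p}}{1-1/p}=\prod_{p\mid q,\, p\le z}\big(1+O(1/p^2)\big)\ll1$ (the product over all primes converging absolutely) together with the fact that $q\le y^C\le z^C$ forces at most $C$ primes dividing $q$ to exceed $z$, each contributing a bounded factor $(1-1/p)^{-1}\le(1-1/3)^{-1}$ to $q/\phi(q)$. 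Inserting the two estimates into the bound furnished by Theorem~\ref{thm:behrend} completes the proof.
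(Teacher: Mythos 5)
Your proposal is correct and follows essentially the same route as the paper: apply Theorem \ref{thm:behrend} to $g=f\cdot\one_{\gcd(\cdot,q)=1}$, extract the factor $\phi(q)/q$ from the Euler product over $p\mid q$ (using $q\le y^C\le z^C$ to control the few large prime divisors), and lower-bound $L_g+1\gg_C\log\log y$ via $\sum_{p\mid q}1/p\le\log\log\log y+O_C(1)$. The only cosmetic difference is that the paper isolates the primes $p\mid q$ with $p>y$ into a single $O_{k,C}(1)$ term up front, whereas you handle the tail $p>z$ directly at the Euler-product stage.
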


\begin{proof} First of all, note that
\begin{equation}
	\label{eq:behrend-missing primes}
	\sum_{\substack{p|q,\ p>y}} \frac{f(p)}{p}=O_{k,C}(1).
\end{equation}
Indeed, if $p|q$, then we have $p\le q\le y^C$, and thus the above estimate follows by Mertens' theorem and the fact that $0\le f(p)\le k$.
	
We are now ready to prove the corollary, which we will accomplish by applying Theorem \ref{thm:behrend} to the multiplicative function $g(a)\coloneqq f(a)\cdot \one_{\gcd(a,q)=1}$. Using \eqref{eq:behrend-missing primes} and our assumptions that $z\ge y$ and that $f\ge1$, we have
	\[
	\sum_{p\le z}\frac{g(p)}{p} 
	=  \sum_{p\le z}\frac{f(p)}{p} - \sum_{p|q}\frac{f(p)}{p}  +O_{k,C}(1) \le \sum_{p\le z}\frac{f(p)}{p} - \sum_{p|q}\frac{1}{p}  +O_{k,C}(1).
	\]
Thus, Theorem \ref{thm:behrend} implies that
\begin{equation}
	\label{eq:L for corollary 1}
\sum_{\substack{a\in\CA\cap[z/y,z] \\ \gcd(a,q)=1}} \frac{f(a)}{a} \ll_{k,C} \frac{\phi(q)}{q}\cdot 
\frac{\log y} {\sqrt{L+1}} \cdot \exp\bigg(\sum_{p\le z}\frac{f(p)-1}{p}\bigg)
\end{equation}
with $L=\sum_{p\le y}g(p)/p$. Using \eqref{eq:behrend-missing primes} once again, as well as our assumption that $1\le f\le\tau_k$, we find that
\[
	L = \sum_{p\le y}\frac{f(p)}{p} - \sum_{p\le y,\, p|q}\frac{f(p)}{p} 
		\ge \sum_{p\le y}\frac{1}{p} - \sum_{p|q}\frac{k}{p}  +O_{k,C}(1) .
\]
Moreover, $\sum_{p|q}1/p\le \log\log\log y+O_C(1)$ because $q\le y^C$. As a consequence, we have $L+1\gg_{k,C} \log\log y$.  Together with \eqref{eq:L for corollary 1}, this completes the proof of the corollary.
\end{proof}

\section{Weighted bipartite graphs}\label{sec:bipartite-graphs}

\begin{dfn}[weight of a set] Let $\mu:\CS\to\R_{\ge0}$ be a weight function defined on a set $\CS$. Given finite sets $\CT\subseteq\CS$ and $\CE\subseteq\CS^2$, we define their {\it weight} to be 
	\[
\mu(\CT) \coloneqq  \sum_{t\in\CT}\mu(t)
\quad\text{and}\quad 
\mu(\CE) \coloneqq \sum_{(s,t)\in\CE} \mu(s)\mu(t).
\]
\end{dfn}

\begin{dfn}[weighted bipartite graph]\label{dfn:bipartite-graph}
	Let $G$ be a quadruple $(\mu,\CV,\CW,\CE)$ such that
	\begin{enumerate}
		\item $\mu:\R_{>0}\to\R_{>0}$ is a positive weight function;
		\item $\CV$ and $\CW$ are finite sets of positive real numbers;
		\item $\CE\subseteq\CV\times \CW$, that is to say $(\CV,\CW,\CE)$ is a bipartite graph.
	\end{enumerate}			
	We then call $G$ a {\it weighted bipartite graph} with \emph{sets of vertices} $(\CV,\CW)$ and \emph{set of edges} $\CE$. 
	
	In addition, we say that $G$ is {\it non-trivial} if $\CE\neq\emptyset$ or, equivalently, if $\mu(\CE)>0$. (We must then also have that $\mu(\CV),\mu(\CW)>0$.)
\end{dfn}

\begin{dfn}[edge density and $\theta$-weight]
	Let $G = (\mu,\mathcal{V},\mathcal{W},\mathcal{E})$ be a weighted bipartite graph. 
	\begin{enumerate}[label=(\alph*)] 
		\item If $G$ is non-trivial, the (weighted) \emph{edge density} of $G$ is defined by
		\[
		\delta(G) = \frac{\mu(\mathcal{E})}{\mu(\mathcal{V}) \mu(\mathcal{W})}.
		\]
		Otherwise, we define $\delta(G)=0$.
		
		\item Let $\theta\ge1$. The \emph{$\theta$-weight} of $G$ is defined by
		\[
		\mu^{(\theta)}(G):=\delta(G)^{\theta}\mu(\mathcal{V})\mu(\mathcal{W}) .
		\]
			\end{enumerate}
\end{dfn}

\begin{rems*}
(a) Since the edge density of a graph is always $\leq 1$, we have
\begin{equation}\label{Eq:RelationEdgeWeight}
\mu^{(\theta')}(G) \le 	\mu^{(\theta)}(G) \leq \mu(\mathcal{E})
\quad\text{for all}\ \theta'\ge\theta\geq 1.
\end{equation}

\medskip 

(b) If $G$ is non-trivial, then we have that
	\begin{equation}
		\label{eq:theta-weight rewrite}
		\mu^{(\theta)}(G) = \frac{\mu(\CE)^\theta}{\mu(\CV)^{\theta-1}\mu(\CW)^{\theta-1}}. 
	\end{equation}
\end{rems*}

\begin{dfn}[Subgraph]\label{dfn:subgraph} 
	Consider two weighted bipartite graphs $G=(\mu,\CV,\CW,\CE)$ and $G'=(\mu',\CV',\CW',\CE')$. We say that $G'$ is a \emph{subgraph} of $G$ if
		\[
		\mu'=\mu,\quad \CV'\subseteq\CV,\quad \CW'\subseteq\CW,\quad \CE'\subseteq\CE.
		\]
We say that $G'$ is a {\it non-trivial subgraph} of $G$ if $\mu(\CE')>0$, that is to say if $G'$ is non-trivial as a weighted bipartite graph.
\end{dfn}

\begin{dfn}[Maximal graph]\label{maximal graph} 
	Let $G = (\mu,\mathcal{V},\mathcal{W},\mathcal{E})$ be a weighted bipartite graph. We say that $G$ is $\theta$-\emph{maximal} if for every subgraph $G'=(\mu,\CV',\CW',\CE')$ of $G$, we have that $\mu^{(\theta)}(G)\ge \mu^{(\theta)}(G')$. 
\end{dfn}

\begin{rems*}
(a) Hauke--Vazquez--Walker use a related notion in their work \cite{HSW} -- see the definition of $\CE'$ in Section 3 of their paper. They employed it to prove a result that is very similar to Lemma \ref{lem:max->connected}. The notion of maximal graphs also appears in \cite[Definition 5]{DS-quantitative}.

\medskip

(b) For any given graph $G= (\mu,\mathcal{V},\mathcal{W},\mathcal{E})$, since both the sets $\mathcal{V}$ and $\mathcal{W}$ are finite, there exists a maximal subgraph $G'$ of $G$. To see this, just list all  subgraphs of $G$, and pick out one with the largest possible $\theta$-weight. We will use this fact several times in the later sections without mentioning it again.
\end{rems*}

\begin{lemma}
	\label{lem:weight increase implies weight increase for larger values of theta}
	Let $\theta\ge1$, let $G = (\mu,\mathcal{V},\mathcal{W},\mathcal{E})$ be a non-trivial weighted bipartite graph and let $G'$ be a subgraph of $G$ such that $\mu^{(\theta)}(G')>\mu^{(\theta)}(G)$. We then have that
	\[
	\delta(G')>\delta(G) 
	\quad\text{and}\quad 
	\frac{\mu^{(\theta')}(G')}{\mu^{(\theta')}(G)} \ge	\frac{\mu^{(\theta)}(G')}{\mu^{(\theta)}(G)} >1 \quad\text{for all}\ \theta'\ge\theta.
	\]
\end{lemma}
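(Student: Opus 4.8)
The plan is to unwind the definition of the $\theta$-weight using \eqref{eq:theta-weight rewrite} and then separate the two claims. For the first claim, suppose toward a contradiction that $\delta(G')\le\delta(G)$. Since $G$ is non-trivial, $\mu(\CV),\mu(\CW)>0$, and moreover $\mu(\CV')\le\mu(\CV)$, $\mu(\CW')\le\mu(\CW)$ because $\CV'\subseteq\CV$ and $\CW'\subseteq\CW$. If $G'$ is trivial then $\mu^{(\theta)}(G')=0\le\mu^{(\theta)}(G)$, contradicting the hypothesis; so $G'$ is non-trivial and all four weights are positive. From the definition $\mu^{(\theta)}(G')=\delta(G')^\theta\mu(\CV')\mu(\CW')$ and $\mu^{(\theta)}(G)=\delta(G)^\theta\mu(\CV)\mu(\CW)$, the assumption $\delta(G')\le\delta(G)$ together with $\mu(\CV')\le\mu(\CV)$ and $\mu(\CW')\le\mu(\CW)$ and $\theta\ge1$ forces $\mu^{(\theta)}(G')\le\mu^{(\theta)}(G)$, contradicting $\mu^{(\theta)}(G')>\mu^{(\theta)}(G)$. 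Hence $\delta(G')>\delta(G)$.

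For the second claim, write the ratio using the formula $\mu^{(\theta)}(G')/\mu^{(\theta)}(G)=\bigl(\delta(G')/\delta(G)\bigr)^\theta\cdot\frac{\mu(\CV')\mu(\CW')}{\mu(\CV)\mu(\CW)}$, valid since all quantities are positive. Set $\rho\coloneqq\delta(G')/\delta(G)$ and $\sigma\coloneqq\frac{\mu(\CV')\mu(\CW')}{\mu(\CV)\mu(\CW)}$, so that $\mu^{(\theta)}(G')/\mu^{(\theta)}(G)=\rho^\theta\sigma$ and the hypothesis reads $\rho^\theta\sigma>1$. By the first claim $\rho>1$, and since $\CV'\subseteq\CV$, $\CW'\subseteq\CW$ we have $0<\sigma\le1$. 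For $\theta'\ge\theta$ we then get $\rho^{\theta'}\sigma=\rho^{\theta'-\theta}\cdot\rho^\theta\sigma\ge\rho^\theta\sigma$ because $\rho>1$ and $\theta'-\theta\ge0$; hence $\mu^{(\theta')}(G')/\mu^{(\theta')}(G)\ge\mu^{(\theta)}(G')/\mu^{(\theta)}(G)>1$, which is exactly what is claimed.

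I do not anticipate a genuine obstacle here: the entire content is the elementary observation that $\mu^{(\theta)}$ is a product of a density factor raised to the power $\theta$ and a factor $\mu(\CV)\mu(\CW)$ that can only shrink on passing to a subgraph, so any increase in $\mu^{(\theta)}$ must come from the density, and an increase in the density is only amplified by raising to a larger power. The only points needing a modicum of care are the bookkeeping that $G'$ is non-trivial (so that $\delta(G')$ and the rewriting \eqref{eq:theta-weight rewrite} make sense) and the monotonicity direction of $x\mapsto x^{\theta}$ for $x>1$ as $\theta$ grows.
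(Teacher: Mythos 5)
Your proof is correct and follows essentially the same route as the paper: both arguments hinge on the inequality $\mu^{(\theta)}(G')\le\delta(G')^\theta\mu(\CV)\mu(\CW)$ (equivalently, that the factor $\mu(\CV')\mu(\CW')/(\mu(\CV)\mu(\CW))$ is at most $1$) to get $\delta(G')>\delta(G)$, and then on the identity that the ratio of ratios equals $(\delta(G')/\delta(G))^{\theta'-\theta}\ge1$. The only cosmetic difference is that you phrase the first step as a contradiction while the paper argues directly.
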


\begin{proof} We have $\mu^{(\theta)}(G)>0$ because $G$ is non-trivial. Thus $\mu^{(\theta)}(G')>\mu^{(\theta)}(G)>0$, which implies that the graph $G'$ is also non-trivial. 
	
	Now, let $G'=(\mu,\CV',\CW',\CE')$. Since $\CV'\subseteq\CV$ and $\CW'\subseteq\CW$, we have
	\[
	\mu^{(\theta)}(G')\le \delta(G')^\theta \mu(\CV)\mu(\CW).
	\]
	Together with our assumption that $\mu^{(\theta)}(G')>\mu^{(\theta)}(G)$, this implies that $\delta(G')>\delta(G)$, as needed. 
	
	Lastly, note that 
	\[
	\frac{\mu^{(\theta')}(G')/\mu^{(\theta')}(G)}{\mu^{(\theta)}(G') / \mu^{(\theta)}(G)}
	= \bigg(\frac{\delta(G')}{\delta(G)}\bigg)^{\theta'-\theta} \ge 1
	\]
	for all $\theta'\ge\theta$, since we have already established that $\delta(G')/\delta(G)\ge1$. 
	This completes the proof of the lemma.
\end{proof}

\begin{rem*} 
In virtue of Lemma \ref{lem:weight increase implies weight increase for larger values of theta}, if $G$ is $\theta$-maximal for some $\theta\ge1$, then it is $\theta'$-maximal for all $\theta'\in[1,\theta]$. Indeed, it this were false, then there would exist some $\theta'\in[1,\theta]$ and a subgraph $G'$ of $G$ such that $\mu^{(\theta')}(G')>\mu^{(\theta')}(G)$. But then Lemma \ref{lem:weight increase implies weight increase for larger values of theta} (with the roles of $\theta'$ and $\theta$ reversed) would imply that $\mu^{(\theta)}(G')>\mu^{(\theta)}(G)$, which contradicts the maximality of $G$.
\end{rem*}

\begin{dfn}[Induced set of edges and subgraph]
Let $G=(\mu,\CV,\CW,\CE)$ be a weighted bipartite graph. If $\CV'\subseteq\CV$ and $\CW'\subseteq\CW$, we define
\[
\CE(\CV',\CW'):=\CE\cap(\CV'\times\CW')
\]
and call it the set of edges {\it induced} by $\CV'$ and $\CW'$. Similarly, we call $(\mu,\CV',\CW',\CE(\CV',\CW'))$ the subgraph of $G$ {\it induced} by $\CV'$ and $\CW'$.
\end{dfn}

\begin{dfn}[Neighbourhood sets and connectivity]\label{dfn:neighborhood sets} Let $G=(\mu,\CV,\CW,\CE)$ be a weighted bipartite graph. 
	\begin{enumerate}
		\item 	We define the \emph{neighbourhood sets} by
		\[
		\Gamma_G(v):=\{w\in\CW:\,(v,w)\in\CE\}\quad\text{for any}\ v\in\CV,
		\]
		and 
		\[
		\Gamma_G(w):=\{v\in\CV:\,(v,w)\in\CE\}
		\quad\text{for any}\ w\in \CW.
		\]			
		\item We say that $G$ is $\eta$-connected if for all $v\in\CV$ and all $w\in\CW$ we have 
		\[
		\mu(\Gamma_G(v))\geqslant \eta\cdot \mu(\CW)
		\quad\text{and}\quad 
		\mu(\Gamma_G(w))\geqslant \eta\cdot \mu(\CV).
		\]
	\end{enumerate}
\end{dfn}	

The notion of maximality implies good connectivity for all vertices.

\begin{lemma}[Maximality implies connectivity]\label{lem:max->connected}
	Let $G=(\mu,\CV,\CW,\CE)$ be a weighted bipartite graph with edge density $\delta>0$. If $G$ is $\theta$-maximal for some $\theta>1$, then $G$ is $(\frac{\theta-1}{\theta}\cdot\delta)$-connected.
\end{lemma}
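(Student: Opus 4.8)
The plan is to argue by contradiction: suppose $G$ is $\theta$-maximal but fails to be $(\frac{\theta-1}{\theta}\delta)$-connected. Then there is a vertex witnessing the failure; by the symmetry of the roles of $\CV$ and $\CW$, I may assume without loss of generality that there exists $w_0\in\CW$ with
\[
\mu(\Gamma_G(w_0)) < \frac{\theta-1}{\theta}\cdot\delta\cdot\mu(\CV).
\]
The idea is that deleting $w_0$ from $\CW$ (together with all edges incident to it) removes a relatively small amount of edge-weight compared to the amount of vertex-weight removed, so the edge density goes up enough that the $\theta$-weight strictly increases, contradicting maximality.

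First I would set up the notation: write $V=\mu(\CV)$, $W=\mu(\CW)$, $E=\mu(\CE)$, and let $G'=(\mu,\CV,\CW\setminus\{w_0\},\CE')$ be the induced subgraph, where $\CE'=\CE\setminus(\CV\times\{w_0\})$. Then $\mu(\CV)$ is unchanged, $\mu(\CW\setminus\{w_0\}) = W-\mu(w_0)$, and $\mu(\CE') = E - \mu(w_0)\mu(\Gamma_G(w_0))$ since the edge-weight lost is exactly $\sum_{v\in\Gamma_G(w_0)}\mu(v)\mu(w_0) = \mu(w_0)\mu(\Gamma_G(w_0))$. Using $E = \delta V W$ and the hypothesis $\mu(\Gamma_G(w_0)) < \frac{\theta-1}{\theta}\delta V$, the edge-weight loss is strictly less than $\frac{\theta-1}{\theta}\cdot\delta V\cdot\mu(w_0) = \frac{\theta-1}{\theta}\cdot\frac{\mu(w_0)}{W}\cdot E$.

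Next, using the formula $\mu^{(\theta)}(G) = \mu(\CE)^\theta/(\mu(\CV)^{\theta-1}\mu(\CW)^{\theta-1})$ from \eqref{eq:theta-weight rewrite} (valid since $G$ is non-trivial as $\delta>0$), I would compare
\[
\frac{\mu^{(\theta)}(G')}{\mu^{(\theta)}(G)} = \left(\frac{\mu(\CE')}{E}\right)^\theta \left(\frac{W}{W-\mu(w_0)}\right)^{\theta-1}.
\]
Writing $t = \mu(w_0)/W \in (0,1)$, the edge-ratio bound gives $\mu(\CE')/E > 1 - \frac{\theta-1}{\theta}t$, while the vertex factor is $(1-t)^{-(\theta-1)}$. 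So it suffices to show
\[
\left(1 - \tfrac{\theta-1}{\theta}t\right)^\theta (1-t)^{-(\theta-1)} \ge 1 \quad\text{for all } t\in(0,1),\ \theta>1,
\]
which would force $\mu^{(\theta)}(G') > \mu^{(\theta)}(G)$, contradicting $\theta$-maximality. (One must also check $G'$ is non-trivial, or handle separately the degenerate case where deleting $w_0$ kills all edges — but if $\mu(\CE')=0$ then $E = \mu(w_0)\mu(\Gamma_G(w_0)) < \frac{\theta-1}{\theta}t\,E < E$, a contradiction, so this cannot happen.)

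**The main obstacle** is verifying the elementary inequality $(1-\frac{\theta-1}{\theta}t)^\theta \ge (1-t)^{\theta-1}$. I would prove it by taking logarithms and showing $\phi(t) := \theta\log(1-\frac{\theta-1}{\theta}t) - (\theta-1)\log(1-t) \ge 0$ on $[0,1)$. We have $\phi(0)=0$, and
\[
\phi'(t) = \frac{-(\theta-1)}{1-\frac{\theta-1}{\theta}t} + \frac{\theta-1}{1-t} = (\theta-1)\left(\frac{1}{1-t} - \frac{\theta}{\theta - (\theta-1)t}\right),
\]
and since $\theta - (\theta-1)t - \theta(1-t) = t > 0$ we get $\theta - (\theta-1)t > \theta(1-t)$, hence the second fraction is $< \frac{1}{1-t}$, so $\phi'(t) > 0$ on $(0,1)$; thus $\phi \ge 0$ throughout, with equality only at $t=0$. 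This strict inequality for $t>0$ (equivalently $\mu(w_0)>0$, which holds since $\mu$ is positive-valued) delivers the contradiction and completes the proof.
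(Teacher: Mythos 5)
Your proposal is correct and follows essentially the same route as the paper: delete the offending vertex, note that the edge-weight lost is strictly less than $\frac{\theta-1}{\theta}t\,\mu(\CE)$ while the vertex-weight factor shrinks by $1-t$, and conclude a strict increase in $\mu^{(\theta)}$ contradicting maximality. The only cosmetic difference is that you verify the inequality $(1-\frac{\theta-1}{\theta}t)^{\theta}\ge(1-t)^{\theta-1}$ by differentiating its logarithm, whereas the paper gets it directly from Bernoulli's inequality in the form $(1-\frac{\theta-1}{\theta}t)^{\theta/(\theta-1)}\ge 1-t$.
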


\begin{proof} We use a simple modification of the argument leading to \cite[Lemma 10.1]{DS} and to \cite[Lemma 10.1]{DS-quantitative}.
	Assume for contradiction that there exists some $v\in\CV$ with $\mu(\Gamma_{G}(v))<\frac{\theta}{\theta-1}\delta\mu(\CW)$, and set $G'=(\mu,\CV\setminus\{v\},\CW,\CE')$ with $\CE'=\CE(\CV\setminus\{v\},\CW)$. We claim that $\mu^{(\theta)}(G')>\mu^{(\theta)}(G)$. Equivalently, we need to show that
	\[
	\bigg(\frac{\mu(\CE')}{\mu(\CE)}\bigg)^\theta > \bigg(\frac{\mu(\CV\setminus\{v\})}{\mu(\CV)}\bigg)^{\theta-1}.
	\]
	Note that $\mu(\CE')>\delta\mu(\CV)\mu(\CW)-\frac{\theta-1}{\theta}\delta \mu(v)\mu(\CW)$, because we have assumed that $\mu(\Gamma_{G}(v))<\frac{\theta-1}{\theta}\delta\mu(\CW)$ and because $\mu(v)>0$ (see part (a) of Definition \ref{dfn:bipartite-graph}). Hence,  $\mu(\CE')/\mu(\CE)>1-\frac{\theta-1}{\theta}\mu(v)/\mu(\CV)$. Here $\frac{\theta}{\theta-1}>1$, and thus $(\mu(\CE')/\mu(\CE))^{\frac{\theta}{\theta-1}}>1-\mu(v)/\mu(\CV)=\mu(V\setminus \{v\})/\mu(\CV)$. This proves that $\mu^{(\theta)}(G')>\mu^{(\theta)}(G)$, in contradiction to the $\theta$-maximality of $G$. We must thus have  $\mu(\Gamma_{G}(v))\ge \frac{\theta-1}{\theta}\delta\mu(\CW)$, as needed.
	
	Similarly, we may disprove the existence of $w\in\CW$ with $\mu(\Gamma_G(w))<\frac{\theta-1}{\theta} \delta\mu(\CV)$. This completes the proof of the lemma.
\end{proof}

Next, we have the following important technical lemma, which is based on some ideas in \cite{DS} -- see relations (7.4) and (7.5) there and the discussion around them.

\begin{lemma}[Subgraph where all vertices have a common neighbor, I]\label{lem:common neighbor 1}
	Let $\theta>2$, let $G=(\mu,\CV,\CW,\CE)$ be a $\theta$-maximal, non-trivial weighted bipartite graph, and let $w_0\in\CW$.  There exists 
	a subgraph $G'=(\mu,\CV',\CW',\CE')$ of $G$ such that:
	\begin{enumerate}
		\item $(v,w_0)\in \CE'$ for all $v\in\CV'$;
		\item for all $w\in\CW'$, there exists $v\in\CV'$ such that $(v,w)\in\CE'$;
		\item $\mu^{(\theta)}(G)\le (1-1/\theta)^{-\theta} \mu^{(\theta-1)}(G')$.
	\end{enumerate}
\end{lemma}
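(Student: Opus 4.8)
The plan is to peel off, inside the neighbourhood $\Gamma_G(w_0)$, a subgraph that is maximal for the shifted exponent $\theta-1$, and to show that this subgraph is automatically rich enough to carry the $\theta$-weight of $G$. First I would set $\CV_0 \coloneqq \Gamma_G(w_0)$ and consider the induced subgraph $G_0 = (\mu,\CV_0,\CW,\CE(\CV_0,\CW))$. Every edge of $G$ incident to $w_0$ survives in $G_0$, and by construction every $v\in\CV_0$ is joined to $w_0$. The key quantitative input is that $\mu(\CV_0) = \mu(\Gamma_G(w_0))$ is large: since $G$ is $\theta$-maximal with $\theta>2>1$, Lemma \ref{lem:max->connected} gives that $G$ is $(\frac{\theta-1}{\theta}\delta)$-connected, hence
\[
\mu(\CV_0) \ge \frac{\theta-1}{\theta}\,\delta\,\mu(\CV),
\]
where $\delta = \delta(G)$. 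I would then pass to a $(\theta-1)$-maximal subgraph $G' = (\mu,\CV',\CW',\CE')$ of $G_0$ (which exists by finiteness, as noted after Definition \ref{maximal graph}); I may assume $G'$ is non-trivial, since $G_0$ is non-trivial because $w_0$ has at least one neighbour in a non-trivial $G$ — and if $w_0$ were isolated we could instead argue that $\mu^{(\theta)}(G)=\mu^{(\theta)}(G\setminus\{w_0\})$ and reduce, but in fact $G$ being non-trivial and $\theta$-maximal forces every vertex to have positive-weight neighbourhood by Lemma \ref{lem:max->connected}. Properties (a) and (b) are then immediate: (a) holds because $\CV'\subseteq\CV_0 = \Gamma_G(w_0)$ and $(v,w_0)\in\CE$ for such $v$, while $(v,w_0)$ lies in $\CE'$ — here one should take $\CE'$ to be exactly $\CE(\CV',\CW')\cup\{(v,w_0):v\in\CV'\}$, or more cleanly restrict attention to the induced subgraph on $\CV'\cup\{w_0\}$; property (b) holds because $G'$ is non-trivial and $(\theta-1)$-maximal, so by Lemma \ref{lem:max->connected} applied with exponent $\theta-1>1$ it is $\frac{\theta-2}{\theta-1}\delta(G')$-connected, in particular every $w\in\CW'$ has a neighbour in $\CV'$ (alternatively, just discard from $\CW'$ any vertices of degree $0$, which only increases $\mu^{(\theta-1)}(G')$, so maximality already guarantees (b)).

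The substance is property (c). Here I would compare $\mu^{(\theta-1)}(G')$ against $\mu^{(\theta-1)}(G_0)$ and then relate $\mu^{(\theta-1)}(G_0)$ to $\mu^{(\theta)}(G)$. By $(\theta-1)$-maximality of $G'$ inside $G_0$,
\[
\mu^{(\theta-1)}(G') \ge \mu^{(\theta-1)}(G_0) = \delta(G_0)^{\theta-1}\mu(\CV_0)\mu(\CW).
\]
Now $\mu(\CE(\CV_0,\CW)) \ge \mu(\Gamma_G(w_0))\cdot\text{(something)}$ — more precisely, the edges of $G$ from $\CV\setminus\CV_0$ are, by definition of $\CV_0$, exactly those not touching $w_0$; but the cleaner route is to use $\delta(G_0)\,\mu(\CV_0)\mu(\CW) = \mu(\CE(\CV_0,\CW)) \ge \mu(\CE) - \mu(\CV\setminus\CV_0)\,\mu(\CW)$ is not quite what I want either. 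Instead I would note the \emph{identity}
\[
\mu(\CE(\CV_0,\CW)) \;=\; \sum_{v\in\CV_0}\mu(v)\,\mu(\Gamma_G(v)),
\]
and bound this below; but the slickest argument is: $\delta(G_0) \ge \delta(G)$ is false in general, so one genuinely needs the weight $\mu(\CV_0)$ to be large. Write
\[
\mu^{(\theta-1)}(G_0) = \frac{\mu(\CE(\CV_0,\CW))^{\theta-1}}{\mu(\CV_0)^{\theta-2}\mu(\CW)^{\theta-2}}
\ge \frac{\big(\delta\,\mu(\CV_0)\mu(\CW)\big)^{\theta-1}}{\mu(\CV_0)^{\theta-2}\mu(\CW)^{\theta-2}}
= \delta^{\theta-1}\mu(\CV_0)\mu(\CW),
\]
where the inequality $\mu(\CE(\CV_0,\CW))\ge\delta\,\mu(\CV_0)\mu(\CW)$ is the crucial claim — it says the bipartite density of $G$ restricted to $\CV_0$ is at least that of $G$, which is \emph{false} for a generic induced subgraph, so this claim must instead be obtained from $\theta$-maximality: if $\mu(\CE(\CV_0,\CW)) < \delta\mu(\CV_0)\mu(\CW)$, then removing $\CV\setminus\CV_0$ from $G$... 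I would actually argue the reverse: maximality of $G$ implies that deleting any vertex set cannot increase $\mu^{(\theta)}$, and a short convexity computation converts this into the stated lower bound on $\mu(\CE(\CV_0,\CW))$. Granting the claim, plug in $\mu(\CV_0)\ge\frac{\theta-1}{\theta}\delta\mu(\CV)$ to get
\[
\mu^{(\theta-1)}(G') \ge \delta^{\theta-1}\cdot\frac{\theta-1}{\theta}\,\delta\,\mu(\CV)\cdot\mu(\CW)
= \Big(1-\tfrac1\theta\Big)\,\delta^{\theta}\mu(\CV)\mu(\CW) = \Big(1-\tfrac1\theta\Big)\mu^{(\theta)}(G),
\]
which is a \emph{stronger} bound than (c) (since $(1-1/\theta)^{-\theta}\ge 1 \ge (1-1/\theta)$ is the wrong direction — so the true argument must lose a further factor $(1-1/\theta)^{\theta-1}$ somewhere, presumably in passing from $\CV_0$ to $\CV'$ via the connectivity of $G'$, which shrinks $\mu(\CW')$ relative to $\mu(\CW)$). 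I would track that loss carefully through the chain $\mu^{(\theta-1)}(G')\ge\mu^{(\theta-1)}(G_0)$ and arrive at $\mu^{(\theta)}(G)\le(1-1/\theta)^{-\theta}\mu^{(\theta-1)}(G')$.

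The main obstacle I anticipate is precisely the comparison $\mu^{(\theta-1)}(G_0)$ versus $\mu^{(\theta)}(G)$: one must use $\theta$-maximality of $G$ to show that restricting the vertex set $\CV$ down to the neighbourhood $\CV_0 = \Gamma_G(w_0)$ does not decrease the density by more than a controlled amount, and simultaneously use the connectivity bound $\mu(\Gamma_G(w_0))\ge\frac{\theta-1}{\theta}\delta\mu(\CV)$ from Lemma \ref{lem:max->connected}. Combining a density estimate with a weight estimate — rather than having one clean monotonicity — is where the exponent drops from $\theta$ to $\theta-1$ and where the constant $(1-1/\theta)^{-\theta}$ is born. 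The routine parts are verifying (a) and (b), which follow formally from the choice of $\CV_0$ and the maximality (hence, via Lemma \ref{lem:max->connected}, the connectivity) of $G'$, plus discarding isolated vertices.
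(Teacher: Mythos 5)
Your setup coincides with the paper's: take $\CV_0=\Gamma_G(w_0)$, form the induced graph $G_0$ on $(\CV_0,\CW)$, use Lemma \ref{lem:max->connected} to get $\mu(\CV_0)\ge\frac{\theta-1}{\theta}\delta\mu(\CV)$, and pass to a $(\theta-1)$-maximal subgraph $G'$ of $G_0$ to secure properties (a) and (b). However, the one quantitative step that carries the content of (c) is wrong as written. You need a lower bound on $\mu(\CE(\CV_0,\CW))$, and you propose $\mu(\CE(\CV_0,\CW))\ge\delta\,\mu(\CV_0)\mu(\CW)$, i.e.\ $\delta(G_0)\ge\delta(G)$, hoping to extract it from $\theta$-maximality via "deleting vertices cannot increase $\mu^{(\theta)}$''. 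That inequality points the wrong way: maximality gives $\mu^{(\theta)}(G_0)\le\mu^{(\theta)}(G)$, i.e.\ $\delta(G_0)^{\theta}\mu(\CV_0)\mu(\CW)\le\delta^{\theta}\mu(\CV)\mu(\CW)$, which is an \emph{upper} bound $\delta(G_0)\le\delta\,(\mu(\CV)/\mu(\CV_0))^{1/\theta}$, not a lower bound, and no convexity computation will reverse it. Indeed $\delta(G_0)\ge\delta(G)$ is simply false in general.

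The correct input is Lemma \ref{lem:max->connected} again, but applied at every $v\in\CV_0$ rather than only at $w_0$: since the $\CW$-side of $G_0$ is all of $\CW$, one has
\[
\mu\big(\CE(\CV_0,\CW)\big)=\sum_{v\in\CV_0}\mu(v)\,\mu\big(\Gamma_G(v)\big)\ \ge\ \frac{\theta-1}{\theta}\,\delta\,\mu(\CV_0)\mu(\CW),
\]
so $\delta(G_0)\ge(1-1/\theta)\delta$. This is exactly where the factor $(1-1/\theta)^{\theta-1}$ you could not locate is born --- not, as you guess, in passing from $G_0$ to $G'$, which is lossless because $(\theta-1)$-maximality gives $\mu^{(\theta-1)}(G')\ge\mu^{(\theta-1)}(G_0)$. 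Combining $\delta(G_0)\ge(1-1/\theta)\delta$ with $\mu(\CV_0)\ge(1-1/\theta)\delta\mu(\CV)$ yields $\mu^{(\theta-1)}(G_0)=\delta(G_0)^{\theta-1}\mu(\CV_0)\mu(\CW)\ge(1-1/\theta)^{\theta}\mu^{(\theta)}(G)$, which is (c). So the skeleton of your argument is the right one, but the single inequality doing the real work is absent, and the claim you put in its place is false.
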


\begin{proof} Let $\delta\coloneqq \delta(G)$. Since $G$ is non-trivial, we must have $\delta>0$. Moreover, since $G$ is maximal, Lemma \ref{lem:max->connected} implies that it is $(\frac{\theta-1}{\theta}\cdot\delta)$-connected. Let $G_1=(\mu,\CV_1,\CW_1,\CE_1)$ with $\CV_1=\Gamma_{G}(w_0)$, $\CW_1=\CW$, and $\CE_1=\CE(\CV_1,\CW_1)$. We have that
\[
\begin{split}
\mu(\CE_1) 
	=\sum_{v\in \Gamma_G(w_0)}\mu(v) \mu(\Gamma_{G}(v))
	&\ge \sum_{v\in \Gamma_{G}(w_0)}\mu(v) \cdot(1-1/\theta)\delta\cdot  \mu(\CW) \\	
	&=  (1-1/\theta)\delta  \cdot \mu(\CV_1)\mu(\CW_1),
\end{split}
\]
where we used that $G$ is $(\frac{\theta-1}{\theta}\cdot\delta)$-connected. In particular, we have $\delta_1\coloneqq\delta(G_1)\ge (1-1/\theta)\delta$. In addition, since $\mu(\CV_1)=\mu(\Gamma_{G}(w_0))\ge (1-1/\theta)\delta \cdot \mu(\CV)$ and $\CW_1=\CW$, we also find that
\[
\delta^\theta \mu(\CV)\mu(\CW)  \le (1-1/\theta)^{-1} \delta^{\theta-1}\mu(\CV_1)\mu(\CW_1) \le (1-1/\theta)^{-\theta} \delta_1^{\theta-1}\mu(\CV_1)\mu(\CW_1).
\]
We have thus proven that $\mu^{(\theta)}(G)\le (1-1/\theta)^{-\theta}\mu^{(\theta-1)}(G_1)$. To complete the proof, let $G'=(\mu,\CV',\CW',\CE')$ be a $(\theta-1)$-maximal subgraph of $G_1$. We then have that 
\[
\mu^{(\theta)}(G)\le  (1-1/\theta)^{-\theta}\mu^{(\theta-1)}(G_1)\le (1-1/\theta)^{-\theta} \mu^{(\theta-1)}(G').
\]
In addition, Lemma \ref{lem:max->connected} implies that $\mu(\Gamma_{G'}(w))>0$ for all $w\in\CW'$; in particular, $\Gamma_{G'}(w)\neq\emptyset$. Lastly, we have that $(v,w_0)\in\CE'$ for all $v\in\CV'$ because $G'$ is a subgraph of $G_1$. This completes the proof. 
\end{proof}

The symmetric version of Lemma \ref{lem:common neighbor 1} obviously also holds:

\begin{lemma}[Subgraph where all vertices have a common neighbor, II]\label{lem:common neighbor 2}
Let $\theta>2$, let $G=(\mu,\CV,\CW,\CE)$ be a $\theta$-maximal, non-trivial weighted bipartite graph, and let $v_0\in\CV$.  There exists 
	a subgraph $G'=(\mu,\CV',\CW',\CE')$ of $G$ such that:
	\begin{enumerate}
		\item $(v_0,w)\in \CE'$ for all $w\in\CW'$;
		\item for all $v\in\CV'$, there exists $w\in\CW'$ such that $(v,w)\in\CE'$;
		\item $\mu^{(\theta)}(G)\le (1-1/\theta)^{-\theta} \mu^{(\theta-1)}(G')$.
	\end{enumerate}
\end{lemma}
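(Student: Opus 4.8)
The statement to prove is Lemma \ref{lem:common neighbor 2}, the symmetric version of Lemma \ref{lem:common neighbor 1}. The plan is simply to invoke the symmetry of the setup: a weighted bipartite graph $G=(\mu,\CV,\CW,\CE)$ has a mirror image $G^\ast=(\mu,\CW,\CV,\CE^\ast)$ where $\CE^\ast=\{(w,v):(v,w)\in\CE\}$, obtained by swapping the two vertex classes. All the relevant quantities are insensitive to this swap: $\mu(\CE^\ast)=\mu(\CE)$, $\delta(G^\ast)=\delta(G)$, and hence $\mu^{(\theta)}(G^\ast)=\mu^{(\theta)}(G)$ for every $\theta\ge1$; moreover subgraphs of $G^\ast$ correspond bijectively to subgraphs of $G$ with the same $\theta$-weight, so $G$ is $\theta$-maximal if and only if $G^\ast$ is $\theta$-maximal, and $G$ is non-trivial iff $G^\ast$ is non-trivial.

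First I would apply Lemma \ref{lem:common neighbor 1} to the graph $G^\ast$ with the distinguished vertex $w_0:=v_0\in\CW(G^\ast)=\CV$. This produces a subgraph $H'=(\mu,\CV^\ast,\CW^\ast,\CF)$ of $G^\ast$ satisfying: $(v,v_0)\in\CF$ for all $v\in\CV^\ast$; for every $w\in\CW^\ast$ there is $v\in\CV^\ast$ with $(v,w)\in\CF$; and $\mu^{(\theta)}(G^\ast)\le(1-1/\theta)^{-\theta}\mu^{(\theta-1)}(H')$. Then I would transport $H'$ back through the mirror, setting $G'=(\mu,\CW^\ast,\CV^\ast,\CF^\ast)$ where $\CF^\ast=\{(v,w):(w,v)\in\CF\}$; this is a subgraph of $(G^\ast)^\ast=G$. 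Relabelling $\CV':=\CW^\ast$ and $\CW':=\CV^\ast$, property (1) of $H'$ becomes exactly ``$(v_0,w)\in\CE'$ for all $w\in\CW'$'', property (2) becomes ``for all $v\in\CV'$ there is $w\in\CW'$ with $(v,w)\in\CE'$'', and the $\theta$-weight inequality becomes $\mu^{(\theta)}(G)=\mu^{(\theta)}(G^\ast)\le(1-1/\theta)^{-\theta}\mu^{(\theta-1)}(H')=(1-1/\theta)^{-\theta}\mu^{(\theta-1)}(G')$. That is precisely the three conclusions claimed, so the proof is complete.

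There is essentially no obstacle here: the only thing to be careful about is bookkeeping — making sure the roles of $\CV$ and $\CW$, and of $v$ and $w$, are swapped consistently everywhere, and that the notions of subgraph, non-triviality, $\theta$-weight and $\theta$-maximality are all manifestly symmetric under the swap (which they are, since they depend on $(\CV,\CW)$ only through the unordered data $\mu(\CV),\mu(\CW),\mu(\CE)$ and the bipartite incidence structure). Since the excerpt already remarks that ``the symmetric version obviously also holds'', in the actual write-up one could legitimately leave this as a one-line observation; I would include the above short paragraph spelling out the mirror correspondence for the reader's convenience rather than reproving Lemma \ref{lem:common neighbor 1} with the roles reversed.
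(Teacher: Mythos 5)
Your proposal is correct and matches the paper's intent: the paper simply remarks that the symmetric version ``obviously also holds,'' and your mirror-graph argument is exactly the symmetry it has in mind. Spelling out that $\mu(\CE)$, $\delta$, $\mu^{(\theta)}$, maximality and non-triviality are all invariant under swapping $\CV$ and $\CW$ is a harmless and complete justification.
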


\begin{lemma}[Few edges between small sets]\label{lem:SmallSetEdges}
	Let	$G=(\mu,\CV,\CW,\CE)$ be a $\theta$-maximal weighted bipartite graph with edge density $\delta>0$, and let $\eta\in(0,1]$. Then, for all sets $\CA\subseteq\CV$ and $\CB\subseteq\CW$ such that $\mu(\CA)\leqslant \eta\cdot  \mu(\CV)$ and  $\mu(\CB)\leqslant\eta \cdot \mu(\CW)$, we have $\mu(\CE(\CA,\CB))\leqslant \eta^{2-2/\theta} \cdot \mu(\CE)$.
\end{lemma}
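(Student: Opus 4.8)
The plan is to mimic the argument of Lemma \ref{lem:max->connected}, but now remove the offending sets $\CA$ and $\CB$ from $\CV$ and $\CW$ simultaneously rather than removing a single vertex. More precisely, I would argue by contradiction: suppose $\mu(\CE(\CA,\CB))> \eta^{2-2/\theta}\cdot\mu(\CE)$ for some $\CA\subseteq\CV$, $\CB\subseteq\CW$ with $\mu(\CA)\le\eta\,\mu(\CV)$ and $\mu(\CB)\le\eta\,\mu(\CW)$. Consider the induced subgraph $G'=(\mu,\CV\setminus\CA,\CW\setminus\CB,\CE')$ where $\CE'=\CE(\CV\setminus\CA,\CW\setminus\CB)$. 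The goal is to show $\mu^{(\theta)}(G')>\mu^{(\theta)}(G)$, contradicting $\theta$-maximality.

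For the key computation, note that every edge of $\CE$ either lies in $\CE'$, or has its $\CV$-endpoint in $\CA$, or has its $\CW$-endpoint in $\CB$. The edges with $\CV$-endpoint in $\CA$ have total weight $\sum_{v\in\CA}\mu(v)\mu(\Gamma_G(v))\le \mu(\CA)\mu(\CW)\le \eta\,\mu(\CV)\mu(\CW)=\eta\,\mu(\CE)/\delta$; similarly the edges with $\CW$-endpoint in $\CB$ have total weight at most $\eta\,\mu(\CE)/\delta$. Hence $\mu(\CE')\ge \mu(\CE)-2\eta\mu(\CE)/\delta$. Hmm — this bound involves $1/\delta$, which is the wrong direction. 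Let me reconsider: the cleaner route is to use the trivial bound $\sum_{v\in\CA}\mu(v)\mu(\Gamma_G(v))\le \mu(\CA)\cdot\mu(\CW)$ together with $\mu(\CE)=\delta\,\mu(\CV)\mu(\CW)$, but to avoid the $1/\delta$ loss I should instead bound the removed edges directly by $\mu(\CE(\CA,\CW))$ and $\mu(\CE(\CV,\CB))$ and iterate the removal one side at a time, exactly as in the proof that maximality gives connectivity. That is, first pass to $G_1=(\mu,\CV\setminus\CA,\CW,\CE(\CV\setminus\CA,\CW))$ and use $\theta$-maximality in the form $\mu^{(\theta)}(G_1)\le \mu^{(\theta)}(G)$, which by \eqref{eq:theta-weight rewrite} gives $(\mu(\CE_1)/\mu(\CE))^\theta\le (\mu(\CV\setminus\CA)/\mu(\CV))^{\theta-1}\le(1-\mu(\CA)/\mu(\CV))^{\theta-1}$; hence $\mu(\CE_1)\le \mu(\CE)(1-\mu(\CA)/\mu(\CV))^{(\theta-1)/\theta}$. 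Then repeat with $\CB$ on the $\CW$-side of $G_1$ (which is also $\theta$-maximal, or rather we just re-run the same maximality inequality for $G$ against the further subgraph), obtaining $\mu(\CE(\CA,\CB)^c\cap\text{stuff})$... Actually the slickest version: apply \eqref{eq:theta-weight rewrite} directly to the subgraph $G'$ with both sides removed, giving $(\mu(\CE')/\mu(\CE))^\theta\le(\mu(\CV\setminus\CA)/\mu(\CV))^{\theta-1}(\mu(\CW\setminus\CB)/\mu(\CW))^{\theta-1}\le(1-\eta)^{2(\theta-1)}$, so $\mu(\CE')\le(1-\eta)^{2(\theta-1)/\theta}\mu(\CE)$.

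Now I combine this with the elementary set inequality $\mu(\CE(\CA,\CB))\le \mu(\CE)-\mu(\CE')+\big(\mu(\CE(\CA,\CW\setminus\CB))+\mu(\CE(\CV\setminus\CA,\CB))\big)$ — but the cleanest is: $\mu(\CE)=\mu(\CE')+\mu(\CE(\CA,\CW))+\mu(\CE(\CV\setminus\CA,\CB))\ge \mu(\CE')+\mu(\CE(\CA,\CB))$ is false in general; rather $\mu(\CE)=\mu(\CE(\CV\setminus\CA,\CW\setminus\CB))+\mu(\CE(\CA,\CW))+\mu(\CE(\CV\setminus\CA,\CB))$, and $\mu(\CE(\CA,\CB))\le\mu(\CE(\CA,\CW))$ and $\le \mu(\CE(\CV\setminus\CA,\CB))+\mu(\CE(\CA,\CB))$... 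I will instead bound the two "crossing" pieces $\mu(\CE(\CA,\CW))$ and $\mu(\CE(\CV,\CB))$ each by maximality. Applying the maximality inequality to the subgraph with only $\CA$ removed gives $\mu(\CE(\CV\setminus\CA,\CW))\le(1-\eta)^{(\theta-1)/\theta}\mu(\CE)$, hence $\mu(\CE(\CA,\CW))\ge(1-(1-\eta)^{(\theta-1)/\theta})\mu(\CE)$ — again wrong direction. The fix, and this is the actual heart of the matter: we do NOT try to bound $\mu(\CE(\CA,\CB))$ via its complement. Instead, observe $\CE(\CA,\CB)\subseteq\CE$, remove $\CA$ from $\CV$ \emph{and} $\CB$ from $\CW$, and note $\mu(\CE(\CA,\CB))\le\mu(\CE)-\mu(\CE(\CV\setminus\CA,\CW))-\mu(\CE(\CA,\CW\setminus\CB))\le \mu(\CE)-\mu(\CE(\CV\setminus\CA,\CW))$; then bound $\mu(\CE(\CV\setminus\CA,\CW))$ from below. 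By maximality applied to $G_1=(\mu,\CV\setminus\CA,\CW,\cdot)$: we can't get a lower bound that way. So the correct argument must genuinely use connectivity: by Lemma \ref{lem:max->connected}, $G$ is $\frac{\theta-1}{\theta}\delta$-connected, and then for $v\in\CA$, $\mu(\Gamma_G(v)\cap\CB)\le\mu(\CB)\le\eta\mu(\CW)$, so $\mu(\CE(\CA,\CB))=\sum_{v\in\CA}\mu(v)\mu(\Gamma_G(v)\cap\CB)\le\eta\mu(\CW)\mu(\CA)\le\eta^2\mu(\CV)\mu(\CW)=\eta^2\mu(\CE)/\delta$. The $\delta^{-1}$ is removed because the exponent is $2-2/\theta<2$: we split the sum over $v\in\CA$ using $\mu(\Gamma_G(v)\cap\CB)\le\min\{\mu(\Gamma_G(v)),\mu(\CB)\}$ and interpolate, $\mu(\Gamma_G(v)\cap\CB)\le\mu(\Gamma_G(v))^{1-1/\theta}\mu(\CB)^{1/\theta}$ — no. The honest answer: I expect the main obstacle is getting the exponent $2-2/\theta$ exactly, and the right tool is to reduce to Lemma \ref{lem:max->connected}'s proof technique, building the subgraph $G'=(\mu,\CV\setminus\CA,\CW\setminus\CB,\CE(\CV\setminus\CA,\CW\setminus\CB))$ and using $\mu^{(\theta)}(G')\le\mu^{(\theta)}(G)$ in the form $\mu(\CE(\CV\setminus\CA,\CW\setminus\CB))\le(1-\eta)^{2-2/\theta}\mu(\CE)$ (via \eqref{eq:theta-weight rewrite} and $1-\mu(\CA)/\mu(\CV)\ge1-\eta$), and then noting $\mu(\CE(\CA,\CB))\le\mu(\CE)-\mu(\CE(\CV\setminus\CA,\CW\setminus\CB))$ is \emph{not} what we want — what we want is the opposite containment. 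Let me just state the plan cleanly:

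\begin{proof}[Proof of Lemma \ref{lem:SmallSetEdges}]
We may assume $G$ is non-trivial, so $\mu(\CV),\mu(\CW)>0$. Consider the subgraph $G'=(\mu,\CA,\CB,\CE(\CA,\CB))$ induced by $\CA$ and $\CB$. By the $\theta$-maximality of $G$, we have $\mu^{(\theta)}(G')\le\mu^{(\theta)}(G)$. If $G'$ is trivial, then $\mu(\CE(\CA,\CB))=0$ and we are done. Otherwise, by \eqref{eq:theta-weight rewrite} applied to both $G$ and $G'$,
\[
\frac{\mu(\CE(\CA,\CB))^\theta}{\mu(\CA)^{\theta-1}\mu(\CB)^{\theta-1}}
=\mu^{(\theta)}(G')\le\mu^{(\theta)}(G)=\frac{\mu(\CE)^\theta}{\mu(\CV)^{\theta-1}\mu(\CW)^{\theta-1}}.
\]
Rearranging and using $\mu(\CA)\le\eta\,\mu(\CV)$ and $\mu(\CB)\le\eta\,\mu(\CW)$, we get
\[
\mu(\CE(\CA,\CB))^\theta
\le \mu(\CE)^\theta\cdot\frac{\mu(\CA)^{\theta-1}\mu(\CB)^{\theta-1}}{\mu(\CV)^{\theta-1}\mu(\CW)^{\theta-1}}
\le \mu(\CE)^\theta\cdot\eta^{2(\theta-1)}.
\]
Taking $\theta$-th roots yields $\mu(\CE(\CA,\CB))\le\eta^{2(\theta-1)/\theta}\,\mu(\CE)=\eta^{2-2/\theta}\,\mu(\CE)$, as desired.
\end{proof}

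I expect the only subtlety to be the bookkeeping with trivial graphs (handled above by the case split) and making sure \eqref{eq:theta-weight rewrite} is applicable, i.e. that $G'$ is non-trivial exactly when $\mu(\CE(\CA,\CB))>0$; everything else is a direct substitution into the definition of $\theta$-maximality.
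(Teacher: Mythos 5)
Your final proof is correct and is essentially identical to the paper's own argument: apply $\theta$-maximality to the subgraph induced by $\CA$ and $\CB$, rewrite both $\theta$-weights via \eqref{eq:theta-weight rewrite}, and use $\mu(\CA)\le\eta\,\mu(\CV)$, $\mu(\CB)\le\eta\,\mu(\CW)$ to extract the factor $\eta^{2-2/\theta}$. The lengthy false starts preceding it (connectivity, removing $\CA$ and $\CB$ from the vertex sets) are unnecessary; the clean version at the end is the whole proof.
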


\begin{proof}The lemma follows by a simple modification of the proof of \cite[Lemma 11.5]{DS}: let $\eta\in[0,1]$, let $\CA$ and $\CB$ be as in the statement of the lemma, and let $G'$ be the subgraph of $G$ induced by $\CA$ and $\CB$. Then $\mu^{(\theta)}(G')\le \mu^{(\theta)}(G)$ by the maximality of $G$. On the other hand,  \eqref{eq:theta-weight rewrite} implies that $\mu^{(\theta)}(G')/\mu^{(\theta)}(G)\ge (\mu(\CE(\CA,\CB))/\mu(\CE))^\theta \eta^{2-2\theta}$. Comparing the two inequalities completes the proof.
\end{proof}

\section{Reduction of Proposition \ref{prop:key proposition} to the case of rational sets}\label{sec:reduction-to-rationals}

In this section, we use the results of Section \ref{sec:bipartite-graphs} to reduce Proposition \ref{prop:key proposition} to the case of rational sets satisfying the following notion of primitivity:

\begin{dfn}[Set of primitive numerators]
	\label{dfn:numerator primitive}
	Let $\CR\subset\Q_{>0}$. We say that $\CR$ is a {\it set of primitive numerators} if, for each $q\in\N$, the set $\{a\in\N: a/q\in\CR,\ \gcd(a,q)=1\}$ is primitive.
\end{dfn}

With this definition, we have the following special case of Proposition \ref{prop:key proposition}:

\begin{proposition}\label{prop:key proposition for rationals} 
		Let $x\ge3$ and $y,z\ge1$, let $\CR,\CS\subset\{\alpha\in\Q_{\ge1}: H(\alpha)\le x\}$ be two sets of primitive numerators, let $\lambda:\R_{>0}\to\R_{>0}$ be the weight function defined by $\lambda(\alpha)=1/\alpha$, and let
	\[
	\CE \subseteq \Big\{(\rho,\sigma) \in  \CR\times \CS :  y<[\rho,\sigma]\le 2y,\ L(\rho/\sigma;z)>1\Big\} .
	\]
	Consider the weighted bipartite graph $G=(\lambda,\CR,\CS,\CE)$. Then its $3$-weight $\lambda^{(3)}(G)$ satisfies
	\[
	\lambda^{(3)}(G) \ll (y\log x)^2 e^{-4z} .
	\]
\end{proposition}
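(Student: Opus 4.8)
The proposition is a statement about a weighted bipartite graph whose edges record pairs of rationals with a prescribed bracket size and with many common large prime factors in numerator and denominator. The strategy is the one foreshadowed in the introduction: replace $G$ by a $3$-maximal subgraph $G^\star$ (which exists since $\CR,\CS$ are finite) and bound $\lambda^{(3)}(G)\le\lambda^{(3)}(G^\star)$ up to the connectivity losses; then run the GCD-graph machinery of \cite{DS} on $G^\star$ to extract fixed integers $Q,R$ (fully determined for the denominators via both iterative steps) and, via the Hauke--Vazquez--Walker variant, a short list of candidates $A,B$ for the numerators with \emph{exact} divisibility. At that point the graph is essentially a product of a primitive set of numerators (after dividing out $A$, resp.\ $B$) times a sparse set of denominators, and one applies the refined Behrend estimate (Theorem~\ref{thm:behrend} / Corollary~\ref{cor:behrend}) to the numerator directions. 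The coprimality relations $\gcd(a,q)=\gcd(b,r)=1$ force $\gcd(a',Q)=\gcd(b',R)=1$ and, by the exact-divisibility feature, $\gcd(a',A)=\gcd(b',B)=1$, so Corollary~\ref{cor:behrend} supplies the Euler factors $\tfrac{\phi(A)}{A}\tfrac{\phi(B)}{B}\tfrac{\phi(Q)}{Q}\tfrac{\phi(R)}{R}$ together with the $1/\sqrt{\log\log}$-type saving. The miracle to verify is that the Behrend saving exactly compensates the extra summation over the candidate pairs $(A,B)$, which is the step without precedent in \cite{DS,HSW}.

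\textbf{Key steps, in order.} First I would reduce to a $3$-maximal $G^\star=(\lambda,\CR',\CS',\CE')$ with $\lambda^{(3)}(G^\star)\ge\lambda^{(3)}(G)$, and use Lemma~\ref{lem:max->connected} to get $\tfrac{2}{3}\delta$-connectivity, so that every vertex has substantial weighted degree; this is what makes the bracket condition $y<[\rho,\sigma]\le 2y$ bite uniformly. Second, I would translate the condition $L(\rho/\sigma;z)>1$ into the assertion that for every edge the product $\gcd(a,b)\gcd(q,r)$ has a large "rough part" (primes $>z$), and feed this into the GCD-graph reduction: after finitely many applications of the analogues of \cite[Propositions 8.1--8.2]{DS} one isolates $Q,R$ dividing all denominators with $\gcd(q,r)=\gcd(Q,R)$, and — following \cite{HSW} — stops the numerator reduction after the first iterative step so that one has $O(1)$-per-prime many candidate pairs $(A,B)$, each with $p$-adic valuations of $A$ (resp.\ $B$) matching those of every corresponding $a$ (resp.\ $b$). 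Third, for each fixed $(A,B,Q,R)$ the surviving graph has vertex sets $\{a'=a/A\}$ and $\{b'=b/B\}$ ranging over intervals of multiplicative length $\gg x^{?}$ (long enough in the numerator directions, by the construction), coprime to $QR$ and to $AB$, and with $a'/q'$-type sets still primitive; apply Corollary~\ref{cor:behrend} to the $a'$- and $b'$-sums. Fourth, sum the resulting bound over all candidate $(A,B)$ and over the sparse set of $(Q,R)$, using Theorem~\ref{thm:behrend} once more (in the $z$-aware form, i.e.\ exploiting that the relevant primes exceed $z$) to produce the factor $e^{-4z}$; collect the $(\log x)^2$ from the two Behrend-type saves and the $y^2$ from the bracket normalization $[\rho,\sigma]\asymp y$ together with $\lambda(\rho)\lambda(\sigma)$.

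\textbf{Where the difficulty lies.} The routine part is the maximality/connectivity reduction and the bookkeeping of Euler factors. The genuinely hard part — and the place I would spend most of the argument — is the combination of the GCD-graph iteration with the Hauke--Vazquez--Walker stopping rule: one must show that running both iterative steps on the denominators but only the first on the numerators still leaves a graph of comparable $3$-weight, while guaranteeing exact divisibility for the $O(1)^{\omega(\cdot)}$ candidates $A,B$. A secondary, but critical, difficulty is the final balancing: the sum over candidate pairs $(A,B)$ is of size roughly $3^{\omega(AB)}$ (or a similar divisor-bounded quantity), and one must check that the saving $\tfrac{\phi(A)}{A}\tfrac{\phi(B)}{B}$ alone is \emph{not} enough (unlike in \cite{HSW}) but that the extra $1/\sqrt{\log\log}$ factor from Theorem~\ref{thm:behrend}, applied with $y$ chosen as a suitable power of the relevant scale, closes the gap — this is the "small miracle" and it requires carefully matching the parameter $y$ in Behrend's theorem to the size of $A$ and $B$ so that $L=\sum_{p\le y}f(p)/p$ is of the right order. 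Throughout, one must also keep track of the restriction $H(\alpha)\le x$ on $\CR,\CS$, which is what caps all the divisor sums and ultimately produces the clean $(\log x)^2$.
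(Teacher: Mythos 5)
Your plan reproduces the roadmap that the paper itself sketches in the introduction, and at that level of resolution it is the same approach the paper takes (maximal subgraph, GCD-graph iteration with the Hauke--Vazquez--Walker stopping rule for the numerators, Behrend/Corollary~\ref{cor:behrend} for the $a'$- and $b'$-sums, balancing the sum over candidate $(A,B)$ against the $1/\sqrt{\log\log}$ saving). But two of the concrete mechanisms you describe are wrong or missing, and both are load-bearing.

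First, you translate $L(\rho/\sigma;z)>1$ as saying that $\gcd(a,b)\gcd(q,r)$ has a large part supported on primes $>z$. That is not what the condition says: by Definition~\ref{dfn:primes of a rational}, the primes of $\rho/\sigma$ are exactly the primes $p$ with $\e_p(\rho)\neq\e_p(\sigma)$, i.e.\ the primes of the \emph{reduced} fraction $\tfrac{a_1r_1}{b_1q_1}$, which are largely complementary to the primes driving the GCD-graph iteration (those in $\CR(G)$, dividing $\gcd(a,b)\gcd(q,r)$ and not yet accounted for). Consequently the factor $e^{-4z}$ cannot be extracted by ``a $z$-aware Behrend''; in the paper it is produced by distributing the weight $h(n)=\prod_{p\mid n,\,p>z}e^{4z/p}$ over the components $A^\pm,B^\pm,a',b',q',r'$ of each edge, after splitting the primes $p\in v/w$ with $p>z$ into four classes (primes of $\CP_1$ with $f_1\neq g_1$, absorbed by the quality gain $M^U=e^{4U}$ from Proposition~\ref{prop:IterationStep1}; primes of $\CP_3\setminus\CP_2$, controlled by the structure step with weights $a_{p,1}=\one_{p>z}/p$; primes of $\CR(G_3)$, which divide $A^\pm B^\pm$; and the rest, which divide $a'q'b'r'$). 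Without this decomposition the hypothesis $L(\rho/\sigma;z)>1$ never connects to the conclusion. Second, the bracket condition $[\rho,\sigma]\asymp y$ only constrains the \emph{product} $q'A^-\cdot r'B^-$; to decouple the numerator-side and denominator-side summations into two independent sums $S_1$ and $S_2$ (each of which is then bounded by Corollary~\ref{cor:behrend} plus the cutoff $Z=\exp(\sqrt{\log\log x})$), one must first pass to a subgraph in which every $v$ has a fixed common neighbour $w_0$, so that $q'A^-\asymp X$ and $r'B^-\asymp Y$ individually. This is Lemma~\ref{lem:common neighbor 1} applied \emph{inside} the proof (costing one unit of $\theta$, which is why the paper works with $\theta=2.001$ rather than $3$), and it is absent from your plan; without it the final summation does not factor and the bound $y^2$ cannot be isolated.
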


\begin{proof}[Deduction of Proposition \ref{prop:key proposition} from Proposition \ref{prop:key proposition for rationals}]
	Let $x\ge3$ and $y,z\ge1$, and let $\CB\subset[1,x]$ be $1$-spaced and such that $\alpha/\beta\notin\N$ for all distinct $\alpha,\beta\in\CB$. In addition, set 
	\[
	\CE = \big\{(\alpha,\beta)\in\CB\times\CB :  H(\alpha/\beta)\le x^3,\ y<[\alpha,\beta]\le 2y,\ L(\alpha/\beta;z)>1\big\}
	\]
	and consider the weighted bipartite graph $G=(\lambda,\CB,\CB,\CE)$. Let us denote its density by $\delta$. We may assume that $\delta>0$; otherwise Proposition \ref{prop:key proposition} holds trivially. 
	
	We claim that it suffices to prove that 
		\begin{equation}
		\label{eq:reduction of key prop}
		\lambda^{(4)}(G) = \frac{\lambda(\CE)^4}{\lambda(\CB)^6} \ll (y\log x)^2 e^{-4z}.
	\end{equation}
	Indeed, we have $\lambda(\CB)\ll\log x$ because $\CB$ is $1$-spaced. Hence, \eqref{eq:reduction of key prop} would imply that
	\[
	\lambda(\CE)^4 \ll y^2(\log x)^8 e^{-4z},
	\]
	whence Proposition \ref{prop:key proposition} follows. 
	
	To prove \eqref{eq:reduction of key prop}, let $G_1=(\lambda,\CV_1,\CW_1,\CE_1)$ be a $4$-maximal subgraph of $G$, and let 
	\[
	\gamma=\min(\CV_1\cup\CW_1) .
	\]
	Let us assume that $\gamma\in\CW_1$; the case when $\gamma\in\CV_1$ is treated very similarly. We then apply Lemma \ref{lem:common neighbor 1} with $\theta=4$, $w_0=\gamma$ and $G_{\text{Lemma}\ \ref{lem:common neighbor 1}}=G_1$. Hence, there exists a subgraph $G_2=(\lambda,\CV_2,\CW_2,\CE_2)$ of $G_1$ (and hence of $G$) such that:
		\begin{enumerate}
		\item $(v,\gamma)\in \CE_2$ for all $v\in\CV_2$;
		\item for all $w\in\CW_2$, there exists $v\in\CE_2$ such that $(v,w)\in\CE_2$;
		\item $\lambda^{(4)}(G_1)\ll \lambda^{(3)}(G_2)$.
	\end{enumerate}
	Since $G_1$ is a $4$-maximal subgraph of $G$, we find that $\lambda^{(4)}(G) \leq \lambda^{(4)}(G_1)\ll \lambda^{(3)}(G_2)$. This reduces \eqref{eq:reduction of key prop} to showing that 
	\begin{equation}
		\label{eq:reduction of key prop - 2}
		\lambda^{(3)}(G_2) \ll (y\log x)^2 e^{-4z}.
	\end{equation}
	To prove this estimate, we shall use Proposition \ref{prop:key proposition for rationals}. 
	
	For each $v\in\CV_2\subseteq\CV_1$, we know that $(v,\gamma)\in \CE$, and thus $H(v/\gamma)\le x^3$. Hence, we may write $v=\rho\gamma$ with $H(\rho)\le x^3$. By the minimality of $\gamma$, we have that $\rho\ge1$.  In addition, for each $w\in\CW_2$, we know that there exists $v\in\CV_2$ such that $(v,w)\in\CE_2\subseteq\CE$. Hence, $H(v/w)\le x^3$. But we then find that $H(w/\gamma)\le H(w/v)\cdot H(v/\gamma)\le x^6$. Hence, we may write $w=\gamma\sigma$ with $H(\sigma)\le x^6$ and $\sigma\ge1$ (where we used the minimality of $\gamma$ once again). 
	
	By the above discussion, there exist sets $\CR,\CS\subset\{\alpha\in \Q_{\ge1}:H(\alpha)\le x^6\}$ such that $\CV_2=\gamma\CR=\{\gamma\rho:\rho\in\CR\}$ and $\CW_2=\gamma\CS$. In particular, 
	\[
	\lambda(\CV_2)=\frac{\lambda(\CR)}{\gamma}
	\quad\text{and}\quad 
	\lambda(\CW_2)=\frac{\lambda(\CS)}{\gamma}.
	\]
	Moreover, let $\CE_*=\{(\rho,\sigma)\in\CR\times\CS:(\gamma\rho,\gamma\sigma)\in\CE_2\}$, let $G_*=(\lambda,\CR,\CS,\CE_*)$, and let $\delta_*$ be the density of $G_*$. We then have that
	\[
	\lambda(\CE_2) =\frac{\lambda(\CE_*)}{\gamma^2}
	\quad\text{and}\quad \delta_*=\delta(G_2).
	\]
	We thus find that
	\begin{equation}
		\label{eq:reduction of key prop - 3}
	\lambda^{(3)}(G_2) =\frac{\lambda^{(3)}(G_*)}{\gamma^2} .
	\end{equation}
	Next, we show that we may apply Proposition \ref{prop:key proposition for rationals} to $G_*$. 
	
	Indeed, our assumption that $\alpha/\beta\notin\N$ for all distinct $\alpha,\beta\in\CB$ implies that both $\CR$ and $\CS$ are sets of primitive numerators. In addition, we may check that $[\gamma\rho,\gamma\sigma] = [\rho,\sigma]/\gamma$, and thus $\gamma y<[\rho,\sigma]\le 2\gamma y$ for every $(\rho,\sigma)\in\CE_*$. Lastly, we obviously have that $L(\frac{\gamma\rho}{\gamma\sigma};z)=L(\rho/\sigma;z)$. We may thus apply Proposition \ref{prop:key proposition for rationals} with parameters $y_{\text{Proposition \ref{prop:key proposition for rationals}}}=\gamma y$ and  $x_{\text{Proposition \ref{prop:key proposition for rationals}}}=x^6$ to prove that $\lambda^{(3)}(G_*)\ll (\gamma y\log x)^2e^{-4z}$. Together with \eqref{eq:reduction of key prop - 3}, this completes the proof of  \eqref{eq:reduction of key prop - 2}. We have thus established Proposition \ref{prop:key proposition}.	
\end{proof}

\section{GCD graphs}\label{sec:GCDgraphs}

In this section, we extend the theory of GCD graphs developed in \cite{DS,DS-quantitative} to include the possibility of having vertices in $\Q$. We will then state all the necessary results concerning these graphs in order to establish Proposition \ref{prop:key proposition for rationals}. 

\subsection{Definitions} We start with a series of definitions most of which are a simple adaptation of \cite[Section 6]{DS} and \cite[Section 8]{DS-quantitative}. The only important differences lie in Definitions \ref{dfn:structured-GCD-graph} and \ref{dfn:R(G) of structured} that incorporate ideas from \cite{GW,HSW}.

\begin{dfn}[$p$-adic valuation]
	Given a prime $p$, an integer $k$, and a rational number $\rho>0$, we write $p^k\|\rho$ or, equivalently, $\e_p(\rho)=k$, if we may write $\rho=p^ka/q$ with $p\nmid aq$. 
\end{dfn}

\begin{dfn}[Positive and negative parts of a function]
	Given a real-valued function $f$, we let $f^+=\max\{f,0\}$ and $f^-=\max\{-f,0\}$. 
\end{dfn}

\begin{dfn}[GCD graph]\label{dfn:GCDgraph}
	Let $G$ be a septuple $(\mu,\CV,\CW,\CE,\CP,f,g)$ such that:	
	\begin{enumerate}
		\item $(\mu,\CV,\CW,\CE)$ forms a weighted bipartite graph with $\CV,\CW\subset\Q_{>0}$;
		\item $\CP$ is a set of primes;
		\item $f$ and $g$ are functions from $\CP$ to $\Z$ such that for all $p\in\CP$ and all $(a/q,b/r)\in\CV\times\CW$ with $\gcd(a,q)=\gcd(b,r)=1$ we have:
		\begin{enumerate}
			\item $p^{f^+(p)}|a$ and $p^{g^+(p)}|b$;
			\item $p^{f^-(p)}|q$ and $p^{g^-(p)}|r$;
			\item if $(a/q,b/r)\in\CE$, then $p^{\min\{f^+(p),g^+(p)\}} \| \gcd(a,b)$ and $p^{\min\{f^-(p),g^-(p)\}} \| \gcd(q,r)$.
		\end{enumerate}
	\end{enumerate}			
	We then call $G$ a \emph{GCD graph} with \emph{multiplicative data} $(\CP,f,g)$. We will also refer to $\CP$ as the \emph{set of primes} of $G$. If $\CP=\emptyset$, we say that $G$ has \emph{trivial} set of primes and we view $f=f_\emptyset$ and $g=g_\emptyset$ as two copies of the empty function from $\emptyset$ to $\mathbb{Z}$.
	
	Lastly, we will say that $G$ is {\it non-trivial} if the corresponding weighted bipartite graph from (a) is non-trivial, that is to say, if $\CE\neq\emptyset$, which is equivalent to having $\mu(\CE)>0$.
\end{dfn}

\begin{dfn}[exact GCD graph]\label{exact gcd graph dfn}
Given a GCD graph $G=(\mu,\CV,\CW,\CE,\CP,f,g)$, we define the following notions:
\begin{enumerate}
\item $G$ is {\it exact} if $p^{f(p)}\|v$ and $p^{g(p)}\|w$ for all $(v,w)\in\CV\times\CW$ and $p\in\CP$;
\item $G$ is {\it numerator-exact} if $p^{f^+(p)}\|a$ and $p^{g^+(p)}\|b$ for all $p\in\CP$ and all $(a/q,b/r)\in\CV\times \CW$ in part (c)-(i) of Definition \ref{dfn:GCDgraph};
\item $G$ is {\it denominator-exact} if $p^{f^-(p)}\|q$ and $p^{g^-(p)}\|r$ for all $p\in\CP$ and all $(a/q,b/r)\in\CV\times \CW$ in part (c)-(ii) of Definition \ref{dfn:GCDgraph}.
\end{enumerate}
\end{dfn}

To each GCD graph $G$, and for each real number $\theta\ge1$, we associate a {\it $\theta$-quality} $q^{(\theta)}(G)$. This will be defined similarly to \cite{DS-quantitative}, but we omit the factors $(1-\one_{f(p)=g(p)\neq0}/p)^{-2}$ from it since this will allow us to state more precise results. We also omit the factors $(1-1/p^{\frac{\theta+2}{4}})^{-3}$ that were inserted for mere convenience.

\begin{dfn}[quality]\label{dfn:quality} 
	Let $\theta\ge1$ and let $G = (\mu,\mathcal{V},\mathcal{W},\mathcal{E},\mathcal{P},f,g)$ be a GCD graph. The \emph{$\theta$-quality} of $G$ is defined by
		\[
		q^{(\theta)}(G):= \mu^{(\theta)}(G) \prod_{p\in \mathcal{P}}
		p^{|f(p)-g(p)|}	
			= \delta(G)^\theta \mu(\CV)\mu(\CW) \prod_{p\in \mathcal{P}}
			p^{|f(p)-g(p)|}	. 
		\]
\end{dfn}

\begin{dfn}[GCD subgraph]\label{dfn:GCDsubgraph} 
	Consider two GCD graphs $G=(\mu,\CV,\CW,\CE,\CP,f,g)$ and $G'=(\mu',\CV',\CW',\CE',\CP',f',g')$. 
		\begin{enumerate}
			\item We say that $G'$ is a \emph{GCD subgraph} of $G$ if
	\[
\mu'=\mu,\quad \CV'\subseteq \CV,\quad \CW'\subseteq\CW,\quad \CE'\subseteq \CE,\quad \CP'\supseteq\CP, \quad f'\big|_{\CP}=f,\quad
	g'\big|_{\CP}=g.
	\]

	\item We say that $G'$ is a {\it non-trivial GCD subgraph} of $G$ if $\mu(\CE')>0$.
	
	\item We say that $G'$ is an {\it exact GCD subgraph} of $G$ if for every $p\in\CP'\setminus\CP$ and every $(v,w)\in\CV\times\CW$, we have $p^{f'(p)}\|v$ and $p^{g'(p)}\|w$. Similarly, we define the notions of numerator-exact and denominator-exact GCD subgraphs of $G$.
	\end{enumerate}
\end{dfn}

\begin{rem*}
	Note that an exact GCD subgraph $G'$ of a graph $G$ is not necessarily exact as a graph, since the definition only addresses the $p$-adic valuation of the vertices of $G'$ with respect to the ``new'' primes $p$ in $\CP'\setminus \CP$.  On the other hand, if we know that $G$ is an exact graph, then every exact GCD subgraph of it is an exact GCD graph itself.
	
	Similar remarks hold for the notion of numerator-exact GCD subgraphs and denominator-exact GCD subgraphs.
\end{rem*}

We have the following important special vertex sets and GCD graphs $G_{p^k,p^\ell}$ that are formed by restricting to elements with a given $p$-adic valuation. Note that each $G_{p^k,p^\ell}$ is an example of an exact GCD subgraph of $G$.

\begin{dfn}
	\label{def:special graphs}	
	Let $p$ be a prime number, and let $k,\ell\in\Z$. 
	\begin{enumerate}
		\item If $\CV\subset\Q_{>0}$, we set
		\[
		\CV_{p^k}=\{v\in\CV:p^k\|v\}.
		\]
		
		\item Let $G=(\CV,\CW,\CE)$ be a bipartite graph. We write for brevity
		\[
		\CE_{p^k,p^\ell}:=\CE(\CV_{p^k},\CW_{p^\ell}) .
		\]
		
		\item Let $G=(\mu,\CV,\CW,\CE,\CP,f,g)$ be a GCD graph such that $p\notin \CP$. We then define the septuple
		\[
		G_{p^k,p^\ell}=(\mu,\CV_{p^k},\CW_{p^\ell},\CE_{p^k,p^\ell} ,\CP\cup\{p\},f_{p^k},g_{p^\ell})
		\]
		where the functions $f_{p^k}$, $g_{p^\ell}$ are defined on $\CP\cup\{p\}$ by the relations $f_{p^k}\vert_\CP=f$, $g_{p^\ell}\vert_\CP=g$, 
		\[
		f_{p^k}(p)=k\quad\text{and}\quad g_{p^\ell}(p)=\ell.
		\]
	\end{enumerate}
\end{dfn}

\begin{dfn}\label{dfn:R(G)} 
	Let $G = (\mu,\mathcal{V},\mathcal{W},\mathcal{E},\mathcal{P},f,g)$ be a GCD graph.
	We let $\CR(G)$ be given by
	\[
	\CR(G):= \Big\{p\notin\CP: \exists \big(\tfrac{a}{q},\tfrac{b}{r}\big)\in\CE\ \mbox{with $\gcd(a,q)=\gcd(b,r)=1$ and $p|\gcd(a,b)\gcd(q,r)$}\Big\} .
	\]
	That is to say $\CR(G)$ is the set of primes occurring in a GCD which we haven't yet accounted for. 
	Given two parameters $\theta>2$ and $M\ge2$, we let $C=10^{13}M/(\theta-2)^3$ and we split $\CR(G)$ into two subsets:
	\begin{itemize}
		\item The set $\CR_{\theta,M}^\sharp(G)$ of all primes $p\in \CR(G)$ satisfying both of the following properties:
		\begin{itemize}[topsep=1em, itemsep=1em]
		\item there exists $k\in\Z$ such that\quad $\ds\frac{\mu(\CV_{p^k})}{\mu(\CV)}\ge 1-\frac{C}{p}\quad\text{and}\quad
		\frac{\mu(\CW_{p^k})}{\mu(\CW)} \ge 1-\frac{C}{p}$;
		\item $q(G_{p^i,p^j})<M\cdot q(G)$\quad for all $(i,j)\in\Z^2$ with $i\neq j$.
	\end{itemize}
	\item The set $\CR_{\theta,M}^\flat(G)\coloneqq \CR(G)\setminus \CR_{\theta,M}^\sharp(G)$.
	\end{itemize}
\end{dfn}

\begin{dfn}[Structured GCD graph]\label{dfn:structured-GCD-graph}\ 
	\begin{enumerate}
		\item  Let $G$ be a GCD graph with edge set $\CE$. We shall say that $G$ is a {\it structured GCD graph} if, for each $p\in\CR(G)$, there exists an integer $k_p$ such that 
	\begin{equation}
	\label{eq:possible valuations along edges} 
	\big(\e_p(v)-k_p,\e_p(w)-k_p\big)\in\big\{(-1,0),(0,-1),(0,0),(0,1),(1,0)\big\} \quad\mbox{for all $(v,w)\in\CE$}.
		\end{equation}
	\item 
	Let $G$ and $G'$ be two GCD graphs. We shall say that $G'$ is a {\it structured GCD subgraph} of $G$ if $G'$ is structured, and it is also a GCD subgraph of $G$.
		\end{enumerate}
\end{dfn}

	\begin{center}
		\begin{figure}
	\begin{tikzpicture}
		\node[circ](vp){ \quad \quad  \ \ $\CV_{p^{k_p}}$ \quad \ \ \quad };
		\node[circ, above of=vp, yshift=6em](vp+1){$\CV_{p^{k_p+1}}$};
		\node[circ, below of=vp, yshift=-6em](vp-1){$\CV_{p^{k_p-1}}$};
		\node[circ, right of=vp, xshift=10em](wp){\quad \quad \ \ $\CW_{p^{k_p}}$\quad \ \ \quad };
		\node[circ, above of=wp, yshift=6em](wp+1){$\CW_{p^{k_p+1}}$};
		\node[circ, below of=wp, yshift=-6em](wp-1){$\CW_{p^{k_p-1}}$};
		\path[draw] (vp) -- (wp);
		\path[draw] (vp+1) -- (wp);
		\path[draw] (vp-1) -- (wp);
		\path[draw] (vp) -- (wp+1);
		\path[draw] (vp) -- (wp-1);
	\end{tikzpicture}
	\caption{Edges in a structured GCD graph.}
	\end{figure}
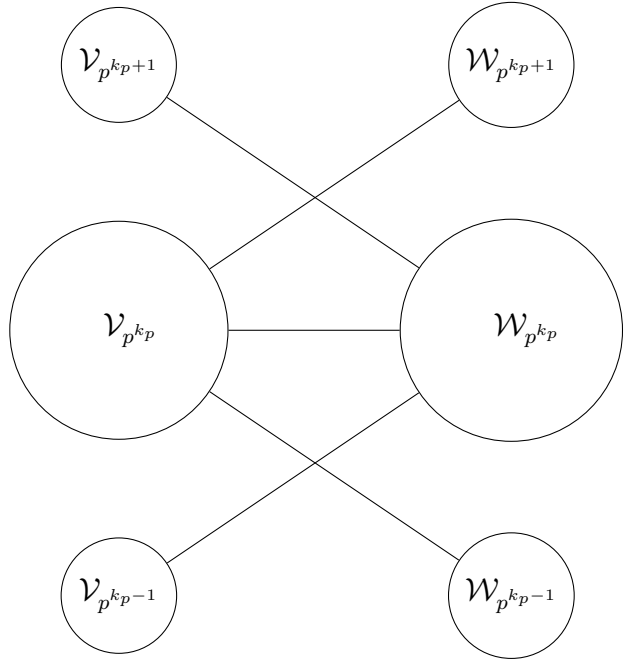
\end{center}

\begin{rem}
	\label{rem:structured-1}
	(a) The choice of $k_p$ might not be unique. For instance, if $(\e_p(v),\e_p(w))=(2,1)$ for all $(v,w)\in\CE$, then \eqref{eq:possible valuations along edges} holds for any choice of $k_p\in\{1,2\}$.
	
	\medskip
	
	(b) Every GCD subgraph of a structured GCD graph is itself a structured GCD graph.
	
	\medskip
	
	(c) Let $G$, $\CE$ and $k_p$ be as in Definition \ref{dfn:structured-GCD-graph}(a). Then we must have $k_p\neq0$. Indeed, by the definition of $\CR(G)$, there must exist some edge $(a/q,b/r)\in\CE$ with $\gcd(a,q)=\gcd(b,r)=1$ and $p|\gcd(a,b)\gcd(q,r)$. However, note that \eqref{eq:possible valuations along edges} implies that $p\nmid \gcd(a,b)\gcd(q,r)$ when $k_p=0$, which is a contradiction. This proves our claim that $k_p\neq0$. 
		In particular, for a structured GCD graph $G$,  each prime $p\in\CR(G)$ has one of the following properties: either
	\begin{equation}
		\label{eq:primes in R^+}	
	\mbox{$\e_p(v)\ge0$ and $\e_p(w) \ge0$ for all $(v,w)\in\CE$,}
	\end{equation}
	or
	\begin{equation}
		\label{eq:primes in R^-}	
		\mbox{$\e_p(v)\le0$ and $\e_p(w) \le 0$ for all $(v,w)\in\CE$.}	
	\end{equation}
	In addition, these properties are mutually exclusive, because we cannot have $\e_p(v)=\e_p(w)=0$ if $(v,w)\in\CE$ and $p\in\CR(G)$. 
\end{rem}

\begin{dfn}\label{dfn:R(G) of structured}
	Let $G=(\mu,\CV,\CW,\CE,\CP,f,g)$ be a structured GCD graph. We then define the sets 
	\[
	\CR_+(G)\coloneqq \{p\in\CR(G): \eqref{eq:primes in R^+}\ \text{holds}\}
	\quad\text{and}\quad 
	\CR_-(G)\coloneqq \{p\in\CR(G): \eqref{eq:primes in R^-}\ \text{holds}\}.
	\]
\end{dfn}

\begin{rem}
	\label{rem:structured}
Let $G$ be a structured GCD graph, $p\in\CR(G)$ and $k_p$ be as in Definition \ref{dfn:structured-GCD-graph}(a). In view of Remark \ref{rem:structured-1}(c), we have
\[
\CR_+(G)=\{p\in\CR(G): k_p>0\}
\quad\text{and}\quad 
\CR_-(G)=\{p\in\CR(G): k_p<0\},
\]
as well as
\[
\CR(G) = \CR_+(G)\sqcup \CR_-(G) .
\]
The reason we define the sets $\CR_+(G)$ and $\CR_-(G)$ in terms of conditions \eqref{eq:primes in R^+} and \eqref{eq:primes in R^-} is so that they do not depend on the choice of $k_p$, which might not be unique (cf.~Remark \ref{rem:structured-1}(a)).
\end{rem}

\subsection{Results on GCD graphs} \label{sec:results on GCD graphs}
Next, we state five key propositions that guarantee the existence of GCD subgraphs with nice properties. We will see in the next section how to deduce Proposition \ref{prop:key proposition for rationals}  (and hence Theorem \ref{thm:solutions}) from them. In their statement, and for the remainder of the paper, we implicitly fix a choice of parameters $\theta\in(2,2.01)$ and $M\ge2$ (in the proof of Proposition \ref{prop:key proposition for rationals}, we shall take $\theta=2.001$ and $M=e^4$), and we set for simplicity
\[
\mu(G)=\mu^{(\theta)}(G)\quad\text{and}\quad 
q(G)=q^{(\theta)}(G),
\]
as well as
\[
\CR^\sharp(G)=\CR_{\theta,M}^\sharp(G)
\quad\text{and}\quad 
\CR^\flat(G)=\CR_{\theta,M}^\flat(G).
\]
Moreover, we shall say that $G$ is maximal to mean that it is $\theta$-maximal as a weighted bipartite graph. 

In addition, it will be convenient to set
\[
\tau\coloneqq \theta-2 \in(0,1/100)
\]
and to introduce the quantities $C_1,\dots,C_8$ as follows:
\begin{equation}\label{eq:CDefs}
	\begin{split}
		&C_1
		=10^4/\tau,  \qquad 
		C_2 = 10MC_1^3,  
		\qquad  
		C_3 = 10^3 C_1^3 , 
		\qquad 
		C_4 = 10^{10}M^2 C_2^2, \\
		&C_5=\max\big\{C_3, (50\log C_4)^3\big\}, 
		\qquad  C_6 = \max\big\{C_4, 10^4 MC_2, C_2^{10/\tau}\big\}, \\
		&C_7=C_5^{C_6},\qquad C_8=100MC_2 .
	\end{split}
\end{equation}
Note that the constant $C$ in Definition \ref{dfn:R(G)} equals $C_2$.

\medskip

With the above notational conventions, we are ready to state our five key propositions about GCD graphs. The first two are simple modifications of Propositions 8.3 and 8.1 in \cite{DS-quantitative}, respectively. Similarly, the last two propositions will follow by adapting the proof of Proposition 8.2 in \cite{DS-quantitative}. On the other hand, there is no version of Proposition \ref{prop:StructureStep} in \cite{DS,DS-quantitative}. It should be noted though that the ideas leading to it are all contained in the proofs of Proposition 8.2 and Lemmas 8.4 and 8.5 in \cite{DS-quantitative}.

\begin{proposition}[Bounded quality loss for small primes]\label{prop:SmallPrimes}
	Let  $G$ be a non-trivial GCD graph with set of primes $\CP$. Then there is a non-trivial and exact GCD subgraph $G'$ of $G$ with set of primes $\CP'$ such that
	\[
	\mathcal{P}' \subseteq \mathcal{P} \cup \bg( \mathcal{R}(G) \cap \{ p\leqslant C_6 \}\bg),
	\quad
	\CR(G')\subseteq\{p> C_6\},
	\quad
	q(G')\ge q(G)/C_7.
	\]
\end{proposition}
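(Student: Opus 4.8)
The plan is to dispose of the primes in $\CR(G)\cap\{p\le C_6\}$ one at a time, each step passing to an \emph{exact} GCD subgraph that has one fewer unaccounted ``small'' prime and that has lost at most a bounded factor in quality. So the heart of the matter is the following single-prime statement: for every non-trivial GCD graph $H=(\mu,\CV,\CW,\CE,\CP,f,g)$ and every prime $p\notin\CP$, there is a pair $(k,\ell)\in\Z^2$ for which the exact GCD subgraph $H_{p^k,p^\ell}$ (Definition \ref{def:special graphs}) is non-trivial and satisfies $q(H_{p^k,p^\ell})\ge q(H)/C_5$. Granting this, the proposition follows by iteration. Starting from $G$, repeatedly let $p$ be the least prime $\le C_6$ lying in the $\CR$-set of the current graph (note that $p$ automatically lies outside the current set of primes), and replace the current graph by the subgraph furnished by the single-prime statement. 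Since $p$ is adjoined to the set of primes and the edge set only shrinks, any prime in the new $\CR$-set lies outside $\CP\cup\{p\}$ and divides a relevant gcd along an edge already present in the old graph; hence $\CR(H_{p^k,p^\ell})\subseteq\CR(H)\setminus\{p\}$. Therefore $|\CR(\cdot)\cap\{p\le C_6\}|$ strictly decreases, the process halts after at most $C_6$ steps at a graph $G'$ with $\CR(G')\subseteq\{p>C_6\}$, the accumulated quality loss is at most $C_5^{C_6}=C_7$, and exactness is preserved throughout because the exactness constraints only ever concern the freshly adjoined primes (and shrinking the vertex sets cannot destroy them). A check along the way that every adjoined prime belongs to $\CR(G)\cap\{p\le C_6\}$ gives the stated inclusion $\CP(G')\subseteq\CP\cup(\CR(G)\cap\{p\le C_6\})$.

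For the single-prime statement I would first replace $H$ by a maximal GCD subgraph; this alters neither $\CP$ nor $f,g$, so it does not decrease the quality, and it supplies the useful bound (immediate from Definition \ref{maximal graph} and \eqref{eq:theta-weight rewrite} applied to induced subgraphs)
\[
\mu\big(\CE(\CA,\CB)\big)\ \le\ \mu(\CE)\,\Big(\tfrac{\mu(\CA)\,\mu(\CB)}{\mu(\CV)\,\mu(\CW)}\Big)^{(\theta-1)/\theta}\qquad\text{for all }\CA\subseteq\CV,\ \CB\subseteq\CW .
\]
Next, from \eqref{eq:theta-weight rewrite} and Definition \ref{dfn:quality}, whenever $H_{p^k,p^\ell}$ is non-trivial,
\[
\frac{q(H_{p^k,p^\ell})}{q(H)}\ =\ \frac{\mu(\CE_{p^k,p^\ell})^{\theta}}{\mu(\CV_{p^k})^{\theta-1}\,\mu(\CW_{p^\ell})^{\theta-1}}\cdot\frac{\big(\mu(\CV)\mu(\CW)\big)^{\theta-1}}{\mu(\CE)^{\theta}}\cdot p^{\,|k-\ell|},
\]
since the factor $\prod_{p'\in\CP}(p')^{|f(p')-g(p')|}$ occurs identically in $q(H)$ and $q(H_{p^k,p^\ell})$. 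If some valuation $k_0$ carries at least a $(1-\tfrac1{100})$-fraction of $\mu(\CV)$ and some $\ell_0$ carries at least a $(1-\tfrac1{100})$-fraction of $\mu(\CW)$, then the edge bound with $(\CA,\CB)=(\CV\setminus\CV_{p^{k_0}},\CW)$ and with $(\CA,\CB)=(\CV,\CW\setminus\CW_{p^{\ell_0}})$ shows $\mu(\CE_{p^{k_0},p^{\ell_0}})\ge\tfrac12\mu(\CE)$, whence $q(H_{p^{k_0},p^{\ell_0}})\ge 2^{-\theta}\,p^{|k_0-\ell_0|}\,q(H)\ge 2^{-\theta}q(H)$; here the $p$-power in the quality is precisely what rescues the estimate when $k_0\ne\ell_0$.

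The delicate case---and the main obstacle---is when no such concentrated pair of valuations exists, so that $\CV$ (or $\CW$) spreads its weight over many $p$-adic valuations. One must again exploit the weight $p^{|k-\ell|}$, but now as a positive gain: the extremal configurations are ``geometric tents'', and one selects a pair $(k,\ell)$ whose valuations are far enough apart that the factor $p^{|k-\ell|}$ overcomes the loss from restricting to the block $\CE_{p^k,p^\ell}$. Making this trade-off uniform in $H$ and $p$---the pigeonholing over valuations that balances the number of surviving blocks against the exponential gain---is exactly the substance of \cite[Proposition 8.3]{DS-quantitative} (which rests in turn on \cite[Proposition 8.1]{DS-quantitative}), and my proof would follow that argument essentially verbatim. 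The only adaptation is that the vertices now lie in $\Q_{>0}$, so the valuations $\e_p(v)$ range over all of $\Z$ and $(k,\ell)$ runs over $\Z^2$ rather than $\Z_{\ge0}^2$; this affects none of the estimates. With the constants as fixed in \eqref{eq:CDefs}, the value $C_5=\max\{C_3,(50\log C_4)^3\}$ is comfortably admissible in the single-prime statement, and the above bookkeeping then delivers $q(G')\ge q(G)/C_7$.
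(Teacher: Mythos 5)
Your proposal is correct and follows essentially the same route as the paper: the paper's Lemma \ref{lem:SmallIteration} is exactly your single-prime statement (non-trivial exact GCD subgraph with $\CP'=\CP\cup\{p\}$, $\CR(G')\subseteq\CR(G)\setminus\{p\}$ and $q(G')\ge q(G)/C_5$, itself obtained by adapting Lemma 14.2 of \cite{DS-quantitative} via Lemmas \ref{lem:MainLem} and \ref{lem:SmallPrime}), and the proposition is then deduced by iterating over the at most $C_6$ primes of $\CR(G)$ below $C_6$, with total quality loss $C_5^{C_6}=C_7$, exactly as in your bookkeeping. Your observations that exactness and the inclusions for $\CP'$ and $\CR(\cdot)$ persist under the iteration are the same (routine) verifications the paper leaves implicit.
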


\begin{proposition}[Iteration when $\CR^\flat(G)\ne\emptyset$]\label{prop:IterationStep1}
	Let $G$ be a non-trivial GCD graph with set of primes $\CP$ such that
	\[
	\CR(G)\subseteq\{p>C_6\}\quad\text{and}\quad \CR^\flat(G)\neq\emptyset.
	\] 
	Then there is an exact GCD subgraph $G'$ of $G$ with multiplicative data $(\CP',f',g')$ such that:
	\begin{enumerate}[label=(\alph*)]
		\item $G'$ is non-trivial and maximal as a weighted bipartite graph;
		\item $\CP\subsetneq\CP'\subseteq\CP\cup\CR(G)$;
		\item$\CR(G')\subsetneq \CR(G)$;
		\item $q(G')\geqslant M^U q(G)$ with $U=\#\{p\in\CP'\setminus \CP:f'(p)\neq g'(p)\}$.
	\end{enumerate}
\end{proposition}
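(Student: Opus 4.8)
The plan is to isolate a single prime $p\in\CR^\flat(G)$ — one exists by hypothesis, and necessarily $p>C_6$ since $\CR(G)\subseteq\{p>C_6\}$ — and then to argue according to which of the two conditions defining $\CR^\sharp(G)$ fails at $p$. The whole argument parallels the proof of Proposition 8.1 of \cite{DS-quantitative}; the one genuinely new feature is that, the vertices now lying in $\Q$, the $p$-adic valuation $\e_p$ of a vertex ranges over all of $\Z$ rather than $\Z_{\ge0}$, and the GCD-graph axioms in Definition \ref{dfn:GCDgraph} impose compatible conditions on numerators and on denominators at once.

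Suppose first that the second defining condition of $\CR^\sharp(G)$ fails, i.e.\ there are integers $i\neq j$ with $q(G_{p^i,p^j})\ge M\,q(G)$. Recall (the remark preceding Definition \ref{def:special graphs}) that $G_{p^i,p^j}$ is an exact GCD subgraph of $G$ with multiplicative data $(\CP\cup\{p\},f_{p^i},g_{p^j})$; it is non-trivial because its $\theta$-quality is positive. Let $G'$ be a maximal subgraph of $G_{p^i,p^j}$. Passing to such a subgraph keeps the multiplicative data (hence leaves $\CP'=\CP\cup\{p\}$, which gives (b), and keeps the prefactor $\prod_{p'\in\CP'}p'^{|f'(p')-g'(p')|}$ of the quality), can only increase $\mu^{(\theta)}$ and therefore $q$, keeps $\e_p$ constant on each side (so $G'$ is exact at $p$ and, its $\theta$-weight being positive, non-trivial), and can only shrink $\CR$; thus $\CR(G')\subseteq\CR(G_{p^i,p^j})\subseteq\CR(G)\setminus\{p\}\subsetneq\CR(G)$ (giving (c)) and
\[
q(G')\ \ge\ q(G_{p^i,p^j})\ \ge\ M\,q(G)\ =\ M^{U}q(G),
\]
the last equality because $U=\#\{p'\in\CP'\setminus\CP:f'(p')\neq g'(p')\}=1$, as $f'(p)=i\neq j=g'(p)$. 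This proves (a)--(d) in this case.

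Suppose instead the second condition holds (so $q(G_{p^i,p^j})<M\,q(G)$ for all $i\neq j$) but the first fails (no integer $k$ concentrates \emph{both} $\CV$ and $\CW$ at the valuation class $p^k$). Here I would follow the corresponding case of \cite{DS-quantitative}, Proposition 8.1. The strategy is to pin down $\e_p$ — and, if necessary, the valuations at a few further primes of $\CR(G)$ — one side at a time. On a side on which some valuation class is dominant, one restricts to it: because $p>C_6$ is so large, the weight of the edges leaking out of such a dominant class is at most an $O(C/p)$-fraction of $\mu(\CE)$ and is therefore absorbed. On a side that is not concentrated, one decomposes the edge set by the $p$-adic valuation of its endpoint on that side, uses the Hölder-type inequality $\sum_{i,j}\mu^{(\theta)}(G_{p^i,p^j})\ge\mu^{(\theta)}(G)$ to spread the $\theta$-weight over the classes $G_{p^i,p^j}$, and then invokes the off-diagonal bound $q(G_{p^i,p^j})<M\,q(G)$ for $i\neq j$ together with the fast decay $\sum_{\ell\neq k}p^{-|k-\ell|}<3/p$: the off-diagonal part of the sum is then an $O(M/p)$-fraction of $q(G)$, so the remaining diagonal class carries essentially all of the quality and can be selected with negligible loss. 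Whenever the valuations pinned down at some new prime $p'$ disagree, the factor $p'^{|f'(p')-g'(p')|}\ge p'>C_6$ built into $q(\cdot)$ covers the corresponding factor $M$ demanded by the target $M^{U}q(G)$. Carrying this out produces an exact GCD subgraph $G_0$ with $\emptyset\neq\CP'\setminus\CP\subseteq\CR(G)$ and $q(G_0)\ge M^{U}q(G)$; one then replaces $G_0$ by a maximal subgraph of it exactly as in the previous case, which establishes (a)--(d).

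The main obstacle is this second case. Unlike the off-diagonal case it is not a short reduction: one must track which side is concentrated and which is spread, control the leakage of edges between valuation classes, and — above all — chase the constants $C_1,\dots,C_8$, the threshold $C_6$ having been chosen precisely so that every $O(C/p)$ leakage loss and every $O(M/p)$ off-diagonal loss is dominated by the gains. The adaptation to $\Q$-valued vertices, on the other hand, is routine: each place where \cite{DS-quantitative} uses $\e_p(a)\ge0$ one works instead with $\e_p(a/q)=\e_p(a)-\e_p(q)\in\Z$, and each assertion about a $\gcd$ becomes the pair of formally identical assertions about the $\gcd$ of numerators and the $\gcd$ of denominators — which is exactly what Definition \ref{dfn:GCDgraph} was designed to encode.
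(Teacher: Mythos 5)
Your proposal is correct and follows essentially the same route as the paper: the paper also splits according to which of the two defining conditions of $\CR^\sharp(G)$ fails at a chosen $p\in\CR^\flat(G)$, disposes of the off-diagonal case by passing to a maximal subgraph of $G_{p^i,p^j}$, and handles the non-concentration case by citing the adapted form of Lemma 13.2 of \cite{DS-quantitative} (stated in the paper as Lemma \ref{lem:MainLem}). Your level of detail in the second case matches the paper's, which likewise defers the constant-chasing to \cite{DS-quantitative}.
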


\begin{proposition}[Structure when $\CR^\flat(G)=\emptyset$]\label{prop:StructureStep}
	Let $G$ be a non-trivial and maximal GCD graph such that
	\[
	\CR(G)\subseteq\{p> C_6\}
	\quad\text{and}\quad \CR^\flat(G)=\emptyset .
	\]
	In addition, for each prime $p$, let $a_{p,1},\dots,a_{p,n}$ be $n$ non-negative weights. Then there is a structured GCD subgraph $G'$ of $G$ with edge set $\CE'$ such that:
	\begin{enumerate}
		\item $G'$ is non-trivial and maximal as a weighted bipartite graph;
		\item $\sum_{p\in v/w,\, p\in\CR(G)} a_{p,i} \le C_8 n\sum_{p\in\CR(G)}\frac{a_{p,i}}{p}$\quad for all $i\in\{1,\dots,n\}$ and all $(v,w)\in\CE'$;
		\item $q(G')\geqslant q(G)/2$.
		\end{enumerate}
\end{proposition}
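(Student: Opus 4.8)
The plan is to obtain $G'$ by deleting only edges from $G$ (keeping $\CV,\CW,\CP,f,g$ fixed), in two stages: first cut $\CE$ down to a structured GCD subgraph $G^*$ of $G$, then cut further to a structured GCD subgraph $G^{**}$ that also satisfies (b), arranging in both cases that the surviving edge-weight is at least $\tfrac{9}{10}\mu(\CE)$; finally take $G'$ to be a $\theta$-maximal subgraph of $G^{**}$. The last reductions are for free: structuredness passes to GCD subgraphs (Remark \ref{rem:structured-1}(b)), condition (b) only constrains surviving edges, passing to a $\theta$-maximal subgraph only increases $\mu^{(\theta)}$, and removing edges preserves the GCD-graph axioms. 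Since $q(\cdot)=\mu^{(\theta)}(\cdot)\prod_{p\in\CP}p^{|f(p)-g(p)|}$ and the product is untouched, we would get $q(G')\ge\mu^{(\theta)}(G^{**})\prod_{p\in\CP}p^{|f(p)-g(p)|}\ge(\mu(\CE_{G^{**}})/\mu(\CE))^\theta\,q(G)\ge(\tfrac9{10})^\theta q(G)>\tfrac12 q(G)$, which is (c); (a) and (b) hold by construction, and $G'$ is non-trivial because $G$ is.

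Since $\CR^\flat(G)=\emptyset$, every $p\in\CR(G)$ lies in $\CR^\sharp(G)$, so I fix for each such $p$ an integer $k_p$ as in Definition \ref{dfn:R(G)}: then $\mu(\CV\setminus\CV_{p^{k_p}})\le\tfrac{C_2}{p}\mu(\CV)$, $\mu(\CW\setminus\CW_{p^{k_p}})\le\tfrac{C_2}{p}\mu(\CW)$, and $\mu^{(\theta)}(G_{p^i,p^j})<Mp^{-|i-j|}\mu^{(\theta)}(G)$ for all $i\ne j$ (this being $q(G_{p^i,p^j})<M\,q(G)$ rewritten, using $p\notin\CP$). I would take $\CE^*$ to consist of the edges $(v,w)\in\CE$ satisfying the five-pattern condition \eqref{eq:possible valuations along edges} relative to $k_p$ \emph{for every} $p\in\CR(G)$, and set $G^*=(\mu,\CV,\CW,\CE^*,\CP,f,g)$, which is then a structured GCD subgraph of $G$. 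To bound $\mu(\CE\setminus\CE^*)$, fix $p$: the edges of $\CE$ violating the pattern at $p$ fall into $\CE(\CV\setminus\CV_{p^{k_p}},\CW\setminus\CW_{p^{k_p}})$ --- bounded by $(C_2/p)^{2-2/\theta}\mu(\CE)$ via Lemma \ref{lem:SmallSetEdges} --- together with $\CE(\CV_{p^{k_p}},\bigcup_{|j-k_p|\ge2}\CW_{p^j})$ and its transpose. For the latter, each slice $\CE_{p^{k_p},p^j}$ has $|j-k_p|\ge2$, so $\mu^{(\theta)}(G_{p^{k_p},p^j})<Mp^{-|j-k_p|}\mu^{(\theta)}(G)$; writing $\mu(\CE_{p^{k_p},p^j})^\theta/\mu(\CW_{p^j})^{\theta-1}=\mu^{(\theta)}(G_{p^{k_p},p^j})\mu(\CV_{p^{k_p}})^{\theta-1}$, bounding $\mu(\CV_{p^{k_p}})\le\mu(\CV)$, and applying H\"older with exponents $\theta$ and $\theta/(\theta-1)$ to $\sum_{|j-k_p|\ge2}\mu(\CE_{p^{k_p},p^j})=\sum_{|j-k_p|\ge2}\big(\mu(\CE_{p^{k_p},p^j})^\theta/\mu(\CW_{p^j})^{\theta-1}\big)^{1/\theta}\mu(\CW_{p^j})^{(\theta-1)/\theta}$, and using $\sum_{j\ne k_p}\mu(\CW_{p^j})\le\tfrac{C_2}{p}\mu(\CW)$, yields a bound $\ll_M C_2^{(\theta-1)/\theta}p^{-(\theta+1)/\theta}\mu(\CE)$, crucially with \emph{no} factor counting the valuations that actually occur. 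Since $C_6$ is enormous (say $C_6\ge C_2^{10/\tau}$) and both exponents $(\theta+1)/\theta$, $2-2/\theta$ exceed $1$, summing over $p\in\CR(G)\subseteq\{p>C_6\}$ gives $\mu(\CE\setminus\CE^*)<10^{-6}\mu(\CE)$.

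For (b) I would run the same H\"older computation with only $|i-k_p|\ge1$, which produces a constant $C_\star\le 9MC_2$ with $\mu(\{(v,w)\in\CE:\e_p(v)\ne\e_p(w)\})\le C_\star\mu(\CE)/p$ for all $p\in\CR(G)$. Writing $T_i:=\sum_{p\in\CR(G)}a_{p,i}/p$, this gives, for each $i$,
\[
\sum_{(v,w)\in\CE^*}\mu(v)\mu(w)\sum_{p\in v/w,\,p\in\CR(G)}a_{p,i}=\sum_{p\in\CR(G)}a_{p,i}\,\mu\big(\{(v,w)\in\CE^*:\e_p(v)\ne\e_p(w)\}\big)\le C_\star\,\mu(\CE)\,T_i.
\]
By Markov's inequality, the set of edges $(v,w)\in\CE^*$ with $\sum_{p\in v/w,\,p\in\CR(G)}a_{p,i}>C_8 n T_i$ has $\mu$-weight at most $\tfrac{C_\star}{C_8 n}\mu(\CE)$ (it is empty when $T_i=0$); deleting all such edges over $i=1,\dots,n$ costs at most $\tfrac{C_\star}{C_8}\mu(\CE)\le\tfrac9{100}\mu(\CE)$ because $C_8=100MC_2$. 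The resulting $G^{**}$ is a structured GCD subgraph of $G$ satisfying (b) with $\mu(\CE_{G^{**}})>(1-10^{-6}-\tfrac9{100})\mu(\CE)>\tfrac9{10}\mu(\CE)$, and then $G'$ is a $\theta$-maximal subgraph of $G^{**}$.

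The hard part, and the only place where more than bookkeeping is needed, is the bound on the ``one valuation far from $k_p$'' edges $\CE(\CV_{p^{k_p}},\bigcup_{|j-k_p|\ge2}\CW_{p^j})$ and its transpose. Invoking maximality of $G$ alone bounds these only by $\ll(C_2/p)^{(\theta-1)/\theta}\mu(\CE)$, which is \emph{not} summable over primes; and bounding each slice $\CE_{p^{k_p},p^j}$ separately and summing over $j$ naively would introduce a loss proportional to the number of distinct $p$-adic valuations present in $\CV$ or $\CW$, which is unbounded in general. The fix is to feed the quality bounds $\mu^{(\theta)}(G_{p^{k_p},p^j})\ll Mp^{-|j-k_p|}\mu^{(\theta)}(G)$ into the single H\"older step above, simultaneously over all valuations $j$ --- this converts them into the genuinely summable exponent $p^{-(\theta+1)/\theta}$ with an absolute implied constant.
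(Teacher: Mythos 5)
Your proposal is correct and follows the same skeleton as the paper's proof: the same decomposition of the edges violating the five-pattern at $p$ into $\CE(\CV\setminus\CV_{p^{k_p}},\CW\setminus\CW_{p^{k_p}})$ (handled by maximality via Lemma \ref{lem:SmallSetEdges}) plus the two ``one valuation far from $k_p$'' pieces, the same intersection over $p\in\CR(G)$ to form $\CE^*$, the same Markov/union-bound pruning to enforce (b), and the same final passage to a $\theta$-maximal subgraph, with the quality only changing through the factor $(\mu(\CE')/\mu(\CE))^{\theta}$. The one substantive difference is that where the paper cites Lemmas \ref{lem:UnbalancedSetEdges1}--\ref{lem:UnbalancedSetEdges2} (imported from the quantitative Duffin--Schaeffer paper) for the far-valuation edges and Lemma \ref{lem:InducedGraphs} for the per-slice bound feeding into (b), you reprove these inline by a single H\"older step that combines the $\CR^{\sharp}$ quality bounds $\mu^{(\theta)}(G_{p^{k_p},p^j})<Mp^{-|j-k_p|}\mu^{(\theta)}(G)$ over all $j$ with the mass concentration $\sum_{j\neq k_p}\mu(\CW_{p^j})\le \tfrac{C_2}{p}\mu(\CW)$; this yields the summable exponent $p^{-(\theta+1)/\theta}$ with an absolute constant, which is exactly what the cited lemmas provide (in the slightly different form $p^{-1-\tau/4}$). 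Your approach is thus somewhat more self-contained; the numerical constants come out a little differently (e.g.\ your $C_\star\le 9MC_2$ versus the paper's $4MC_2$ from the four admissible slices inside $\CE^*$, and your $10^{-6}$ for the first-stage loss), but all of them sit comfortably within the margins afforded by $C_6\ge C_2^{10/\tau}$ and $C_8=100MC_2$, so the argument goes through.
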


\begin{proposition}[Iteration when $G$ is structured and $\CR_-(G)\neq\emptyset$]\label{prop:IterationStep2-denominators}
	Let $G$ be a non-trivial and structured GCD graph with set of primes $\CP$ such that 
	\[
	\CR(G)\subseteq\{p> C_6\}
	\quad\text{and}\quad
	\CR_-(G)\neq\emptyset.
	\] 
	Then there is a numerator-exact GCD subgraph $G'$ of $G$ with multiplicative data $(\CP',f',g')$ such that:
	\begin{enumerate}
		\item  $G'$ is non-trivial and maximal as a weighted bipartite graph;
		\item $\CP\subsetneq\CP'\subseteq \CP\cup \CR_-(G)$,\quad $\CR_-(G')\subsetneq \CR_-(G)$,\quad $\CR_+(G')\subseteq \CR_+(G)$;
		\item $f'(p)\le 0$ and $g'(p)\le 0$ for all $p\in\CP'\setminus\CP$;
		\item $q(G')\geqslant q(G)\prod_{p\in\CP'\setminus\CP}  (1-\one_{f'(p)=g'(p)<0}/p)^2 (1-1/p^{1+\frac{\tau}{4}})$.
	\end{enumerate}
\end{proposition}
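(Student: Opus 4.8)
The plan follows the template of the proof of \cite[Proposition 8.2]{DS-quantitative} (together with the ideas in Lemmas 8.4 and 8.5 there), adapted to GCD graphs with rational vertices.

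\emph{Reduction to choosing a ``slice''.} First I would fix a prime $p\in\CR_-(G)$ and a shift $k_p<0$ witnessing the structure of $G$ at $p$ (Definition \ref{dfn:structured-GCD-graph}(a) and Remark \ref{rem:structured-1}(c)). Since $p\in\CR_-(G)$, every edge $(v,w)\in\CE$ satisfies \eqref{eq:primes in R^-}, so $\e_p(v),\e_p(w)\le0$; writing $v=a/q$ with $\gcd(a,q)=1$ this forces $p\nmid a$, whence numerator-exactness at $p$ will hold automatically for any value $f'(p)\le0$. I would then build $G'$ by putting $\CP'=\CP\cup\{p\}$ (so $\CP\subsetneq\CP'\subseteq\CP\cup\CR_-(G)$), choosing integers $f'(p),g'(p)\le0$, keeping the vertices $v$ with $\e_p(v)\le f'(p)$ and the vertices $w$ with $\e_p(w)\le g'(p)$, keeping the edges $(a/q,b/r)$ with $\e_p(\gcd(q,r))=\min\{-f'(p),-g'(p)\}$ --- these are exactly the restrictions Definition \ref{dfn:GCDgraph}(b)--(c) imposes at $p$ --- and finally passing to a maximal weighted-bipartite subgraph. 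The last step raises $\mu^{(\theta)}(G')$, hence $q(G')$, and gives property (a); since $\CV'\subseteq\CV$, $\CW'\subseteq\CW$, $\CE'\subseteq\CE$ and $p\in\CP'$, one checks $\CR(G')\subseteq\CR(G)\setminus\{p\}$, so (also using that for a structured graph a prime of $\CR_-(G)$ cannot reappear in $\CR_+$ of a subgraph) $\CR_-(G')\subsetneq\CR_-(G)$ and $\CR_+(G')\subseteq\CR_+(G)$; thus (b) and (c) hold, and non-triviality of $G'$ will follow once the quality bound (d) is established, because $q(G)>0$. So everything reduces to exhibiting a choice of $(f'(p),g'(p))$ with
\[
q(G')\ \ge\ q(G)\,\bigl(1-1/p\bigr)^{2\one_{f'(p)=g'(p)<0}}\bigl(1-p^{-1-\tau/4}\bigr),
\]
which, through the identity $q(G'')/q(G)=\bigl(\mu^{(\theta)}(G'')/\mu^{(\theta)}(G)\bigr)\,p^{|f''(p)-g''(p)|}$, is a purely quantitative matter.

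\emph{The dichotomy.} I would split according to whether condition~(2) in Definition \ref{dfn:R(G)} holds at $p$. If it fails, some $G_{p^i,p^j}$ with $i\ne j$ already has $q(G_{p^i,p^j})\ge Mq(G)$, and restricting to it (then passing to a maximal subgraph) gives $q(G')\ge Mq(G)$, with $f'(p)\ne g'(p)$ so that the factor $(1-1/p)^2$ is not needed. If condition~(2) holds, I would pass to a maximal subgraph so that Lemma \ref{lem:max->connected} forces every vertex onto an edge, whence $\CV=\CV_{p^{k_p-1}}\sqcup\CV_{p^{k_p}}\sqcup\CV_{p^{k_p+1}}$ and likewise for $\CW$. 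If in addition condition~(1) holds --- necessarily with common exponent $k_p$, so $\mu(\CV_{p^{k_p}})\ge(1-C_2/p)\mu(\CV)$ and $\mu(\CW_{p^{k_p}})\ge(1-C_2/p)\mu(\CW)$ --- I would take $f'(p)=g'(p)=k_p<0$; the only edges dropped are those of the two ``upward'' slices $G_{p^{k_p+1},p^{k_p}}$ and $G_{p^{k_p},p^{k_p+1}}$, and I would bound their edge-weight by feeding condition~(2), the smallness $\mu(\CV_{p^{k_p+1}})\le(C_2/p)\mu(\CV)$ and $\mu(\CW_{p^{k_p+1}})\le(C_2/p)\mu(\CW)$, and the few-edges estimate of Lemma \ref{lem:SmallSetEdges} into the $\theta$-weight formula \eqref{eq:theta-weight rewrite}; the resulting quality loss must then be absorbed into $\bigl(1-1/p\bigr)^2\bigl(1-p^{-1-\tau/4}\bigr)$, which is exactly the budget one is allowed because $f'(p)=g'(p)<0$. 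If instead condition~(1) fails, one of the off-diagonal slices has a small vertex class; its appearance in the denominator of \eqref{eq:theta-weight rewrite} inflates that slice's $\theta$-weight enough that, even after the factor $p^{|f'(p)-g'(p)|}=p$, one gets a genuine quality gain, and I would finish as in the first case.

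\emph{Main obstacle.} The hard part will be the quantitative quality bookkeeping in the main case: off-diagonal one gains only a single factor $p$, while contracting the edge set costs a $\theta$-th power with $\theta>2$, so the estimates must be sharp to within the factor $(1-1/p)^2$. This is precisely where the calibration of the constants $C_1,\dots,C_8$ in \eqref{eq:CDefs} --- in particular $C_6\ge C_2^{10/\tau}$, which ensures $p^{-\tau/4}$ dominates the $C_2$-sized error terms for all $p>C_6$ --- and the exponent $1+\tau/4$ in the conclusion enter, exactly as in the proof of \cite[Proposition 8.2]{DS-quantitative}. The passage from integer to rational vertices is cosmetic, since every quantity involved depends on a vertex only through its $p$-adic valuations.
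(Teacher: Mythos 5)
Your overall architecture matches the paper's, which routes the proposition through Lemma \ref{lem:MainLem2}: fix $p\in\CR_-(G)$ with $k_p<0$, split on whether $p\in\CR^\flat(G)$ or $p\in\CR^\sharp(G)$, and in the latter case pass to the ``merged'' subgraph with vertex classes $\CV_{p^{k_p-1}}\cup\CV_{p^{k_p}}$ and $\CW_{p^{k_p-1}}\cup\CW_{p^{k_p}}$ and $f'(p)=g'(p)=k_p$ (the Hauke--Vazquez--Walker--style subgraph, which is numerator-exact but not denominator-exact precisely because $k_p<0$). Your observations on why numerator-exactness and properties (a)--(c) come for free are correct. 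However, there are two genuine gaps in the case $p\in\CR^\sharp(G)$.

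First, the claim that condition (1) of Definition \ref{dfn:R(G)} can only hold ``with common exponent $k_p$'' is not immediate and you give no argument for it. The dominant valuation class could a priori sit at $k_p-1$ or $k_p+1$; ruling this out (equivalently, showing that in that situation some off-diagonal slice $G_{p^i,p^j}$ already gains a factor $\ge M$, so that one is back in your first branch) is exactly the content of Cases 2a--2b of Lemma \ref{lem:MainLem2}, and it uses maximality, Lemma \ref{lem:SmallSetEdges} applied to the two light classes, and Lemma \ref{lem:InducedGraphs} to convert ``a quarter of the edges lie in one off-diagonal slice'' into the gain $4^{-(2+\tau)}p\ge M$. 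Second, and more seriously, your plan for the diagonal sub-case does not close as described. Lemma \ref{lem:SmallSetEdges} cannot bound the dropped slices $\CE_{p^{k_p},p^{k_p+1}}$ and $\CE_{p^{k_p+1},p^{k_p}}$, since each has one \emph{large} side ($\CW_{p^{k_p}}$, resp.\ $\CV_{p^{k_p}}$). The only available bound from condition (2) plus Lemma \ref{lem:InducedGraphs} gives $\mu(\CE_{p^{k_p+1},p^{k_p}})\le (MC_2^{1+\tau})^{1/(2+\tau)}\mu(\CE)/p$, and an edge loss of order $\sqrt{MC_2}/p$ raised to the power $2+\tau$ is far outside the budget $(1-1/p)^2(1-1/p^{1+\tau/4})\approx 1-2/p$. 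The correct mechanism (the paper's Cases 2c--2d, following Lemma 15.1 of \cite{DS-quantitative}) is a further trichotomy among the merged graph $G^-$ and the two off-diagonal slices $G_{p^{k_p},p^{k_p+1}}$, $G_{p^{k_p+1},p^{k_p}}$: if either slice carries at least roughly $\mu(\CE)/p$ of the edge weight, then connectivity (Lemma \ref{lem:max->connected}) bounds the mass of its light vertex class from above by a constant times that edge weight, and Lemma \ref{lem:InducedGraphs} then shows that slice alone has quality $\ge q(G)$; otherwise $G^-$ loses at most about $2\mu(\CE)/((2+\tau)p)$ in edge weight and fits inside $(1-1/p)^2(1-1/p^{1+\tau/4})$. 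Without this extra dichotomy the argument fails quantitatively.
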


Finally, even though we do not need it in this paper, we also have the following iteration proposition that we state because it might be useful in future work.

\begin{proposition}[Iteration when $G$ is structured and $\CR_+(G)\neq\emptyset$]\label{prop:IterationStep2-numerators}
	Let $G$ be a non-trivial and structured GCD graph with multiplicative data $(\CP',f',g')$ such that 
	\[
	\CR(G)\subseteq\{p> C_6\}
	\quad\text{and}\quad
	\CR_+(G)\neq\emptyset.
	\] 
	Then there is a denominator-exact GCD subgraph $G'$ of $G$ with set of primes $\CP'$ such that:
	\begin{enumerate}
		\item  $G'$ is non-trivial and maximal as a weighted bipartite graph;
		\item $\CP\subsetneq\CP'\subseteq \CP\cup \CR_+(G)$,\quad $\CR_+(G')\subsetneq \CR_+(G)$,\quad $\CR_-(G')\subseteq \CR_-(G)$;
		\item $f'(p)\ge0$ and $g'(p)\ge0$ for all $p\in\CP'\setminus\CP$;
		\item $q(G')\geqslant q(G)\prod_{p\in\CP'\setminus\CP}  (1-\one_{f'(p)=g'(p)>0}/p)^2 (1-1/p^{1+\frac{\tau}{4}})$.
	\end{enumerate}
\end{proposition}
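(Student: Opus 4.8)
The plan is to carry out the argument of Proposition~\ref{prop:IterationStep2-denominators} with the roles of numerators and denominators exchanged. The essential simplification in this ``positive'' case is that $\CR_+(G)\neq\emptyset$ forces the exponent $k_p$ of Definition~\ref{dfn:structured-GCD-graph}(a) to satisfy $k_p\ge1$ for each $p\in\CR_+(G)$ (Remark~\ref{rem:structured-1}(c) and Remark~\ref{rem:structured}), so all the $p$-adic valuations in play are $\ge0$; consequently the denominator-exactness demanded in the conclusion is automatic, $f'(p),g'(p)\ge0$ will hold for every choice we make, and the only real task is to pin down the numerator valuations while controlling the quality.

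First I would replace $G$ by a maximal subgraph $G_1$; this does not decrease the quality, keeps $\CR_+$ and $\CR_-$ from growing, preserves structuredness (Remark~\ref{rem:structured-1}(b)), and, by Lemma~\ref{lem:max->connected}, makes $G_1$ be $\tfrac{\theta-1}{\theta}\delta(G_1)$-connected with no isolated vertices. If $\CR_+(G_1)=\emptyset$ one is done at once by adjoining to $\CP$ any prime $p\in\CR_+(G)$ with exponents $f'(p)=g'(p)=0$: this is a legitimate GCD graph since $p\notin\CR(G_1)$, it has $q(G')=q(G_1)\ge q(G)$, and $p$ leaves $\CR_+$. Otherwise fix $p\in\CR_+(G_1)$ together with $k_p\ge1$. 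Structuredness distributes the edges of $G_1$ among the five ``cells'' $(\e_p(v)-k_p,\e_p(w)-k_p)\in\{(-1,0),(0,-1),(0,0),(0,1),(1,0)\}$, with every off-centre vertex on one side adjacent only to the centre class on the other. Restricting $G_1$ to one such cell, or to the union of the three cells $(0,0),(0,1),(1,0)$ on which $\gcd(a,b)$ has $p$-valuation exactly $k_p$ (taking $f'(p)=g'(p)=k_p$ there), produces an exact --- in particular denominator-exact --- GCD subgraph with $p\in\CP'\setminus\CP$, $f'(p),g'(p)\ge0$, $p\notin\CR(G')$, $\CR_-(G')\subseteq\CR_-(G)$ and $\CR_+(G')\subseteq\CR_+(G)\setminus\{p\}$; passing finally to a maximal subgraph of the chosen cell-graph yields~(a). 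Hence (a)--(c) hold for \emph{any} admissible cell choice, and everything comes down to arranging the quality bound~(d).

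For~(d) the key is~\eqref{eq:theta-weight rewrite}: written as a ratio of $\mu$-weights times $p^{|f'(p)-g'(p)|}$, an \emph{off-diagonal} cell gains a full factor $p$, whereas the \emph{diagonal} choice $f'(p)=g'(p)=k_p>0$ (so $\one_{f'(p)=g'(p)>0}=1$) gains nothing and must instead be shown to lose at most $(1-1/p)^2$ in $\mu$-weight. I would run a dichotomy: \emph{either} some off-centre vertex class, say $\CV_{p^{k_p-1}}$, carries $\gg1/p$ of $\mu(\CV)$ --- in which case, using that those vertices see only $\CW_{p^{k_p}}$ and that connectivity forces the cell $(-1,0)$ to have $\mu$-weight $\gg_\theta\delta(G_1)\mu(\CV_{p^{k_p-1}})\mu(\CW)$, the factor-$p$ bonus pushes that cell-graph's quality above $q(G_1)$, comfortably beating $1-1/p^{1+\frac{\tau}{4}}$ since $p>C_6$ --- \emph{or} all four off-centre vertex classes are $\ll1/p$, so that $\mu(\CV_{p^{k_p}})\ge(1-O(1/p))\mu(\CV)$ and likewise for $\CW$, in which case, using maximality to bound the densities (hence the $\mu$-masses) of the off-diagonal cells together with connectivity once more, the diagonal subgraph has quality at least $q(G_1)(1-1/p)^2(1-1/p^{1+\frac{\tau}{4}})$.

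The main obstacle is this quantitative balancing. Connectivity only bounds the off-diagonal \emph{edge} masses from below by a $\delta$-multiple of a vertex mass, so controlling them from above --- which the second branch needs --- forces one to combine structure with maximality and to separate the two branches at precisely the scale dictated by the constant $C=C_2$ of Definition~\ref{dfn:R(G)} and by the smallest admissible prime $C_6$ of~\eqref{eq:CDefs}; in the delicate intermediate regime one processes all of $\CR_+(G_1)$ simultaneously, exploiting the weighted edge-control carried by a structured graph (cf.\ Proposition~\ref{prop:StructureStep}(2)), so that the cumulative loss telescopes into $\prod_{p\in\CP'\setminus\CP}(1-\one_{f'(p)=g'(p)>0}/p)^2(1-1/p^{1+\frac{\tau}{4}})$. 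This is exactly the bookkeeping done, for the denominators, in the adaptation of~\cite[Proposition~8.2]{DS-quantitative}; transcribing it to the numerators is routine once the mirror symmetry is in place.
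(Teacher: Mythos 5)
Your overall strategy is the same as the paper's: the paper proves this proposition by applying Lemma \ref{lem:MainLem2} (the mirror of the argument for Proposition \ref{prop:IterationStep2-denominators}) to a single prime $p\in\CR_+(G)$, and your reduction to one prime, your cell decomposition, and your treatment of conclusions (a)--(c) (including the observation that $k_p\ge1$ makes denominator-exactness automatic) are all sound. The gap is exactly where you flag "the main obstacle": conclusion (d) in the balanced case. Your claim that "the diagonal subgraph has quality at least $q(G_1)(1-1/p)^2(1-1/p^{1+\frac{\tau}{4}})$" is asserted, not proved, and the mechanisms you invoke cannot deliver it. If you take the literal diagonal cell $G_{p^{k_p},p^{k_p}}$, the best upper bound on the edge mass of each adjacent off-diagonal cell that maximality ($q(G_{p^i,p^j})<M\,q(G)$ for $i\ne j$, via Lemma \ref{lem:InducedGraphs}) gives is $\mu(\CE_{p^i,p^j})<MC_2\,\mu(\CE)/p$, so the retained edge fraction is only $1-O(MC_2/p)$ and the quality ratio is $1-O(MC_2/p)$, far below the required $1-(2+o(1))/p$. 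The constant $2$ in $(1-1/p)^2$ is not negotiable: in Section \ref{sec:proof-of-moments-bound} it must exactly cancel $\prod_{p\mid J^-}(1-1/p)^{-2}$. Connectivity only gives \emph{lower} bounds on the off-diagonal cell masses, so it cannot be "used once more" to bound them from above; and Lemma \ref{lem:SmallSetEdges} does not apply to the adjacent cells because only one side of each such cell is small.

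The missing ingredient is the three-way comparison carried out in Case 2 of the proof of \cite[Lemma 15.1]{DS-quantitative} (Cases 2c--2d of Lemma \ref{lem:MainLem2} here): one compares the graph $G^+$ built on $\CV_{p^{k}}\cup\CV_{p^{k+1}}$ and $\CW_{p^{k}}\cup\CW_{p^{k+1}}$ with exponents $f'(p)=g'(p)=k$ (this, not the exact diagonal cell, is what is merely denominator-exact) against the two remaining cells $G_{p^{k-1},p^k}$ and $G_{p^{k},p^{k-1}}$, using connectivity to convert a lower bound on a cell's edge mass into a vertex-mass bound and then optimizing so that either one of the off-diagonal cells yields $q\ge q(G)$ outright, or the total mass of $\CE\setminus\CE^+$ is at most $\big(\tfrac{2}{2+\tau}+o(1)\big)/p$. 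Your final suggestion --- processing all of $\CR_+(G_1)$ simultaneously and using the weighted edge control of Proposition \ref{prop:StructureStep}(2) so that the losses "telescope" --- is a red herring: the proposition adds one prime per application, the product in (d) arises from iterating it in Section \ref{sec:proof-of-moments-bound}, and Proposition \ref{prop:StructureStep}(2) controls sums of the form $\sum_{p\in v/w}a_{p,i}$, which has no bearing on the per-prime constant in the quality loss.
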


\section{Proof of Proposition \ref{prop:key proposition for rationals} assuming the results of Section \ref{sec:GCDgraphs}}\label{sec:proof-of-moments-bound}

Here, we show how to deduce Proposition \ref{prop:key proposition for rationals} from Propositions \ref{prop:SmallPrimes}-\ref{prop:IterationStep2-denominators}. Throughout, we take $\tau=0.001$, $\theta=2+\tau=2.001$, and $M=e^4$, and we adopt all notational conventions of Section \ref{sec:results on GCD graphs}. For instance, we will write $\lambda(G)$ and $q(G)$ instead of $\lambda^{(2.001)}(G)$ and $q^{(2.001)}(G)$, and we will say that $G$ is maximal if it is $2.001$-maximal as a weighted bipartite graph.

Let $G=(\lambda,\CR,\CS,\CE)$ with $\CR,\CS$ and $\CE$ as in the statement of Proposition \ref{prop:key proposition for rationals}. With a slight abuse of notation, we view $G$ as the GCD graph $(\lambda,\CR,\CS,\mathcal{E},\emptyset,f_\emptyset,g_\emptyset)$ with trivial multiplicative data. We then have that
\[
\lambda^{(3)}(G) \le \lambda^{(2.001)}(G)= q^{(2.001)}(G)=q(G) 
\]
by relation \eqref{Eq:RelationEdgeWeight} and by Definition \ref{dfn:quality}. Hence, it suffices to show that	
\begin{equation}
	\label{eq:key-prop3 rephrasing}
	q(G) \ll (y\log x)^2 e^{-4z}.
\end{equation}
We may assume that $G$ is non-trivial; otherwise $q(G)=0$, and thus \eqref{eq:key-prop3 rephrasing} holds trivially.

It is now time to employ the results of Section \ref{sec:GCDgraphs}. We first use Proposition \ref{prop:SmallPrimes}, and we then iterate Proposition \ref{prop:IterationStep1} till we arrive at a GCD subgraph $G_1 = (\lambda, \CV_1,\CW_1,\CE_1,\CP_1,f_1,g_1)$ of $G$ such that:
\begin{enumerate}[label=(\alph*)]
	\item $G_1$ is maximal as a weighted bipartite graph;
	\item $G_1$ is exact as a GCD graph;
	\item $\mathcal{R}^\flat(G_1) = \emptyset$;
	\item $q(G_1) \gg e^{4U} q(G)$ with $U=\#\{p\in\CP_1:p>C_6,\ f_1(p)\neq g_1(p)\}$. 
\end{enumerate}
Next, we apply Proposition \ref{prop:StructureStep} with $G_{\text{Proposition}\ \ref{prop:StructureStep}}=G_1$, with $n=1$, and with $a_{p,1}=\one_{p>z}/p$. Hence, we may locate a GCD subgraph 
$G_2 = (\lambda, \CV_2,\CW_2,\CE_2,\CP_2,f_2,g_2)$ of $G_1$ with the same multiplicative data $(\CP_2,f_2,g_2)=(\CP_1,f_1,g_1)$ and such that:
\begin{enumerate}[label=(\alph*)]
	\item $G_2$ is maximal as weighted bipartite graph; 
	\item $G_2$ is exact and structured as a GCD graph;
	\item $\sum_{p\in v/w,\ p\in\CR(G_1),\ p>z}\frac{1}{p} \le C_8 \sum_{p>z} \frac{1}{p^2}\ll \frac{1}{z}$ for all $(v,w)\in\CE_2$;
	\item $q(G_2) \ge q(G_1)/2$.
\end{enumerate}
Since $G_2$ is structured, all of its subgraphs are also structured. Thus, we may iterate Proposition \ref{prop:IterationStep2-denominators} 
till we arrive at a GCD subgraph $G_3 = (\lambda, \CV_3,\CW_3,\CE_3,\CP_3,f_3,g_3)$ of $G_2$ such that:
\begin{enumerate}[label=(\alph*)]
	\item $G_3$ is maximal as a weighted bipartite graph;
	\item $G_3$ is structured and numerator-exact as a GCD graph;
	\item $\mathcal{R}_-(G_3)=\emptyset$;
	\item $q(G_3) \ge q(G_2) \prod_{p\in\CP_3\setminus \CP_2}(1-\one_{f_3(p)=g_3(p)<0}/p)^2(1-1/p^{1+1/4000})$.
\end{enumerate}
In particular, since $G_3$ is a structured GCD graph, for each $p\in\CR(G_3)$ there exists an integer $k_p\in\Z$ such that
\begin{equation}
	\label{eq:possible valuations along edges encore}
	\big(\e_p(v)-k_p,\e_p(w)-k_p\big)\in\big\{(-1,0),(0,-1),(0,0),(0,1),(1,0)\big\} \quad\mbox{for all $(v,w)\in\CE_3$}.
\end{equation}
In addition, $k_p>0$ by Remark \ref{rem:structured} and the fact that $\CR_-(G_3)=\emptyset$. 

Let us define the integers
\[
d^\pm =  \prod_{p\in\CP_2}p^{f_3^\pm(p)},\quad e^\pm =  \prod_{p\in\CP_2}p^{g_3^\pm(p)},
\quad j^\pm =\gcd(d^\pm,e^\pm),
\]
and
\[
D^\pm =  \prod_{p\in\CP_3\setminus \CP_2}p^{f_3^\pm(p)},\quad E^\pm =  \prod_{p\in\CP_3\setminus\CP_2}p^{g_3^\pm(p)},
\quad J^\pm =\gcd(D^\pm,E^\pm),
\]
and
\[
N= \prod_{p\in\CR(G_3)} p^{k_p} ,\quad N^*=\prod_{p|N}p.
\]
The integer $N$ has no prime factor in $\CP_3$ by the definition of $\CR(G_3)$. Moreover, we have 
\[
D^+=E^+=J^+=1 
\]
because $f_3^+(p)=g_3^+(p)=0$ for all $p\in\CP_3\setminus\CP_2$ by property (c) of Proposition \ref{prop:IterationStep2-denominators}. In addition, note that
\begin{equation}
	\label{eq:quality loss factor}
	\prod_{p\in\CP_3}p^{|f_3(p)-g_3(p)|} = \frac{d^+D^+e^+E^+d^-D^-e^-E^-}{(j^+J^+j^-J^-)^2} =  \frac{d^+e^+d^-D^-e^-E^-}{(j^+j^-J^-)^2}  ,
\end{equation}
because $|\chi-\psi|=|\chi^+ - \psi^+| + |\chi^- - \psi^-|$ and $|\chi^\pm-\psi^\pm|=\chi^\pm+\psi^\pm-2\min\{\chi^\pm,\psi^\pm\}$ for all $\chi,\psi\in\R$.  

Given $(v,w)\in\CV_3\times \CW_3$, let
\[
A^\pm= A^\pm(v) = \prod_{\substack{p|N \\ \e_p(v)=k_p\pm1}}p
\quad\text{and}\quad 
B^\pm= B^\pm(w) = \prod_{\substack{p|N \\ \e_p(w)=k_p\pm 1}} p .
\]
Note that if $(v,w)\in\CE_3$, then \eqref{eq:possible valuations along edges encore} implies that the integers $A^+,A^-,B^+,B^-$ are mutually coprime and that
\begin{equation}
	\label{eq:valuation of v and w on primes in R(G_3)}
\e_p(v) = \e_p(NA^+/A^-)\quad\text{and}\quad \e_p(w) = \e_p(NB^+/B^-)\quad\text{for all}\ p\in\CR(G_3) .
\end{equation}
In particular, $\e_p(v),\e_p(w)\ge k_p-1\ge0$ for all $p\in\CR(G_3)$.

Now, given $(v,w)\in\CE_3$, let us write $v=a/q$ and $w=b/r$ with $\gcd(a,q)=\gcd(b,r)=1$. We then find that
\begin{equation}\label{Eq:abqr_factorization}
a=d^+\frac{NA^+}{A^-}\cdot a',\quad 
b=e^+\frac{NB^+}{B^-}\cdot b',\quad 
q=d^-D^-\cdot q',\quad
r=e^-E^- \cdot r'
\end{equation}
for some integers $a',b',r',q'$ such that $\gcd(a',q')=\gcd(b',r')=1$. In addition, observe that
\begin{equation}
	\label{eq:prime factors of a',q',b',r'}
	p|a'q'b'r'\quad\implies\quad p\notin \CR(G_3) ,
\end{equation}
by \eqref{eq:valuation of v and w on primes in R(G_3)} and the comment following it. Moreover, since all elements of $\CV_3\subseteq\CR$ and $\CW_3\subseteq\CS$ are rational numbers $\ge1$ of height $\le x$, we have that $q\le a\le x$ and $r\le b\le x$. Together with \eqref{Eq:abqr_factorization}, this implies
\begin{equation}
	\label{eq:height condition}
	\frac{NA^+}{A^-}\cdot a'\le x, \quad 
	\frac{NB^+}{B^-}\cdot b'\le x
		\quad\text{and}\quad 
		J^- \le x. 
\end{equation}
Furthermore, we have that:
\begin{enumerate}
	\item $A^-A^+B^-B^+|N^*$ because $A^-, A^+, B^-$ and $B^+$ are mutually coprime and square-free;
	\item $\gcd(a'q',N)=\gcd(b'r',N)=1$ by \eqref{eq:prime factors of a',q',b',r'};
	\item $\gcd(a',d^-D^-)=\gcd(b',e^-E^-)=1$ because $\gcd(a,q)=\gcd(b,r)=1$;
	\item $\gcd(a',d^+)=\gcd(b',e^+)=1$ because $G_2$ is exact;
	\item $\gcd(a,b)=j^+N/(A^-B^-)$ and $\gcd(q,r)=j^-J^-$ because: 
	\begin{itemize}
		\item the definition of $\CR(G_3)$ implies that $\gcd(a,b)$ and $\gcd(q,r)$ are solely composed of primes in $\CP_3\sqcup\CR(G_3)$;
		\item if $p\in\CR(G_3)$, then $\e_p(v)=\e_p(NA^+/A^-)$ and $\e_p(w)=\e_p(NB^+/B^-)$ by \eqref{eq:valuation of v and w on primes in R(G_3)};
		\item $\gcd(NA^+/A^-,NB^+/B^-)=N/(A^-B^-)$ by condition (a) above;
		\item if $p \in \CP_3$, then $p^{\min\{f_3^+(p),g_3^+(p)\}}\|\gcd(a,b)$ and $p^{\min\{f_3^-(p),g_3^-(p)\}}\|\gcd(q,r)$ by condition (c)-(iii) in Definition \ref{dfn:GCDgraph};
		\item if $p\in\CP_3$, then $\min\{f_3^\pm(p),g_3^\pm(p)\} = \e_p(j^\pm J^\pm)$ by the definition of $j^\pm$ and of $J^\pm$;
		\item $J^+=1$.
	\end{itemize} 
\end{enumerate}
Using property (e), equation \eqref{Eq:abqr_factorization} and Lemma \ref{lem:R for rational ratio}, we find that
\[
[v,w] = \frac{qr}{\gcd(a,b)\gcd(q,r)} = \frac{d^-D^-e^-E^-}{j^+j^-J^-N} \cdot q'A^-\cdot r'B^-
\]
whenever $(v,w)\in\CE_3$. But we also know that $[v,w]\asymp y$ for all $(v,w)\in\CE_3\subseteq\CE$. Hence, we conclude that
\begin{equation}
	\label{eq:bracket condition}
	q'A^-\cdot r'B^- \asymp y\cdot \frac{j^+j^-J^-N}{d^-D^-e^-E^-}  \quad\text{for all}\ (v,w)\in\CE_3. 
\end{equation}

Now, let $h$ denote the multiplicative function
\[
h(n)=\prod_{\substack{p|n \\ p>z}}e^{4z/p},
\]
and let us note that
\begin{equation}\label{Eq:hfunction_average}
	\sum_p \frac{|h(p)-1|}{p}=O(1)
\end{equation}
because $1\le h(p)\le e^{\one_{p>z}\cdot 4z/p}\le 1+\one_{p>z}\cdot e^4z/p$. We claim that 
\begin{equation}
	\label{eq:remove L}
	e^{4z}\ll e^{4U}h(A^+)h(A^-)h(B^+)h(B^-)h(a')h(q') h(b')h(r')
	\quad\text{for all}\ (v,w)\in\CE_3.
\end{equation}
If $z\le C_6$, this is clearly true because $h\ge1$. So assume that $z>C_6$. We claim that it suffices to show that
\begin{equation}
	\label{eq:remove L-2}
	\sum_{\substack{p\in v/w \\ p>z}} \frac{1}{p} \le \frac{U}{z} + O\bigg(\frac{1}{z}\bigg)+ \sum_{n\in\{A^+,A^-,B^+,B^-,a',q',b',r'\}} \sum_{\substack{p| n \\ p>z}}\frac{1}{p} 
	\quad\text{for all}\ (v,w)\in\CE_3.
\end{equation}
Indeed, we know that $\sum_{p\in v/w,\ p>z}\frac{1}{p}=L(v/w;z)>1$ for all $(v,w)\in\CE_2\subseteq\CE$. Hence, if \eqref{eq:remove L-2} is true, we may then multiply both sides by $4z$ and exponentiate them to deduce \eqref{eq:remove L}.

Let us now prove \eqref{eq:remove L-2}. If $p\in v/w$ and $p>z>C_6$, we have the following four possibilities:
\begin{itemize}
	\item $p\in \CP_1=\CP_2$. Since $G_1$ is exact, we find that $f_1(p)\neq g_1(p)$. This implies that 
	\[
	\sum_{p\in v/w,\ p\in\CP_2,\ p>z} \frac{1}{p}\le \frac{U}{z}.
	\]
	\item $p\in \CP_3\setminus \CP_2$. We then have $p\in\CR(G_2)\subseteq\CR(G_1)$, and thus property (c) of $G_2$ implies that
	\[
	\sum_{p\in v/w,\ p\in\CP_3\setminus \CP_2,\ p>z} \frac{1}{p} \ll \frac{1}{z}.
	\] 
	\item $p\in \CR(G_3)$. Using \eqref{eq:valuation of v and w on primes in R(G_3)}, we find that $p \in \frac{NA^+/A^-}{NB^+/B^-}=\frac{A^+B^-}{A^-B^+}$, and thus that $p|A^+A^-B^+B^-$. 
	\item $p\notin \CP_3\cup \CR(G_3)$. Using \eqref{Eq:abqr_factorization}, we must then have that $p|a'q'b'r'$. 
\end{itemize}
Putting together the above observations proves \eqref{eq:remove L-2}, and thus \eqref{eq:remove L}.

Let us now pick an edge $(v_0,w_0)=(\frac{a_0}{q_0},\frac{b_0}{r_0})\in\CE_3$ (it doesn't matter which one) and let us set 
\begin{equation}
	\label{eq:X and Y}
	X \coloneqq q_0'A_0^-
	\quad\text{and}\quad 
	Y \coloneqq r_0' B^-_0 ,
\end{equation}
where $A_0^- \coloneqq A^-(v_0)$ and $B_0^- \coloneqq B^-(w_0)$. By  \eqref{eq:bracket condition}, we have that
\begin{equation}
	\label{eq:X and Y 2}
	XY\asymp y  \cdot \frac{j^+j^-J^-N}{d^-D^-e^-E^-}.
\end{equation}
We then apply Lemma \ref{lem:common neighbor 1} with $G_{\text{Lemma}\ \ref{lem:common neighbor 1}}=(\lambda,\CV_3,\CW_3,\CE_3)$, $w_0$ as above, and $\theta=2.001$. This proves that there exists a GCD subgraph $G_4=(\lambda,\CV_4,\CW_4,\CE_4,\CP_3,f_3,g_3)$ of $G_3$ such that:
\begin{enumerate}
	\item $(v,w_0)\in \CE_4$ for all $v\in\CV_4$;
	\item for all $w\in\CW_4$, there exists $v\in\CV_4$ such that $(v,w)\in\CE_4$;
	\item $\lambda(G_3)\ll \lambda^{(1.001)}(G_4)$.
\end{enumerate}
In particular, condition (a) and relations \eqref{eq:bracket condition}, \eqref{eq:X and Y} and \eqref{eq:X and Y 2} imply that
\begin{equation}
	\label{eq:all of size X}
	q'A^-\asymp \frac{y}{r_0'B_0^-}\cdot \frac{j^+j^-J^-N}{d^-D^-e^-E^-}    \asymp X \quad\text{for all}\ v\in\CV_4.
\end{equation}
In addition, for every $w\in\CW_4$, condition (b) implies that the existence of some $v\in\CV_4$ such that $(v,w)\in\CE_4\subseteq\CE_3$. Together with  \eqref{eq:bracket condition}, \eqref{eq:all of size X} and \eqref{eq:X and Y 2}, this implies that
\begin{equation}
	\label{eq:all of size Y}
	r'B^- \asymp \frac{y}{q'A^-}\cdot \frac{j^+j^-J^-N}{d^-D^-e^-E^-}  \asymp \frac{y}{X}\cdot \frac{j^+j^-J^-N}{d^-D^-e^-E^-} \asymp Y \quad\text{for all}\ w\in\CW_4.
\end{equation}

We proceed to bound $q(G)$ in terms of $\lambda(\CE_4)$. By the properties of $G_1$ and $G_2$, we have
\[
	q(G) \ll e^{-4U} q(G_1) \ll e^{-4U} q(G_2).
\]
Next, note that if $f_3(p)=g_3(p)<0$ and $p\in\CP_3\setminus\CP_2$, then we must have $p|J^-$. Moreover, we know that $\prod_p (1-1/p^{1+1/4000})\gg1$. Consequently, the properties of $G_3$ and the definition of the quality $q(\cdot)$, imply that
\[
q(G)	\ll e^{-4U} q(G_3) \prod_{p|J^-}\bigg(1-\frac{1}{p}\bigg)^{-2} 
= e^{-4U} \lambda(G_3) \prod_{p\in\CP_3} p^{|f_3(p)-g_3(p)|}  \prod_{p|J^-}\bigg(1-\frac{1}{p}\bigg)^{-2} .
\]
Lastly, note that $\lambda(G_3)\ll \lambda^{(1.001)}(G_4)\le \lambda(\CE_4)$ by the properties of $G_4$ and \eqref{Eq:RelationEdgeWeight}. Together with  \eqref{eq:quality loss factor}, this yields the estimate
\begin{equation}
	\label{eq:quality ineq}
		q(G) \ll e^{-4U} \lambda(\CE_4)\cdot  \frac{d^+e^+d^-D^-e^-E^-}{(j^+j^-J^-)^2} \cdot \prod_{p|J^-}\bigg(1-\frac{1}{p}\bigg)^{-2} .
\end{equation}

Now, consider the modified weight function
\[
\mu(v) \coloneqq \lambda(v) h(A^+)h(A^-)h(a')h(q') .
\]
In virtue of \eqref{eq:remove L}, we have that $e^{4z} \lambda(v)\lambda(w) \ll e^{4U} \mu(v)\mu(w)$ for all $(v,w)\in\CE_3\subseteq\CE_4$, and thus $e^{-4U}\lambda(\CE_4)\ll e^{-4z} \mu(\CE_4)$. Together with \eqref{eq:quality ineq}, this reduces the proof of \eqref{eq:key-prop3 rephrasing}, and thus of Proposition \ref{prop:key proposition for rationals}, to showing that
\begin{equation}
	\label{eq:key-prop3 rephrasing 2}
	\mu(\CE_4) \ll (y\log x)^2 \cdot \frac{(j^+j^-J^-)^2} {d^+e^+d^-D^-e^-E^-} \cdot \prod_{p|J^-}\bigg(1-\frac{1}{p}\bigg)^2.
\end{equation}

To prove \eqref{eq:key-prop3 rephrasing 2}, note that if $(v,w)\in\CE_4\subseteq\CE_3$, then \eqref{Eq:abqr_factorization} and \eqref{eq:all of size X} imply that
\[
\begin{split}
	\mu(v) &=  \frac{d^-D^-}{d^+N} \cdot \frac{A^-}{A^+} \cdot \frac{q'}{a'} \cdot h(A^+)h(A^-)h(a')h(q')\\
	&\asymp\frac{d^-D^-}{d^+N} \cdot \frac{X}{A^+a'} \cdot h(A^+)h(A^-)h(a')h(q') .
\end{split}
\]
Similarly, from \eqref{Eq:abqr_factorization} and \eqref{eq:all of size Y} we deduce that
\[
\mu(w) \asymp  \frac{e^-E^-}{e^+N} \cdot \frac{Y}{B^+b'}  \cdot h(B^+)h(B^-)h(b')h(r') .
\]
Therefore, for all $(v, w) \in \mathcal{E}_4$ we have
\[
\mu(v)\mu(w) \asymp \frac{d^-D^-e^-E^-}{d^+e^+} \cdot \frac{XY}{N^2} \cdot \frac{h(A^+)h(A^-)h(a')h(q')}{A^+a'}\cdot \frac{h(B^+)h(B^-)h(b')h(r')}{B^+b'} .
\]
Moreover, we have
\[
\frac{d^-D^-e^-E^-}{d^+e^+} \cdot \frac{XY}{N^2} 
\asymp y^2\cdot \frac{(j^+j^-J^-)^2}{d^+d^-D^-e^+e^-E^-} \cdot  \frac{1}{XY} 
\]
by \eqref{eq:X and Y 2}. We thus conclude that
\[
\mu(\CE_4) \ll  y^2\cdot \frac{(j^+j^-J^-)^2}{d^+d^-D^-e^+e^-E^-} \cdot \frac{S_1}{X}\cdot \frac{S_2}{Y} ,
\]
where 
\[
S_1 \coloneqq  \mathop{\sum\sum\sum\sum}_{\substack{A^+,A^-,a',q' \in\N \\ A^-A^+|N^*,\  \frac{NA^+}{A^-}\cdot a' \le x,\ A^-q'\asymp X \\ \gcd(a'q',N)=\gcd(a',J^-)=1  \\  
		\frac{a'd^+NA^+/A^-}{q'd^-D^-} \in \CR }}
\frac{h(A^+)h(A^-)h(a')h(q')}{A^+a'} 
\]
and 
\[
S_2 \coloneqq   \mathop{\sum\sum\sum\sum}_{\substack{B^-,B^+,b',r' \in \N\\ B^-B^+|N^*,\ \frac{NB^+}{B^-} \cdot b' \le x,\ B^-r'\asymp X \\ \gcd(b'r',N)=\gcd(b',J^-)=1  \\  
		\frac{b'e^+NB^+/B^-}{r'e^-E^-} \in \CS }}
\frac{h(B^+)h(B^-)h(b')h(r')}{B^+b'} ,
\]
where we used \eqref{eq:height condition}. We have thus reduced the proof of \eqref{eq:key-prop3 rephrasing 2}, and hence of Proposition \ref{prop:key proposition for rationals}, to showing that
\begin{equation}
	\label{eq:key-prop3 rephrasing 4}
	S_1 \ll X\cdot (\log x) \prod_{p|J^-}\bigg(1-\frac{1}{p}\bigg)
	\quad\text{and}\quad
	S_2 \ll Y\cdot (\log x) \prod_{p|J^-}\bigg(1-\frac{1}{p}\bigg).
\end{equation}
We proceed to prove the bound for $S_1$; the sum $S_2$ is treated similarly.

We fix $A^\pm$ and $q'$ and apply Corollary \ref{cor:behrend} to bound the sum over $a'$. Notice that we do not know whether $N\le x^{O(1)}$. Hence, we will only use that $\gcd(a',\frac{NA^+}{A^-}J^-)=1$ because we do know that $NA^+/A^-$ and $J^-$ are both $\le x$ by \eqref{eq:height condition}. 
Therefore, for each fixed choice of $A^\pm$ and $q'$, Corollary \ref{cor:behrend} together with our assumption that $\CR$ is a set of primitive numerators and the estimate \eqref{Eq:hfunction_average} imply that
\[
\sum_{a'}\frac{h(a')}{a'} \ll \frac{\log x}{\sqrt{\log\log x} } \cdot \prod_{p| \frac{NA^+}{A^-}J^-} \bigg(1-\frac{1}{p}\bigg).
\]
Consequently, since $\gcd(N, J^{-})=1$, we obtain
\[
S_1 \ll \frac{\log x}{\sqrt{\log\log x}} \cdot  S_1' \cdot  \prod_{p|J^-}\bigg(1-\frac{1}{p}\bigg),
\]
where 
\[
S_1'\coloneqq 	\mathop{\sum\sum\sum}_{\substack{A^-A^+|N^*  \\  \frac{NA^+}{A^-} \le x ,\ A^-q'\asymp X \\ \gcd(q',\frac{NA^+}{A^-})=1 }}
\frac{h(A^+)h(A^-)h(q')}{A^+}  \prod_{   p| \frac{NA^+}{A^-} } \bigg(1-\frac{1}{p}\bigg).
\]
To establish \eqref{eq:key-prop3 rephrasing 4} for $S_1$, it remains to prove that
\begin{equation}
	\label{eq:key-prop3 rephrasing 5}
	S_1' \ll X\cdot \sqrt{\log\log x} .
\end{equation}

Consider the cut-off parameter
\[
Z\coloneqq \exp\left(\sqrt{\log\log x}\right)
\]
and note that
\begin{equation}
	\label{eq:controlled loss of Euler factors}
	\prod_{   \substack{ p| \frac{NA^+}{A^-} \\ p>Z}} \bigg(1-\frac{1}{p}\bigg)^{-1}\ll \prod_{Z<p\le \log x}\bigg(1-\frac{1}{p}\bigg)^{-1} \asymp \frac{\log\log x}{\log Z} = \sqrt{\log\log x} ,
\end{equation}
because an integer $\le x$ has $\ll\log x$ prime factors. 

Now, we split $S_1'=S_{1,1}'+S_{1,2}'$, where $S_{1,1}'$ is the subsum with $q'>Z$ and $S_{1,2}'$ is the remaining sum. To bound $S_{1,1}'$, let us fix $A^\pm$. Since $q'\asymp X/A^-$, we must have $Z\ll X/A^-$ for this subsum of $S_{1,1}'$ to be non-empty. Hence, if we use Lemma \ref{lem:mult-fnc-bound} with $f(n)=h(n)\cdot \one_{\gcd(n,NA^+/A^-)=1}$ and $x_{\text{Lemma}\ \ref{lem:mult-fnc-bound}}\asymp X/A^-$, we find that
\[
\sum_{\substack{q'\asymp \frac{X}{A^-} \\ \gcd(q',\frac{NA^+}{A^-})=1 }} h(q')
\ll \frac{X}{A^-} \exp\bigg(\sum_{p\le X/A^- } \frac{h(p)\cdot\one_{p\nmid NA^+/A^-} \ -1}{p}\bigg).
\]
We have $h(p) \cdot \one_{p\nmid NA^+/A^-} \ -1 \le |h(p)-1|-\one_{p|NA^+/A^-}$. Together with \eqref{Eq:hfunction_average} and our assumption that $Z\ll X/A^-$, this implies that
\[
\sum_{\substack{q'\asymp \frac{X}{A^-} \\ \gcd(q',\frac{NA^+}{A^-})=1 }} h(q')
\ll \frac{X}{A^-} \exp\bigg( - \sum_{\substack{p\le X/A^- \\ p|NA^+/A^-}} \frac{1}{p}\bigg) 
\ll \frac{X}{A^-} \prod_{  \substack{ p| \frac{NA^+}{A^-}   \\ p\le Z}} \bigg(1-\frac{1}{p}\bigg). 
\]
Combining this estimate with \eqref{eq:controlled loss of Euler factors}, we find that
\[
S'_{1,1} \ll X\sqrt{\log\log x} 
	\mathop{\sum\sum}_{A^-A^+|N^*}
	\frac{h(A^+)h(A^-)}{A^+A^-}  \prod_{   p| \frac{NA^+}{A^-} } \bigg(1-\frac{1}{p}\bigg)^2 
	\leq X\sqrt{\log\log x}  \cdot H(N),
\]
where $H$ is the multiplicative function defined by 
\[
H(N):= \sum_{mn|\prod_{p|N}p} \frac{h(m)h(n)}{mn}  \prod_{   p| \frac{N}{mn} } \bigg(1-\frac{1}{p}\bigg)^2.
\]
Since $m$ and $n$ run over square-free divisors of $N$, and we have $1\leq h(p)\leq e^{4}$ for all primes $p$, we get
\[
H(p^{\ell})=1+\frac{2(h(p)-1)}{p}+O\bigg(\frac{1}{p^2}\bigg),
\]
for all primes $p$ and all integers $\ell\geq 1$. 
Therefore, it follows from \eqref{Eq:hfunction_average} that $H(N)\ll 1$ uniformly in $N\geq 1$ and hence
\begin{equation}
	\label{eq:key-prop3 bound 1}
	S_{1,1}' \ll X\sqrt{\log\log x},
\end{equation}
as needed.

Lastly, let us bound $S_{1,2}'$. Here, we fix $A^+$ and $q'\le Z$, and sum over $A^-$ first. Note that
\[
\prod_{   p| \frac{NA^+}{A^-} } \bigg(1-\frac{1}{p}\bigg) 
\le  \prod_{   p| N } \bigg(1-\frac{1}{p}\bigg) \prod_{p|A^-} \bigg(1-\frac{1}{p}\bigg)^{-1} .
\]
In addition, 
\[
\sum_{A^-\asymp \frac{X}{q'}} h(A^-) \prod_{p|A^-} \bigg(1-\frac{1}{p}\bigg)^{-1} \ll \frac{X}{q'}
\]
by Lemma \ref{lem:mult-fnc-bound} applied with $f(n)=h(n)n/\phi(n)$ and by \eqref{Eq:hfunction_average}. Consequently, 
\[
S_{1,2}'\ll X	\cdot \frac{\phi(N)}{N}  \sum_{q'\le Z}\frac{h(q')}{q'} \sum_{A^+|N} 
\frac{h(A^+)}{A^+}  .
\]
The sum over $q'$ is $\ll \log Z=\sqrt{\log\log x}$ by Lemma \ref{lem:mult-fnc-bound} and the sum over $A^+$ is $\ll N/\phi(N)$ by \eqref{Eq:hfunction_average}. This proves that 
\begin{equation}
	\label{eq:key-prop3 bound 2}
	S_{1,2}' \ll X\cdot \sqrt{\log\log x} ,
\end{equation}
as needed.

Combining \eqref{eq:key-prop3 bound 1} and \eqref{eq:key-prop3 bound 2} completes the proof of \eqref{eq:key-prop3 rephrasing 5}, and thus of Proposition \ref{prop:key proposition for rationals}. This, in turn, completes the proof of Theorem \ref{thm:solutions} modulo proving the results of Section \ref{sec:GCDgraphs}. \qed

\section{Lemmas on GCD graphs and deduction of Propositions \ref{prop:SmallPrimes} and \ref{prop:IterationStep1}}\label{sec:Prep}

\begin{lemma}[Quality variation for special GCD subgraphs]\label{lem:InducedGraphs}
	Let $G=(\mu,\CV,\CW,\CE,\CP,f,g)$ be a non-trivial GCD graph, let $p\in\CR(G)$ and let $k,\ell\in\Z$. If $\mu(\CV_{p^k}),\mu(\CW_{p^\ell})>0$, then
	\[
	\frac{q(G_{p^k,p^\ell})}{q(G)}
	=\bigg(\frac{\mu(\CE_{p^k,p^\ell})}{\mu(\CE)}\bigg)^{2+\tau}
	\bigg(\frac{\mu(\CV)}{\mu(\CV_{p^k})}\bigg)^{1+\tau}
	\bigg(\frac{\mu(\CW)}{\mu(\CW_{p^\ell})}\bigg)^{1+\tau}
	p^{|k-\ell|} .
	\]
\end{lemma}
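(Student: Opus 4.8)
The plan is to unwind all the definitions and check that the various weighted quantities transform in exactly the claimed way. Recall from Definition~\ref{dfn:quality} that $q^{(\theta)}(G) = \mu^{(\theta)}(G)\prod_{p\in\CP}p^{|f(p)-g(p)|}$, and that $\mu^{(\theta)}(G) = \delta(G)^\theta\mu(\CV)\mu(\CW)$. Since $G$ is non-trivial and $\mu(\CV_{p^k}),\mu(\CW_{p^\ell})>0$, I would first dispose of the trivial sub-case $\mu(\CE_{p^k,p^\ell})=0$, where $G_{p^k,p^\ell}$ is trivial, $q(G_{p^k,p^\ell})=0$, and the right-hand side also vanishes. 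So assume $\mu(\CE_{p^k,p^\ell})>0$, so that $G_{p^k,p^\ell}$ is non-trivial too. By the rewriting \eqref{eq:theta-weight rewrite}, for a non-trivial graph we have $\mu^{(\theta)}(G) = \mu(\CE)^\theta/(\mu(\CV)^{\theta-1}\mu(\CW)^{\theta-1})$, and likewise for $G_{p^k,p^\ell}$ with edge set $\CE_{p^k,p^\ell}$, vertex sets $\CV_{p^k}$, $\CW_{p^\ell}$.

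Next I would track the multiplicative-data factor. By Definition~\ref{def:special graphs}, the GCD graph $G_{p^k,p^\ell}$ has set of primes $\CP\cup\{p\}$ (here $p\notin\CP$ since $p\in\CR(G)$), with $f_{p^k}|_\CP=f$, $g_{p^\ell}|_\CP=g$, $f_{p^k}(p)=k$, $g_{p^\ell}(p)=\ell$. Hence
\[
\prod_{r\in\CP\cup\{p\}} r^{|f_{p^k}(r)-g_{p^\ell}(r)|} = p^{|k-\ell|}\prod_{r\in\CP}r^{|f(r)-g(r)|},
\]
so the ratio of the multiplicative-data products of $G_{p^k,p^\ell}$ and $G$ is exactly $p^{|k-\ell|}$. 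Combining this with the ratio of the $\mu^{(\theta)}$'s computed via \eqref{eq:theta-weight rewrite}, and recalling $\theta=2+\tau$, gives
\[
\frac{q(G_{p^k,p^\ell})}{q(G)}
= \frac{\mu(\CE_{p^k,p^\ell})^{2+\tau}}{\mu(\CE)^{2+\tau}}\cdot\frac{\mu(\CV)^{1+\tau}\mu(\CW)^{1+\tau}}{\mu(\CV_{p^k})^{1+\tau}\mu(\CW_{p^\ell})^{1+\tau}}\cdot p^{|k-\ell|},
\]
which is precisely the claimed identity.

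This lemma is essentially bookkeeping, so I do not anticipate a genuine obstacle; the only point requiring a little care is the degenerate case analysis (ensuring $G_{p^k,p^\ell}$ is non-trivial before invoking \eqref{eq:theta-weight rewrite}, which is only valid for non-trivial graphs), together with confirming that $p\notin\CP$ — which holds because $p\in\CR(G)$ and, by the definition of $\CR(G)$ in Definition~\ref{dfn:R(G)}, every element of $\CR(G)$ lies outside $\CP$ — so that Definition~\ref{def:special graphs}(3) legitimately applies and the prime $p$ genuinely contributes a fresh factor $p^{|k-\ell|}$ rather than modifying an existing one.
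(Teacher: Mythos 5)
Your proposal is correct and matches the paper's approach: the paper simply states that the identity follows directly from the definitions (noting only the modified definition of $q(G)$ compared to the earlier literature), and your computation—including the degenerate case $\mu(\CE_{p^k,p^\ell})=0$ and the observation that $p\notin\CP$ so the factor $p^{|k-\ell|}$ is genuinely new—is exactly that verification carried out.
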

\begin{proof}
	This follows directly from the definitions. There is a small difference compared to Lemma 11.1 in \cite{DS-quantitative} due to the modified definition of $q(G)$.
\end{proof}

\begin{lemma}[Few edges between unbalanced sets, I]\label{lem:UnbalancedSetEdges1}
	Let	$G=(\mu,\CV,\CW,\CE,\CP,f,g)$ be a non-trivial GCD graph. Let $p\in\CR(G)$, $r\in\Z_{\geqslant1}$ and $k\in\mathbb{Z}$ be such that $p^r> C_4$ and\footnote{If $p\ge C_2$, the last hypothesis is vacuous.}
	\[
	\frac{\mu(\CW_{p^k})}{\mu(\CW)}\geqslant 1-\frac{C_2}{p}\ ;
	\]
	If we set $\CL_{k,r}=\{\ell\in\Z:|\ell-k|\geqslant r+1\}$, then one of the following holds:
	\begin{enumerate}[label=(\alph*)]
		\item There exists $\ell\in\CL_{k,r}$ such that $q(G_{p^k,\,p^\ell})>M\cdot q(G)$.
		\item $\sum_{\ell\in\CL_{k,r}}  \mu(\CE_{p^k,\,p^\ell})\leqslant\mu(\CE)/(4p^{1+\tau/4})$.
	\end{enumerate}
\end{lemma}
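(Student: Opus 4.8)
The plan is to argue by contradiction with alternative (a): assuming that $q(G_{p^k,p^\ell})\le M\cdot q(G)$ for every $\ell\in\CL_{k,r}$, I will show that the total weight of edges landing in the "far" levels $\ell$ is small, giving (b). The starting point is the exact formula of Lemma~\ref{lem:InducedGraphs}, which for each $\ell$ with $\mu(\CV_{p^k}),\mu(\CW_{p^\ell})>0$ reads
\[
\frac{q(G_{p^k,p^\ell})}{q(G)}
=\Big(\tfrac{\mu(\CE_{p^k,p^\ell})}{\mu(\CE)}\Big)^{2+\tau}
\Big(\tfrac{\mu(\CV)}{\mu(\CV_{p^k})}\Big)^{1+\tau}
\Big(\tfrac{\mu(\CW)}{\mu(\CW_{p^\ell})}\Big)^{1+\tau}
p^{|k-\ell|}.
\]
Under the negation of (a), the left side is $\le M$, so rearranging gives, for each such $\ell$,
\[
\mu(\CE_{p^k,p^\ell})
\le \mu(\CE)\cdot M^{1/(2+\tau)}
\Big(\tfrac{\mu(\CV_{p^k})}{\mu(\CV)}\Big)^{(1+\tau)/(2+\tau)}
\Big(\tfrac{\mu(\CW_{p^\ell})}{\mu(\CW)}\Big)^{(1+\tau)/(2+\tau)}
p^{-|k-\ell|/(2+\tau)}.
\]

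The next step is to sum this over $\ell\in\CL_{k,r}$. First bound the $\CV$-factor: trivially $\mu(\CV_{p^k})\le\mu(\CV)$, so that factor is $\le 1$. For the $\CW$-factors, I will use that the $\CW_{p^\ell}$ are disjoint, so $\sum_\ell \mu(\CW_{p^\ell})\le\mu(\CW)$; since $(1+\tau)/(2+\tau)<1$, the concavity/power-sum inequality $\sum_\ell x_\ell^{\,s}\le (\sum_\ell x_\ell)^{s}\cdot(\#\text{terms})^{1-s}$ is not quite what I want because the number of terms is unbounded — instead I will simply use $\big(\mu(\CW_{p^\ell})/\mu(\CW)\big)^{(1+\tau)/(2+\tau)}\le \big(\mu(\CW_{p^\ell})/\mu(\CW)\big)^{?}$… the cleanest route is to bound each such factor by $1$ as well and let the geometric decay $p^{-|k-\ell|/(2+\tau)}$ do all the work. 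Then
\[
\sum_{\ell\in\CL_{k,r}}\mu(\CE_{p^k,p^\ell})
\le \mu(\CE)\,M^{1/(2+\tau)}\sum_{m\ge r+1} 2\,p^{-m/(2+\tau)}
\le \mu(\CE)\,M^{1/(2+\tau)}\cdot\frac{2p^{-(r+1)/(2+\tau)}}{1-p^{-1/(2+\tau)}}.
\]
Since $\tau<1/100$ and $p\ge2$, the denominator $1-p^{-1/(2+\tau)}$ is bounded below by an absolute positive constant, so the right-hand side is $\ll \mu(\CE)\,M^{1/2}\,p^{-(r+1)/(2+\tau)}$. Using $p^r>C_4=10^{10}M^2C_2^2$, hence $p^{r+1}\ge p\cdot C_4$, together with the exponent bound $(r+1)/(2+\tau)\ge (r+1)/3$ and a comparison of $(r+1)/3$ against $1+\tau/4$, one checks that $M^{1/2}p^{-(r+1)/(2+\tau)}\le \tfrac14 p^{-1-\tau/4}$: indeed it suffices that $p^{(r+1)/(2+\tau)-1-\tau/4}\ge 4M^{1/2}$, and since $p^{r}>C_4\gg M^2$ while the leftover exponent on $p$ stays positive, the inequality follows from the size of $C_4$. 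This yields alternative (b).

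The main obstacle I anticipate is bookkeeping the exponents carefully enough to turn the crude bound $M^{1/2}p^{-(r+1)/(2+\tau)}$ into the sharp shape $\le \tfrac14 p^{-1-\tau/4}$ demanded in (b); this is where the specific value $C_4=10^{10}M^2C_2^2$ from \eqref{eq:CDefs} is used, and one must make sure that the geometric-series constant and the factor $M^{1/(2+\tau)}$ are absorbed. A secondary point to handle cleanly is the degenerate case $\mu(\CV_{p^k})=0$ (then $\CE_{p^k,p^\ell}=\emptyset$ for all $\ell$ and (b) is trivial), and the terms with $\mu(\CW_{p^\ell})=0$ (which contribute nothing), so that Lemma~\ref{lem:InducedGraphs} may be applied only where it is valid. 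The hypothesis $\mu(\CW_{p^k})/\mu(\CW)\ge 1-C_2/p$ is not actually needed for this direction — it is listed to match the companion lemma — but it guarantees $\mu(\CW)>0$, i.e.\ that $G$ being non-trivial is not vacuously circumvented; I will note this in passing.
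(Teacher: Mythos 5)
Your overall strategy (negate (a), feed the negation into the exact formula of Lemma \ref{lem:InducedGraphs}, and sum a geometric series over $\ell$) is the right one, but the step where you ``bound each $\CW$-factor by $1$ and let the geometric decay do all the work'' contains a genuine gap, and it is located exactly where you dismiss the hypothesis $\mu(\CW_{p^k})/\mu(\CW)\ge 1-C_2/p$ as unnecessary. Your final inequality requires $p^{(r+1)/(2+\tau)-1-\tau/4}\ge 4M^{1/2}$ (up to the geometric-series constant), and you assert that ``the leftover exponent on $p$ stays positive.'' It does not: for $r=1$ the exponent is
\[
\frac{2}{2+\tau}-1-\frac{\tau}{4}=\frac{-\tfrac{3\tau}{2}-\tfrac{\tau^2}{4}}{2+\tau}<0,
\]
so the left-hand side tends to $0$ as $p\to\infty$ while the right-hand side is $\ge 4M^{1/2}\ge 1$; the hypothesis $p^r>C_4$ only forces $p$ to be \emph{large} when $r=1$, which makes matters worse, not better. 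So your argument proves the lemma for $r\ge2$ (and for small $p$ with large $r$), but fails precisely in the case $r=1$, $p>C_4$.

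The repair is to keep the factor $\big(\mu(\CW_{p^\ell})/\mu(\CW)\big)^{(1+\tau)/(2+\tau)}$ rather than bounding it by $1$. Since the sets $\CW_{p^\ell}$, $\ell\in\Z$, partition $\CW$, the hypothesis gives $\mu(\CW_{p^\ell})\le (C_2/p)\,\mu(\CW)$ for every $\ell\ne k$, hence for every $\ell\in\CL_{k,r}$. This supplies an extra factor $(C_2/p)^{(1+\tau)/(2+\tau)}$ in each term, so after summing the geometric series the total power of $p$ becomes $-\frac{(1+\tau)+(r+1)}{2+\tau}=-1-\frac{r}{2+\tau}$, which beats $-1-\tau/4$ by at least $p^{-r(1/(2+\tau)-\tau/4)}\le (p^r)^{-0.49}<C_4^{-0.49}$; with $C_4=10^{10}M^2C_2^2$ this comfortably absorbs the geometric-series constant, the factor $M^{1/(2+\tau)}$, the factor $C_2^{(1+\tau)/(2+\tau)}$, and the $\tfrac14$. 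So the hypothesis you set aside is not decorative: it is the only source of the extra $p^{-1/2}$ that closes the case $r=1$. Your handling of the degenerate cases ($\mu(\CV_{p^k})=0$ or $\mu(\CW_{p^\ell})=0$) is fine.
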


\begin{proof} 
This result is Lemma 11.3 in \cite{DS-quantitative}, with the only differences being the following ones: (i) $k$ and $\ell$ are allowed to lie in $\Z$ and not just in $\Z_{\ge0}$ due to our more general definition of GCD graphs that allows rational vertices; (ii) the conclusion in (a) is slightly stronger than in \cite[Lemma 11.3]{DS-quantitative} due to the modified definition of $q(\cdot)$ used here, with the omission of the factors $(1-\one_{f(p)=g(p)\neq0}/p)^{-2}(1-1/p^{1+\tau/4})^{-3}$. Note however that these factors are not used in the proof of \cite[Lemma 11.3]{DS-quantitative}; they are only exploited in \cite[Lemma 15.1]{DS-quantitative}. In conclusion, the argument of \cite{DS-quantitative} goes through to our more general context with minimal changes.
\end{proof}

We also need the symmetric version of Lemma \ref{lem:UnbalancedSetEdges1}:

\begin{lemma}[Few edges between unbalanced sets, II]\label{lem:UnbalancedSetEdges2}
	Let $G=(\mu,\CV,\CW,\CE,\CP,f,g)$ be a non-trivial GCD graph. Let $p\in\CR(G)$,  $r\in\mathbb{Z}_{\geqslant 1}$ and $\ell\in\Z$ be such that $p^r>C_4$ and
	\[
	\frac{\mu(\CV_{p^\ell})}{\mu(\CV)}\geqslant 1-\frac{C_2}{p} .
	\]
	If we set $\CK_{\ell,r}=\{k\in\Z:|\ell-k|\geqslant r+1\}$, then one of the following holds:
	\begin{enumerate}[label=(\alph*)]
		\item There exists $k\in\CK_{\ell,r}$ such that $q(G_{p^k,\,p^\ell})>M\cdot q(G)$.
		\item $\sum_{k\in\CK_{\ell,r}}  \mu(\CE_{p^k,\,p^\ell})\leqslant\mu(\CE)/(4p^{1+\tau/4})$.
	\end{enumerate}
\end{lemma}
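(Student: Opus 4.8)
The statement to prove is Lemma~\ref{lem:UnbalancedSetEdges2}, the symmetric counterpart of Lemma~\ref{lem:UnbalancedSetEdges1}. My plan is to simply reduce it to Lemma~\ref{lem:UnbalancedSetEdges1} by exploiting the fundamental symmetry of the GCD-graph formalism: a GCD graph $G=(\mu,\CV,\CW,\CE,\CP,f,g)$ has a natural ``transpose'' $G^{\mathsf T}=(\mu,\CW,\CV,\CE^{\mathsf T},\CP,g,f)$, where $\CE^{\mathsf T}=\{(w,v):(v,w)\in\CE\}$, obtained by swapping the two sides of the bipartition together with the two multiplicative-data functions $f$ and $g$. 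One checks directly from Definition~\ref{dfn:GCDgraph} that $G^{\mathsf T}$ is again a GCD graph (conditions (a)--(c) are symmetric in the two vertex classes once $f$ and $g$ are swapped along with $\CV$ and $\CW$), and from Definitions~\ref{dfn:quality} and \ref{dfn:R(G)} that the transpose operation preserves all the relevant quantities: $q(G^{\mathsf T})=q(G)$ since $\mu^{(\theta)}$ and $\prod_{p\in\CP}p^{|f(p)-g(p)|}$ are both symmetric under swapping $(\CV,f)\leftrightarrow(\CW,g)$; $\CR(G^{\mathsf T})=\CR(G)$ since the defining condition $p\mid\gcd(a,b)\gcd(q,r)$ is symmetric; and, crucially, $(G^{\mathsf T})_{p^\ell,p^k}=(G_{p^k,p^\ell})^{\mathsf T}$, so that $q\big((G^{\mathsf T})_{p^\ell,p^k}\big)=q(G_{p^k,p^\ell})$ and $\mu\big((G^{\mathsf T})_{p^\ell,p^k\,\text{edge set}}\big)=\mu(\CE_{p^k,p^\ell})$.

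\textbf{Execution.} Given $G$, $p\in\CR(G)$, $r\in\Z_{\ge1}$ and $\ell\in\Z$ as in the hypotheses of Lemma~\ref{lem:UnbalancedSetEdges2}, apply Lemma~\ref{lem:UnbalancedSetEdges1} to the GCD graph $G^{\mathsf T}$ with the roles of $k$ and $\ell$ interchanged: the hypothesis $\mu(\CV_{p^\ell})/\mu(\CV)\ge 1-C_2/p$ for $G$ becomes exactly the hypothesis $\mu\big((\CW^{\mathsf T})_{p^\ell}\big)/\mu(\CW^{\mathsf T})\ge 1-C_2/p$ needed to run Lemma~\ref{lem:UnbalancedSetEdges1} on $G^{\mathsf T}$ with the distinguished valuation equal to $\ell$, and $G^{\mathsf T}$ is non-trivial because $G$ is. Lemma~\ref{lem:UnbalancedSetEdges1} then yields, with $\CL_{\ell,r}=\{k\in\Z:|k-\ell|\ge r+1\}=\CK_{\ell,r}$, that either there exists $k\in\CK_{\ell,r}$ with $q\big((G^{\mathsf T})_{p^\ell,p^k}\big)>M\cdot q(G^{\mathsf T})$, or $\sum_{k\in\CK_{\ell,r}}\mu\big(\CE^{\mathsf T}_{p^\ell,p^k}\big)\le\mu(\CE^{\mathsf T})/(4p^{1+\tau/4})$. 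Translating back via the identities above --- $q\big((G^{\mathsf T})_{p^\ell,p^k}\big)=q(G_{p^k,p^\ell})$, $q(G^{\mathsf T})=q(G)$, $\mu(\CE^{\mathsf T}_{p^\ell,p^k})=\mu(\CE_{p^k,p^\ell})$, $\mu(\CE^{\mathsf T})=\mu(\CE)$ --- gives precisely conclusion (a) or conclusion (b) of Lemma~\ref{lem:UnbalancedSetEdges2}.

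\textbf{Main obstacle.} There is no real obstacle here; the only thing requiring care is the bookkeeping check that the transpose $G^{\mathsf T}$ genuinely satisfies Definition~\ref{dfn:GCDgraph} and that each of the auxiliary quantities ($q(\cdot)$, $\CR(\cdot)$, the special subgraphs $G_{p^k,p^\ell}$, and $\mu$ of their edge sets) transforms as claimed under transposition --- in particular that swapping $f\leftrightarrow g$ is what makes conditions (a)--(c) of Definition~\ref{dfn:GCDgraph} symmetric, and that $f_{p^k}$, $g_{p^\ell}$ in Definition~\ref{def:special graphs} are swapped correctly. Once these routine verifications are recorded, the proof is a one-line invocation of Lemma~\ref{lem:UnbalancedSetEdges1}. (In the write-up I would present this exactly as the excerpt does for the analogous pair of lemmas, namely with the terse remark that the statement follows from Lemma~\ref{lem:UnbalancedSetEdges1} ``by symmetry'', i.e.\ by applying it to the transposed GCD graph.)
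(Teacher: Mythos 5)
Your proposal is correct and matches the paper's approach: the paper states this lemma without proof, simply as ``the symmetric version of Lemma \ref{lem:UnbalancedSetEdges1}'', and your transpose construction $G^{\mathsf T}=(\mu,\CW,\CV,\CE^{\mathsf T},\CP,g,f)$ is exactly the standard formalization of that symmetry. The bookkeeping you record (invariance of $q(\cdot)$ and $\CR(\cdot)$ under transposition, and the identity $(G^{\mathsf T})_{p^\ell,p^k}=(G_{p^k,p^\ell})^{\mathsf T}$) is the right content for such a check.
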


\begin{lemma}[Quality increment unless a prime power divides almost all]\label{lem:MainLem}
	Consider a non-trivial GCD graph $G=(\mu,\CV,\CW,\CE,\CP,f,g)$ and a prime $p\in\CR(G)$ with $p>C_2$. Then one of the following holds:
	\begin{enumerate}
		\item There is a non-trivial, maximal and exact GCD subgraph $G'$ of $G$ with multiplicative data $(\CP',f',g')$ such that
		\[	\CP'=\CP\cup\{p\},\quad \CR(G')\subseteq \CR(G)\setminus\{p\},\quad q(G') \geqslant M^{\one_{f'(p)\neq g'(p)}} \cdot q(G).
		\]
		\item There exists $k\in\Z$ such that
		\[
		\frac{\mu(\CV_{p^k})}{\mu(\CV)}\geqslant 1-\frac{C_2}{p}
		\quad\text{and}\quad
		\frac{\mu(\CW_{p^k})}{\mu(\CW)}\geqslant 1-\frac{C_2}{p}.
		\]
	\end{enumerate}
\end{lemma}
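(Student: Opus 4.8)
The plan is to examine the ``local structure at $p$'' of the graph $G$ and argue dichotomously. For $k\in\Z$ let $\CV_{p^k}=\{v\in\CV:p^k\|v\}$ and $\CW_{p^\ell}=\{w\in\CW:p^\ell\|w\}$, so $\CV=\bigsqcup_k\CV_{p^k}$ and $\CW=\bigsqcup_\ell\CW_{p^\ell}$. Since $p\in\CR(G)$, there is an edge $(a/q,b/r)\in\CE$ with $p\mid\gcd(a,b)\gcd(q,r)$, so at least one of the ``diagonal'' subgraphs $G_{p^k,p^k}$ with $k\ne0$ is non-trivial; more importantly, the off-diagonal edge sets $\CE_{p^k,p^\ell}$ with $k\ne\ell$ may exist as well. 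The key quantitative input is Lemma \ref{lem:InducedGraphs}, which gives the exact formula
\[
\frac{q(G_{p^k,p^\ell})}{q(G)}
=\Bigl(\tfrac{\mu(\CE_{p^k,p^\ell})}{\mu(\CE)}\Bigr)^{2+\tau}
\Bigl(\tfrac{\mu(\CV)}{\mu(\CV_{p^k})}\Bigr)^{1+\tau}
\Bigl(\tfrac{\mu(\CW)}{\mu(\CW_{p^\ell})}\Bigr)^{1+\tau}
p^{|k-\ell|},
\]
valid whenever $\mu(\CV_{p^k}),\mu(\CW_{p^\ell})>0$. The factor $p^{|k-\ell|}$ is a large bonus for off-diagonal pairs, while $p^{0}=1$ for diagonal pairs; this is the source of the dichotomy.

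The first step is to dispose of the off-diagonal contributions. Suppose that for some pair $(k,\ell)$ with $k\ne\ell$ we have $\mu(\CE_{p^k,p^\ell})\ge\mu(\CE)/(C_2 p)$, say (the precise threshold to be tuned against $C_2,M$). Then in the formula above, $(\mu(\CE_{p^k,p^\ell})/\mu(\CE))^{2+\tau}\ge (C_2p)^{-(2+\tau)}$, the two vertex ratios are each $\ge1$, and $p^{|k-\ell|}\ge p$. Since $p>C_2$ and $C_2=10MC_1^3$ with $C_1=10^4/\tau$ is large, a short computation shows $p^{1}\cdot(C_2p)^{-(2+\tau)}\ge M$ already fails for the bare diagonal gain — so one instead passes to a \emph{maximal} subgraph of $G_{p^k,p^\ell}$ and uses that passing to a maximal subgraph only increases $\mu^{(\theta)}$, hence $q$; one then has a non-trivial, maximal, exact GCD subgraph $G'$ with $\CP'=\CP\cup\{p\}$, $p\notin\CR(G')$ since $f'(p)=k\ne\ell=g'(p)$ now ``explains'' the $p$-part of the GCD (here $f^+(p)-$ and $g$-values are read off from $k,\ell$ and the GCD along every edge of $G'$ is forced by exactness), and $q(G')\ge M\cdot q(G)\ge M^{\one_{f'(p)\ne g'(p)}}q(G)$. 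This puts us in case (1). The bookkeeping to verify $\CR(G')\subseteq\CR(G)\setminus\{p\}$ — i.e.\ that making $p$ exact with valuations $k,\ell$ genuinely removes $p$ from $\CR$ — is routine from Definition \ref{dfn:GCDgraph}(c)(iii) and Definition \ref{exact gcd graph dfn}.

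So we may assume that \emph{every} off-diagonal edge set is small: $\mu(\CE_{p^k,p^\ell})<\mu(\CE)/(C_2p)$ for all $k\ne\ell$. The second step is then to show the diagonal is concentrated. Summing over $k\ne\ell$, the total off-diagonal weight is at most (number of relevant pairs)$\times\mu(\CE)/(C_2p)$; since vertices of $\CV,\CW$ lie in $[1,x]$-type ranges with $p$-adic valuations in a bounded window (one should first invoke Lemmas \ref{lem:UnbalancedSetEdges1} and \ref{lem:UnbalancedSetEdges2} — applied with a suitable $r$ and the hypothesis $p>C_2$ making their side condition vacuous — to confine the bulk of $\CW$, resp.\ $\CV$, to a single residue $p^k$ up to an error $\ll\mu(\CE)/p^{1+\tau/4}$, or else land in case (1) via their alternative (a)), one gets that $\sum_{k\ne\ell}\mu(\CE_{p^k,p^\ell})\le \mu(\CE)/2$, hence $\sum_k\mu(\CE_{p^k,p^k})\ge\mu(\CE)/2$, and in fact a single $k$ dominates. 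From domination of one diagonal edge set $\CE_{p^k,p^k}$, one extracts via the edge-counting in Lemma \ref{lem:InducedGraphs} (now with $|k-\ell|=0$, so no $p$-bonus but also the vertex ratios are close to $1$ precisely when $\mu(\CV_{p^k})\approx\mu(\CV)$ and $\mu(\CW_{p^k})\approx\mu(\CW)$) that $\mu(\CV_{p^k})/\mu(\CV)\ge 1-C_2/p$ and $\mu(\CW_{p^k})/\mu(\CW)\ge1-C_2/p$: if either failed, the corresponding vertex ratio $\mu(\CV)/\mu(\CV_{p^k})$ or $\mu(\CW)/\mu(\CW_{p^k})$ would exceed $(1-C_2/p)^{-1}\ge 1+C_2/p$, and combined with $\mu(\CE_{p^k,p^k})\ge\mu(\CE)/2$ this would push $q(G_{p^k,p^k})/q(G)\ge 2^{-(2+\tau)}(1+C_2/p)^{1+\tau}$ above $M$ once $C_2$ is large enough relative to $M$ — again landing us in case (1) after passing to a maximal subgraph. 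Thus we are in case (2).

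\textbf{Main obstacle.} The delicate point is the constant chase: one must choose the off-diagonal threshold and verify the inequalities $p^{|k-\ell|}(\text{threshold})^{2+\tau}\ge M$ (for case (1)) versus $(1-C_2/p)^{-(1+\tau)}\cdot 2^{-(2+\tau)}\ge M$ (for case (2)), simultaneously, using only $p>C_2=10MC_1^3$ and $\tau\in(0,1/100)$. This is exactly the kind of balancing done in \cite[Lemma 11.2]{DS-quantitative}, and the expectation is that the present lemma is a near-verbatim adaptation of that argument to the rational-vertex setting — the only genuinely new feature being that $k,\ell$ now range over all of $\Z$ rather than $\Z_{\ge0}$, which affects nothing since the window of relevant valuations is still finite by the height bounds on $\CV,\CW$. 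The second subtlety is ensuring that when we land in case (1) we really can take $G'$ \emph{exact} at $p$: this requires that after restricting to $\CV_{p^k}\times\CW_{p^\ell}$ the value $\e_p$ is constant on each side, which holds by construction of $\CV_{p^k},\CW_{p^\ell}$, and that the forced GCD condition (c)(iii) of Definition \ref{dfn:GCDgraph} is met — automatic since on $\CE_{p^k,p^\ell}$ every edge $(a/q,b/r)$ has $p^{\min(k^+,\ell^+)}\|\gcd(a,b)$ and $p^{\min(k^-,\ell^-)}\|\gcd(q,r)$.
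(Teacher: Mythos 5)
The paper does not give a self-contained proof of this lemma: it cites Lemma 13.2 of \cite{DS-quantitative} and asserts that the argument there transfers with minimal changes (modified quality, and $k$ ranging over $\Z$). Your sketch has the right overall shape --- a dichotomy driven by the factor $p^{|k-\ell|}$ in Lemma \ref{lem:InducedGraphs} --- but the quantitative execution breaks down at points that are not mere constant-chasing. The central failure is your first step. With the threshold $\mu(\CE_{p^k,p^\ell})\ge\mu(\CE)/(C_2p)$, Lemma \ref{lem:InducedGraphs} only yields $q(G_{p^k,p^\ell})/q(G)\ge p^{|k-\ell|}(C_2p)^{-(2+\tau)}$, which for $|k-\ell|=1$ equals $p^{-1-\tau}C_2^{-(2+\tau)}\ll1$, nowhere near $M$; and passing to a maximal subgraph of $G_{p^k,p^\ell}$ cannot repair this, since maximality only guarantees $q(G')\ge q(G_{p^k,p^\ell})$, not $q(G')\ge M\,q(G)$. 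The workable argument is the contrapositive: assume $q(G_{p^k,p^\ell})<M\,q(G)$ for all $k\ne\ell$ and $q(G_{p^k,p^k})<q(G)$ for all $k$ (the diagonal threshold is $1$, not $M$, since $f'(p)=g'(p)$ there), convert these into upper bounds
\[
\frac{\mu(\CE_{p^k,p^\ell})}{\mu(\CE)}\le \big(M p^{-|k-\ell|}\big)^{\frac{1}{2+\tau}}\Big(\frac{\mu(\CV_{p^k})}{\mu(\CV)}\cdot\frac{\mu(\CW_{p^\ell})}{\mu(\CW)}\Big)^{\frac{1+\tau}{2+\tau}},
\]
and sum over all pairs using $\sum_k\mu(\CV_{p^k})=\mu(\CV)$ and $\sum_\ell\mu(\CW_{p^\ell})=\mu(\CW)$. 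The vertex-ratio factors, which you discard by noting ``the two vertex ratios are each $\ge1$'', are indispensable: the number of valuation classes is not bounded in terms of $p$, so without these factors the off-diagonal sum does not close.

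Two further steps fail. Your appeal to Lemmas \ref{lem:UnbalancedSetEdges1} and \ref{lem:UnbalancedSetEdges2} to ``confine the bulk of $\CW$, resp.\ $\CV$, to a single residue'' is circular: their hypothesis is precisely the concentration inequality $\mu(\CW_{p^k})/\mu(\CW)\ge1-C_2/p$ that constitutes conclusion (b); in this paper they are deployed only after that concentration has been secured (in Sections \ref{sec:structure} and \ref{sec:IterationStep2-denominators}). And your closing contradiction is numerically false: since $p>C_2$ we have $(1+C_2/p)^{1+\tau}<2^{1+\tau}$, so $2^{-(2+\tau)}(1+C_2/p)^{1+\tau}<\tfrac12$, which exceeds neither $M$ nor even the correct diagonal threshold $1$. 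To contradict $q(G_{p^k,p^k})<q(G)$ when $\mu(\CV_{p^k})/\mu(\CV)\le1-C_2/p$ one needs $\mu(\CE_{p^k,p^k})\ge(1-O(C_2/p))\mu(\CE)$, not merely $\ge\mu(\CE)/2$; producing that strong diagonal concentration is the real content of the lemma and is achieved in \cite{DS-quantitative} by a bootstrap (first concentration at a fixed level, in the spirit of Lemma \ref{lem:SmallPrime}, then an upgrade to $1-C_2/p$). As written, your proposal does not establish the statement.
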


\begin{proof}
	This result is Lemma 13.2 in \cite{DS-quantitative}, with the only differences being that we use the modified definition of $q(\cdot)$ in part (a), and that in part (b) we allow $k$ to lie in $\Z$ instead of $\Z_{\ge0}$. The argument of \cite{DS-quantitative} goes through with minimal changes.
\end{proof}

\begin{proof}
	[Deduction of Proposition \ref{prop:IterationStep1}]
	Using the above lemma, we readily establish Proposition \ref{prop:IterationStep1}. For more details, see the deduction of \cite[Proposition 8.1]{DS-quantitative} from \cite[Lemma 13.2]{DS-quantitative}. 
\end{proof}

\begin{lemma}[Small quality loss or a prime power divides a positive proportion]\label{lem:SmallPrime}
	Consider a non-trivial GCD graph $G=(\mu,\CV,\CW,\CE,\CP,f,g)$ and a prime $p\in\CR(G)$. Then one of the following holds:
	\begin{enumerate}[label=(\alph*)]
		\item There is a non-trivial and exact GCD subgraph $G'$ of $G$ with set of primes $\CP'$ such that
		\[
		\CP'=\CP\cup\{p\},\quad
		\CR(G')\subseteq\CR(G)\setminus\{p\},\quad 
		q(G')\ge q(G)/C_3 .
		\]
		\item There is some $k\in\Z$ such that
		\[
		\frac{\mu(\CV_{p^k})}{\mu(\CV)}\geqslant \frac{9}{10}
		\quad\text{and}\quad
		\frac{\mu(\CW_{p^k})}{\mu(\CW)}\geqslant \frac{9}{10}.
		\]
	\end{enumerate}
\end{lemma}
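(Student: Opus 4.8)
The plan is to mimic the proof of Lemma \ref{lem:MainLem} (i.e. \cite[Lemma 13.2]{DS-quantitative}), but with the threshold constants relaxed: instead of requiring $p>C_2$ and getting a quality gain by a factor $M$, we allow any prime $p\in\CR(G)$ and settle for a quality loss of at most a factor $C_3$ (recall $C_3=10^3C_1^3$). First I would consider the weighted bipartite graph underlying $G$ and look at the "balanced" decomposition of $\CV$ and $\CW$ according to the $p$-adic valuation: for each pair $(k,\ell)\in\Z^2$ we have the special exact GCD subgraph $G_{p^k,p^\ell}$ of Definition \ref{def:special graphs}, and Lemma \ref{lem:InducedGraphs} gives the exact identity
\[
\frac{q(G_{p^k,p^\ell})}{q(G)}
=\bigg(\frac{\mu(\CE_{p^k,p^\ell})}{\mu(\CE)}\bigg)^{2+\tau}
\bigg(\frac{\mu(\CV)}{\mu(\CV_{p^k})}\bigg)^{1+\tau}
\bigg(\frac{\mu(\CW)}{\mu(\CW_{p^\ell})}\bigg)^{1+\tau}
p^{|k-\ell|}.
\]
The dichotomy then comes from asking whether there is a single value $k$ that is "$p$-adically dominant" for both $\CV$ and $\CW$ in the quantitative sense of alternative (b), namely $\mu(\CV_{p^k})/\mu(\CV)\ge 9/10$ and $\mu(\CW_{p^k})/\mu(\CW)\ge 9/10$.

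If no such $k$ exists, I would show alternative (a) holds. There are two subcases. In the first, the masses $\mu(\CV_{p^k})$ are spread out — no single $k$ carries $\ge 9/10$ of $\mu(\CV)$ (and symmetrically for $\CW$) — and then some off-diagonal or moderately unbalanced choice of $(k,\ell)$ already yields $q(G_{p^k,p^\ell})\ge q(G)/C_3$ directly from the displayed identity, because the penalty factors $(\mu(\CV)/\mu(\CV_{p^k}))^{1+\tau}(\mu(\CW)/\mu(\CW_{p^\ell}))^{1+\tau}$ can only help, while $\mu(\CE_{p^k,p^\ell})/\mu(\CE)$ is not too small for at least one choice (there are only $O(\log x)$ relevant pairs, but more efficiently one uses that the diagonal $\sum_k \mu(\CE_{p^k,p^k})$ together with the near-diagonal $|k-\ell|=1$ terms captures a constant proportion of $\mu(\CE)$; this is where one invokes the structured-graph-type argument bounding $\sum_{|k-\ell|\ge 2}\mu(\CE_{p^k,p^\ell})$ via Lemmas \ref{lem:UnbalancedSetEdges1} and \ref{lem:UnbalancedSetEdges2}, or one is simply in their alternative (a) already, giving $q(G_{p^k,p^\ell})>M\cdot q(G)\ge q(G)/C_3$). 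In the second subcase, some $k$ dominates $\CV$ (say $\mu(\CV_{p^k})/\mu(\CV)\ge 9/10$) but no $k'$ dominates $\CW$; then taking $\ell$ to be a near-optimal index for $\CW$, the factor $(\mu(\CW)/\mu(\CW_{p^\ell}))^{1+\tau}\ge (10/9)^{1+\tau}$ more than compensates any small loss from $\mu(\CE_{p^k,p^\ell})/\mu(\CE)$ and from $p^{|k-\ell|}$ when $k=\ell$; when $k\neq\ell$ one uses $p^{|k-\ell|}\ge p\ge 2$, which again only helps. In all these cases one arrives at a subgraph $G_{p^k,p^\ell}$ with $q\ge q(G)/C_3$; passing to a non-trivial component if necessary and noting that $G_{p^k,p^\ell}$ is exact, non-trivial (it has $\mu(\CE_{p^k,p^\ell})>0$ for the chosen indices), and has $\CP'=\CP\cup\{p\}$ with $p\notin\CR(G')$ since the $p$-adic valuations are now pinned down, we get conclusion (a). The bookkeeping constant $C_3$ is generous enough to absorb all the constant-size losses incurred above.

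If on the other hand such a dominant $k$ does exist, we are immediately in conclusion (b) and there is nothing further to prove. So the logical structure is: either a single $p$-adic valuation $k$ controls $\ge 9/10$ of the mass on both sides — conclusion (b) — or the "spreading" lets us pick an index pair whose special GCD subgraph has quality at least $q(G)/C_3$ — conclusion (a). The main obstacle, as in \cite{DS-quantitative}, is the careful quantitative choice of the index pair $(k,\ell)$ in the non-dominant case so that the product of the three penalty/bonus factors with $p^{|k-\ell|}$ stays $\ge 1/C_3$; this requires the edge-mass redistribution estimates of Lemmas \ref{lem:UnbalancedSetEdges1} and \ref{lem:UnbalancedSetEdges2} (in their alternative (b)) to guarantee that a constant proportion of $\mu(\CE)$ survives on a bounded band $|k-\ell|\le r$, so that pigeonholing over that band produces an index pair with $\mu(\CE_{p^k,p^\ell})/\mu(\CE)$ bounded below by a constant depending only on $\tau$. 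Everything else is a direct transcription of the corresponding steps in \cite{DS-quantitative}, with $\Z$-valued valuations in place of $\Z_{\ge0}$-valued ones, which as noted in the remarks after Lemma \ref{lem:MainLem} causes no difficulty.
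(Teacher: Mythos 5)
The paper's own proof is a citation to \cite[Lemma 14.1]{DS-quantitative}, so what matters is whether your reconstruction actually works — and it has a genuine gap at its quantitative core. The whole lemma reduces to producing one pair $(k,\ell)$ with $\mu(\CE_{p^k,p^\ell})>0$ for which the right-hand side of the identity in Lemma \ref{lem:InducedGraphs} is at least $1/C_3$, and for that you need $\mu(\CE_{p^k,p^\ell})/\mu(\CE)$ bounded below by a constant for some such pair. Your proposal supplies this in two ways, both of which fail. First, the claim that the bonus factor $(\mu(\CW)/\mu(\CW_{p^\ell}))^{1+\tau}\ge(10/9)^{1+\tau}\approx1.11$ ``more than compensates any small loss from $\mu(\CE_{p^k,p^\ell})/\mu(\CE)$'' is false: that fraction can be arbitrarily small and enters with exponent $2+\tau$. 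Second, ``pigeonholing over the band $|k-\ell|\le r$'' cannot yield a pair carrying a constant proportion of $\mu(\CE)$, because the number of valuations $k$ with $\mu(\CV_{p^k})>0$ is not bounded by anything in the hypotheses (there is no $x$ in this lemma, and $C_3$ is an absolute constant depending only on $\tau$). Finally, Lemmas \ref{lem:UnbalancedSetEdges1} and \ref{lem:UnbalancedSetEdges2} are not available in the ``spread-out'' regime where you invoke them: they require $\mu(\CW_{p^k})/\mu(\CW)\ge1-C_2/p$ (near-total concentration, far stronger than the mere failure of alternative (b)) and $p^r>C_4$, whereas the present lemma must handle every prime $p\in\CR(G)$, including small ones.

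The missing idea is a \emph{weighted} pigeonhole that exploits $\theta=2+\tau>2$. Suppose $q(G_{p^k,p^\ell})<q(G)/C_3$ for every pair with $\mu(\CE_{p^k,p^\ell})>0$. Writing $x_k=\mu(\CV_{p^k})/\mu(\CV)$ and $y_\ell=\mu(\CW_{p^\ell})/\mu(\CW)$, Lemma \ref{lem:InducedGraphs} rearranges to
\[
\frac{\mu(\CE_{p^k,p^\ell})}{\mu(\CE)}<C_3^{-\frac{1}{2+\tau}}\,(x_ky_\ell)^{\frac{1+\tau}{2+\tau}}\,p^{-\frac{|k-\ell|}{2+\tau}}
\]
for all $(k,\ell)$ (trivially when $\mu(\CE_{p^k,p^\ell})=0$). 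Summing over $j=k-\ell$ and applying Cauchy--Schwarz in $k$, the key point is that $2(1+\tau)/(2+\tau)>1$ and $\sum_kx_k=1$ force $\sum_kx_k^{2(1+\tau)/(2+\tau)}\le1$, so the right-hand side sums to at most $C_3^{-1/(2+\tau)}\sum_{j\in\Z}2^{-|j|/(2+\tau)}\ll C_3^{-1/(2+\tau)}$, contradicting $\sum_{k,\ell}\mu(\CE_{p^k,p^\ell})=\mu(\CE)$ once $C_3$ exceeds an absolute constant. This convexity step — the reason the quality carries the exponent $\theta>2$ — is what replaces your naive pigeonhole over an unbounded index set, and it is the substance of the argument in \cite{DS-quantitative} that the paper imports; your case analysis around alternative (b) is then only organizational.
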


\begin{proof}	This result is Lemma 14.1 in \cite{DS-quantitative}, with the only differences being that we use the modified definition of $q(\cdot)$ in part (a), and that in part (b) we allow $k$ to lie in $\Z$ instead of $\Z_{\ge0}$. The argument of \cite{DS-quantitative} goes through with minimal changes. Note that, even though Lemma 14.1 in \cite{DS-quantitative} does not specify that $G'$ is an exact subgraph of $G$, this fact follows readily from the proof, because we end up taking $G'=G_{p^k,p^\ell}$ for some $k,\ell\in\Z$. 
\end{proof}

\begin{lemma}[Adding small primes to $\CP$]\label{lem:SmallIteration}
	Let $G=(\mu,\CV,\CW,\CE,\CP,f,g)$ be a non-trivial GCD graph and let $p\in\CR(G)$ be a prime. Then there is a non-trivial and exact GCD subgraph $G'$ of $G$ with set of primes $\CP'$  such that 
	\[
	\CP'=\CP\cup\{p\},
	\quad \CR(G')\subseteq\CR(G)\setminus\{p\},
	\quad 
	q(G')\ge q(G)/C_5.
	\]
\end{lemma}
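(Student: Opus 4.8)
The plan is to combine Lemmas \ref{lem:SmallPrime}, \ref{lem:MainLem}, \ref{lem:UnbalancedSetEdges1} and \ref{lem:UnbalancedSetEdges2} with the quality identity of Lemma \ref{lem:InducedGraphs}. First I would apply Lemma \ref{lem:MainLem} if $p>C_2$ and Lemma \ref{lem:SmallPrime} if $p\le C_2$. If the first alternative of the applicable lemma holds, it directly furnishes a non-trivial exact GCD subgraph $G'$ with $\CP'=\CP\cup\{p\}$ and $\CR(G')\subseteq\CR(G)\setminus\{p\}$ for which $q(G')\ge q(G)$ (from Lemma \ref{lem:MainLem}, as $M\ge1$) or $q(G')\ge q(G)/C_3\ge q(G)/C_5$ (from Lemma \ref{lem:SmallPrime}, as $C_5\ge C_3$), and we are done. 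Otherwise the second alternative holds: there is $k\in\Z$ with $\mu(\CV_{p^k})/\mu(\CV)$ and $\mu(\CW_{p^k})/\mu(\CW)$ both $\ge 1-C_2/p$ --- this is Lemma \ref{lem:MainLem}(b) when $p>C_2$, and follows from Lemma \ref{lem:SmallPrime}(b) together with $C_2/p\ge1$ when $p\le C_2$. I would then fix the minimal $r\in\Z_{\ge1}$ with $p^r>C_4$ (so $r\ll\log C_4$) and apply Lemma \ref{lem:UnbalancedSetEdges1} with this $k$ and $r$, and Lemma \ref{lem:UnbalancedSetEdges2} with $\ell=k$ and the same $r$ (the hypotheses on $\mu(\CW_{p^k})/\mu(\CW)$ and $\mu(\CV_{p^k})/\mu(\CV)$ being exactly what we just arranged). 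If either of their alternatives (a) holds, some $G_{p^k,p^\ell}$ or $G_{p^j,p^k}$ has quality $>M\,q(G)\ge q(G)/C_5$ and we take $G'$ to be it. So we may assume both alternatives (b) hold, which bounds each of $\sum_{|\ell-k|\ge r+1}\mu(\CE_{p^k,p^\ell})$ and $\sum_{|j-k|\ge r+1}\mu(\CE_{p^j,p^k})$ by $\mu(\CE)/(4p^{1+\tau/4})$.

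It then remains to produce, among the graphs $G_{p^j,p^{j'}}$ with $|j-k|\le r$ and $|j'-k|\le r$, one of quality $\ge q(G)/C_5$; I would prove this by contradiction, assuming that every $G_{p^j,p^{j'}}$ with $\mu(\CE_{p^j,p^{j'}})>0$ has quality $<q(G)/C_5$. Writing $\beta=\frac{1+\tau}{2+\tau}\in(\tfrac12,1)$, Lemma \ref{lem:InducedGraphs} rearranges into
\[
\frac{\mu(\CE_{p^j,p^{j'}})}{\mu(\CE)}=\Big(\frac{q(G_{p^j,p^{j'}})}{q(G)}\Big)^{1/(2+\tau)}\Big(\frac{\mu(\CV_{p^j})}{\mu(\CV)}\Big)^{\beta}\Big(\frac{\mu(\CW_{p^{j'}})}{\mu(\CW)}\Big)^{\beta}p^{-|j-j'|/(2+\tau)}
\]
for every pair with $\mu(\CE_{p^j,p^{j'}})>0$, and by assumption each factor $(q(G_{p^j,p^{j'}})/q(G))^{1/(2+\tau)}$ is $<C_5^{-1/(2+\tau)}$. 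Summing over the pairs with $j\ne k$ and $j'\ne k$, a Young-type bilinear estimate in the convolution variable $j-j'$ (against $m\mapsto p^{-|m|/(2+\tau)}$, of $\ell^1$-norm $\le6$) together with $\sum_j(\mu(\CV_{p^j})/\mu(\CV))^{2\beta}\le\sum_j\mu(\CV_{p^j})/\mu(\CV)=1$ (valid since $2\beta\ge1$) and its analogue for $\CW$ gives $\sum_{j\ne k,\,j'\ne k}\mu(\CE_{p^j,p^{j'}})\le6\,\mu(\CE)\,C_5^{-1/(2+\tau)}$. Since every edge outside $\bigcup_{|j-k|,\,|j'-k|\le r}\CE_{p^j,p^{j'}}$ lies in $\CE_{p^k,p^{j'}}$ or $\CE_{p^j,p^k}$ with the other index $\ge r+1$ from $k$, or else has both indices $\ne k$, the two (b)-bounds, $p^{1+\tau/4}>2$ and $C_5\ge(50\log C_4)^3$ show such edges carry weight less than $\mu(\CE)/4+\mu(\CE)/8=3\mu(\CE)/8$, whence $\sum_{|j-k|,\,|j'-k|\le r}\mu(\CE_{p^j,p^{j'}})>5\mu(\CE)/8$. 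On the other hand, estimating over this grid of $(2r+1)^2$ pairs the same way --- now bounding $p^{-|j-j'|/(2+\tau)}\le1$, splitting off the $j=k$ and $j'=k$ terms (each $\le1$), and using concavity of $t\mapsto t^\beta$ on the $\le2r$ remaining terms, whose bases sum to $\le1$ --- bounds the grid weight by $O\big(\mu(\CE)\,C_5^{-1/(2+\tau)}\log C_4\big)$, which since $C_5\ge(50\log C_4)^3$ and $2+\tau<3$ is $o(\mu(\CE))$, hence $<5\mu(\CE)/8$ (the relevant constants being vastly exceeded by the actual value of $C_4$). This contradiction proves the claim, and $G'$ is taken to be the graph $G_{p^j,p^{j'}}$ it produces.

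The step I expect to be the main obstacle is the bound on the weight of the edges joining the two complementary ``small'' sets $\CV\setminus\CV_{p^k}$ and $\CW\setminus\CW_{p^k}$ (the pairs with $j\ne k$ and $j'\ne k$): since $G$ is \emph{not} assumed maximal here, Lemma \ref{lem:SmallSetEdges} is unavailable, so this bound must instead be squeezed out of the quality identity, the key point being that a large weight there would force one of the $G_{p^j,p^{j'}}$ to have large quality (in which case we would be done anyway). A second delicate point is arranging the final pigeonhole over the $(2r+1)^2$ valuation classes to lose only a factor of roughly $\log C_4$ --- by exploiting the $\beta$-power weights and concavity rather than a naive count --- which is precisely what fits inside the budget $C_5\asymp(\log C_4)^3$.
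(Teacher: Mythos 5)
Your proposal is correct, and it follows the same overall skeleton as the argument the paper imports from \cite[Lemma 14.2]{DS-quantitative} (dichotomy via Lemma \ref{lem:MainLem} or \ref{lem:SmallPrime}, control of the off-grid edges via Lemmas \ref{lem:UnbalancedSetEdges1}--\ref{lem:UnbalancedSetEdges2}, and a pigeonhole over the $O(\log C_4)^2$ valuation classes near $k$, with the resulting loss absorbed into $C_5\ge(50\log C_4)^3$). The one step you handle genuinely differently is exactly the one you flag: the paper's route first replaces $G$ by a $\theta$-maximal GCD subgraph $G^{(1)}$ (whose quality is at least $q(G)$) and then invokes Lemma \ref{lem:SmallSetEdges} on $G^{(1)}$ to bound the weight of edges between the complements $\CV\setminus\CV_{p^k}$ and $\CW\setminus\CW_{p^k}$; this is why the paper notes that $G'$ ends up being $G^{(1)}_{p^k,p^\ell}$ for a maximal $G^{(1)}$. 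You instead stay with $G$ itself and extract the same bound from the quality identity of Lemma \ref{lem:InducedGraphs} combined with a Cauchy--Schwarz/Young convolution estimate in $j-j'$, valid under the standing assumption that no $G_{p^i,p^j}$ has yet reached quality $q(G)/C_5$; the exponent bookkeeping ($2\beta\ge1$ for the $\ell^2$ bounds, $\sum_m p^{-|m|/(2+\tau)}\le 6$, and $(2r)^{1-\beta}\ll\sqrt{\log C_4}$ for the grid) all checks out against the numerical definitions in \eqref{eq:CDefs}. What your variant buys is the elimination of the maximality reduction altogether (at the cost of a slightly more hands-on bilinear computation); what the paper's route buys is that the complement-complement bound comes for free from an already-proved lemma. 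One cosmetic remark: your contradiction hypothesis is stated over all pairs $(j,j')$ with $\mu(\CE_{p^j,p^{j'}})>0$, not just those in the grid, which is in fact what you use and is harmless, since any such graph of quality $\ge q(G)/C_5$ is exact, non-trivial, and serves as $G'$.
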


\begin{proof}
	This result is Lemma 14.2 in \cite{DS-quantitative} adapted to our more general definition of GCD graphs with rational vertices. The argument of \cite{DS-quantitative} goes through to this more general context with minimal changes, using Lemmas \ref{lem:MainLem} and \ref{lem:SmallPrime} above. Note that, even though Lemma 14.2 in \cite{DS-quantitative} does not specify that $G'$ is an exact subgraph of $G$, this fact follows readily from the proof, because we end up taking $G'=G^{(1)}_{p^k,p^\ell}$ for some $k,\ell\in\Z$, where $G^{(1)}$ is a maximal GCD subgraph of $G$.
\end{proof}

\begin{proof}[Deduction of Proposition \ref{prop:SmallPrimes}]

	Using the above lemma, we readily establish Proposition \ref{prop:SmallPrimes}. For more details, see the deduction of \cite[Proposition 8.3]{DS-quantitative} from \cite[Lemma 14.2]{DS-quantitative}. 
\end{proof}

\section{Proof of Proposition \ref{prop:StructureStep}}\label{sec:structure}

As in the statement of Proposition \ref{prop:StructureStep}, let $G=(\mu,\CV,\CW,\CE,\CP,f,g)$ be a non-trivial and maximal GCD graph such that 
\[
\CR(G)\subseteq\{p> C_6\}
\quad\text{and}\quad 
\CR^\flat(G)=\emptyset .
\]
If $\CR(G)=\emptyset$, then $G$ is structured for trivial reasons, so that Proposition \ref{prop:StructureStep} follows by taking $G'=G$. Let us thus assume that $\CR(G)\neq\emptyset$.

Now, fix for the moment a prime $p\in\CR(G)$. Since $\CR^\flat(G)=\emptyset$, we must have that $p\in\CR^\sharp(G)$, and thus there exists some integer $k_p$ such that
\begin{equation}
	\label{eq:almost all vertices have valuation k_p}
\frac{\mu(\CV_{p^{k_p}})}{\mu(\CV)}\geqslant1-\frac{C_2}{p}
\quad\text{and}\quad
\frac{\mu(\CW_{p^{k_p}})}{\mu(\CW)}\geqslant 1-\frac{C_2}{p}.
\end{equation}
These inequalities, together with the maximality of $G$ and Lemma \ref{lem:SmallSetEdges}, applied with $\theta=2+\tau$, $\eta=C_2/p$, $\CA=\CV\setminus\CV_{p^{k_p}}$ and $\CB=\CW\setminus\CW_{p^{k_p}}$, imply that
\begin{equation}\label{eq:NoBadEdges1}
	\mu\Big(\CE\big(\CV\setminus\CV_{p^{k_p}} , \CW\setminus \CW_{p^{k_p}}  \big)\Big) 
	\leqslant \bigg(\frac{C_2}{p}\bigg)^{1+\frac{\tau}{2.01}} \mu(\CE) \le \frac{\mu(\CE)}{2 p^{1+\frac{\tau}{4}}},
\end{equation}
where we used that $p>C_6\ge C_2^{10/\tau}$ (cf.~\eqref{eq:CDefs}) and that $\tau\in(0,1/100)$.

Moreover, using again the fact that $p\in\CR^\sharp(G)$, we find that
\begin{equation}\label{eq:no quality increase for asymmetric primes}
	q(G_{p^i,p^j}) 	<M\cdot q(G)
	\quad\text{for all}\ (i,j)\in\Z^2\ \text{with}\ i\neq j.
\end{equation}
In particular, if we set
\begin{align*}
\CV^*_p \coloneqq \bigcup_{i=k_p-1}^{k_p+1}\CV_{p^i}
\quad\text{and}\quad 
\CW^*_p \coloneqq \bigcup_{j=k_p-1}^{k_p+1}\CW_{p^j},
\end{align*}
then Lemmas \ref{lem:UnbalancedSetEdges1} and \ref{lem:UnbalancedSetEdges2} applied with $r=1$ imply that
\begin{equation}\label{eq:NoBadEdges2}
	\mu\Big(\CE\big(\CV\setminus \CV^*_p, \CW_{p^{k_p}} \big)\Big)
	= \sum_{i\in\Z\,:\, |i-k_p|\geqslant2} \mu\big(\CE_{p^i,p^{k_p}}  \big) \leqslant \frac{\mu(\CE)}{4p^{1+\frac{\tau}{4}}},
\end{equation}
and
\begin{equation}\label{eq:NoBadEdges3}
	\mu\Big(\CE\big(\CV_{p^{k_p}} ,   \CW\setminus \CW^*_p   \big)\Big)
	= \sum_{j\in\Z\,:\, |j-k_p|\geqslant2} \mu\big(\CE_{p^{k_p},p^j} \big) 
		\leqslant \frac{\mu(\CE)}{4p^{1+\frac{\tau}{4}}}.
\end{equation}
Hence, if we let
\[
\CE^*_p \coloneqq \CE\big( \CV_{p^{k_p}}, \CW^*_p \big)\cup \CE\big(\CV^*_p, \CW_{p^{k_p}}\big),
\]
then \eqref{eq:NoBadEdges1}, \eqref{eq:NoBadEdges2} and \eqref{eq:NoBadEdges3} imply that
\begin{equation}
	\label{eq:NoBadEdges4}
\mu(\CE\setminus \CE^*_p)\le \frac{\mu(\CE)}{p^{1+\frac{\tau}{4}}} \qquad\text{for all}\ p\in\CR(G).
\end{equation}

Now, let us set
\[
\CE^*\coloneqq \bigcap_{p\in\CR(G)} \CE^*_p.
\]
Using \eqref{eq:NoBadEdges4} and our assumption that $\CR(G)\subseteq\{p>C_6\}$, we find that 
\begin{equation}
	\label{eq:tiny number of edges left out}
\mu\big(\CE\setminus \CE^*\big)\le \sum_{p\in\CR(G)}\mu(\CE\setminus \CE^*_p)
\le \sum_{p>C_6}  \frac{\mu(\CE)}{p^{1+\frac{\tau}{4}}}  \le \frac{10\mu(\CE)}{\tau C_6^{\tau/4}}\le \frac{\mu(\CE)}{100},
\end{equation}
since $\sum_{n>x} n^{-1-c}\le x^{-1-c}+\int_x^\infty y^{-1-c}\dee y =\frac{c+1}{c} x^{-1-c}$ for all $c>0$ and all $x\ge1$, and we have assumed that $C_6\ge C_2^{10/\tau}\ge(10^5/\tau)^{10/\tau}$. Hence, if we let $G^*=(\CV,\CW,\CE^*,\CP,f,g)$, then 
\begin{equation}
	\label{G^*/G}
	\frac{q(G^*)}{q(G)}=\bigg(\frac{\mu(\CE^*)}{\mu(\CE)}\bigg)^{2+\tau} \ge 0.99^{2+\tau} >0.98
\end{equation}
for $\tau\le 0.01$. We may easily check that $G^*$ is a structured GCD graph and a GCD subgraph of $G$.

\medskip

Next, we show that condition (b) of Proposition \ref{prop:StructureStep} holds for some appropriate subgraph of $G^*$ by adapting the argument leading to \cite[Corollary 11.8 and Lemmas 8.4-8.5]{DS-quantitative}. To this end, fix an index $m\in\{1,\dots,n\}$ and recall the weights $a_{p,m}\ge0$ from the statement of Proposition \ref{prop:StructureStep}. Given $\rho\in\Q_{>0}$, let us define $A_m(\rho)=\sum_{p\in\rho,\, p\in\CR(G)}a_{p,m}$. We then have that
\begin{equation}
	\label{eq:anatomy on average}
\sum_{(v,w)\in\CE^*} \mu(v)\mu(w) A_m(v/w) 
= \sum_{p\in\CR(G)} a_{p,m} \cdot \mu\Big(\big\{(v,w)\in \CE^*  : p\in v/w\big\} \Big).
\end{equation}
Note that if $p\in\CR(G)$ is a prime of $v/w$, then we must have that $\e_p(v)\neq \e_p(w)$. Since we also know that $(v,w)\in \CE^*\subseteq \CE^*_p$, we conclude that $(v,w)$ lies in $\CE_{p^i,p^j}$ with $(i,j)\in\{(k_p,k_p+1),(k_p,k_p-1),(k_p-1,k_p),(k_p+1,k_p)\}$. For all these choices of $(i,j)$, we may use relations \eqref{eq:almost all vertices have valuation k_p} and \eqref{eq:no quality increase for asymmetric primes} in conjunction with Lemma \ref{lem:InducedGraphs} to find that
\[
M>\frac{q(G_{p^i,p^j})}{q(G)} = \bigg(\frac{\mu(\CE_{p^i,p^j})}{\mu(\CE)}\bigg)^{2+\tau}
\bigg(\frac{\mu(\CV)}{\mu(\CV_{p^i})}\bigg)^{1+\tau}
\bigg(\frac{\mu(\CW)}{\mu(\CW_{p^j})}\bigg)^{1+\tau}
p
\ge \bigg(\frac{\mu(\CE_{p^i,p^j})}{\mu(\CE)}\bigg)^{2+\tau}
\frac{p^{2+\tau}}{C_2^{1+\tau}} ,
\]
whence
\[
\frac{\mu(\CE_{p^i,p^j})}{\mu(\CE)}< \frac{\big(MC_2^{1+\tau}\big)^{\frac{1}{2+\tau}}}{p} < \frac{MC_2}{p} = \frac{C_8}{100p}.
\]
Therefore, 
\[
\mu\Big(\big\{(v,w)\in \CE^*: p\in v/w\big\} \Big)
	< 4\cdot \frac{C_8}{100p} \cdot \mu(\CE) < \frac{C_8}{20p}\cdot \mu(\CE^*),
\]
in virtue of \eqref{eq:tiny number of edges left out}. Inserting this bound into \eqref{eq:anatomy on average}, we conclude that
\begin{equation}
	\label{eq:anatomy on average 2}
\sum_{(v,w)\in \CE^*} \mu(v)\mu(w) A_m(v/w) < \frac{C_8}{20} \sum_{p\in\CR(G)} \frac{a_{p,m}}{p} \cdot \mu(\CE^*) 
\quad\text{for}\ m=1,2,\dots,n.
\end{equation}

Now, let
\[
\CE^{**}=\bigg\{(v,w)\in \CE^*  : A_m(v/w) \le C_8n \sum_{p\in\CR(G)} \frac{a_{p,m}}{p} \quad\text{for}\ m=1,2,\dots,n\bigg\}.
\]
Using the union bound, Markov's inequality and \eqref{eq:anatomy on average 2}, we find that $\mu( \CE^*\setminus\CE^{**})\le \frac{1}{20}\cdot\mu(\CE^*)$. Hence, if we let $G^{**}=(\CV,\CW,\CE^{**},\CP,f,g)$, then 
\[
	\frac{q(G^{**})}{q(G^*)}=\bigg(\frac{\mu(\CE^{**})}{\mu(\CE^*)}\bigg)^{2+\tau} \ge \bigg(\frac{19}{20}\bigg)^{2+\tau} >0.9. 
\]
Together with \eqref{G^*/G}, this implies that $q(G^{**})\ge q(G)/2$. Clearly, $G^{**}$ is a structured GCD graph and we also have that $A_m(v/w)\le C_8 n\sum_{p\in\CR(G)}\frac{a_{p,m}}{p}$ for all $(v,w)$ in its edge set $\CE^{**}$ and for all $m\in\{1,\dots,n\}$. To complete the proof, we take $G'$ to be a maximal GCD subgraph of $G^{**}$.\qed

\section{Proof of Propositions \ref{prop:IterationStep2-denominators} and \ref{prop:IterationStep2-numerators}}\label{sec:IterationStep2-denominators}

\begin{lemma}[Small quality loss in a structured graph]\label{lem:MainLem2} 
	Consider a maximal, non-trivial and structured GCD graph $G=(\mu,\CV,\CW,\CE,\CP,f,g)$ and a prime $p\in\CR(G)$ with $p>C_6$, and let $k_p\in\Z$ be such that 
		\begin{equation}
		\label{eq:possible valuations along edges encore^2} 
		\big(\e_p(v),\e_p(w)\big)\in\big\{(k_p-1,k_p),(k_p,k_p-1),(k_p,k_p),(k_p,k_p+1),(k_p+1,k_p)\big\}
	\end{equation}
	for all $(v,w)\in\CE$. 	Then there is a non-trivial and maximal GCD subgraph $G'$ of $G$ with multiplicative data $(\CP',f',g')$ such that: 
	\begin{enumerate}
		\item $\CP'=\CP\cup\{p\}$;		
		\item $\CR(G')\subseteq\CR(G)\setminus\{p\}$;
		\item $f'(p),g'(p)\in\{k_p-1,k_p,k_p+1\}$;
		\item if $k_p>0$, then $G'$ is a denominator-exact subgraph of $G$, whereas if $k_p<0$, then $G'$ is a numerator-exact subgraph of $G$;
		\item $q(G')\geqslant q(G)  \big(1-\one_{f'(p)=g'(p)=k_p}/p\big)^2 \big(1-1/p^{1+\frac{\tau}{4}} \big)$.
	\end{enumerate}
\end{lemma}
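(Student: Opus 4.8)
The plan is to produce $G'$ by restricting $G$ along the prime $p$, after a reduction to $k_p>0$. First, $k_p\neq0$ by Remark \ref{rem:structured-1}(c), and the case $k_p<0$ reduces to $k_p>0$ by passing to the ``reciprocal'' GCD graph in which every vertex $\rho$ is replaced by $1/\rho$ — this interchanges numerators and denominators, replaces $k_p$ by $-k_p$, and swaps ``numerator-exact'' with ``denominator-exact''. So assume $k_p\ge1$. Then \eqref{eq:possible valuations along edges encore^2} and Remark \ref{rem:structured-1}(c) give $\e_p(v),\e_p(w)\ge k_p-1\ge0$ on every edge; since $G$ is maximal it has no isolated vertices, so $\CV=\CV_{p^{k_p-1}}\sqcup\CV_{p^{k_p}}\sqcup\CV_{p^{k_p+1}}$ and likewise for $\CW$, and moreover every numerator occurring in $\CV\cup\CW$ is divisible by $p^{k_p-1}$ while every denominator is coprime to $p$. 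Consequently denominator-exactness at $p$ is automatic for any $p$-augmented GCD subgraph with $f'(p),g'(p)\ge0$, which will hold for all candidates below. I would then split $\CE$ into the five ``structured'' edge classes $\CE_{p^i,p^j}$ (with $(i,j)$ one of the five pairs in \eqref{eq:possible valuations along edges encore^2}) and write $\beta_1=\mu(\CE_{p^{k_p-1},p^{k_p}})/\mu(\CE)$, $\gamma_1=\mu(\CE_{p^{k_p},p^{k_p-1}})/\mu(\CE)$ for the relative weights of the two ``inward arms''.

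The candidates for $G'$ are the special GCD subgraphs $G_{p^{k_p-1},p^{k_p}}$ and $G_{p^{k_p},p^{k_p-1}}$ of Definition \ref{def:special graphs} (for which $f'(p)\neq g'(p)$), and the ``thick-diagonal'' graph $G_\star$ obtained from $G$ by deleting the vertex classes $\CV_{p^{k_p-1}}$ and $\CW_{p^{k_p-1}}$ (hence also the two inward arms) and declaring $f_\star(p)=g_\star(p)=k_p$. A direct check against Definition \ref{dfn:GCDgraph} shows $G_\star$ is a legitimate structured GCD subgraph with set of primes $\CP\cup\{p\}$; it is denominator-exact since $k_p\ge1$, and $p\notin\CR(G_\star)$ because $p\in\CP$ now. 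From Definition \ref{dfn:quality} and \eqref{eq:theta-weight rewrite},
\[
\frac{q(G_\star)}{q(G)}=(1-\beta_1-\gamma_1)^{\theta}\,\Bigl(\tfrac{\mu(\CV)}{\mu(\CV\setminus\CV_{p^{k_p-1}})}\Bigr)^{\theta-1}\Bigl(\tfrac{\mu(\CW)}{\mu(\CW\setminus\CW_{p^{k_p-1}})}\Bigr)^{\theta-1},
\]
the last two factors being $\ge1$.

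The dichotomy I would run is as follows. If either $G_{p^{k_p-1},p^{k_p}}$ or $G_{p^{k_p},p^{k_p-1}}$ has quality $\ge(1-1/p^{1+\tau/4})\,q(G)$, let $G'$ be a maximal subgraph of it: since $f'(p)\neq g'(p)$ the indicator in clause (5) of the lemma vanishes and the bound follows. Otherwise both have quality $<(1-1/p^{1+\tau/4})\,q(G)$, and I would take $G'$ to be a maximal subgraph of $G_\star$, so $f'(p)=g'(p)=k_p$ and the indicator equals $1$; this last case carries the bulk of the work, namely showing $q(G_\star)\ge(1-1/p)^2(1-1/p^{1+\tau/4})\,q(G)$. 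By Lemma \ref{lem:max->connected}, $G$ is $\frac{\theta-1}{\theta}\delta(G)$-connected; since a vertex of $\CV_{p^{k_p-1}}$ can only be joined to $\CW_{p^{k_p}}$, this yields $\beta_1\ge\frac{\theta-1}{\theta}\,\mu(\CV_{p^{k_p-1}})/\mu(\CV)$, and symmetrically for $\gamma_1$. On the other hand Lemma \ref{lem:InducedGraphs} converts ``$q(G_{p^{k_p-1},p^{k_p}})<(1-1/p^{1+\tau/4})q(G)$'' into an upper bound of the shape $\beta_1<p^{-1/\theta}\bigl(\mu(\CV_{p^{k_p-1}})/\mu(\CV)\bigr)^{(\theta-1)/\theta}$, and likewise for $\gamma_1$; in particular $\beta_1,\gamma_1$ and the fractions $\mu(\CV_{p^{k_p-1}})/\mu(\CV)$, $\mu(\CW_{p^{k_p-1}})/\mu(\CW)$ are all $\ll 1/p$, so $G_\star$ is non-trivial. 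Feeding these bounds into the displayed formula and optimizing the one-variable expression $(1-\beta_1)^{\theta}\bigl(1-\mu(\CV_{p^{k_p-1}})/\mu(\CV)\bigr)^{-(\theta-1)}$ over its admissible range — which is pinched between the connectivity lower bound and the induced-subgraph upper bound — one finds that the first-order (in $1/p$) loss from deleting each inward arm's edges is almost exactly compensated by the first-order gain from deleting the associated vertex class, so the net loss is only $\sim1/p$ per inward arm and the claimed bound holds once $p>C_6$ is large. This cancellation is precisely the mechanism of \cite[Lemmas 8.4--8.5 and Proposition 8.2]{DS-quantitative}, reorganized here around the five structured edge classes and transcribed to the present quality function $q$, which omits the auxiliary Euler factors used in \cite{DS-quantitative}. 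In all cases, passing to a maximal subgraph preserves non-triviality, maximality, the identity $\CP'=\CP\cup\{p\}$, the inclusion $\CR(G')\subseteq\CR(G)\setminus\{p\}$, the constraint $f'(p),g'(p)\in\{k_p-1,k_p,k_p+1\}$, and denominator-exactness.

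The hard part is the quality bookkeeping in that last case: a crude estimate loses a constant factor, and one genuinely needs the cancellation between the dropped edge-weight and the simultaneously dropped vertex-weight, which forces one to combine the connectivity bound of Lemma \ref{lem:max->connected} with the quality identity of Lemma \ref{lem:InducedGraphs} and a short optimization, rather than using any of these in isolation. A secondary, routine nuisance is confirming that the degenerate configurations — $\CV_{p^{k_p-1}}=\emptyset$, or almost all of the edge weight lying on the inward arms (which, via the connectivity bound, forces an inward special subgraph to have quality $\gg q(G)$ and so lands us in the first case of the dichotomy) — are correctly absorbed, and that each candidate graph verifies every clause of Definition \ref{dfn:GCDgraph}.
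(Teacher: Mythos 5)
Your proposal is correct, and it is organized differently from the paper's proof. The paper first splits on whether $p\in\CR^\flat(G)$ or $p\in\CR^\sharp(G)$, handling the first case via Lemma \ref{lem:MainLem}, and in the second case splits further on where the majority valuation $k$ sits relative to $k_p$ (its Cases 2a--2d), delegating the delicate quality bookkeeping to \cite[Lemmas 13.2 and 15.1]{DS-quantitative}; it also treats $k_p<0$ by a parallel construction $G^-$ rather than by symmetry. You instead exploit the structured hypothesis directly: after reducing to $k_p>0$ by reciprocation, only five valuation classes carry edges, and you run a single trichotomy among $G_{p^{k_p-1},p^{k_p}}$, $G_{p^{k_p},p^{k_p-1}}$ and the thick-diagonal graph $G_\star$ --- which is literally the paper's $G^+$ from its Case 2c. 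This buys a uniform argument that never needs the $\CR^\sharp/\CR^\flat$ distinction or Lemma \ref{lem:MainLem} (those are only required for unstructured graphs), at the cost of writing out the optimization that the paper imports wholesale from \cite{DS-quantitative}. Two small cautions on that optimization. First, as literally stated it is not one-variable: $(1-\beta_1-\gamma_1)^{\theta}$ does not factor as $(1-\beta_1)^{\theta}(1-\gamma_1)^{\theta}$ (the inequality $1-\beta_1-\gamma_1\le(1-\beta_1)(1-\gamma_1)$ goes the wrong way), so you must either run the coupled two-variable optimization (unique interior critical point at $v_1=w_1\approx 1/p$, value $-2/p-(\theta+1)/p^2+O(p^{-3})$ on the logarithmic scale, beating the target thanks to the $p^{-1-\tau/4}$ slack) or first invoke the connectivity bound to get $\beta_1,\gamma_1\le(\tfrac{\theta}{\theta-1})^{\theta-1}/p$ and absorb the resulting $O(1/p^2)$ cross term into the factor $(1-1/p^{1+\tau/4})$; your closing remark that connectivity and Lemma \ref{lem:InducedGraphs} must be combined shows you have this in view. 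Second, the one-arm inequality is in fact sharp: subject only to $\beta_1^{\theta}v_1^{-(\theta-1)}p\le1$, the minimum of $(1-\beta_1)^{\theta}(1-v_1)^{-(\theta-1)}$ over all $v_1\in[0,1)$ is exactly $1-1/p$, attained at $v_1=\beta_1=1/p$, so the connectivity lower bound is needed only for the decoupling step, not for the per-arm estimate. Neither point is a substantive gap.
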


\begin{proof} 
	Before we begin, let us note that any non-trivial and exact subgraph $G'$ of $G$ automatically satisfies property (c). Indeed, since $G'$ is non-trivial, it has at least one edge, which must satisfy \eqref{eq:possible valuations along edges encore^2}. Since $G'$ is also an exact subgraph of $G$, we conclude that it must satisfy (c).
	
	In addition, note that any $G'$ satisfying (e) is non-trivial. Indeed, we have $q(G)>0$ because $G$ is non-trivial. In virtue of (e), we must also have that $q(G')>0$. Thus, $G'$ is non-trivial.

	\medskip
	
Let us now proceed to the main part of the proof. We shall separate various cases.

\medskip
 
{\it Case 1: $p\in\CR^\flat(G)$.} We must then be in one of following two subcases:

\medskip

{\it Case 1a: there exist distinct integers $i,j$ such that $q(G_{p^i,p^j})\ge M\cdot q(G)$.} We then simply take $G'$ to be a maximal GCD subgraph of $G_{p^i,p^j}$. This completes the proof, since $G'$ is an exact subgraph of $G$ and it satisfies $q(G')\ge M \cdot q(G)>q(G)$.

\medskip

{\it Case 1b: for every $k\in\Z$, we have $\min\{\mu(\CV_{p^k}),\mu(\CW_{p^k})\}<1-C_2/p$.} We then apply Lemma \ref{lem:MainLem} (we must be in case (a) of the lemma by the above assumption) to locate a subgraph $G'$ of $G$ satisfying all desired properties.

\medskip

{\it Case 2: $p\in\CR^\sharp(G)$.} Hence, there exists $k\in\Z$ such that
\begin{equation}
	\label{eq:almost all p^k}
\frac{\mu(\CV_{p^{k}})}{\mu(\CV)}\geqslant1-\frac{C_2}{p}
\quad\text{and}\quad
\frac{\mu(\CW_{p^{k}})}{\mu(\CW)}\geqslant 1-\frac{C_2}{p}.
\end{equation}
To proceed further, we separate into four cases.

\medskip

{\it Case 2a: $k=k_p-1$.} Let 
\[
	\CE^*  \coloneqq 
	\Big\{(v,w)\in\CE: 
	\big(\e_p(v),\e_p(w)\big)\in\big\{(k-1,k),(k,k-1),(k,k),(k,k+1),(k+1,k)\big\} \Big\} .
\]
On the one hand, using \eqref{eq:possible valuations along edges encore^2} and our assumption that $k=k_p-1$, we have that
\begin{equation}
	\label{eq:E^* reduction of choices}
\CE^*  = \Big\{(v,w)\in\CE: \big(\e_p(v),\e_p(w)\big)\in\{(k,k+1),(k+1,k)\big\} \Big\}.
\end{equation}
On the other hand, using \eqref{eq:almost all p^k} and arguing as in the proof of Proposition \ref{prop:StructureStep} in Section \ref{sec:structure}, with $k$ playing the role of $k_p$, we find that 
$\mu(\CE\setminus \CE^*)\le \mu(\CE)/p^{1+\frac{\tau}{4}}$, and thus $\mu(\CE^*)\ge\mu(\CE)/2$. Together with \eqref{eq:E^* reduction of choices}, this implies that either $\mu(\CE_{p^k,p^{k+1}})\ge \mu(\CE)/4$ or $\mu(\CE_{p^{k+1},p^k})\ge \mu(\CE)/4$. In the former case, Lemma \ref{lem:InducedGraphs} yields that
$q(G_{p^k,p^{k+1}})/q(G)\ge 4^{-2-\tau} p\ge 1$. In the latter case, we find similarly that $q(G_{p^{k+1},p^k})\ge q(G)$. We then take $G'$ to be a maximal GCD subgraph of $G_{p^k,p^{k+1}}$ or of $G_{p^{k+1},p^k}$, according to the subcase we are in. In particular, $G'$ is an exact GCD subgraph of $G$. We may then check easily that $G'$ satisfies all required properties. 

\medskip

{\it Case 2b:} $k=k_p+1$. If $\CE^*$ is defined as above, we have $\CE^*  = \{(v,w)\in\CE: (\e_p(v),\e_p(w)\big)\in\{(k-1,k),(k,k-1)\} \}$. Hence, we may argue similarly to Case 2a to complete the proof.

\medskip 

{\it Case 2c:} $k=k_p>0$. Let $G^+=(\mu,\CV^+,\CW^+,\CE^+,\CP\cup\{p\},f_{p^k},g_{p^k})$, where:
	\[
	\CV^+=\CV_{p^k}\cup\CV_{p^{k+1}},
	\quad
	\CW^+=\CW_{p^k}\cup\CW_{p^{k+1}},
	\quad
	\CE^+=\CE\big(\CV^+,\CW^+\big),
	\]
	and $f_{p^k}$ and $g_{p^k}$ are as in Definition \ref{def:special graphs}. It is easy to check that $G^+$ is a GCD subgraph of $G$ satisfying properties (a)-(d)\footnote{$G^+$ is a denominator-exact subgraph of $G$ because if $(a/q,b/r)\in\CV^+\times\CW^+$ with $\gcd(a,q)=\gcd(b,r)=1$, then $\e_p(a/q),\e_p(b/r)\ge k>0$, and thus $\e_p(q)=0=f_{p^k}^-(p)$ and $\e_p(r)=0=g_{p^k}^-(p)$.}. Moreover, both $G_{p^k,p^{k-1}}$ and $G_{p^{k-1},p^k}$ are GCD subgraphs of $G$ satisfying properties (a)-(d). Furthermore, if we follow the proof of Lemma 15.1 in \cite{DS-quantitative} (see Case 2 there), we may show that at least one of the graphs $G^+$, $G_{p^k,p^{k-1}}$ and $G_{p^{k-1},p^k}$ also satisfies property (e). Let us call $G^*$ this GCD subgraph of $G$. To complete the proof of the lemma, we take $G'$ to be a maximal subgraph of $G^*$.
	
\medskip

{\it Case 2d:} $k=k_p<0$. Let $G^-=(\mu,\CV^-,\CW^-,\CE^-,\CP\cup\{p\},f_{p^k},g_{p^k})$, where:
	\[
\CV^-=\CV_{p^{k-1}}\cup\CV_{p^{k}},
\quad
\CW^-=\CW_{p^{k-1}}\cup\CW_{p^k},
\quad
\CE^-=\CE\big(\CV^-,\CW^-\big).
\]
This is a GCD subgraph of $G$ satisfying properties (a)-(d). To complete the proof, we argue similarly to Case 2c, selecting $G'$ to be a maximal subgraph of one of the graphs $G^-$, $G_{p^k,p^{k+1}}$ and $G_{p^{k+1},p^k}$. 

\end{proof}

\begin{proof}[Proof of Proposition \ref{prop:IterationStep2-denominators}] 	This follows almost immediately from Lemma \ref{lem:MainLem2}. Our assumptions that $G$ is structured, that $\CR(G)\subseteq\{p> C_6\}$ and that $\CR_-(G)\neq\emptyset$ imply that there is a prime $p > C_6$ lying in $\CR(G)$ with $k_p<0$ (see Remark \ref{rem:structured}). Thus we can apply Lemma \ref{lem:MainLem2} with this choice of $p$ and complete the proof.
\end{proof}

\begin{proof}[Proof of Proposition \ref{prop:IterationStep2-numerators}] As above, this follows almost immediately from Lemma \ref{lem:MainLem2}.
\end{proof}

\section*{Acknowledgments}

We would like to thank Andrew Granville and Manuel Hauke for useful discussions on Erd\H os's integer dilation approximation problem. In particular, the name of the problem is due to the former.

DK is supported by the Courtois Chair II in fundamental research, by the Natural Sciences and Engineering Research Council of Canada (RGPIN-2018-05699 and RGPIN-2024-05850) and by the Fonds de recherche du Qu\'ebec - Nature et technologies (2022-PR-300951 and 2025-PR-345672).

YL is supported by a junior chair of the Institut Universitaire de France, and was supported by a Simons-CRM visiting professorship while an essential part of this work was carried. In particular, YL would like to thank the Centre de Recherches Math\'ematiques for its excellent working conditions.

JDL is supported by an NSF MSPRF fellowship, and would like to thank the Institut \'Elie Cartan de Lorraine and the Centre de Recherches Math\'ematiques for their hospitality.

\end{document}